\newtheorem{theorem}{Theorem}[section]
\newtheorem{lemma}[theorem]{Lemma}
\newtheorem{proposition}[theorem]{Proposition}
\theoremstyle{definition}
\newtheorem{definition}[theorem]{Definition}
\newtheorem{example}[theorem]{Example}
\newtheorem{remark}[theorem]{Remark}
\theoremstyle{question}
\newtheorem{question}[theorem]{Question}
\numberwithin{equation}{section}
\newtheorem*{xclaim}{Claim}
\begin{document}

\title[Topological extensions with compact remainder]{Topological extensions with compact remainder}

\author{M.R. Koushesh}
\address{Department of Mathematical Sciences, Isfahan University of Technology, Isfahan 84156--83111, Iran.}
\address{School of Mathematics, Institute for Research in Fundamental Sciences (IPM), P.O. Box: 19395--5746, Tehran, Iran.}
\email{koushesh@cc.iut.ac.ir}
\thanks{This research was in part supported by a grant from IPM (No. 90030052).}

\subjclass[2010]{Primary 54D35, 54D60; Secondary 54D40.}

\keywords{Stone--\v{C}ech compactification, Hewitt realcompactification, pseudocompactification, realcompactification, Mr\'{o}wka's condition $(\mathrm{W})$, compactness-like topological property}

\begin{abstract}
Let $\mathfrak{P}$ be a topological property. We study the relation between the order structure of the set of all $\mathfrak{P}$-extensions of a completely regular space $X$ with compact remainder (partially ordered by the standard partial order $\leq$) and the topology of certain subspaces of the outgrowth $\beta X\setminus X$. The cases when $\mathfrak{P}$ is either pseudocompactness or realcompactness are studied in more detail.
\end{abstract}

\maketitle

\section{Introduction}

\emph{All spaces under consideration are assumed to be completely regular; completely regular spaces are Hausdorff.}

A space $Y$ is called an {\em extension} of a space $X$ if $Y$ contains $X$ as a dense subspace. If $Y$ is an extension of $X$ then the subspace $Y\setminus X$ of $Y$ is called the {\em remainder} of $Y$. Two extensions of $X$ are said to be {\em equivalent} if there exists a homeomorphism between them which fixes $X$ point-wise. This defines an equivalence relation on the class of all extensions of $X$. The equivalence classes will be identified with individuals when no confusion arises. For any extensions $Y$ and $Y'$ of $X$ we let $Y\leq Y'$ if there exists a continuous mapping of $Y'$ to $Y$ which fixes $X$ point-wise. The relation $\leq$ defines a partial order on the set of all (equivalence classes of) extensions of $X$. (See Section 4.1 of \cite{PW} for more details.) Let $\mathfrak{P}$ be a topological property. An extension $Y$ of $X$ is called a {\em  $\mathfrak{P}$-extension} if it has  $\mathfrak{P}$. If $\mathfrak{P}$ is compactness, pseudocompactness or realcompactness, respectively, then the corresponding $\mathfrak{P}$-extensions are called {\em compactification}, {\em pseudocompactification} or {\em realcompactification}. (Recall that a space is said to be {\em pseudocompact} if every continuous real-valued mapping defined on it is bounded. Also, a space is called {\em realcompact} if it is homeomorphic to a closed subspace of a product of copies of the real line.) For any partially ordered sets $(P,\leq)$ and $(Q,\leq)$, a mapping $f:P\rightarrow Q$ is called an {\em order-homomorphism} ({\em anti-order-homomorphism}, respectively), if $f(a)\leq f(b)$ ($f(b)\leq f(a)$, respectively) whenever $a\leq b$. An order-homomorphism (anti-order-homomorphism, respectively) $f:P\rightarrow Q$ is called an {\em order-isomorphism} ({\em anti-order-isomorphism}, respectively), if $f^{-1}:Q\rightarrow P$ (exists and) is an order-homomorphism (anti-order-homomorphism, respectively). Two partially ordered sets $(P,\leq)$ and $(Q,\leq)$ are said to be {\em order-isomorphic} ({\em anti-order-isomorphic}, respectively), if there exists an  order-isomorphism (anti-order-isomorphism, respectively) between them. A {\em zero-set} of a space $X$ is a set of the form $\mathrm{Z}(f)=f^{-1}(0)$, where $f:X\rightarrow[0,1]$ is continuous. Any set of the form $X\setminus Z$, where $Z$ is a zero-set of $X$, is called a {\em cozero-set} of $X$. We denote the set of all zero-sets of $X$ by ${\mathscr Z}(X)$, and the set of all cozero-sets of $X$ by $\mathrm{Coz}(X)$.

There is a well-known result of K.D. Magill, Jr. which for a locally compact space $X$ relates the order-structure of the set ${\mathscr K}(X)$ of all compactifications of $X$ to the topology of the space $\beta X\setminus X$ (where $\beta X$ is the Stone--\v{C}ech compactification of $X$).

\begin{theorem}[Magill \cite{Mag3}]\label{KLFA}
Let $X$ and $Y$ be locally compact non-compact spaces. If ${\mathscr K}(X)$ and ${\mathscr K}(Y)$ are order-isomorphic then $\beta X\setminus X$ and $\beta Y\setminus Y$ are homeomorphic.
\end{theorem}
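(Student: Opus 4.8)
The plan is to reconstruct $\beta X\setminus X$ as a topological space purely from the abstract order structure of $\mathscr{K}(X)$, so that an order-isomorphism $\mathscr{K}(X)\to\mathscr{K}(Y)$ automatically induces a homeomorphism $\beta X\setminus X\to\beta Y\setminus Y$. The key classical fact I would use is that for a locally compact noncompact space $X$, every compactification of $X$ is a quotient of $\beta X$ obtained by collapsing the remainder $\beta X\setminus X$ along an upper semicontinuous decomposition into compact sets; equivalently, $\mathscr{K}(X)$ is anti-order-isomorphic (or order-isomorphic, depending on convention) to the lattice of such decompositions, and in particular $\beta X$ is the top element and the one-point compactification $\omega X$ is the bottom element (this uses local compactness so that $\beta X\setminus X$ is itself compact and $\omega X$ exists).

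First I would identify, within $\mathscr{K}(X)$, the compactifications lying just above the one-point compactification $\omega X$: these are the "two-point-type" or more generally the finite compactifications, and the atoms of the interval $[\omega X,\beta X]$ correspond exactly to partitioning $\beta X\setminus X$ into two closed pieces, i.e. to nontrivial clopen subsets, or—more usefully—I would look one level further and recover the lattice of closed subsets of $\beta X\setminus X$. The standard route (this is essentially Magill's original argument, and is also reproduced in \cite{PW}) is: a compactification $\alpha X$ with $\omega X\le\alpha X\le\beta X$ is determined by its associated decomposition; the compactifications whose decomposition has exactly one nondegenerate class correspond bijectively and order-theoretically to the closed subsets of $\beta X\setminus X$ containing more than one point, with set inclusion matching the order on $\mathscr{K}(X)$. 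From the closed sets one recovers the lattice of closed sets of $\beta X\setminus X$, hence (since $\beta X\setminus X$ is compact Hausdorff, so its topology is determined by its closed-set lattice) the space $\beta X\setminus X$ up to homeomorphism.

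So the steps in order are: (1) use local compactness to locate $\omega X$ as the bottom and $\beta X$ as the top of $\mathscr{K}(X)$, and note these are preserved by any order-isomorphism; (2) translate the interval $[\omega X,\beta X]$ into the lattice of upper semicontinuous compact decompositions of $\beta X\setminus X$; (3) pick out, order-theoretically, the sublattice corresponding to single-nondegenerate-class decompositions, and show it is isomorphic to the lattice $\mathcal{C}$ of closed subsets of $\beta X\setminus X$ with at least two points (with $\varnothing$/singletons adjoined to get a lattice); (4) invoke the fact that a compact Hausdorff space is determined up to homeomorphism by the lattice of its closed subsets (a Wallman-type / Stone-type reconstruction) to conclude $\beta X\setminus X\cong\beta Y\setminus Y$. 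Since each of steps (1)–(3) produces its data by a formula in terms of $\le$ alone, an order-isomorphism $\mathscr{K}(X)\cong\mathscr{K}(Y)$ carries the $X$-data to the $Y$-data, and step (4) finishes.

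The main obstacle I expect is step (3): showing that the property "the decomposition has exactly one nondegenerate member" is expressible order-theoretically inside $\mathscr{K}(X)$, and that inclusion of the corresponding closed sets really does correspond to $\le$ in both directions. One has to be careful that different closed sets can yield the same compactification only in trivial (one-point) cases, and that joins/meets in $\mathscr{K}(X)$ restrict correctly to this sublattice—meets are fine but joins of single-class decompositions need not be single-class, so the sublattice must be characterized as, say, the join-irreducible elements of $[\omega X,\beta X]$ above the atoms, or via a suitable order-theoretic definition of "the closure of a point-pair." A secondary but real subtlety is the reconstruction in step (4): one must verify that the relevant closed-set lattice (not all closed sets, but those of cardinality $\ge 2$ together with artificial bottom) still determines the space; this is true for compact Hausdorff spaces because points are recovered as the minimal nonzero elements once one adjoins singletons, and the topology is recovered as the Wallman topology on those minimal elements.
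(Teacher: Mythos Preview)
The paper does not give its own proof of this statement: Theorem~\ref{KLFA} is stated in the introduction as a known result of Magill~\cite{Mag3} and is then used as a black box (in the proof of Theorem~\ref{RSQ}). So there is nothing in the paper to compare your argument against line by line.

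That said, your sketch is essentially the classical argument (Magill's original, also reproduced in~\cite{PW}), and the overall architecture is sound. A few remarks on the obstacles you flag. For step~(3), the usual way to single out the ``one nondegenerate class'' compactifications order-theoretically is \emph{not} via join-irreducibility in $\mathscr{K}(X)$ directly, but rather by first isolating the co-atoms of $\mathscr{K}(X)$ (which are exactly the $e_X(\{a,b\})$ for distinct $a,b\in\beta X\setminus X$, i.e.\ the two-point identifications) and then building up arbitrary closed sets from pairs; alternatively one shows that the elements with a unique nondegenerate fiber are precisely those $T$ for which the up-set $\{S:S\geq T\}$ is totally ordered by a suitable refinement condition. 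Either route works but does require care, as you note. For step~(4), the paper itself invokes exactly the reconstruction principle you have in mind (Theorem~11.1 of~\cite{B}: a $T_1$ space is determined up to homeomorphism by its lattice of closed sets); see the remark preceding Definition~\ref{OGFD} and the proof of Lemma~\ref{RREQ}. Your worry about having only closed sets of cardinality $\geq 2$ is handled by noting that singletons are recovered as meets of the minimal such sets (the two-element closed sets), so the full closed-set lattice is reconstructible.

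In short: correct approach, matching the literature; the paper simply cites the result rather than proving it.
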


There have been extensive studies to generalize Magill's theorem in various directions. (See \cite{Me} for a different proof of the theorem; see \cite{R} for generalizations of the theorem to non-locally compact spaces; see \cite{Wo1} and \cite{Do} for a zero-dimensional version of the theorem, and see \cite{PW1} for an  extension of the theorem to mappings.) One of the most interesting generalizations is the one obtained by J. Mack, M. Rayburn and R.G. Woods in \cite{MRW1}. We state some results from \cite{MRW1} below.

Let $X$ be a space and let $\mathfrak{P}$ be a topological property. A $\mathfrak{P}$-extension of $X$ is called {\em tight} if it does not contain properly any other $\mathfrak{P}$-extension of $X$. Suppose that $\mathfrak{P}$ is closed hereditary, productive and is such that if a space is the union of a compact space and a space with $\mathfrak{P}$ then it has $\mathfrak{P}$. Define the {\em $\mathfrak{P}$-reflection} $\gamma_\mathfrak{P} X$ of $X$ by
\[\gamma_\mathfrak{P}X=\bigcap\{T:T \mbox{ has }\mathfrak{P}\mbox{ and } X\subseteq T\subseteq\beta X\}.\]
If $\mathfrak{P}$ is compactness then $\gamma_\mathfrak{P} X=\beta X$ and if $\mathfrak{P}$ is realcompactness then $\gamma_\mathfrak{P}X=\upsilon X$, where $\upsilon X$ is the Hewitt realcompactification of $X$. (Recall that for any space $X$ the {\em Hewitt realcompactification} of $X$ is the space $\upsilon X$ which contains $X$ as a dense subspace and is such that every continuous $f:X\rightarrow\mathbb{R}$ is continuously extendible to $\upsilon X$; one may assume that $\upsilon X\subseteq\beta X$.) Denote by ${\mathscr P}(X)$ the set of all tight $\mathfrak{P}$-extensions of $X$. For a non-$\mathfrak{P}$ locally-$\mathfrak{P}$ space (that is, a space in which every point has a neighborhood with $\mathfrak{P}$) $X$ there is the largest one-point extension $X^*$ in ${\mathscr P}(X)$. Let
\[{\mathscr P}^*(X)=\big\{T\in{\mathscr P}(X):X^*\leq T\big\}\]
and for any $T\in{\mathscr P}^*(X)$, if $f_T:\beta X\rightarrow\beta T$ denotes the continuous extension of $\mathrm{id}_X$, let
\[{\mathscr D}^*(X)=\big\{T\in {\mathscr P}^*(X):f_T[\gamma_\mathfrak{P} X]=T\big\}.\]

The following main result of \cite{MRW1} generalizes Magill's theorem.

\begin{theorem}[Mack, Rayburn and  Woods \cite{MRW1}]\label{GGF}
Let $X$ and $Y$ be locally-$\mathfrak{P}$ non-$\mathfrak{P}$ spaces and suppose that ${\mathscr D}^*(X)={\mathscr P}^*(X)$ and ${\mathscr D}^*(Y)={\mathscr P}^*(Y)$. If $\gamma_\mathfrak{P}X\setminus X$ and $\gamma_\mathfrak{P}Y\setminus Y$ are $C^*$-embedded in
$\gamma_\mathfrak{P}X$ and $\gamma_\mathfrak{P}Y$, respectively, then ${\mathscr P}^*(X)$ and ${\mathscr P}^*(Y)$ are lattice-isomorphic if and only if $\gamma_\mathfrak{P}X\setminus X$ and $\gamma_\mathfrak{P}Y\setminus Y$ are homeomorphic.
\end{theorem}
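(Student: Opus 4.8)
The plan is to reduce Theorem \ref{GGF} to Magill's theorem by showing that, under the hypotheses, the lattice $\mathscr{P}^*(X)$ is lattice-isomorphic to the lattice $\mathscr{K}(\gamma_\mathfrak{P}X\setminus X)$ of compactifications of the remainder $\gamma_\mathfrak{P}X\setminus X$, and then applying Theorem \ref{KLFA} (observing that $\gamma_\mathfrak{P}X\setminus X$ is locally compact and non-compact). The key structural observation is this: if $T\in\mathscr{P}^*(X)$, then $T$ is a one-point-type gluing of $X$ with a compactification of the remainder. More precisely, I would first analyze the map $f_T:\beta X\to\beta T$; the hypothesis $\mathscr{D}^*(X)=\mathscr{P}^*(X)$ forces $f_T[\gamma_\mathfrak{P}X]=T$ for every $T$, which means $T$ is obtained from $\gamma_\mathfrak{P}X$ by collapsing the remainder $\gamma_\mathfrak{P}X\setminus X$ to a compact space via a quotient map that is injective on $X$. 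So $T\setminus X$ is a compact quotient (indeed a compactification, since $X$ is dense and $T\setminus X$ is the image of $\gamma_\mathfrak{P}X\setminus X$) of $\gamma_\mathfrak{P}X\setminus X$; conversely every compactification of $\gamma_\mathfrak{P}X\setminus X$ should arise this way by forming the appropriate adjunction space.

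The main steps: (1) Show $\gamma_\mathfrak{P}X\setminus X$ is locally compact and non-compact. Non-compactness follows because $X$ is non-$\mathfrak{P}$ (so $X\neq\gamma_\mathfrak{P}X$) and $X^*$ being the \emph{largest} one-point extension in $\mathscr{P}(X)$ is relevant; local compactness of the remainder should follow from the $C^*$-embedding hypothesis together with local-$\mathfrak{P}$-ness of $X$, using that $\gamma_\mathfrak{P}X\setminus X$ is a zero-set (or at least closed) in $\gamma_\mathfrak{P}X$ with suitable properties. (2) Construct the order-isomorphism $\Phi:\mathscr{P}^*(X)\to\mathscr{K}(\gamma_\mathfrak{P}X\setminus X)$ by $\Phi(T)=$ the closure of (the image of) $\gamma_\mathfrak{P}X\setminus X$ inside $T$, i.e. $T\setminus X$ with its subspace topology — equivalently $f_T\!\restriction(\gamma_\mathfrak{P}X\setminus X)$ viewed as a compactification. (3) Construct the inverse: given a compactification $\alpha$ of $\gamma_\mathfrak{P}X\setminus X$, form the adjunction $\gamma_\mathfrak{P}X\cup_{\gamma_\mathfrak{P}X\setminus X}\alpha$ and verify it lies in $\mathscr{P}^*(X)$ — this requires knowing $\mathfrak{P}$ is preserved under gluing a compact space (which is part of the standing assumption on $\mathfrak{P}$), and requires the $C^*$-embedding hypothesis to ensure the adjunction space is Hausdorff/completely regular and that no identifications are forced on $X$. (4) Check $\Phi$ is order-preserving both ways: $T\leq T'$ in $\mathscr{P}^*(X)$ corresponds exactly to $\Phi(T)\leq\Phi(T')$ in $\mathscr{K}(\gamma_\mathfrak{P}X\setminus X)$, because a fixing-$X$ map $T'\to T$ restricts to a fixing-$(\gamma_\mathfrak{P}X\setminus X)$ map on remainders, and conversely a map of compactifications of the remainder extends by the identity on $X$ to a map $T'\to T$ (continuity at the "seam" again uses the $C^*$-embedding and the structure of $\gamma_\mathfrak{P}X$ near its remainder).

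The hard part will be Step (3) and the continuity verifications at the seam in Step (4): showing that the adjunction space built from $\gamma_\mathfrak{P}X$ and a compactification of the remainder is again a $\mathfrak{P}$-extension of $X$ that is \emph{tight} and lies above $X^*$, and that maps between such spaces are continuous. This is exactly where the two hypotheses of the theorem are consumed — the $C^*$-embedding of $\gamma_\mathfrak{P}X\setminus X$ in $\gamma_\mathfrak{P}X$ guarantees that continuous functions on the remainder control the behavior near the seam (so that quotient topologies coincide with the expected subspace topologies and the glued space is completely regular), while the hypothesis $\mathscr{D}^*(X)=\mathscr{P}^*(X)$ guarantees that \emph{every} member of $\mathscr{P}^*(X)$ is of the adjunction form, so that $\Phi$ is onto and injective. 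Once the lattice isomorphism $\mathscr{P}^*(X)\cong\mathscr{K}(\gamma_\mathfrak{P}X\setminus X)$ is established (and likewise for $Y$), the theorem follows: if $\mathscr{P}^*(X)\cong\mathscr{P}^*(Y)$ then $\mathscr{K}(\gamma_\mathfrak{P}X\setminus X)\cong\mathscr{K}(\gamma_\mathfrak{P}Y\setminus Y)$, so by Magill's theorem $\gamma_\mathfrak{P}X\setminus X$ and $\gamma_\mathfrak{P}Y\setminus Y$ are homeomorphic; and the converse is immediate since a homeomorphism of remainders induces the lattice isomorphism through the identifications $\Phi_X,\Phi_Y$.
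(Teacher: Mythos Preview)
This theorem is quoted in the paper's introduction as a result of Mack, Rayburn and Woods \cite{MRW1}; the paper itself does not supply a proof, so there is no in-paper argument to compare against. That said, your proposed strategy has two genuine errors that would prevent it from going through.

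First, the map $\Phi$ is set up backwards. Under the hypothesis $\mathscr{D}^*(X)=\mathscr{P}^*(X)$, the restriction $f_T|_{\gamma_\mathfrak{P}X}:\gamma_\mathfrak{P}X\to T$ is a surjection fixing $X$, so $T\setminus X$ is the \emph{image} of $\gamma_\mathfrak{P}X\setminus X$ under a (generally non-injective) continuous map --- a quotient, not an extension. Thus $T\setminus X$ is not a compactification of $\gamma_\mathfrak{P}X\setminus X$ in the sense required for $\Phi(T)\in\mathscr{K}(\gamma_\mathfrak{P}X\setminus X)$: a compactification must \emph{contain} the space as a dense subspace, whereas here the map goes the other way. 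Your own Step~(1) makes this visible: $\gamma_\mathfrak{P}X$ is the top element of $\mathscr{P}^*(X)$, and you argue its remainder $\gamma_\mathfrak{P}X\setminus X$ is non-compact --- but then $\Phi(\gamma_\mathfrak{P}X)=\gamma_\mathfrak{P}X\setminus X$ cannot lie in $\mathscr{K}(\gamma_\mathfrak{P}X\setminus X)$.

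Second, even if one had a lattice isomorphism $\mathscr{P}^*(X)\cong\mathscr{K}(\gamma_\mathfrak{P}X\setminus X)$, Magill's theorem (Theorem~\ref{KLFA}) applied to the spaces $A=\gamma_\mathfrak{P}X\setminus X$ and $B=\gamma_\mathfrak{P}Y\setminus Y$ would yield only that $\beta A\setminus A$ and $\beta B\setminus B$ are homeomorphic, not that $A$ and $B$ themselves are. So the final deduction does not deliver the stated conclusion. The actual argument in \cite{MRW1} must instead identify $\mathscr{P}^*(X)$ with a lattice from which $\gamma_\mathfrak{P}X\setminus X$ can be recovered directly (for instance, a lattice of quotients/decompositions of that remainder, or the compactification lattice of a space whose Stone--\v{C}ech remainder is $\gamma_\mathfrak{P}X\setminus X$); the $C^*$-embedding hypothesis is what makes $\mathrm{cl}_{\beta X}(\gamma_\mathfrak{P}X\setminus X)=\beta(\gamma_\mathfrak{P}X\setminus X)$, which is the natural ingredient for such an identification.
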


In particular, when $\mathfrak{P}$ is realcompactness, Theorem \ref{GGF} gives the following.

\begin{theorem}[Mack, Rayburn and  Woods \cite{MRW1}]\label{FGGF}
Let $X$ and $Y$ be locally realcompact spaces such that $\upsilon X$ and $\upsilon Y$ are both Lindel\"{o}f. Then ${\mathscr P}^*(X)$ and ${\mathscr P}^*(Y)$ are lattice-isomorphic if and only if $\upsilon X\setminus X$ and $\upsilon Y\setminus Y$ are homeomorphic.
\end{theorem}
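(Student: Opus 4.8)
The plan is to derive Theorem \ref{FGGF} as a special case of Theorem \ref{GGF} by verifying that the hypotheses of the latter hold automatically when $X$ is locally realcompact and $\upsilon X$ is Lindel\"of, and similarly for $Y$. So the argument splits into two independent checks, carried out for $X$ (and verbatim for $Y$): first, that $\upsilon X\setminus X$ is $C^*$-embedded in $\upsilon X$; second, that ${\mathscr D}^*(X)={\mathscr P}^*(X)$. Recall that here $\mathfrak{P}$ is realcompactness, so $\gamma_\mathfrak{P}X=\upsilon X$ (as noted in the excerpt), and realcompactness is indeed closed hereditary, productive, and absorbs compact unions, so the machinery of \cite{MRW1} applies.

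For the $C^*$-embedding: since $\upsilon X$ is Lindel\"of, it is normal (indeed Lindel\"of regular spaces are normal), and $\upsilon X\setminus X$ is a closed subset of $\upsilon X$ because $X$ is open in $\upsilon X$ — this uses that $X$ is locally realcompact, so each point of $X$ has a realcompact (hence, taking closures in $\upsilon X$, a "full") neighborhood, forcing $X$ to be open in any realcompactification between $X$ and $\beta X$; I would cite the standard fact (e.g.\ from \cite{PW}) that a locally-$\mathfrak{P}$ space is open in its $\mathfrak{P}$-reflection. By the Tietze--Urysohn theorem a closed subset of a normal space is $C^*$-embedded, so $\upsilon X\setminus X$ is $C^*$-embedded in $\upsilon X$, as required.

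For the equality ${\mathscr D}^*(X)={\mathscr P}^*(X)$: the inclusion ${\mathscr D}^*(X)\subseteq{\mathscr P}^*(X)$ is immediate from the definitions. For the reverse inclusion, take $T\in{\mathscr P}^*(X)$; I must show $f_T[\upsilon X]=T$, where $f_T:\beta X\to\beta T$ extends $\mathrm{id}_X$. The point is that $f_T[\upsilon X]$ is a continuous image of the Lindel\"of space $\upsilon X$, hence Lindel\"of, hence realcompact; it contains $X$ densely and sits inside $\beta T=\beta X$-image, so it is a realcompactification of $X$ contained in $T$. But $T$ is a \emph{tight} realcompactification, so it contains no proper realcompact extension of $X$; combined with $X\subseteq f_T[\upsilon X]\subseteq T$ and the fact that $f_T[\upsilon X]$ is itself a realcompact extension of $X$, tightness forces $f_T[\upsilon X]=T$. (One should check that $f_T[\upsilon X]$ really is an \emph{extension}, i.e.\ genuinely contains a dense copy of $X$ — this follows since $f_T$ fixes $X$ pointwise and $X$ is dense in $\upsilon X$.) Hence $T\in{\mathscr D}^*(X)$, giving ${\mathscr D}^*(X)={\mathscr P}^*(X)$.

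With both hypotheses verified for $X$ and for $Y$, Theorem \ref{GGF} applies directly and yields the stated equivalence: ${\mathscr P}^*(X)$ and ${\mathscr P}^*(Y)$ are lattice-isomorphic if and only if $\upsilon X\setminus X$ and $\upsilon Y\setminus Y$ are homeomorphic. The main obstacle I anticipate is the tightness argument in the second step — specifically, making sure that "tight" is being used correctly (no \emph{proper} realcompact \emph{subextension}, rather than no proper realcompact \emph{super}extension) and that the containment $f_T[\upsilon X]\subseteq T$ is justified; this hinges on the precise relationship between $\beta T$, $\upsilon T$, and the image $f_T[\beta X]=\beta T$, together with the characterization of $\upsilon X$ as the smallest realcompact space between $X$ and $\beta X$. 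Everything else is a routine invocation of standard facts about $\upsilon X$, Lindel\"of spaces, and normality.
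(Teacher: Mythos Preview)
The paper does not actually prove this statement: Theorem~\ref{FGGF} is quoted from \cite{MRW1}, and the only justification the paper offers is the single sentence ``In particular, when $\mathfrak{P}$ is realcompactness, Theorem~\ref{GGF} gives the following.'' Your proposal is therefore precisely the approach the paper gestures at, with the details filled in.

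Your verification is essentially sound. For the $C^*$-embedding step, the fact you need (that a locally realcompact $X$ is open in $\upsilon X$) is recorded later in the paper as Lemma~\ref{KJD}. For the step ${\mathscr D}^*(X)={\mathscr P}^*(X)$, the containment $f_T[\upsilon X]\subseteq T$ that you flag as the main obstacle is resolved cleanly once you observe that $T$ is itself realcompact: the inclusion $X\hookrightarrow T$ then extends continuously to a map $\upsilon X\to T$ by the universal property of $\upsilon X$, and this extension must agree with $f_T|_{\upsilon X}$ by density, so $f_T[\upsilon X]\subseteq T$. One small slip: you refer to $\upsilon X$ as the ``smallest realcompact space between $X$ and $\beta X$,'' which is backwards --- it is the \emph{largest} realcompactification of $X$ --- but you do not actually use that mischaracterization anywhere, so the argument is unaffected.
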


Motivated by the above results and our previous studies \cite{Ko1}--\cite{Ko6} (see also \cite{Ko8} and \cite{Ko7}) we prove the following analogous results. For a space $X$ we denote by ${\mathscr U}(X)$ the set of all pseudocompactifications of $X$ with compact remainder.

\begin{theorem}[Theorems \ref{RSQ} and \ref{RTRT}]\label{OOS}
Let $X$ and $Y$ be locally pseudocompact non-pseudocompact spaces. If ${\mathscr U}(X)$ and ${\mathscr U}(Y)$ are order-isomorphic then
\begin{itemize}
\item[\rm(1)] $\mathrm{cl}_{\beta X}(\beta X\setminus\upsilon X)$ and $\mathrm{cl}_{\beta Y}(\beta Y\setminus\upsilon Y)$ are homeomorphic.
\item[\rm(2)] If in addition $X$ and $Y$ are locally compact, then $(\beta X\setminus X)\setminus\mathrm{cl}_{\beta X}(\beta X\setminus\upsilon X)$ and $(\beta Y\setminus Y)\setminus\mathrm{cl}_{\beta Y}(\beta Y\setminus\upsilon Y)$ are homeomorphic.
\end{itemize}
\end{theorem}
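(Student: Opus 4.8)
The plan is to reduce the theorem to an explicit order-theoretic description of the poset $\big(\mathscr U(X),\leq\big)$ and then, on top of that description, to recover the topology of each of the two pieces of $\beta X\setminus X$ by a Magill-type rigidity argument. Write $G_X=\mathrm{cl}_{\beta X}(\beta X\setminus\upsilon X)$. The first thing to prove is that, because $X$ is locally pseudocompact and non-pseudocompact, $G_X$ is a nonempty compact subset of the outgrowth $\beta X\setminus X$; this is where both standing hypotheses enter, and it ensures that the objects appearing in (1) and (2) are genuine (sub)spaces rather than being empty or meeting $X$.

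\emph{Step 1 (a representation theorem, this being Theorem \ref{RSQ}).} I would establish an order-isomorphism, under a suitable natural order, between $\big(\mathscr U(X),\leq\big)$ and a concrete poset $\mathfrak C(X)$ of data attached to $\beta X$. The basic mechanism is the following chain of equivalences. If $K\subseteq\beta X\setminus X$ is compact then $X\cup K$ is $C^{*}$-embedded in $\beta X$, so $\beta(X\cup K)=\beta X$, and $X\cup K$ is pseudocompact --- equivalently, $G_{\delta}$-dense in $\beta X$ --- precisely when $K$ meets every nonempty zero-set of $\beta X$ disjoint from $X$; and for compact $K$ this last condition is in turn equivalent to $G_X\subseteq K$ (separate a hypothetical point of $G_X\setminus K$ from the compact set $K$ by a zero-set neighbourhood, using normality of $\beta X$, and intersect with a zero-set of $\beta X$ missing $X$ that meets that neighbourhood). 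A general $Y\in\mathscr U(X)$ arises by first collapsing an upper semicontinuous decomposition of $\beta X$ that is trivial on $X$ --- this produces the compactification $\beta Y\leq\beta X$ --- and then passing to a compact subspace $X\cup K$ of $\beta Y$; one checks that $Y$ is determined up to equivalence by this data, that the admissible data are exactly those in which $K$ together with the collapsed part of the outgrowth covers $G_X$, and that the assignment behaves monotonically. Inside $\mathfrak C(X)$ one can then isolate, order-theoretically, the sub-poset corresponding to the extensions with $\beta Y=\beta X$ (no collapsing of the outgrowth): by the above this is anti-order-isomorphic to the family of compact sets $K$ with $G_X\subseteq K\subseteq\beta X\setminus X$, and its least $K$, namely $K=G_X$, corresponds to the subspace $X\cup G_X$ of $\beta X$, whose principal ideal $\downarrow(X\cup G_X)$ in $\mathscr U(X)$ consists exactly of the extensions obtained from $X\cup G_X$ by collapsing $G_X$ onto a quotient.

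\emph{Step 2 (recovering the two pieces).} Given an order-isomorphism $\mathscr U(X)\to\mathscr U(Y)$, transport it through Step 1 to an order-isomorphism $\mathfrak C(X)\to\mathfrak C(Y)$; the order-theoretically defined sub-posets and distinguished elements are then preserved. For part (1): once $X\cup G_X$ is pinned down order-theoretically, the interval $\downarrow(X\cup G_X)$ of $\mathscr U(X)$ is lattice-isomorphic to the lattice of upper semicontinuous decompositions of the compact space $G_X$, and by the method of Magill's theorem (Theorem \ref{KLFA}; cf.\ \cite{Mag3} and \cite{Me}) an order-isomorphism between the lattices of upper semicontinuous decompositions of two compact spaces forces those spaces to be homeomorphic; hence $G_X\cong G_Y$. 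For part (2), with $X$ and $Y$ now locally compact so that $T_X:=\beta X\setminus X$ is compact and $G_X\subseteq T_X$, the map $K\mapsto X\cup K$ is an anti-order-isomorphism from the closed subsets of $T_X$ containing $G_X$ (ordered by $\subseteq$) onto an order-theoretically located interval of $\mathscr U(X)$ with top $X\cup G_X$ and bottom $\beta X$; combining an order-isomorphism between the poset of closed subsets of $T_X$ above $G_X$ and the poset of closed subsets of $T_Y$ above $G_Y$ with the homeomorphism $G_X\cong G_Y$ already obtained, the relative form of the Magill reconstruction produces a homeomorphism $T_X\setminus G_X\cong T_Y\setminus G_Y$, that is, between $(\beta X\setminus X)\setminus G_X$ and $(\beta Y\setminus Y)\setminus G_Y$. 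These are assertions (1) and (2).

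The main obstacle will be Step 1 together with the order-theoretic definability claims that feed Step 2: one must show that \emph{every} member of $\mathscr U(X)$ is captured by the decomposition-plus-compact-piece data, that $\leq$ is faithfully represented, and --- crucially --- that the sub-posets used (the ``$\beta Y=\beta X$'' part, the principal ideal modelling the decompositions of $G_X$, and, under local compactness, the interval modelling the closed sets between $G_X$ and $T_X$, together with the elements $X\cup G_X$ and $\beta X$) are invariant under arbitrary order-isomorphisms. A further subtlety is that part (1) does not assume local compactness, so $\beta X\setminus X$ need not be compact and the family of compact $K$ with $G_X\subseteq K\subseteq\beta X\setminus X$ need not have a least element; thus only the ``$G_X$-part'' behaves cleanly in that generality, which is precisely why local compactness is imposed for the sharper conclusion (2). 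Once the representation and these invariance statements are in hand, both homeomorphisms follow from (relative) Magill rigidity, giving Theorems \ref{RSQ} and \ref{RTRT}, and hence Theorem \ref{OOS}.
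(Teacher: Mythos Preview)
Your overall strategy --- locate, inside $\mathscr U(X)$, a sub-poset that encodes the topology of $G_X=\mathrm{cl}_{\beta X}(\beta X\setminus\upsilon X)$ via a Magill-type argument --- is the right one and is also the paper's strategy. But your execution has a concrete error and, more seriously, it waves its hands at precisely the step where the real work lies.

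\textbf{The error.} The extension $X\cup G_X$ is the subspace $\zeta X\subseteq\beta X$, and by Lemma~\ref{PFAJ} this is the \emph{largest} element of $\mathscr U(X)$. Hence $\downarrow(X\cup G_X)=\mathscr U(X)$; it does \emph{not} consist only of the extensions obtained by collapsing $G_X$. For instance $e_X(\{a\})$ for $a\in(\beta X\setminus X)\setminus G_X$ lies below $\zeta X$ but involves a point outside $G_X$. The sub-poset you actually want --- the extensions whose remainder fibers partition exactly $G_X$ --- is the image of the map $\Theta_X:\mathscr K(\mathrm{int}_{\beta X}\upsilon X)\to\mathscr U(X)$ (Definition~\ref{KHF}), and this image is neither a down-set, nor an up-set, nor an interval in $\mathscr U(X)$. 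So no amount of taking principal ideals or intervals will isolate it; a genuinely different order-theoretic description is needed.

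\textbf{The gap.} You repeatedly assert that various sub-posets (the ``$\beta Y=\beta X$'' part, the ``decompositions of $G_X$'' part, and for (2) the interval with bottom $\beta X$) can be ``isolated order-theoretically'', but you give no mechanism for doing so. This is exactly where the content of the proof is. The paper's device is a careful analysis of co-atoms in $\mathscr U(X)$: they come in two types (Lemma~\ref{UGGH}), type~(II) is distinguished from type~(I) by the purely order-theoretic condition that there exists a co-atom $S$ with $|\{U:U\geq S\wedge T\}|=5$ (Lemma~\ref{GFS}), and then $\mathrm{Im}(\Theta_X)$ is characterized as the set of $T$ with no type~(I) co-atom above them (Lemma~\ref{TDT}). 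This is what makes the sub-poset invariant under order-isomorphisms and allows Magill's theorem to be applied. For part~(2), you would similarly need to pin down $\beta X$ order-theoretically inside $\mathscr U(X)$, which you have not done (and which is not obvious); the paper avoids this by instead characterizing the one-point extensions $\mathscr U^*(X)$ via co-atoms and a second gadget called co-atom covers (Lemmas~\ref{PJE} and \ref{OSDR}), which serve the same purpose since $\mathscr U^*(X)$ is also anti-order-isomorphic to the closed sets of $\beta X\setminus X$ containing $G_X$.

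In short: your representation data $\mathfrak C(X)$ is essentially Lemma~\ref{j2} plus Lemma~\ref{ASD}, and your endgame (Magill rigidity on each piece) matches the paper's. What is missing is the middle --- the order-theoretic definability of the relevant sub-posets --- and your one concrete attempt at it (the principal-ideal claim) is false.
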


Analogously, if for a space $X$ we denote by ${\mathscr R}(X)$ the set of all realcompactifications of $X$ with compact remainder, we prove the following.

\begin{theorem}[Theorems \ref{AEF} and \ref{PKS}]\label{AFK}
Let $X$ and $Y$ be locally realcompact non-realcompact spaces. If ${\mathscr R}(X)$ and ${\mathscr R}(Y)$ are order-isomorphic then
\begin{itemize}
\item[\rm(1)] $\mathrm{cl}_{\beta X}(\upsilon X\setminus X)$ and $\mathrm{cl}_{\beta Y}(\upsilon Y\setminus Y)$ are homeomorphic.
\item[\rm(2)] If in addition $X$ and $Y$ are locally compact, then $(\beta X\setminus X)\setminus\mathrm{cl}_{\beta X}(\upsilon X\setminus X)$ and $(\beta Y\setminus Y)\setminus\mathrm{cl}_{\beta Y}(\upsilon Y\setminus Y)$ are homeomorphic.
\end{itemize}
\end{theorem}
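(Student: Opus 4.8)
The plan is to build a bridge between the order structure of $\mathscr{R}(X)$ and the topology of the outgrowth that mirrors the approach already established for pseudocompactifications in Theorem \ref{OOS}. The key structural observation is that a realcompactification $T$ of $X$ with compact remainder sits between $X$ and $\beta X$, and $X$ is necessarily $C$-embedded (indeed $C^*$-embedded) in $T$ since $T\setminus X$ is compact; hence $\upsilon X\subseteq T$, so every such $T$ arises as a subspace $X\subseteq T\subseteq\beta X$ with $T$ realcompact and $\beta X\setminus T$ (the remainder's complement inside the outgrowth) relatively open-compact in an appropriate sense. Because $T$ is realcompact and $\upsilon X\subseteq T\subseteq\beta X$ with $\beta X\setminus T$ contained in the remainder, I would first show that $\mathscr{R}(X)$ is order-anti-isomorphic to a lattice of certain subsets of $\beta X\setminus X$ — concretely, to a family of compact subsets of $(\beta X\setminus X)$ that contain $\mathrm{cl}_{\beta X}(\upsilon X\setminus X)\cap(\beta X\setminus X)$, ordered by reverse inclusion — via $T\mapsto \beta X\setminus T$ or its closure. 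This is the analogue of the correspondence exploited for $\mathscr{U}(X)$, and it should follow by the same Stone–Čech/Gelfand–Kolmogorov bookkeeping used in the authors' earlier work \cite{Ko1}--\cite{Ko6}.

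Granting that correspondence, the heart of the matter is to recover the two distinguished subspaces of $\beta X\setminus X$ from the abstract poset $\mathscr{R}(X)$. For part (1), $\mathrm{cl}_{\beta X}(\upsilon X\setminus X)$ should be identifiable as (a copy of) the remainder of the \emph{smallest} element of $\mathscr{R}(X)$, namely $\upsilon X$ itself when $\upsilon X\setminus X$ is compact — but in the locally realcompact non-realcompact case one must instead extract it as the intersection, or the "limit", of the compact sets in the range of the correspondence; order-theoretically this is the infimum of the family, hence preserved by any order-isomorphism. For part (2), under the extra hypothesis of local compactness the whole outgrowth $\beta X\setminus X$ is locally compact, and one separates $(\beta X\setminus X)\setminus\mathrm{cl}_{\beta X}(\upsilon X\setminus X)$ as the "free part" on which one can perform arbitrary compactification-like surgery; the point-counting / Wallman-type arguments (Mrówka's condition $(\mathrm{W})$ is flagged in the keywords) let one read off, from the sublattice of $\mathscr{R}(X)$ consisting of extensions that are simultaneously compactifications on that part, a copy of the Boolean-like structure of clopen or regular-closed subsets of that locally compact space, from which Magill-style reconstruction (Theorem \ref{KLFA}, applied to the locally compact space $(\beta X\setminus X)\setminus\mathrm{cl}_{\beta X}(\upsilon X\setminus X)$) yields the homeomorphism.

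Concretely I would proceed as follows. First, prove the representation lemma: $T\mapsto\mathrm{cl}_{\beta X}(\beta X\setminus T)$ is an anti-order-isomorphism of $\mathscr{R}(X)$ onto the set of compact subsets $K$ of $\beta X\setminus X$ with $K\supseteq\mathrm{cl}_{\beta X}(\upsilon X\setminus X)\cap(\beta X\setminus X)$ and $K$ "saturated" (a regular-closed or $\mathscr{Z}$-compatible condition coming from realcompactness of $X\cup((\beta X\setminus X)\setminus K)$). Second, identify $\mathrm{cl}_{\beta X}(\upsilon X\setminus X)$ purely order-theoretically as the remainder attached to $\sup\mathscr{R}(X)$, equivalently the complement in $\beta X\setminus X$ of $\bigcup\{(\beta X\setminus X)\setminus K\}$ over all $K$ in the range; an order-isomorphism $\mathscr{R}(X)\to\mathscr{R}(Y)$ transports this supremum, giving a homeomorphism between these two closed subspaces — that is part (1). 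Third, for part (2), observe that when $X$ is locally compact the family of those $T\in\mathscr{R}(X)$ whose remainder is disjoint from (a fixed large) $\mathrm{cl}_{\beta X}(\upsilon X\setminus X)$ forms, under the same correspondence, essentially $\mathscr{K}$ of the locally compact space $(\beta X\setminus X)\setminus\mathrm{cl}_{\beta X}(\upsilon X\setminus X)$; applying Magill's theorem (Theorem \ref{KLFA}) to this space — which is non-compact precisely because $X$ is non-realcompact — produces the required homeomorphism. I expect the main obstacle to be the second step: pinning down exactly which compact subsets $K$ of $\beta X\setminus X$ correspond to genuine realcompactifications (the "saturation" condition), and showing this condition, together with the order, determines $\mathrm{cl}_{\beta X}(\upsilon X\setminus X)$ intrinsically — this is where the interplay between zero-sets of $X$, the structure of $\upsilon X$, and Mrówka's condition $(\mathrm{W})$ must be handled with care, and it is the realcompact analogue of the most delicate part of the proof of Theorem \ref{OOS}.
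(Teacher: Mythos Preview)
Your proposal has the two parts essentially reversed, and each step as written has a genuine gap.

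For part~(1), knowing that an order-isomorphism carries $\sup\mathscr{R}(X)$ to $\sup\mathscr{R}(Y)$ tells you nothing about the \emph{topology} of the remainder $\mathrm{cl}_{\beta X}(\upsilon X\setminus X)$; it only tells you that both suprema exist. The paper's route is to embed, not the closed set itself, but the compactification lattice $\mathscr{K}(\beta X\setminus\mathrm{cl}_{\beta X}(\upsilon X\setminus X))$ of its open complement into $\mathscr{R}(X)$ via $T\mapsto X\cup(T\setminus(\beta X\setminus\mathrm{cl}_{\beta X}(\upsilon X\setminus X)))$, and then to identify the image of this embedding \emph{order-theoretically} inside $\mathscr{R}(X)$ by analysing co-atoms of two types (Lemmas analogous to \ref{UGGH}, \ref{GFS}, \ref{TDT}). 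Magill's theorem (Theorem~\ref{KLFA}) is then applied to this locally compact open set, and its Stone--\v{C}ech remainder is exactly $\mathrm{cl}_{\beta X}(\upsilon X\setminus X)$. So Magill belongs in part~(1), not part~(2).

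For part~(2), your proposed family ``those $T\in\mathscr{R}(X)$ whose remainder is disjoint from $\mathrm{cl}_{\beta X}(\upsilon X\setminus X)$'' is empty: by Lemma~\ref{SEDD} every $T\in\mathscr{R}(X)$ has $\mathrm{cl}_{\beta X}(\upsilon X\setminus X)\subseteq\phi^{-1}[T\setminus X]$, so the remainder always contains (a quotient of) that set. Moreover, even if you repaired this, Magill applied to the locally compact space $Z=(\beta X\setminus X)\setminus\mathrm{cl}_{\beta X}(\upsilon X\setminus X)$ would yield information about $\beta Z\setminus Z$, not about $Z$ itself. The paper instead isolates the one-point extensions $\mathscr{R}^*(X)\subseteq\mathscr{R}(X)$ order-theoretically (via co-atom covers, as in Lemmas~\ref{PJE} and~\ref{OSDR}), observes that $\mathscr{R}^*(X)$ is anti-order-isomorphic to the lattice of compact subsets of $\beta X\setminus X$ containing $\mathrm{cl}_{\beta X}(\upsilon X\setminus X)$, and then uses the classical fact that the lattice of closed sets of a space determines its topology.

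Two further points: elements of $\mathscr{R}(X)$ are in general \emph{not} subspaces of $\beta X$ (only $\rho X$ and its ``singleton-fiber'' cousins are); they live in quotients of $\beta X$, so the map $T\mapsto\mathrm{cl}_{\beta X}(\beta X\setminus T)$ is not well defined. And Mr\'{o}wka's condition $(\mathrm{W})$ plays no role here, since realcompactness is explicitly \emph{not} a compactness-like property; the ``saturation'' difficulty you anticipate dissolves once one works with the fiber description ${\mathscr F}(Y)$ and Lemma~\ref{SEDD}.
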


We further extend the above two theorems by considering $\mathfrak{P}$-extensions with compact remainder. Let $X$ be a space and let $\mathfrak{P}$ be a topological property. Define
\[\lambda_\mathfrak{P} X=\bigcup\big\{\mathrm{int}_{\beta X}\mathrm{cl}_{\beta X}C:C\in\mathrm{Coz}(X)\mbox{ and } \mathrm{cl}_X C \mbox{ has }\mathfrak{P}\big\}.\]
If $\mathfrak{P}$ is pseudocompactness then
\[\lambda_\mathfrak{P} X=\mathrm{int}_{\beta X}\upsilon X\]
and if $\mathfrak{P}$ is realcompactness (and $X$ is normal) then
\[\lambda_\mathfrak{P} X=\beta X\setminus\mathrm{cl}_{\beta X}(\upsilon X\setminus X).\]
Denote by ${\mathscr E}_\mathfrak{P}(X)$ the set of all $\mathfrak{P}$-extensions of $X$ with compact remainder. As in \cite{Ko3}, we call $\mathfrak{P}$ a {\em compactness-like} topological property if $\mathfrak{P}$ is hereditary with respect to clopen (simultaneously closed and open) subspaces, is both invariant and inverse invariant under perfect surjective mappings (recall that a closed continuous mapping $f:X\rightarrow Y$ is {\em perfect}, if each fiber $f^{-1}(y)$, where $y\in Y$, is a compact subspace of $X$) and satisfies Mr\'{o}wka's condition $(\mathrm{W})$ (that is, if a space $Y$ contains a point $p$ with an open base ${\mathscr B}$ for $Y$ at $p$ such that $Y\setminus B$ has $\mathfrak{P}$ for each $B\in{\mathscr B}$, then $Y$ has $\mathfrak{P}$). Neither  pseudocompactness nor realcompactness is a compactness-like topological property. We prove the following.

\begin{theorem}[Theorems \ref{GFO} and \ref{PPKS}]\label{GFDD}
Let $X$ and $Y$ be locally-$\mathfrak{P}$ non-$\mathfrak{P}$ spaces, where $\mathfrak{P}$ is a compactness-like topological property. If ${\mathscr E}_\mathfrak{P}(X)$ and ${\mathscr E}_\mathfrak{P}(Y)$ are order-isomorphic then
\begin{itemize}
\item[\rm(1)] $\beta X\setminus\lambda_\mathfrak{P}X$ and $\beta Y\setminus\lambda_\mathfrak{P}Y$ are homeomorphic.
\item[\rm(2)] If in addition $X$ and $Y$ are locally compact, then $\lambda_\mathfrak{P}X\setminus X$ and $\lambda_\mathfrak{P}Y\setminus Y$ are homeomorphic.
\end{itemize}
\end{theorem}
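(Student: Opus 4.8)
The plan is to reduce Theorem \ref{GFDD} to the analogue of Magill's theorem for the poset $\mathscr{E}_\mathfrak{P}(X)$ by first identifying $\mathscr{E}_\mathfrak{P}(X)$ with a concrete family of subspaces of $\beta X$. Specifically, for a $\mathfrak{P}$-extension $T$ of $X$ with compact remainder one expects the canonical map $\beta X \to \beta T$ to restrict to a perfect surjection from a subspace of $\beta X$ lying between $X$ and $\lambda_\mathfrak{P}X$ onto $T$; conversely, collapsing the compact set $\beta X \setminus \lambda_\mathfrak{P}X$ (or suitable closed subsets of it) to points should produce, via condition $(\mathrm{W})$ together with invariance under perfect maps and clopen-hereditariness, genuine $\mathfrak{P}$-extensions with compact remainder. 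The first block of the proof therefore establishes an order-anti-isomorphism (or order-isomorphism, depending on the direction of $\leq$) between $\mathscr{E}_\mathfrak{P}(X)$ and a lattice of partitions/closed-set-structures on the compact remainder $\beta X \setminus \lambda_\mathfrak{P}X$, exactly as in the classical Magill setup where $\mathscr{K}(X)$ corresponds to compactifications-with-remainder-structure on $\beta X \setminus X$. This is where the hypothesis that $\mathfrak{P}$ is compactness-like is used in full; I would quote the relevant construction from \cite{Ko3} if it is already available there, and otherwise carry it out in a preliminary lemma.

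With that identification in hand, part (1) follows the Magill--Rayburn--Woods template: an order-isomorphism $\mathscr{E}_\mathfrak{P}(X) \to \mathscr{E}_\mathfrak{P}(Y)$ transports the internal order-theoretic structure (atoms, the bottom element corresponding to the one-point extension obtained by collapsing all of $\beta X \setminus \lambda_\mathfrak{P}X$, and the meet-irreducible elements, which correspond to collapsing all but one point) to the corresponding structure on $\mathscr{E}_\mathfrak{P}(Y)$. One then recovers the points of $\beta X \setminus \lambda_\mathfrak{P}X$ as certain distinguished elements of the poset, and recovers the topology of $\beta X \setminus \lambda_\mathfrak{P}X$ from the order relation among the elements that realize finite modifications; this yields a homeomorphism $\beta X \setminus \lambda_\mathfrak{P}X \cong \beta Y \setminus \lambda_\mathfrak{P}Y$. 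The key point that makes this work, and which I would isolate as a lemma, is that the compact space $\beta X \setminus \lambda_\mathfrak{P}X$ is itself reconstructible from the abstract poset $\mathscr{E}_\mathfrak{P}(X)$ — i.e., the analogue of the statement ``the maximal compactification is the top and its remainder is $\beta X \setminus X$'' holds here with $\beta X \setminus \lambda_\mathfrak{P}X$ in place of $\beta X \setminus X$.

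For part (2), the extra hypothesis of local compactness guarantees that $X$ is open in $\beta X$, hence open in $\lambda_\mathfrak{P}X$, so that $\lambda_\mathfrak{P}X \setminus X$ is compact and equals $\lambda_\mathfrak{P}X \setminus X = (\beta X \setminus X) \cap \lambda_\mathfrak{P}X$, a clopen-type piece complementary inside $\beta X \setminus X$ to $\beta X \setminus \lambda_\mathfrak{P}X$. The idea is that under local compactness the whole outgrowth $\beta X \setminus X$ splits, relative to the structure recorded by $\mathscr{E}_\mathfrak{P}(X)$, into the part $\beta X \setminus \lambda_\mathfrak{P}X$ that the $\mathfrak{P}$-extensions are allowed to collapse and the part $\lambda_\mathfrak{P}X \setminus X$ that is forced to survive (because any $\mathfrak{P}$-extension with compact remainder contains $X$ densely and its remainder must ``see'' the $\mathfrak{P}$-bounded cozero structure). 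Thus the order-isomorphism, already known from (1) to fix the $\beta X \setminus \lambda_\mathfrak{P}X$ part, must also match up the complementary parts, giving $\lambda_\mathfrak{P}X \setminus X \cong \lambda_\mathfrak{P}Y \setminus Y$. Concretely I expect to realize $\lambda_\mathfrak{P}X \setminus X$ inside the poset as the remainder of the \emph{smallest} element of $\mathscr{E}_\mathfrak{P}(X)$ that is still an extension in the honest sense (the one-point-at-infinity-over-$\lambda_\mathfrak{P}X$ extension), and then invoke the reconstruction lemma again.

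The main obstacle I anticipate is the first block — proving that $\mathscr{E}_\mathfrak{P}(X)$ is faithfully modeled by partition-type data on $\beta X \setminus \lambda_\mathfrak{P}X$, in both directions. The ``easy'' direction (each $T \in \mathscr{E}_\mathfrak{P}(X)$ determines an upper-semicontinuous decomposition of $\beta X \setminus \lambda_\mathfrak{P}X$) needs that $f_T[\lambda_\mathfrak{P}X] \subseteq T$ and that the fibers of $\beta X \to \beta T$ over the remainder are exactly compact subsets of $\beta X \setminus \lambda_\mathfrak{P}X$; the converse direction (every such decomposition yields a space with $\mathfrak{P}$) is precisely where Mr\'{o}wka's condition $(\mathrm{W})$ and perfect-map invariance must be combined, and getting the neighborhood base ${\mathscr B}$ at the collapsed point to consist of sets whose complements have $\mathfrak{P}$ is the delicate technical step. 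Once this correspondence is nailed down, the order-theoretic reconstruction of the topology is essentially Magill's argument carried over verbatim, so I would present that part briskly and devote the bulk of the write-up to the correspondence lemma.
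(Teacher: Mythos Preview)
Your central correspondence lemma is false, and the whole plan collapses with it. You claim that for $T\in\mathscr{E}_\mathfrak{P}(X)$ the fibers of $\beta X\to\beta T$ over $T\setminus X$ are compact subsets of $\beta X\setminus\lambda_\mathfrak{P}X$, so that $\mathscr{E}_\mathfrak{P}(X)$ is modeled by partition data on $\beta X\setminus\lambda_\mathfrak{P}X$ alone. But Lemma~\ref{Y16} only says $\beta X\setminus\lambda_\mathfrak{P}X\subseteq\phi^{-1}[T\setminus X]$, an \emph{inclusion}; the preimage can and generally does contain points of $\lambda_\mathfrak{P}X\setminus X$. Concretely, for any $a\in\lambda_\mathfrak{P}X\setminus X$ the space $e_X(\{a\})$ (adjoin the single point $a$ together with all of $\beta X\setminus\lambda_\mathfrak{P}X$) is a genuine element of $\mathscr{E}_\mathfrak{P}(X)$ whose remainder meets $\lambda_\mathfrak{P}X$; these are exactly the co-atoms of type~$(\mathrm{I})$ in the paper, and Example~\ref{HFP} (transported to this setting) shows they are the reason the converse of part~(1) fails. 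So $\mathscr{E}_\mathfrak{P}(X)$ is strictly richer than anything you can build from $\beta X\setminus\lambda_\mathfrak{P}X$, and your Magill-style reconstruction cannot get off the ground.

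What the paper actually does is isolate, \emph{inside} $\mathscr{E}_\mathfrak{P}(X)$, the proper sub-poset $\mathrm{Im}(\Theta_X)$ that \emph{is} order-isomorphic to $\mathscr{K}(\lambda_\mathfrak{P}X)$, and then characterize that sub-poset purely order-theoretically (Lemmas~\ref{GFS} and~\ref{TDT}, transported): an element lies in $\mathrm{Im}(\Theta_X)$ iff no type-$(\mathrm{I})$ co-atom sits above it, and the type-$(\mathrm{I})$/type-$(\mathrm{II})$ distinction is itself detected by a counting condition on $\{U:U\geq S\wedge T\}$. Only after this filtration does Magill's theorem apply. For part~(2) the paper does not look at a ``smallest element'' (the smallest element is the one-point compactification, with a one-point remainder) but rather characterizes the one-point extensions $\mathscr{E}^*_\mathfrak{P}(X)$ order-theoretically via co-atoms and co-atom covers; these correspond anti-isomorphically to compact sets in $\beta X\setminus X$ containing $\beta X\setminus\lambda_\mathfrak{P}X$, hence to closed subsets of $\lambda_\mathfrak{P}X\setminus X$, and one finishes by recovering the topology from the closed-set lattice. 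Incidentally, your claim that $\lambda_\mathfrak{P}X\setminus X$ is compact is also wrong: it is open in $\beta X\setminus X$ (since $\lambda_\mathfrak{P}X$ is open in $\beta X$), hence locally compact, but not compact in general.
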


We now briefly review some known facts from the theory of the Stone--\v{C}ech compactification. Additional information may be found in \cite{E} and \cite{GJ}.

\subsection*{The Stone--\v{C}ech compactification.} The {\em Stone--\v{C}ech compactification} $\beta X$ of a space $X$ is the largest (with respect to the partial order $\leq$) compactification of $X$ and is characterized among all compactifications of $X$ by either of the following properties:
\begin{itemize}
  \item Every continuous $f:X\rightarrow K$, where $K$ is a compact space, is continuously extendable to $\beta X$; denote by $f_\beta$ this continuous extension of $f$.
  \item For every $Z,S\in {\mathscr Z}(X)$ we have
  \[\mathrm{cl}_{\beta X}(Z\cap S)=\mathrm{cl}_{\beta X}Z\cap\mathrm{cl}_{\beta X}S.\]
\end{itemize}
In what follows use will be made of the following properties of $\beta X$.
\begin{itemize}
  \item $X$ is locally compact if and only if $X$ is open in $\beta X$.
  \item Any clopen subspace of $X$ has clopen closure in $\beta X$.
  \item If $X\subseteq T\subseteq\beta X$ then $\beta T=\beta X$.
  \item If $X$ is normal then $\beta T=\mathrm{cl}_{\beta X}T$ for any closed subspace $T$ of $X$.
\end{itemize}

\subsection*{The Hewitt realcompactification.} The {\em Hewitt realcompactification} $\upsilon X$ of a space $X$ is the largest (with respect to the partial order $\leq$) realcompactification of $X$ and is characterized among all realcompactifications of $X$ by the following property:
\begin{itemize}
  \item Every continuous $f:X\rightarrow Y$, where $Y$ is a realcompact space, is continuously extendable to $\upsilon X$.
\end{itemize}
The Hewitt realcompactification of $X$ may be viewed as the intersection of all cozero-sets of $\beta X$ which contain $X$. Thus, the points of $\beta X\setminus\upsilon X$ are exactly those $p\in\beta X$ for which there exists a $G_\delta$-set of $\beta X$ containing $p$ and missing $X$.

\section{Pseudocompactifications with compact remainder}

Pseudocompact extensions are called {\em pseudocompactifications}. (Recall that a space is said to be {\em pseudocompact} if every continuous real-valued mapping  defined on it is bounded.) In this section we consider pseudocompactifications with compact remainder. The section is divided into two parts. The first part consists of some known results which describe the general form of all pseudocompactifications of a given space $X$ with compact remainder. The second part deals with the partially ordered set of all pseudocompactifications of a space $X$ with compact remainder. We show that this partially ordered set determines the topology of certain subspaces of $\beta X\setminus X$.

\subsection{Pseudocompactifications with compact remainder; their general form.}\label{IUI} The results of this part are from \cite{Ko5} (for a proof of Lemma \ref{j2}, see \cite{Ko3}); we include them here for completeness of  results and reader's convenience.

\begin{definition}\label{TDAS}
For a space $X$ denote by ${\mathscr K}(X)$ and ${\mathscr U}(X)$ the set of all compactifications of $X$ and the set of all pseudocompactifications of $X$ with compact remainder, respectively.
\end{definition}

\begin{lemma}\label{j2}
Let $X$ be a space, let $Y$ be an extension of $X$ with compact remainder and let $\phi:\beta X\rightarrow\beta Y$ continuously extend $\mathrm{id}_X$. Then $\beta Y$ coincides with the quotient space of $\beta X$ obtained by contracting each fiber $\phi^{-1}(p)$, where $p\in Y\setminus X$, to $p$, and $\phi$ is the quotient mapping.
\end{lemma}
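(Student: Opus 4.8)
The plan is to realize $\beta Y$ concretely as a quotient of $\beta X$ and then verify it satisfies the universal property characterizing $\beta Y$. Let $R$ be the equivalence relation on $\beta X$ whose non-trivial classes are exactly the fibers $\phi^{-1}(p)$ for $p\in Y\setminus X$; since $Y\setminus X$ is compact and $\phi$ is a continuous closed surjection onto $\beta Y$ (being a continuous map between compact Hausdorff spaces), each such fiber is a compact subset of $\beta X$, and the decomposition of $\beta X$ into these fibers and singletons is upper semicontinuous. Write $q:\beta X\to Z$ for the quotient map onto the decomposition space $Z=\beta X/R$. First I would check that $Z$ is compact and Hausdorff: compactness is immediate as a continuous image of $\beta X$, and Hausdorffness follows because the decomposition is upper semicontinuous with compact saturated closed classes in a compact Hausdorff space, so one can separate distinct classes by disjoint saturated open sets. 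Next I would observe that $q$ restricted to $q^{-1}(q[X])$ behaves well: since $\phi$ is injective on $\phi^{-1}(Y)$ away from the contracted fibers, and the contracted fibers lie over $Y\setminus X\subseteq Y$, the map $q$ embeds $X$ (note $X\cap\bigcup_{p}\phi^{-1}(p)=\emptyset$ because $\phi$ fixes $X$ pointwise) as a dense subspace of $Z$; so $Z$ is a compactification of $X$.

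The key step is then to factor $\phi$ through $q$ and identify the induced map with a homeomorphism. By the definition of $R$, the map $\phi$ is constant on each $R$-class, so there is a unique continuous map $\psi:Z\to\beta Y$ with $\psi\circ q=\phi$; since $\phi$ is surjective so is $\psi$, and $\psi$ is injective precisely because we have quotiented by the full fibers $\phi^{-1}(p)$ over $Y\setminus X$ while $\phi$ is injective on the rest of $Y$ (it fixes $X$ pointwise, and the points of $Y\setminus X$ are the images of their fibers). Hence $\psi:Z\to\beta Y$ is a continuous bijection between compact Hausdorff spaces, thus a homeomorphism, and it fixes $X$ pointwise. To conclude that $Z$ (equivalently $\beta Y$ under $\psi$) really is the Stone--\v{C}ech compactification, I would invoke the extension characterization: given any continuous $g:X\to K$ with $K$ compact, it extends to $g_\beta:\beta X\to K$; I must show $g_\beta$ is constant on each fiber $\phi^{-1}(p)$, $p\in Y\setminus X$. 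This is exactly where the hypothesis that $Y\setminus X$ is compact enters decisively — $Y$ is then an extension of $X$ with compact remainder, so $X$ is cofinal in $Y$ in a strong sense and $g$ extends continuously to $Y$ (a continuous real- or $K$-valued map on $X$ extends over a compact remainder because $Y=X\cup(Y\setminus X)$ with $Y\setminus X$ compact; more carefully, $Y\le\beta Y$ and the remainder being compact forces $g$ to extend to $\beta Y$, hence to $Y$), and this extension $\bar g:Y\to K$ together with $g_\beta=\bar g\circ(\phi|_{\phi^{-1}(Y)})$-type reasoning shows $g_\beta$ collapses each fiber. Then $g_\beta$ factors through $q$, giving a continuous extension of $g$ to $Z$; by the universal property, $Z=\beta X$-extension data forces $Z\ge\beta(q[X])$, and combined with $Z$ being a compactification of $X$ we get $Z=\beta X$ as claimed for $\beta Y$.

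I expect the main obstacle to be the clean verification that $g_\beta$ is constant on the fibers $\phi^{-1}(p)$ — that is, pinning down why continuity of $g$ on $X$ plus compactness of the remainder $Y\setminus X$ propagates to a genuine factorization through the decomposition, rather than merely a set-theoretic collapse. The cleanest route is probably not to extend $g$ to $Y$ at all, but to argue directly: $\phi:\beta X\to\beta Y$ is the extension of $\mathrm{id}_X$, and for $p\in Y\setminus X$ the fiber $\phi^{-1}(p)$ is the set of cluster points in $\beta X$ of filters on $X$ converging to $p$ in $Y$; any $g:X\to K$ into a compact space has all such clusters mapping under $g_\beta$ into the single point to which $g$-images of those filters converge in $K$ (using that $Y$, having compact remainder, is \v{C}ech-complete-free but still forces convergence) — concretely, because $p\in Y\le\beta Y=\beta X$-quotient, $g$ extends over $\beta Y$ and the value at $p$ is forced. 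Once this factorization is in hand, the rest is the standard "continuous bijection of compacta is a homeomorphism" argument, so the proof is short modulo that one point; I would also double-check the edge case $Y\setminus X=\emptyset$, where $\phi$ is a homeomorphism and the statement is trivial, and note that we never need $X$ itself to be compact or locally compact.
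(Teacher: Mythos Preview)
Your proposal has a genuine gap at the decisive step. You correctly set up the quotient $Z=\beta X/R$ and the induced map $\psi:Z\to\beta Y$ with $\psi\circ q=\phi$, and you correctly note that a continuous bijection of compact Hausdorff spaces is a homeomorphism. But your assertion that $\psi$ is injective is not justified. For $\psi$ to be injective you need $\phi$ to be injective on all of $\beta X\setminus\phi^{-1}[Y\setminus X]$, and in particular on $\phi^{-1}[\beta Y\setminus Y]$. Your parenthetical (``it fixes $X$ pointwise, and the points of $Y\setminus X$ are the images of their fibers'') addresses only fibers over points of $Y$; it says nothing about why the fibers over $\beta Y\setminus Y$ are singletons. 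That injectivity is exactly the content of the lemma and is where the compactness of $Y\setminus X$ must be used.

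Your second paragraph compounds the problem rather than fixing it. The universal-property argument is redundant --- once $\psi$ is a bijection you are done --- and the key claim there is false: it is not true that every continuous $g:X\to K$ extends to $Y$, nor that $g_\beta$ is constant on each fiber $\phi^{-1}(p)$. Take $X=(0,1)$, $Y=[0,1]$, $K=[-1,1]$, $g(x)=\sin(1/x)$; then $g_\beta$ takes every value in $[-1,1]$ on $\phi^{-1}(0)$. You flag this as ``the main obstacle'' and then do not overcome it.

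Here is how to close the gap. Suppose $a\neq b$ in $\beta X$ with $\phi(a)=\phi(b)=t\in\beta Y\setminus Y$; then $a,b\notin K:=\phi^{-1}[Y\setminus X]$. Choose a continuous $g:\beta X\to[0,1]$ with $g\equiv 0$ on the compact set $K\cup\{a\}$ and $g(b)=1$. Because $g$ vanishes on $K$, the restriction $g|_X$ \emph{does} extend continuously to $Y$ by declaring it $0$ on $Y\setminus X$: if $(x_i)$ is a net in $X$ converging in $Y$ to $p\in Y\setminus X$, every $\beta X$-cluster point $c$ of $(x_i)$ satisfies $\phi(c)=p$, hence $c\in K$ and $g(c)=0$, so $g(x_i)\to 0$. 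The extension to $Y$ then extends to $\beta Y$, and composing with $\phi$ recovers $g$ on $\beta X$ by density, giving $0=g(a)=g(b)=1$, a contradiction. (The paper itself does not prove this lemma, referring instead to \cite{Ko3}; the argument above is the standard one.)
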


\begin{lemma}\label{ASD}
Let $X$ be a space, let $Y$ be an extension of $X$ with compact remainder, let $\zeta Y$ be a compactification of $Y$ and let $\phi:\beta X\rightarrow\zeta Y$ continuously extend $\mathrm{id}_X$. The following are equivalent:
\begin{itemize}
\item[\rm(1)] $Y\in{\mathscr U}(X)$.
\item[\rm(2)] $\mathrm{cl}_{\beta X}(\beta X\setminus\upsilon X)\subseteq\phi^{-1}[Y\setminus X]$.
\end{itemize}
\end{lemma}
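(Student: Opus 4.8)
The plan is to work with the compactification $\zeta Y$ and the map $\phi:\beta X\to\zeta Y$ extending $\mathrm{id}_X$, and to show that $Y$ is pseudocompact precisely when $\phi$ collapses all of $\mathrm{cl}_{\beta X}(\beta X\setminus\upsilon X)$ into the remainder $Y\setminus X$. First I would recall the standard characterization of pseudocompactness through $\beta X$: a completely regular space $Z$ is pseudocompact if and only if $\upsilon Z=\beta Z$, equivalently if and only if $\beta Z\setminus Z$ contains no nonempty zero-set of $\beta Z$ (equivalently, no nonempty $G_\delta$ of $\beta Z$). Since $Y$ has compact remainder and $X\subseteq Y\subseteq\beta Y$, Lemma \ref{j2} identifies $\beta Y$ with the quotient of $\beta X$ obtained by contracting each fiber $\phi^{-1}(p)$, $p\in Y\setminus X$; in particular $\beta Y\setminus Y=\beta X\setminus\phi^{-1}[Y]$ (here I am using $\phi$ also to denote the induced map $\beta X\to\beta Y$, which agrees with the given $\phi$ up to the canonical identification of $\zeta Y$ with a quotient of $\beta Y$; one should first reduce to the case $\zeta Y=\beta Y$, since any compactification of $Y$ is a quotient of $\beta Y$ and the condition (2) does not depend on the choice of $\zeta Y$).

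For the implication (1) $\Rightarrow$ (2): suppose some point $p\in\mathrm{cl}_{\beta X}(\beta X\setminus\upsilon X)$ lies in $\phi^{-1}[\beta Y\setminus Y]$, i.e. $\phi(p)\in\beta Y\setminus Y$. I want to produce a nonempty zero-set of $\beta Y$ inside $\beta Y\setminus Y$, contradicting pseudocompactness of $Y$. The point is that $p\notin\upsilon X$ would immediately give a $G_\delta$ of $\beta X$ around $p$ missing $X$; but $p$ need only be in the \emph{closure} of $\beta X\setminus\upsilon X$, so I instead use that $\beta X\setminus\upsilon X$ is a union of zero-sets of $\beta X$ (its complement $\upsilon X$ is the intersection of the cozero-sets of $\beta X$ containing $X$), hence $\mathrm{cl}_{\beta X}(\beta X\setminus\upsilon X)$ is contained in $\beta X\setminus(\text{some cozero-set containing }X$)? — that is false, so the correct route is: take a zero-set neighborhood argument in $\beta Y$. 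Concretely, choose a zero-set $Z$ of $\beta Y$ with $\phi(p)\in Z$ and $Z\cap Y=\emptyset$ would finish it; to get such a $Z$, pull back and use that $\phi^{-1}[Z]$ must be a zero-set of $\beta X$ meeting $\beta X\setminus\upsilon X$ (since $p$ is in its closure and zero-sets in the point-separating sense... ) — more carefully, I expect to argue: if $\phi(p)\notin Y$ then by complete regularity of $\beta Y$ and since $Y\setminus X$ is compact there is a cozero-set $U$ of $\beta Y$ with $Y\setminus X\subseteq U$ and $\phi(p)\notin\mathrm{cl}_{\beta Y}U$; then $\beta Y\setminus U\supseteq$ a zero-set around $\phi(p)$, and $\phi^{-1}$ of it is a zero-set of $\beta X$ containing $p$ and contained in $\beta X\setminus X$ (as $\phi^{-1}[U]\supseteq Y\setminus X$ and on $X$ the map is the identity), so its nonempty intersection with $\beta X\setminus\upsilon X$ — guaranteed because $p$ lies in the closure of that set and the zero-set is a neighborhood of $p$ — shows that this zero-set is not in $\upsilon X$; pushing a suitable subzero-set back down yields a nonempty zero-set of $\beta Y$ missing $Y$, contradiction.

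For (2) $\Rightarrow$ (1): assume $\mathrm{cl}_{\beta X}(\beta X\setminus\upsilon X)\subseteq\phi^{-1}[Y\setminus X]$; I must show $Y$ is pseudocompact, i.e. $\beta Y\setminus Y$ contains no nonempty zero-set of $\beta Y$. Let $Z$ be a zero-set of $\beta Y$ with $Z\subseteq\beta Y\setminus Y$. Then $\phi^{-1}[Z]$ is a nonempty zero-set of $\beta X$ (nonempty because $\phi$ is onto) and $\phi^{-1}[Z]\subseteq\beta X\setminus X$, hence — since any zero-set of $\beta X$ missing $X$ is disjoint from $\upsilon X$? No: that is exactly the statement that $\beta X\setminus\upsilon X$ contains every zero-set of $\beta X$ that misses $X$, which \emph{is} true, because $\upsilon X=\bigcap\{\mathrm{coz}(f):f\in C(\beta X),\ X\subseteq\mathrm{coz}(f)\}$, so a zero-set $\mathrm{Z}(g)$ of $\beta X$ with $\mathrm{Z}(g)\cap X=\emptyset$ has $X\subseteq\mathrm{coz}(g)$ and thus $\mathrm{Z}(g)\subseteq\beta X\setminus\upsilon X$. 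Therefore $\phi^{-1}[Z]\subseteq\beta X\setminus\upsilon X\subseteq\mathrm{cl}_{\beta X}(\beta X\setminus\upsilon X)\subseteq\phi^{-1}[Y\setminus X]$, so $\phi^{-1}[Z]\subseteq\phi^{-1}[Y\setminus X]$, giving $Z\subseteq Y\setminus X$ (as $\phi$ is onto), which contradicts $Z\cap Y=\emptyset$ unless $Z=\emptyset$. Hence $Y$ is pseudocompact, completing the proof.

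The main obstacle I anticipate is the first implication: the hypothesis only places $p$ in the \emph{closure} of $\beta X\setminus\upsilon X$, not in $\beta X\setminus\upsilon X$ itself, so one cannot directly extract a $G_\delta$ of $\beta X$ around $p$ missing $X$; instead one must first pass down to $\beta Y$, use compactness of the remainder $Y\setminus X$ to separate $\phi(p)$ from $Y\setminus X$ by a cozero-set of $\beta Y$, and only then pull back a genuine zero-set neighborhood of $p$ in $\beta X$ that misses $X$ — and it is precisely the fact that this zero-set is a \emph{neighborhood} of $p$ (so meets $\beta X\setminus\upsilon X$) that rescues the argument. The reverse direction, by contrast, is essentially the clean observation recorded above that zero-sets of $\beta X$ missing $X$ lie in $\beta X\setminus\upsilon X$, combined with surjectivity of $\phi$.
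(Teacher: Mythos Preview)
Your (2)$\Rightarrow$(1) is correct and is exactly the paper's argument: if $Z$ is a nonempty zero-set of $\beta Y$ inside $\beta Y\setminus Y$, then $\phi^{-1}[Z]$ is a zero-set of $\beta X$ missing $X$, hence contained in $\beta X\setminus\upsilon X\subseteq\phi^{-1}[Y\setminus X]$, forcing $Z\subseteq Y\setminus X$, a contradiction. The reduction to $\zeta Y=\beta Y$ is also handled as in the paper.

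For (1)$\Rightarrow$(2), however, your ``main obstacle'' is a phantom, and missing this makes your argument both more complicated and incorrect in one place. The set $\phi^{-1}[Y\setminus X]$ is \emph{closed} in $\beta X$ (it is the preimage of the compact remainder), so condition (2) is equivalent to the uncl osed condition $\beta X\setminus\upsilon X\subseteq\phi^{-1}[Y\setminus X]$. The paper makes exactly this reduction in its first sentence. One then starts with $x\in\beta X\setminus\upsilon X$ not in $\phi^{-1}[Y\setminus X]$, picks a zero-set $P$ of $\beta X$ with $x\in P\subseteq\beta X\setminus X$, and observes that $P\setminus\phi^{-1}[Y\setminus X]$ is a nonempty $G_\delta$ of $\beta X$ missing all the contracted fibers; via Lemma~\ref{j2} it is a nonempty $G_\delta$ of $\beta Y$ missing $Y$, contradicting pseudocompactness.

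Because you work instead with $p$ only in the closure, you are forced to separate $\phi(p)$ from $Y\setminus X$ by a zero-set $W$ of $\beta Y$ and pull it back. Here you assert that $\phi^{-1}[W]$ is ``contained in $\beta X\setminus X$'', which is false: any zero-set neighborhood of $\phi(p)$ in $\beta Y$ has nonempty interior and therefore meets the dense subspace $X$, so $\phi^{-1}[W]$ meets $X$ as well (your parenthetical justification does not address this). Your argument is salvageable --- use $\phi^{-1}[W]$ only as a neighborhood of $p$ to locate some $q\in\phi^{-1}[W]\cap(\beta X\setminus\upsilon X)$, and then run the paper's $G_\delta$ argument at $q$ --- but your proposal stops at the vague ``pushing a suitable subzero-set back down'', which is precisely where the content lies, and the detour through $W$ is then entirely superfluous.
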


\begin{proof}
First consider the case when $\zeta Y=\beta Y$. Note that since $\phi^{-1}[Y\setminus X]$ is closed in $\beta X$, condition (2) is equivalent to the requirement that $\beta X\setminus\upsilon X\subseteq\phi^{-1}[Y\setminus X]$.

(1) {\em implies} (2). Let $x\in\beta X\setminus\upsilon X$. Suppose to the contrary that $x\notin\phi^{-1}[Y\setminus X]$. Let $P\in{\mathscr Z}(\beta X)$ be such that $x\in P$ and $P\cap X$ is empty. Now $G=P\setminus\phi^{-1}[Y\setminus X]$ is non-empty (as it contains $x$) and it is a countable intersection of open subspaces  of $\beta X$ each missing $\phi^{-1}[Y\setminus X]$. Thus (using Lemma \ref{j2}) $G$ is a non-empty $G_\delta$-set of $\beta Y$ which misses $Y$, contradicting the pseudocompactness of $Y$.

(2) {\em implies} (1). Suppose to the contrary that $Y$ is not pseudocompact. Let $p\in \beta Y\setminus\upsilon Y$ and let $Z\in {\mathscr Z}(\beta Y)$ be such that $p\in Z$ and $Z\cap Y$ is empty. Then $\phi^{-1}[Z]\in{\mathscr Z}(\beta X)$ misses $X$, and thus
\[\phi^{-1}[Z]\subseteq\beta X\setminus\upsilon X\subseteq\phi^{-1}[Y\setminus X].\]
Since $p\in\phi^{-1}[Z]$ (as $\phi(p)=p$; see Lemma \ref{j2}) we have $p\in\phi^{-1}[Y\setminus X]$ or equivalently $p=\phi(p)\in Y\setminus X$, which contradicts the choice of $Z$.

Suppose that $\zeta Y$ is an arbitrary compactification of $Y$. Denote by $\psi:\beta X\rightarrow\zeta Y$ and $\gamma:\beta Y\rightarrow\zeta Y$ the continuous extensions of $\mathrm{id}_X$ and $\mathrm{id}_Y$, respectively. Then $\gamma\psi=\phi$, as they agree on $X$, and $\gamma[\beta Y\setminus Y]=\zeta Y\setminus Y$. The lemma now follows, as
\[\psi^{-1}[Y\setminus X]=\psi^{-1}\big[\gamma^{-1}[Y\setminus X]\big]=(\gamma\psi)^{-1}[Y\setminus X]=\phi^{-1}[Y\setminus X].\]
\end{proof}

\begin{definition}\label{LFS}
A space $X$ is called {\em locally pseudocompact} if every $x\in X$ has an open neighborhood $U$ in $X$ with pseudocompact closure $\mathrm{cl}_X U$.
\end{definition}

Note that pseudocompactness is hereditary with respect to regular closed subspaces; thus, a space with a pseudocompactification with compact remainder is locally pseudocompact.

The following lemma may be used in the sequel without explicit reference.

\begin{lemma}[Comfort \cite{C}]\label{FGA}
A space $X$ is locally pseudocompact if and only if $X\subseteq\mathrm{int}_{\beta X}\upsilon X$.
\end{lemma}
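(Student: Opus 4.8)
The final statement to be proven is Lemma~\ref{FGA} (Comfort's characterization), so the plan is to verify: $X$ is locally pseudocompact if and only if $X\subseteq\mathrm{int}_{\beta X}\upsilon X$.

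\medskip

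\textbf{Plan.} For the forward implication, suppose $X$ is locally pseudocompact and fix $x\in X$. Choose an open neighborhood $U$ of $x$ in $X$ with $\mathrm{cl}_X U$ pseudocompact. By complete regularity I would shrink $U$ to a cozero-set neighborhood: pick $f:X\to[0,1]$ continuous with $f(x)=0$ and $f\equiv 1$ outside $U$, and set $C=\{f<1/2\}$, a cozero-set of $X$ with $x\in C\subseteq\mathrm{cl}_X C\subseteq U$; then $\mathrm{cl}_X C$ is pseudocompact, being regular-closed (indeed closed) in the pseudocompact space $\mathrm{cl}_X U$. The key fact I would invoke is the standard relation between pseudocompactness of a regular-closed set and the behaviour of its closure in $\beta X$: namely, if $C$ is a cozero-set of $X$ with $\mathrm{cl}_X C$ pseudocompact, then $\mathrm{cl}_{\beta X} C\subseteq\upsilon X$. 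This follows because a point $p\in\mathrm{cl}_{\beta X}C\setminus\upsilon X$ would lie in a zero-set $Z$ of $\beta X$ missing $X$; then $Z\cap\mathrm{cl}_{\beta X}C$ is a nonempty zero-set of $\beta X$ (using the $\beta X$ zero-set intersection property) meeting $\mathrm{cl}_{\beta X}C$ but missing $C$, and intersecting it with a suitable zero-set trace one produces a nonempty $G_\delta$ of $\beta(\mathrm{cl}_X C)=\mathrm{cl}_{\beta X}C$ missing $\mathrm{cl}_X C$, contradicting pseudocompactness of $\mathrm{cl}_X C$. Granting this, $x\in\mathrm{int}_{\beta X}\mathrm{cl}_{\beta X}C\subseteq\mathrm{int}_{\beta X}\upsilon X$ since $\mathrm{int}_{\beta X}\mathrm{cl}_{\beta X}C$ is a neighborhood of $x$ in $\beta X$ (as $C$ is open in the dense subspace $X$, a standard fact gives $C\subseteq\mathrm{int}_{\beta X}\mathrm{cl}_{\beta X}C$). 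As $x\in X$ was arbitrary, $X\subseteq\mathrm{int}_{\beta X}\upsilon X$.

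\medskip

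\textbf{Converse.} Suppose $X\subseteq\mathrm{int}_{\beta X}\upsilon X$ and fix $x\in X$. Then there is an open set $W$ of $\beta X$ with $x\in W\subseteq\mathrm{cl}_{\beta X}W\subseteq\mathrm{int}_{\beta X}\upsilon X$, using complete regularity of $\beta X$ to insert the closure; shrinking further I may take $W$ to be a cozero-set of $\beta X$, so $\mathrm{cl}_{\beta X}W\subseteq\upsilon X$. Put $U=W\cap X$, an open neighborhood of $x$ in $X$. Then $\mathrm{cl}_X U=X\cap\mathrm{cl}_{\beta X}U=X\cap\mathrm{cl}_{\beta X}W$, and its closure in $\beta X$ is $\mathrm{cl}_{\beta X}W\subseteq\upsilon X$. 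Now $\mathrm{cl}_{\beta X}(\mathrm{cl}_X U)=\mathrm{cl}_{\beta X}W$ is a compactification of $\mathrm{cl}_X U$ contained in $\upsilon X$, hence disjoint from $\beta X\setminus\upsilon X$; by the standard criterion a space is pseudocompact iff its Stone--\v{C}ech remainder meets no zero-set of the compactification that misses the space, equivalently iff no nonempty $G_\delta$ of $\beta X$ lies in the remainder. Since $\mathrm{cl}_{\beta X}W\subseteq\upsilon X$ and $\upsilon X$ is the intersection of the cozero-sets of $\beta X$ containing $X$, no such $G_\delta$ can be inside $\mathrm{cl}_{\beta X}W\setminus\mathrm{cl}_X U$; thus $\mathrm{cl}_X U$ is pseudocompact, proving local pseudocompactness.

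\medskip

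\textbf{Main obstacle.} The only nontrivial ingredient is the equivalence, for a closed (or regular-closed) subspace $A$ of $X$, between pseudocompactness of $A$ and the inclusion $\mathrm{cl}_{\beta X}A\subseteq\upsilon X$ (equivalently $\beta A\subseteq\upsilon X$ when $A$ is closed and $X$ normal, but here we must avoid normality). I expect the delicate point to be handling this without a normality hypothesis, so I would phrase it purely in terms of cozero-sets $C$ of $X$: $\mathrm{cl}_X C$ is pseudocompact if and only if every zero-set of $\beta X$ meeting $\mathrm{cl}_{\beta X}C$ meets $C$ itself, which is exactly the condition $\mathrm{cl}_{\beta X}C\subseteq\upsilon X$ translated through the description of $\beta X\setminus\upsilon X$ as the points separated from $X$ by a $G_\delta$ of $\beta X$. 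Both implications of this auxiliary fact are short given the zero-set intersection property of $\beta X$ recalled in the preliminaries, and once it is in hand the two halves of the lemma are immediate as sketched above.
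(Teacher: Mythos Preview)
The paper itself does not prove this lemma; it simply attributes it to Comfort \cite{C}. So there is no ``paper's own proof'' to compare against directly, but the paper does later prove the closely related Lemma~\ref{HGA} (for regular closed $A$: $\mathrm{cl}_{\beta X}A\subseteq\upsilon X$ iff $A$ is pseudocompact), invoking the Hager--Johnson Lemma~\ref{A} for one direction, and that is the natural route to a proof here.

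Your forward direction is essentially right, but you should drop the identification $\beta(\mathrm{cl}_X C)=\mathrm{cl}_{\beta X}C$: it is false without normality and you do not need it. What you actually use is that $\mathrm{cl}_{\beta X}C$ is \emph{some} compactification of $\mathrm{cl}_X C$, and that for a pseudocompact space every nonempty zero-set of \emph{any} compactification meets the space (pull the zero-set back along $\beta(\mathrm{cl}_X C)\to\mathrm{cl}_{\beta X}C$). With that correction the argument goes through. Note also that your ``Main obstacle'' reformulation --- ``$\mathrm{cl}_{\beta X}C\subseteq\upsilon X$ iff every zero-set of $\beta X$ meeting $\mathrm{cl}_{\beta X}C$ meets $C$'' --- is false as stated (take $X=[0,1]$, $C=(0,1/2)$, $Z=\{0\}$).

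Your converse, however, has a real gap. You assert a ``standard criterion'' that $\mathrm{cl}_X U$ is pseudocompact iff no nonempty $G_\delta$ of $\beta X$ lies in $\mathrm{cl}_{\beta X}W\setminus\mathrm{cl}_X U$. Pseudocompactness of $\mathrm{cl}_X U$ is a statement about $\beta(\mathrm{cl}_X U)$, not about $G_\delta$-sets of $\beta X$; without knowing $\mathrm{cl}_X U$ is $C^*$-embedded (which you cannot assume) there is no direct bridge. You do correctly observe that no nonempty $G_\delta$ of $\beta X$ sits inside $\upsilon X\setminus X$, but this does not by itself force $\mathrm{cl}_X U$ to be $G_\delta$-dense in \emph{its own} Stone--\v{C}ech compactification. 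The missing ingredient is precisely the Hager--Johnson lemma (Lemma~\ref{A}): if $\mathrm{cl}_{\upsilon X}U$ is compact then $\mathrm{cl}_X U$ is pseudocompact. Since $\mathrm{cl}_{\beta X}W\subseteq\upsilon X$ gives $\mathrm{cl}_{\upsilon X}U=\mathrm{cl}_{\beta X}W$ compact, this finishes the argument cleanly --- but that lemma is not trivial (the paper cites \cite{HJ} and \cite{C} for it) and is exactly the step you tried to circumvent.
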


\begin{theorem}\label{RES}
Let $X$ be a locally pseudocompact space, let $\zeta X$ be a compactification of $X$, let $\phi:\beta X\rightarrow\zeta X$ continuously extend $\mathrm{id}_X$ and let $E$ be a compact subspace of $\zeta X\setminus X$ containing $\phi[\mathrm{cl}_{\beta X}(\beta X\setminus\upsilon X)]$. Then $Y=X\cup E\in{\mathscr U}(X)$ (considered as a subspace of $\zeta X$). Furthermore, every element of ${\mathscr U}(X)$ is of this form.
\end{theorem}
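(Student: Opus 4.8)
The plan is to prove the two halves of the statement separately, each being a short application of Lemma \ref{ASD}.

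\emph{The space $Y$ belongs to ${\mathscr U}(X)$.} First I would check that $Y=X\cup E$, regarded as a subspace of $\zeta X$, is an extension of $X$ with compact remainder: $X$ is dense in $\zeta X$ and $X\subseteq Y\subseteq\zeta X$, so $X$ is dense in $Y$, while $Y\setminus X=E$ is compact by hypothesis. Since $Y$ is dense in the compact space $\zeta X$, the space $\zeta X$ is itself a compactification of $Y$, so Lemma \ref{ASD} may be invoked with $\zeta Y:=\zeta X$ and with the given $\phi:\beta X\rightarrow\zeta X$. It then remains to verify condition (2) of that lemma, that is, $\mathrm{cl}_{\beta X}(\beta X\setminus\upsilon X)\subseteq\phi^{-1}[Y\setminus X]$. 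But $Y\setminus X=E$, and by hypothesis $\phi[\mathrm{cl}_{\beta X}(\beta X\setminus\upsilon X)]\subseteq E$; applying $\phi^{-1}$ to this inclusion and using $A\subseteq\phi^{-1}[\phi[A]]$ gives $\mathrm{cl}_{\beta X}(\beta X\setminus\upsilon X)\subseteq\phi^{-1}[E]$, which is exactly what is needed. Hence $Y\in{\mathscr U}(X)$ by Lemma \ref{ASD}.

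\emph{Every element of ${\mathscr U}(X)$ has this form.} Given $Y\in{\mathscr U}(X)$, I would take $\zeta X:=\beta Y$. As $X$ is dense in $Y$ and $Y$ is dense in $\beta Y$, the space $\beta Y$ is a compactification of $X$, and the inclusion $X\hookrightarrow\beta Y$ extends to a continuous surjection $\phi:\beta X\rightarrow\beta Y$ fixing $X$ point-wise. Set $E:=Y\setminus X$; this is compact by hypothesis and, since $X\subseteq Y$, satisfies $E\subseteq\beta Y\setminus X=\zeta X\setminus X$. Applying Lemma \ref{ASD} in the direction $(1)\Rightarrow(2)$, with $\zeta Y=\beta Y$ and the $\phi$ just constructed, gives $\mathrm{cl}_{\beta X}(\beta X\setminus\upsilon X)\subseteq\phi^{-1}[Y\setminus X]=\phi^{-1}[E]$, and therefore $\phi[\mathrm{cl}_{\beta X}(\beta X\setminus\upsilon X)]\subseteq E$. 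Finally $Y=X\cup E$ as sets, and $Y$ carries precisely the topology it inherits as a subspace of $\beta Y=\zeta X$, so $Y$ is of the required form.

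I do not anticipate a genuine obstacle here: once Lemmas \ref{j2} and \ref{ASD} are available, both directions reduce to the elementary set-theoretic facts $\phi[\phi^{-1}[E]]\subseteq E$ and $A\subseteq\phi^{-1}[\phi[A]]$, together with the routine observations that $\zeta X$ (respectively $\beta Y$) is a compactification of $Y$ (respectively of $X$). The local pseudocompactness hypothesis on $X$ is not invoked directly in either argument; it is present because, as noted after Definition \ref{LFS}, any space admitting a pseudocompactification with compact remainder is automatically locally pseudocompact, so without it the first half of the statement would be vacuous (no admissible $E$ could exist) and the second half would concern an empty ${\mathscr U}(X)$.
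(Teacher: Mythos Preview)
Your proof is correct and follows exactly the approach the paper intends: the paper's own proof is the single line ``This follows from Lemma \ref{ASD} (and Lemma \ref{FGA}),'' and you have simply unpacked both directions of that citation. Your closing remark about local pseudocompactness is also on target---the paper's parenthetical reference to Lemma \ref{FGA} is there for precisely the reason you identify, namely to explain why the hypothesis is natural rather than because it is used in the deduction itself.
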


\begin{proof}
This follows from Lemma \ref{ASD} (and Lemma \ref{FGA}).
\end{proof}

\subsection{Pseudocompactifications with compact remainder; their partially ordered sets.}\label{TIS} In this part we prove the first set of main results.

In our first theorem, for a locally pseudocompact space $X$, we relate the order-structure of the set ${\mathscr U}(X)$ to the topology of the subspace $\mathrm{cl}_{\beta X}(\beta X\setminus\upsilon X)$ of the outgrowth. Here is the proof overview. We define an order-isomorphism $\Theta_X$ from the set ${\mathscr K}(\mathrm{int}_{\beta X}\upsilon X)$ of all compactifications of $\mathrm{int}_{\beta X}\upsilon X$ to ${\mathscr U}(X)$. We then characterize order-theoretically the image of $\Theta_X$ in ${\mathscr U}(X)$; for this purpose we need to consider certain types of co-atoms of ${\mathscr U}(X)$. Thus, for any locally pseudocompact spaces $X$ and $Y$, any order-isomorphism between ${\mathscr U}(X)$ and ${\mathscr U}(Y)$ carries the image of $\Theta_X$ onto the image of $\Theta_Y$, and therefore, induces an order-isomorphism between ${\mathscr K}(\mathrm{int}_{\beta X}\upsilon X)$ and ${\mathscr K}(\mathrm{int}_{\beta Y}\upsilon Y)$. Magill's theorem (Theorem \ref{KLFA}) will then imply that $\beta X\setminus\mathrm{int}_{\beta X}\upsilon X$ and $\beta Y\setminus\mathrm{int}_{\beta Y}\upsilon Y$ are homeomorphic. Now, we proceed with the proof details (in a possibly different order).

\begin{definition}
Let $X$ be a space and let $Y$ be an extension of $X$. Let $Z$ be a compactification of $Y$ and let $\phi:\beta X\rightarrow Z$ be the continuous extension of $\mathrm{id}_X$. Define
\[{\mathscr F}_X(Y)=\big\{\phi^{-1}(p):p\in Y\setminus X\big\}.\]
We may write ${\mathscr F}(Y)$ instead of ${\mathscr F}_X(Y)$ when no confusion arises. Note that the definition is independent of the choice of the compactification $Z$. To see this, let $\psi:\beta X\rightarrow\beta Y$ and $\gamma:\beta Y\rightarrow Z$ denote the continuous extensions of $\mathrm{id}_X$ and $\mathrm{id}_Y$, respectively. Then $\gamma\psi=\phi$, as they coincide on $X$. Also $\gamma[\beta Y\setminus Y]=Z\setminus Y$. Thus $\gamma^{-1}(p)=p$ for each $p\in Y\setminus X$, and
\begin{eqnarray*}
\big\{\phi^{-1}(p):p\in Y\setminus X\big\}&=&\big\{(\gamma\psi)^{-1}(p):p\in Y\setminus X\big\}\\&=&\big\{\psi^{-1}\big(\gamma^{-1}(p)\big):p\in Y\setminus X\big\}=\big\{\psi^{-1}(p):p\in Y\setminus X\big\}.
\end{eqnarray*}
\end{definition}

The following lemma is known (see \cite{Ko3}); we use it very often, mostly without referring to it.

\begin{lemma}\label{DFH}
Let $X$ be a space and let $Y_1$ and $Y_2$ be extensions of $X$ with compact remainder. The following are equivalent:
\begin{itemize}
\item[\rm(1)] $Y_1\leq Y_2$.
\item[\rm(2)] Each element of ${\mathscr F}(Y_2)$ is contained in an element of ${\mathscr F}(Y_1)$.
\end{itemize}
\end{lemma}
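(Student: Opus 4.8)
The plan is to prove the equivalence by passing through a common compactification $Z$ of both $Y_1$ and $Y_2$ (for instance $Z=\beta X$, viewed as a compactification of each $Y_i$ via Lemma \ref{j2}, or any compactification large enough to dominate both $\beta Y_1$ and $\beta Y_2$), so that the fibers ${\mathscr F}(Y_1)$ and ${\mathscr F}(Y_2)$ are computed with respect to a single continuous extension $\phi:\beta X\to Z$ of $\mathrm{id}_X$. The key point to keep in mind throughout is that, since the remainders $Y_i\setminus X$ are compact, Lemma \ref{j2} identifies $\beta Y_i$ with the quotient of $\beta X$ obtained by collapsing each fiber $\phi^{-1}(p)$, $p\in Y_i\setminus X$, to the point $p$; the map $Y_2\to Y_1$ witnessing $Y_1\le Y_2$ will be obtained by factoring these quotients through one another.

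First I would prove (2) $\Rightarrow$ (1). Assume each fiber $\phi^{-1}(p)$ with $p\in Y_2\setminus X$ sits inside some fiber $\phi^{-1}(q)$ with $q\in Y_1\setminus X$. I would define a map $g:Y_2\to Y_1$ by letting $g$ be the identity on $X$ and, for $p\in Y_2\setminus X$, letting $g(p)$ be the unique $q\in Y_1\setminus X$ with $\phi^{-1}(p)\subseteq\phi^{-1}(q)$ (uniqueness is immediate since distinct fibers of $\phi$ are disjoint and nonempty). To see $g$ is well defined and continuous, I would factor through the Stone--\v{C}ech picture: working inside $\beta X$, the assumption says the partition of $\mathrm{cl}_{\beta X}(\text{remainder preimage})$ induced by ${\mathscr F}(Y_2)$ refines that induced by ${\mathscr F}(Y_1)$, and both partitions (extended by singletons on $\phi^{-1}[X]\supseteq X$) are upper-semicontinuous decompositions of $\beta X$ into compact sets whose quotients are $\beta Y_2$ and $\beta Y_1$ respectively. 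Hence there is a continuous surjection $\beta Y_2\to\beta Y_1$ fixing $X$; I would then check it maps $Y_2$ into $Y_1$ (a point of $Y_2\setminus X$, being the image of a whole fiber, goes to the image of the larger fiber, which is the corresponding point of $Y_1\setminus X$) and restrict to get the required continuous $g:Y_2\to Y_1$ fixing $X$ pointwise, i.e. $Y_1\le Y_2$.

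Next, (1) $\Rightarrow$ (2). Assume there is a continuous $g:Y_2\to Y_1$ fixing $X$ pointwise. Since $g$ fixes the dense set $X$ and $Y_1$ is Hausdorff, $g$ extends continuously to $g_\beta:\beta Y_2\to\beta Y_1$ fixing $X$, and by density $g_\beta$ agrees with the canonical comparison map of the compactifications. Composing with the quotient maps $\beta X\to\beta Y_2$ and $\beta X\to\beta Y_1$ from Lemma \ref{j2}, and using that all these maps fix $X$ and hence coincide on the dense set $X$, I would conclude that the quotient map $\phi_1:\beta X\to\beta Y_1$ factors as $g_\beta\circ\phi_2$ where $\phi_2:\beta X\to\beta Y_2$ is the quotient map. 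Now take $p\in Y_2\setminus X$ and set $q=g(p)\in Y_1$; since $g[Y_2\setminus X]\subseteq Y_1\setminus X$ (the remainder of $Y_2$ is compact, so its image is a compact subset of $Y_1$ disjoint from the open dense copy of $X$ — here one uses that $X$ is open in $Y_1$, or alternatively argues directly that no point of the remainder maps into $X$ because $X$ is dense and $g$ fixes $X$), we get $q\in Y_1\setminus X$. For any $x\in\phi_2^{-1}(p)$ we have $\phi_1(x)=g_\beta(\phi_2(x))=g_\beta(p)=q$, so $\phi_2^{-1}(p)\subseteq\phi_1^{-1}(q)$, which is (2).

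The main obstacle I anticipate is the careful handling of the decomposition-space arguments: verifying that the collections ${\mathscr F}(Y_i)$, suitably completed by singletons, really are upper-semicontinuous decompositions of $\beta X$ into compact sets and that the induced quotient maps are exactly the canonical ones of Lemma \ref{j2}, and then checking that a refinement of decompositions yields a continuous map between the quotients \emph{that fixes $X$ and sends remainder to remainder}. Once the bookkeeping that ``everything restricts correctly over $X$ and over the remainders'' is in place, both implications are short; the subtlety is entirely in making sure the maps one produces are the right ones and land in the right subspaces. A secondary minor point, used in both directions, is that $g$ cannot carry a remainder point into $X$: this follows because $X$ is dense in $Y_1$ while $g^{-1}[X]\supseteq X$ is dense in $Y_2$, and $g^{-1}[X]$ is open only when $X$ is open in $Y_1$ (true here, e.g., since the remainder is compact hence closed), so $Y_2\setminus X\subseteq g^{-1}[Y_1\setminus X]$.
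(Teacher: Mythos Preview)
The paper does not prove this lemma; it merely records it as known and cites \cite{Ko3}. So there is no ``paper's proof'' to compare against, and your outline is essentially a standard argument via Lemma~\ref{j2} and decomposition spaces, which is almost certainly what \cite{Ko3} does. Your (2)$\Rightarrow$(1) is fine: the hypothesis says the decomposition of $\beta X$ associated to $Y_2$ refines the one associated to $Y_1$, so the universal property of quotients gives a continuous $h:\beta Y_2\to\beta Y_1$ with $h\circ\phi_2=\phi_1$, and one checks $h[Y_2]\subseteq Y_1$ exactly as you indicate.

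There is, however, a genuine gap in your justification that $g$ carries $Y_2\setminus X$ into $Y_1\setminus X$. Your final paragraph argues that $g^{-1}[X]$ is open (since $X$ is open in $Y_1$) and contains $X$, hence is dense in $Y_2$, and then concludes $Y_2\setminus X\subseteq g^{-1}[Y_1\setminus X]$. That last step does not follow: an open dense set containing $X$ need not equal $X$. The earlier parenthetical (``its image is a compact subset of $Y_1$ disjoint from the open dense copy of $X$'') is likewise unjustified. The clean fix is already implicit in your setup: by Lemma~\ref{j2}, $\phi_1$ is a bijection off $\phi_1^{-1}[Y_1\setminus X]$, so $\phi_1^{-1}(q)=\{q\}$ for every $q\in X$. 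Now if $p\in Y_2\setminus X$ and $g(p)\in X$, pick any $a\in\phi_2^{-1}(p)$; then $a\in\beta X\setminus X$ (since $\phi_2$ fixes $X$) while $\phi_1(a)=g_\beta(\phi_2(a))=g(p)\in X$ forces $a=g(p)\in X$, a contradiction. With this correction your (1)$\Rightarrow$(2) goes through.
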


\begin{definition}\label{IOHJD}
Let $(X,\leq)$ be a partially ordered set with the largest element $u$. An element $a\in X$ is called a {\em co-atom} in $X$ if $a\neq u$ and there exists no $x\in X$ with $a<x<u$.
\end{definition}

Co-atoms in ${\mathscr U}(X)$ play a crucial role here; but first, we need to know that ${\mathscr U}(X)$ has a largest element. This is shown in \cite{Ko5}, however, it can be readily deduced at this point.

\begin{definition}\label{TR}
For a space $X$ let
\[\zeta X=X\cup\mathrm{cl}_{\beta X}(\beta X\setminus\upsilon X)=X\cup(\beta X\setminus\mathrm{int}_{\beta X}\upsilon X)\]
considered as a subspace of $\beta X$.
\end{definition}

\begin{lemma}\label{PFAJ}
Let $X$ be a locally pseudocompact space. Then ${\mathscr U}(X)$ has a largest element, namely $\zeta X$.
\end{lemma}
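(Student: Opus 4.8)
The plan is to verify directly that $\zeta X$ belongs to $\mathscr{U}(X)$ and then show it dominates every other member of $\mathscr{U}(X)$ in the partial order $\leq$. For membership, note first that $\zeta X$ is an extension of $X$: since $X$ is locally pseudocompact, Lemma~\ref{FGA} gives $X\subseteq\mathrm{int}_{\beta X}\upsilon X$, hence $\mathrm{cl}_{\beta X}(\beta X\setminus\upsilon X)\subseteq\beta X\setminus X$, so $X$ is disjoint from the attached set and, being dense in $\beta X$, is dense in $\zeta X$. The remainder $\zeta X\setminus X=\mathrm{cl}_{\beta X}(\beta X\setminus\upsilon X)$ is closed in the compact space $\beta X$, hence compact; thus $\zeta X$ is an extension of $X$ with compact remainder. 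To see that $\zeta X$ is pseudocompact, apply Lemma~\ref{ASD} with $\zeta Y=\beta X$ itself as the ambient compactification of $\zeta X$ (it is a compactification of $\zeta X$ since $\zeta X$ is dense in $\beta X$) and with $\phi=\mathrm{id}_{\beta X}$. Condition~(2) of Lemma~\ref{ASD} then reads $\mathrm{cl}_{\beta X}(\beta X\setminus\upsilon X)\subseteq\mathrm{id}^{-1}[\zeta X\setminus X]=\zeta X\setminus X=\mathrm{cl}_{\beta X}(\beta X\setminus\upsilon X)$, which holds trivially; so $\zeta X\in\mathscr{U}(X)$.

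For maximality, let $Y\in\mathscr{U}(X)$ be arbitrary; I want $Y\leq\zeta X$. Use the description of $\mathscr{U}(X)$ given by Theorem~\ref{RES}: applying it with $\zeta X$ (the space just constructed) in the role of the compactification — or more simply working inside $\beta X$ — every $Y\in\mathscr{U}(X)$ arises, up to equivalence, as $X\cup E$ for a compact $E\subseteq\beta X\setminus X$ with $\mathrm{cl}_{\beta X}(\beta X\setminus\upsilon X)\subseteq\mathrm{cl}_{\beta X}E$; indeed Lemma~\ref{ASD} applied with the ambient compactification $\beta X$ tells us precisely that for $Y=X\cup E$ one has $Y\in\mathscr{U}(X)$ iff $\mathrm{cl}_{\beta X}(\beta X\setminus\upsilon X)\subseteq\mathrm{cl}_{\beta X}E$. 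The natural map $\beta X\to\beta Y$ extending $\mathrm{id}_X$ collapses the fibers over points of $Y\setminus X$, by Lemma~\ref{j2}; to compare $Y$ with $\zeta X$ I translate into the language of ${\mathscr F}$. By Lemma~\ref{DFH}, $Y\leq\zeta X$ is equivalent to: each element of ${\mathscr F}(\zeta X)$ is contained in an element of ${\mathscr F}(Y)$. The elements of ${\mathscr F}(\zeta X)$ are the singletons $\{p\}$ with $p\in\mathrm{cl}_{\beta X}(\beta X\setminus\upsilon X)$, since the canonical map $\beta X\to\beta(\zeta X)=\beta X$ is the identity (using $X\subseteq\zeta X\subseteq\beta X$, so $\beta(\zeta X)=\beta X$). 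Each such $p$ lies in $Y\setminus X$ is false in general — rather $p$ lies in $\phi_Y^{-1}[Y\setminus X]$ where $\phi_Y:\beta X\to\beta Y$; but that set is exactly $\bigcup{\mathscr F}(Y)$, and the containment $\mathrm{cl}_{\beta X}(\beta X\setminus\upsilon X)\subseteq\phi_Y^{-1}[Y\setminus X]$ is guaranteed by Lemma~\ref{ASD}(2) since $Y\in\mathscr{U}(X)$. Hence every $\{p\}\in{\mathscr F}(\zeta X)$ is contained in some fiber $\phi_Y^{-1}(q)\in{\mathscr F}(Y)$, which is condition~(2) of Lemma~\ref{DFH}, so $Y\leq\zeta X$.

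The main obstacle is bookkeeping rather than depth: one must be careful that $\zeta X$ as defined is genuinely a \emph{$T_2$} extension with \emph{compact} remainder before Lemma~\ref{ASD} is even applicable, and one must correctly identify $\beta(\zeta X)$ with $\beta X$ (legitimate because $X\subseteq\zeta X\subseteq\beta X$) so that the continuous extension of $\mathrm{id}_X$ to $\beta X$ targeting $\beta(\zeta X)$ really is the identity and its fibers over $\zeta X\setminus X$ really are singletons. Once these identifications are in place, both membership and maximality reduce to the already-established Lemmas~\ref{ASD} and~\ref{DFH}, and no separate argument is needed.
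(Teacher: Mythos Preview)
Your proof is correct and follows essentially the same route as the paper: use Lemma~\ref{FGA} to see that the remainder $\zeta X\setminus X=\mathrm{cl}_{\beta X}(\beta X\setminus\upsilon X)$ is compact, apply Lemma~\ref{ASD} (with the identity $\beta X\to\beta X$) to get pseudocompactness, and then invoke Lemma~\ref{DFH} together with the observation that ${\mathscr F}(\zeta X)$ consists of singletons to obtain maximality. Your write-up is more explicit about bookkeeping (e.g.\ the identification $\beta(\zeta X)=\beta X$), but the detour through Theorem~\ref{RES} and the self-correcting prose in the maximality paragraph could be trimmed, since Lemmas~\ref{ASD} and~\ref{DFH} alone suffice.
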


\begin{proof}
Note that $X\subseteq\mathrm{int}_{\beta X}\upsilon X$, by Lemma \ref{FGA}. Therefore
\[\zeta X\setminus X=\beta X\setminus\mathrm{int}_{\beta X}\upsilon X\]
is compact. The lemma now follows from Lemmas \ref{ASD} and \ref{DFH}; note that
\[{\mathscr F}(\zeta X)=\big\{\{y\}:y\in\mathrm{cl}_{\beta X}(\beta X\setminus\upsilon X)\big\}.\]
\end{proof}

Next, we identify the co-atoms of ${\mathscr U}(X)$.

\begin{definition}\label{HGFD}
Let $X$ be a locally pseudocompact space and let $C_1,\dots,C_n$ be $n$ pairwise disjoint compact non-empty subspaces of $\beta X\setminus X$. Let $Z$ be the quotient space of $\beta X$ obtained by contracting $C_1,\dots,C_n$ to $p_1,\dots,p_n$, respectively, with the quotient mapping $q:\beta X\rightarrow Z$. Define
\[e_X(C_1,\dots,C_n)=X\cup\{p_1,\dots,p_n\}\cup\big(\mathrm{cl}_{\beta X}(\beta X\setminus\upsilon X)\setminus (C_1\cup\dots\cup C_n)\big),\]
considered as a subspace of $Z$. Note that $Z$ is a compactification of $Y$, thus
\[e_X(C_1,\dots,C_n)\in{\mathscr U}(X)\]
by Lemma \ref{ASD} (with $\phi=q$ in its statement), and
\begin{eqnarray*}
&&{\mathscr F}\big(e_X(C_1,\dots,C_n)\big)=\\&&\big\{q^{-1}(p):p\in e_X(C_1,\dots,C_n)\setminus X\big\}=\\&&\{C_1,\dots,C_n\}\cup\big\{\{y\}:y\in\mathrm{cl}_{\beta X}(\beta X\setminus\upsilon X)\setminus (C_1\cup\dots\cup C_n)\big\}.
\end{eqnarray*}
\end{definition}

The following characterizes the co-atoms of ${\mathscr U}(X)$; it may be used in the sequel without explicit reference.

\begin{lemma}\label{UGGH}
Let $X$ be a locally pseudocompact space and let $Y\in{\mathscr U}(X)$. Then $Y$ is a co-atom in ${\mathscr U}(X)$ if and only if $Y$ is of either of the following forms.
\begin{itemize}
\item[\rm(1)] $Y=e_X(\{a\})$, for some $a\in(\beta X\setminus X)\setminus\mathrm{cl}_{\beta X}(\beta X\setminus\upsilon X)$.
\item[\rm(2)] $Y=e_X(\{a,b\})$, for some distinct $a,b\in\mathrm{cl}_{\beta X}(\beta X\setminus\upsilon X)$.
\end{itemize}
\end{lemma}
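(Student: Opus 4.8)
The plan is to characterize co-atoms of ${\mathscr U}(X)$ purely in terms of the fiber collections ${\mathscr F}(Y)$, using Lemma \ref{DFH} to translate the order relation into containments among fibers. Recall that by Lemma \ref{PFAJ} the largest element of ${\mathscr U}(X)$ is $\zeta X$, whose fiber collection consists entirely of singletons $\{y\}$ with $y\in\mathrm{cl}_{\beta X}(\beta X\setminus\upsilon X)$; note also that by Theorem \ref{RES} every $Y\in{\mathscr U}(X)$ has ${\mathscr F}(Y)$ covering all of $\mathrm{cl}_{\beta X}(\beta X\setminus\upsilon X)$, and the only freedom is how points of $(\beta X\setminus X)\setminus\mathrm{cl}_{\beta X}(\beta X\setminus\upsilon X)$ (together possibly with points of $\mathrm{cl}_{\beta X}(\beta X\setminus\upsilon X)$) get clumped into the finitely many non-singleton compact fibers. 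So I would set $K=\mathrm{cl}_{\beta X}(\beta X\setminus\upsilon X)$ and $W=(\beta X\setminus X)\setminus K$, and record once and for all: $Y<\zeta X$ iff ${\mathscr F}(Y)$ has at least one member that is either a singleton $\{a\}$ with $a\in W$ or a set of size $\geq 2$.

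First I would prove that the two listed forms are co-atoms. For $Y=e_X(\{a\})$ with $a\in W$: here ${\mathscr F}(Y)$ differs from ${\mathscr F}(\zeta X)$ only in that it contains the extra singleton $\{a\}$ (with $a\notin K$), so by Lemma \ref{DFH} indeed $Y<\zeta X$. If $Y<Y'\leq\zeta X$, then every member of ${\mathscr F}(Y')$ sits inside a member of ${\mathscr F}(Y)$; since the members of ${\mathscr F}(Y)$ are $\{a\}$ and singletons in $K$, every member of ${\mathscr F}(Y')$ is a singleton, and it must contain $\{a\}$ or a point of $K$; but the only way for ${\mathscr F}(Y')$ to be a ``coarsening'' strictly between is impossible since all fibers are already singletons and the underlying point set of $Y'$ lies between — a short argument using $X\subseteq Y'$ forces $Y'=\zeta X$, contradiction. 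For $Y=e_X(\{a,b\})$ with distinct $a,b\in K$: ${\mathscr F}(Y)$ is ${\mathscr F}(\zeta X)$ with the two singletons $\{a\},\{b\}$ replaced by $\{a,b\}$. Any $Y'$ with $Y<Y'\leq\zeta X$ has fibers each contained in some member of ${\mathscr F}(Y)$; a member of ${\mathscr F}(Y')$ contained in $\{a,b\}$ is either $\{a,b\}$ itself or a singleton; if it is $\{a,b\}$ then ${\mathscr F}(Y')={\mathscr F}(Y)$ and $Y'=Y$; otherwise both $\{a\}$ and $\{b\}$ must appear (they must be covered) and ${\mathscr F}(Y')={\mathscr F}(\zeta X)$, so $Y'=\zeta X$. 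Either way there is nothing strictly between, so $Y$ is a co-atom; that $Y\neq\zeta X$ is clear since $\{a,b\}\in{\mathscr F}(Y)$.

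Conversely, suppose $Y\in{\mathscr U}(X)$ is a co-atom. Since $Y\neq\zeta X$, ${\mathscr F}(Y)$ contains a member $C$ that is not a singleton-in-$K$, i.e.\ either $C=\{a\}$ with $a\in W$, or $|C|\geq 2$. The strategy is to show that if ${\mathscr F}(Y)$ is any ``more complicated'' than the two allowed forms, one can interpolate a strictly larger proper element $Y'$ by \emph{splitting} one fiber, contradicting co-atomicity. Concretely: if ${\mathscr F}(Y)$ has a non-singleton fiber $C$ with $|C|\geq 3$, or with $|C|=2$ but $C\not\subseteq K$ (say $C=\{a,b\}$ with $a\in W$), or has $|C|=2$ with $C\subseteq K$ together with \emph{some other} non-singleton fiber or extra singleton in $W$, then split off a point: replace $C$ by $\{c\}$ and $C\setminus\{c\}$ (for a suitable $c\in C$, chosen in $W$ when possible so the result still lies in ${\mathscr U}(X)$ by Theorem \ref{RES} — one must keep all of $K$ covered, which is why when $C\subseteq K$ with $|C|=2$ we cannot split $C$ alone down to $\zeta X$'s level without also being forced to fix everything else). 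In each such case Definition \ref{HGFD} produces a genuine element of ${\mathscr U}(X)$ lying strictly between $Y$ and $\zeta X$. The upshot is that the only surviving possibilities for ${\mathscr F}(Y)$ are: exactly one extra singleton $\{a\}$, $a\in W$, all other fibers singletons in $K$ (form (1)); or exactly one doubleton $\{a,b\}\subseteq K$, all other fibers singletons in $K$ (form (2)). Matching these fiber collections against Definition \ref{HGFD} gives $Y=e_X(\{a\})$ or $Y=e_X(\{a,b\})$ respectively.

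The main obstacle is the bookkeeping in the converse direction: one must carefully enumerate the ways ${\mathscr F}(Y)$ can fail to be one of the two forms and, in each case, exhibit a splitting that \emph{remains in} ${\mathscr U}(X)$ — the constraint from Theorem \ref{RES} (equivalently Lemma \ref{ASD}) that every non-singleton fiber is allowed but all of $\mathrm{cl}_{\beta X}(\beta X\setminus\upsilon X)$ must be covered by the union of the fibers is exactly what makes the doubleton-inside-$K$ case a genuine co-atom rather than something further decomposable. I would isolate this as a small combinatorial lemma about the lattice of ``finite clumpings of $W$ together with finite clumpings of $K$ into compact pieces'' and then read off both directions from it.
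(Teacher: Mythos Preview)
Your approach is essentially the paper's: translate the order via Lemma \ref{DFH} into fiber containments, verify directly that the two listed forms admit nothing strictly below $\zeta X$, and in the converse direction interpolate an intermediate element whenever ${\mathscr F}(Y)$ is more complicated than those two forms. The paper organizes the converse slightly differently (it first rules out two distinct non-singleton fibers, then splits into the cases ``all fibers are singletons'' versus ``exactly one non-singleton fiber''), but the underlying moves are the same.

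One small gap: your interpolation mechanism is stated as ``splitting a fiber,'' but that does not handle the case where \emph{every} fiber of $Y$ is already a singleton yet there are at least two singletons $\{a\},\{b\}$ with $a,b\in W=(\beta X\setminus X)\setminus K$. There is nothing to split here; instead you interpolate by \emph{dropping} one of them, e.g.\ $Y<e_X(\{a\})<\zeta X$. The paper's Case~1 handles exactly this. Also, your argument that form~(1) is a co-atom is a bit tangled: the clean statement is that any $Y'$ with $Y\leq Y'\leq\zeta X$ has only singleton fibers, each equal to $\{a\}$ or to some $\{y\}$ with $y\in K$, and since ${\mathscr F}(Y')$ must cover $K$ one gets ${\mathscr F}(Y')\in\{{\mathscr F}(Y),{\mathscr F}(\zeta X)\}$, hence $Y'\in\{Y,\zeta X\}$; the appeal to ``$X\subseteq Y'$'' is not what drives this.
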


\begin{proof}
Let $Y$ be a co-atom in ${\mathscr U}(X)$. There exist no distinct $F,G\in{\mathscr F}(Y)$ with $|F|>1$ and $|G|>1$; as otherwise $Y<e_X(F)<\zeta X$. Then either
\begin{description}
\item[{\sc Case 1}] $|F|=1$ for each $F\in{\mathscr F}(Y)$, or else
\item[{\sc Case 2}] $|F|>1$ for exactly one $F\in{\mathscr F}(Y)$.
\end{description}
In the first case, by Lemma \ref{ASD} we have
\[\big\{\{y\}:y\in\mathrm{cl}_{\beta X}(\beta X\setminus\upsilon X)\big\}\subsetneqq{\mathscr F}(Y),\]
where the proper inclusion is because $Y\neq\zeta X$. Now there exists at least one
\[a\in(\beta X\setminus X)\setminus\mathrm{cl}_{\beta X}(\beta X\setminus\upsilon X)\]
with $\{a\}\in{\mathscr F}(Y)$, and there exists at most one such $a$; as if $\{b\}\in{\mathscr F}(Y)$, where
\[b\in(\beta X\setminus X)\setminus\mathrm{cl}_{\beta X}(\beta X\setminus\upsilon X)\]
is distinct from $a$, then $Y<e_X(\{b\})<\zeta X$. This shows that in this case $Y$ is of the form indicated in (1). In the second case, we have $|F|=2$; otherwise, choose some distinct $a,b\in F$ and note that $Y<e_X(\{a,b\})<\zeta X$. Finally, note that $F\subseteq\mathrm{cl}_{\beta X}(\beta X\setminus\upsilon X)$, as if $a\notin\mathrm{cl}_{\beta X}(\beta X\setminus\upsilon X)$ for some $a\in F$, then $Y<e_X(\{a\})<\zeta X$. Thus, in this case $Y$ is of the form indicated in (2).

To prove the converse, let $Y$ be as indicated in (1). Then
\[{\mathscr F}(Y)=\big\{\{a\}\big\}\cup\big\{\{y\}:y\in\mathrm{cl}_{\beta X}(\beta X\setminus\upsilon X)\big\}\]
and for each $T\in{\mathscr U}(X)$ with $Y\leq T\leq\zeta X$, depending on whether $\{a\}\in{\mathscr F}(T)$ or not, we have $T=Y$ or $T=\zeta X$.  That is, $Y$ is a co-atom in ${\mathscr U}(X)$. Now, let $Y$ be as indicated in (2). Then
\[{\mathscr F}(Y)=\big\{\{a,b\}\big\}\cup\big\{\{y\}:y\in\mathrm{cl}_{\beta X}(\beta X\setminus\upsilon X)\setminus\{a,b\}\big\}.\]
Let $T\in{\mathscr U}(X)$ with $Y\leq T\leq\zeta X$ and $G\in{\mathscr F}(T)$. Then $G\subseteq F$ for some $F\in{\mathscr F}(Y)$. If $F\neq\{a,b\}$, then $G$ is a singleton, and if $F=\{a,b\}$, then either
\[G=\{a\},G=\{b\},\mbox{ or }G=\{a,b\}.\]
In the first two cases $T=\zeta X$, and in the latter case $T=Y$. That is, $Y$ is a co-atom in ${\mathscr U}(X)$.
\end{proof}

\begin{definition}\label{JFS}
Let $X$ be a locally pseudocompact space. We say that a co-atom $Y$ of ${\mathscr U}(X)$ is {\em of type $(\mathrm{I})$}, if
$Y=e_X(\{a\})$, where
\[a\in(\beta X\setminus X)\setminus\mathrm{cl}_{\beta X}(\beta X\setminus\upsilon X);\]
otherwise, $Y$ is said to be {\em of type $(\mathrm{II})$}.
\end{definition}

We now define an order-isomorphism of ${\mathscr K}(\mathrm{int}_{\beta X}\upsilon X)$ to ${\mathscr U}(X)$ and characterize its image order-theoretically.

\begin{lemma}\label{UGRH}
Let $X$ be a locally pseudocompact space. Define
\[\Theta:{\mathscr K}(\mathrm{int}_{\beta X}\upsilon X)\rightarrow{\mathscr U}(X)\]
by
\[\Theta(T)=X\cup(T\setminus\mathrm{int}_{\beta X}\upsilon X)\]
for any $T\in{\mathscr K}(\mathrm{int}_{\beta X}\upsilon X)$. Then $\Theta$ is an order-isomorphism onto its image.
\end{lemma}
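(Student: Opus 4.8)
The plan is to verify three things: that $\Theta$ lands in ${\mathscr U}(X)$, that it is order-preserving in both directions on its image, and that $\Theta^{-1}$ (restricted to the image) is well-defined and order-preserving. The most efficient route is to pass through the description of extensions with compact remainder in terms of the fiber families ${\mathscr F}(\cdot)$ provided by Lemma \ref{DFH}. So first I would fix $T\in{\mathscr K}(\mathrm{int}_{\beta X}\upsilon X)$ and identify ${\mathscr F}_X\big(\Theta(T)\big)$ explicitly. Since $\beta\big(\mathrm{int}_{\beta X}\upsilon X\big)=\beta X$ (because $\mathrm{int}_{\beta X}\upsilon X$ is dense in $\beta X$ and lies between $X$ and $\beta X$, using the property $\beta S=\beta X$ for $X\subseteq S\subseteq\beta X$), the compactification $T$ of $\mathrm{int}_{\beta X}\upsilon X$ is, by the analogue of Lemma \ref{j2}, the quotient of $\beta X$ obtained by contracting the fibers of the extension map $\phi_T:\beta X\to T$ over the points of $T\setminus\mathrm{int}_{\beta X}\upsilon X$. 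These fibers are pairwise disjoint compact subsets of $\beta X\setminus\mathrm{int}_{\beta X}\upsilon X=\beta X\setminus X\setminus\big(\mathrm{int}_{\beta X}\upsilon X\setminus X\big)$; in particular they are disjoint from $X$, so $\Theta(T)=X\cup(T\setminus\mathrm{int}_{\beta X}\upsilon X)$ makes sense as a subspace of $T$, and its remainder $T\setminus\mathrm{int}_{\beta X}\upsilon X$ is compact. Then ${\mathscr F}_X\big(\Theta(T)\big)=\{\phi_T^{-1}(p):p\in T\setminus\mathrm{int}_{\beta X}\upsilon X\}$, which is a partition of $\beta X\setminus\mathrm{int}_{\beta X}\upsilon X$ into compact sets, and every such partition arises from exactly one $T$.

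Second, to see $\Theta(T)\in{\mathscr U}(X)$: by Definition \ref{TR} we have $\zeta X\setminus X=\beta X\setminus\mathrm{int}_{\beta X}\upsilon X=\mathrm{cl}_{\beta X}(\beta X\setminus\upsilon X)$, and ${\mathscr F}(\zeta X)$ consists of the singletons of this set. Each fiber $\phi_T^{-1}(p)$ contains some element of ${\mathscr F}(\zeta X)$ — indeed it is a nonempty subset of $\mathrm{cl}_{\beta X}(\beta X\setminus\upsilon X)$, hence contains (in fact is a union of) singletons from ${\mathscr F}(\zeta X)$ — so by Lemma \ref{DFH} we get $\Theta(T)\le\zeta X$, and since $\zeta X\in{\mathscr U}(X)$ is the largest element (Lemma \ref{PFAJ}), transitivity of $\le$ combined with the fact that $\Theta(T)$ is an extension of $X$ with compact remainder and is realized inside the compactification $T$ gives, via Lemma \ref{ASD} applied with $\phi=\phi_T$, that $\Theta(T)\in{\mathscr U}(X)$; one just checks $\mathrm{cl}_{\beta X}(\beta X\setminus\upsilon X)\subseteq\phi_T^{-1}\big[\Theta(T)\setminus X\big]$, which is immediate since $\phi_T$ fixes $\mathrm{int}_{\beta X}\upsilon X$ pointwise and collapses the rest of $\beta X\setminus\mathrm{int}_{\beta X}\upsilon X$ onto $\Theta(T)\setminus X$.

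Third, for the order-isomorphism onto the image: given $T_1,T_2\in{\mathscr K}(\mathrm{int}_{\beta X}\upsilon X)$, the classical Magill-type description says $T_1\le T_2$ iff each fiber of $\phi_{T_2}$ over $T_2\setminus\mathrm{int}_{\beta X}\upsilon X$ is contained in a fiber of $\phi_{T_1}$ over $T_1\setminus\mathrm{int}_{\beta X}\upsilon X$ — i.e., the partition ${\mathscr F}_X(\Theta(T_2))$ refines ${\mathscr F}_X(\Theta(T_1))$. By Lemma \ref{DFH} this is exactly the condition $\Theta(T_1)\le\Theta(T_2)$. Hence $\Theta$ is an order-isomorphism from ${\mathscr K}(\mathrm{int}_{\beta X}\upsilon X)$ onto $\Theta\big[{\mathscr K}(\mathrm{int}_{\beta X}\upsilon X)\big]$, with inverse sending $Y$ in the image (equivalently, $Y\in{\mathscr U}(X)$ whose fiber family ${\mathscr F}(Y)$ partitions $\beta X\setminus\mathrm{int}_{\beta X}\upsilon X$, each fiber lying inside $\mathrm{cl}_{\beta X}(\beta X\setminus\upsilon X)$) to the corresponding compactification of $\mathrm{int}_{\beta X}\upsilon X$. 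Injectivity of $\Theta$ follows because ${\mathscr F}_X(\Theta(T))$ determines the partition of $\beta X\setminus\mathrm{int}_{\beta X}\upsilon X$, which determines $T$.

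The step I expect to need the most care is the identification $\beta\big(\mathrm{int}_{\beta X}\upsilon X\big)=\beta X$ and the resulting correspondence between compactifications of $\mathrm{int}_{\beta X}\upsilon X$ and partitions of $\beta X\setminus\mathrm{int}_{\beta X}\upsilon X$ into compact sets — together with verifying that the subspace $X\cup(T\setminus\mathrm{int}_{\beta X}\upsilon X)$ of $T$ genuinely has $X$ dense in it (so that it is an extension of $X$) and that its remainder is exactly $T\setminus\mathrm{int}_{\beta X}\upsilon X$, not something larger. Density of $X$ follows from density of $X$ in $\beta X$ and continuity/surjectivity of $\phi_T$; compactness of the remainder follows from closedness of $T\setminus\mathrm{int}_{\beta X}\upsilon X$ in the compact space $T$. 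Everything else is a routine translation through Lemma \ref{DFH}.
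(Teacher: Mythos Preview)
Your argument is correct, but it takes a genuinely different route from the paper's proof. The paper proves order-preservation of $\Theta$ by directly restricting continuous maps $f:T\to S$ (which fix $\mathrm{int}_{\beta X}\upsilon X$) to maps $\Theta(T)\to\Theta(S)$; for the reverse direction it proves an intermediate claim, namely that $\beta\big(\Theta(T)\big)=T$, and then extends any continuous $h:\Theta(T)\to\Theta(S)$ fixing $X$ to $h_\beta:T\to S$. In contrast, you translate everything into fiber families via Lemma~\ref{DFH}, applied once to compactifications of the locally compact space $\mathrm{int}_{\beta X}\upsilon X$ (whose remainders are compact) and once to extensions of $X$, and you exploit the key identification ${\mathscr F}_X\big(\Theta(T)\big)={\mathscr F}_{\mathrm{int}_{\beta X}\upsilon X}(T)$, which follows since the same map $\phi_T:\beta X\to T$ extends both $\mathrm{id}_X$ and $\mathrm{id}_{\mathrm{int}_{\beta X}\upsilon X}$ and since $\Theta(T)\setminus X=T\setminus\mathrm{int}_{\beta X}\upsilon X$. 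Your approach is more algebraic and arguably shorter once Lemma~\ref{DFH} is in hand; the paper's approach is more self-contained and produces the explicit fact $\beta\big(\Theta(T)\big)=T$ along the way. Your well-definedness argument (density of $X$, compactness of the remainder, and the appeal to Lemma~\ref{ASD} with $\phi=\phi_T$) matches the paper's.
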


\begin{proof}
To show that $\Theta$ is well defined, let $T\in{\mathscr K}(\mathrm{int}_{\beta X}\upsilon X)$. Then $\Theta(T)$ is an extension of $X$, and the remainder \[\Theta(T)\setminus X=T\setminus\mathrm{int}_{\beta X}\upsilon X\]
is compact; as $\mathrm{int}_{\beta X}\upsilon X$, being open in $\beta X$, is locally compact, and thus, open in all of its compactifications. Let $\phi:\beta X\rightarrow T$ be continuous and fix the points of $\mathrm{int}_{\beta X}\upsilon X$. (Observe that $\beta(\mathrm{int}_{\beta X}\upsilon X)=\beta X$, as $X\subseteq\mathrm{int}_{\beta X}\upsilon X\subseteq\beta X$.) Note that $T$ is a compactification of $\Theta(T)$ (as $\Theta(T)$ contains $X$, and $X$, being dense in $\mathrm{int}_{\beta X}\upsilon X$, is dense in $T$) and
\[\phi[\beta X\setminus\mathrm{int}_{\beta X}\upsilon X]=T\setminus\mathrm{int}_{\beta X}\upsilon X.\]
Now since
\[\phi^{-1}\big[\Theta(T)\setminus X\big]=\phi^{-1}[T\setminus\mathrm{int}_{\beta X}\upsilon X]=\phi^{-1}\big[\phi[\beta X\setminus\mathrm{int}_{\beta X}\upsilon X]\big]\supseteq\beta X\setminus\mathrm{int}_{\beta X}\upsilon X,\]
by Lemma \ref{ASD} it follows that $\Theta(T)$ is pseudocompact.

Now, we show that $\Theta$ is an order-homomorphism. Suppose that $S\leq T$ for some $S,T\in{\mathscr K}(\mathrm{int}_{\beta X}\upsilon X)$. Then there exists a continuous $f:T\rightarrow S$, fixing $\mathrm{int}_{\beta X}\upsilon X$ point-wise. Note that
\[f[T\setminus\mathrm{int}_{\beta X}\upsilon X]=S\setminus\mathrm{int}_{\beta X}\upsilon X,\]
which yields
\begin{eqnarray*}
\Theta(S)&=&X\cup(S\setminus\mathrm{int}_{\beta X}\upsilon X)\\&=&f[X]\cup f[T\setminus\mathrm{int}_{\beta X}\upsilon X]=f\big[X\cup (T\setminus\mathrm{int}_{\beta X}\upsilon X)\big]=f\big[\Theta(T)\big]
\end{eqnarray*}
and therefore
\[g=f|\Theta(T):\Theta(T)\rightarrow\Theta(S).\]
Since $g$ fixes the points of $X$, this shows that $\Theta(S)\leq\Theta(T)$.

Before we proceed with the remainder of the proof we need to verify the following.

\begin{xclaim}
If $T\in{\mathscr K}(\mathrm{int}_{\beta X}\upsilon X)$ then $\beta(\Theta(T))=T$.
\end{xclaim}

\noindent \emph{Proof of the claim.}
Let $\phi:\beta(\Theta(T))\rightarrow T$ be continuous and fix the points of $\Theta(T)$. To prove the claim it suffices to show that $\phi$ is a homeomorphism.   But (since $\phi$ is surjective with compact domain) $\phi$ is a homeomorphism, if it is injective, and since it fixes the points of $\Theta(T)$ and
\[\phi\big[\beta\big(\Theta(T)\big)\setminus\Theta(T)\big]=T\setminus\Theta(T),\]
the mapping $\phi$ is injective provided that it is injective on $\beta(\Theta(T))\setminus\Theta(T)$. Let $a,b\in\beta(\Theta(T))\setminus\Theta(T)$ with $\phi(a)=\phi(b)$. Note that $X$, being dense in $T$, is dense in $\Theta(T)$, and thus in $\beta(\Theta(T))$, that is,  $\beta(\Theta(T))$ is also a compactification of $X$. Denote by $\psi:\beta X\rightarrow \beta(\Theta(T))$ the continuous extension of $\mathrm{id}_X$. Recall that $\beta(\Theta(T))$ is the quotient space of $\beta X$ obtained by contracting each $\psi^{-1}(p)$, where $p\in\Theta(T)\setminus X$, to a point with the quotient mapping $\psi$. (See Lemma \ref{j2}.) Since $\Theta(T)\in{\mathscr U}(X)$ we have
\[\beta X\setminus\mathrm{int}_{\beta X}\upsilon X\subseteq\psi^{-1}\big[\Theta(T)\setminus X\big],\]
by Lemma \ref{ASD}. Thus $a,b\in\mathrm{int}_{\beta X}\upsilon X\setminus X$, as $a,b\notin\Theta(T)\setminus X$, and also $\psi(a)=a$ and $\psi(b)=b$. Note that $\phi\psi:\beta X\rightarrow T$ fixes the points of $\mathrm{int}_{\beta X}\upsilon X$, as it fixes the points of its dense subspace $X$. We have
\[a=\phi\big(\psi(a)\big)=\phi(a)=\phi(b)=\phi\big(\psi(b)\big)=b\]
which proves the claim.

\medskip

Suppose that $\Theta(S)\leq\Theta(T)$ for some $S,T\in{\mathscr K}(\mathrm{int}_{\beta X}\upsilon X)$. We show that $S\leq T$. Let $h:\Theta(T)\rightarrow \Theta(S)$ be continuous and fix the points of $X$. Using the claim, $h$ can be continuously extended to a mapping
$h_\beta:T\rightarrow S$. Note that $h_\beta$ fixes the points of $\mathrm{int}_{\beta X}\upsilon X$, as it fixes the points of its dense subspace $X$, and therefore $S\leq T$. This in particular implies that $\Theta$ is injective and
\[\Theta^{-1}:\mathrm{Im}(\Theta)\rightarrow{\mathscr K}(\mathrm{int}_{\beta X}\upsilon X)\]
is an order-homomorphism.
\end{proof}

\begin{definition}\label{KHF}
For a locally pseudocompact space $X$ denote by
\[\Theta_X:{\mathscr K}(\mathrm{int}_{\beta X}\upsilon X)\rightarrow{\mathscr U}(X)\]
the order-isomorphism (onto its image) defined for any $T\in{\mathscr K}(\mathrm{int}_{\beta X}\upsilon X)$ by
\[\Theta_X(T)=X\cup(T\setminus\mathrm{int}_{\beta X}\upsilon X).\]
\end{definition}

\begin{lemma}\label{GHDS}
Let $X$ be a locally pseudocompact space and let $C$ and $D$ be compact non-empty subspaces of $\beta X\setminus X$. Then
\begin{itemize}
\item[\rm(1)] $e_X(C)\wedge e_X(D)=e_X(C\cup D)$, if $C\cap D\neq\emptyset$.
\item[\rm(2)] $e_X(C)\wedge e_X(D)=e_X(C,D)$, if $C\cap D=\emptyset$.
\end{itemize}
\end{lemma}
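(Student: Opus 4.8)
The plan is to compute $e_X(C)\wedge e_X(D)$ by working with the fiber families ${\mathscr F}(\cdot)$ and applying Lemma~\ref{DFH}, which translates the order $\leq$ on ${\mathscr U}(X)$ into the combinatorics of how the fibers refine one another. Recall from Definition~\ref{HGFD} that ${\mathscr F}(e_X(C))=\{C\}\cup\{\{y\}:y\in\mathrm{cl}_{\beta X}(\beta X\setminus\upsilon X)\setminus C\}$, and likewise for $D$. So the first step is simply to write down these two families explicitly in both cases (whether or not $C$ and $D$ meet), keeping in mind that the ``small'' fibers are the singletons $\{y\}$ with $y$ ranging over $\mathrm{cl}_{\beta X}(\beta X\setminus\upsilon X)$ minus the relevant compact set.

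Next I would identify the candidate infimum. In case~(1), when $C\cap D\neq\emptyset$, the set $C\cup D$ is compact, non-empty, and a subspace of $\beta X\setminus X$, so $e_X(C\cup D)$ is a legitimate element of ${\mathscr U}(X)$ with ${\mathscr F}(e_X(C\cup D))=\{C\cup D\}\cup\{\{y\}:y\in\mathrm{cl}_{\beta X}(\beta X\setminus\upsilon X)\setminus(C\cup D)\}$. In case~(2), when $C\cap D=\emptyset$, the element $e_X(C,D)$ from Definition~\ref{HGFD} has ${\mathscr F}(e_X(C,D))=\{C,D\}\cup\{\{y\}:y\in\mathrm{cl}_{\beta X}(\beta X\setminus\upsilon X)\setminus(C\cup D)\}$. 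To verify that this candidate $W$ (either $e_X(C\cup D)$ or $e_X(C,D)$) is a lower bound, I check via Lemma~\ref{DFH} that each element of ${\mathscr F}(e_X(C))$ lies inside an element of ${\mathscr F}(W)$, and similarly for $D$: a singleton $\{y\}$ with $y\notin C$ is either itself a fiber of $W$ or sits inside $C\cup D$ (in case~(1)) or inside $D$ (in case~(2)); and the big fiber $C$ of $e_X(C)$ sits inside $C\cup D$ in case~(1) and equals a fiber $C$ of $W$ in case~(2). So $W\leq e_X(C)$ and $W\leq e_X(D)$.

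For the universal property, suppose $Z\in{\mathscr U}(X)$ with $Z\leq e_X(C)$ and $Z\leq e_X(D)$; I must show $Z\leq W$. By Lemma~\ref{DFH} each $G\in{\mathscr F}(Z)$ is contained in some $F_1\in{\mathscr F}(e_X(C))$ and in some $F_2\in{\mathscr F}(e_X(D))$, so $G\subseteq F_1\cap F_2$. The point is to enumerate the possibilities for $F_1\cap F_2$: if either $F_1$ or $F_2$ is a singleton, then $G$ is a singleton contained in $\mathrm{cl}_{\beta X}(\beta X\setminus\upsilon X)$ (by Lemma~\ref{ASD}, since $Z$ pseudocompact forces $\mathrm{cl}_{\beta X}(\beta X\setminus\upsilon X)$ into the union of fibers of $Z$, and actually every point of it is in a fiber of $Z$), and such a singleton is then covered by a fiber of $W$ by construction; the interesting case is $F_1=C$ and $F_2=D$, giving $G\subseteq C\cap D$. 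In case~(2) this is empty, a contradiction unless no such $G$ exists, which is fine. In case~(1), $G\subseteq C\cap D\subseteq C\cup D$, and $C\cup D$ is a fiber of $W=e_X(C\cup D)$. In every case each fiber of $Z$ is contained in a fiber of $W$, so $Z\leq W$ by Lemma~\ref{DFH}, completing the argument. The main thing to be careful about is the bookkeeping at points $y\in\mathrm{cl}_{\beta X}(\beta X\setminus\upsilon X)$ that happen to lie in $C\cup D$ versus outside it — one must make sure Lemma~\ref{ASD} is invoked correctly so that every such $y$ is genuinely covered by a fiber of $W$, and that $W$ really is in ${\mathscr U}(X)$ (which is immediate from Definition~\ref{HGFD}). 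I expect that to be the only delicate point; the rest is a direct translation through Lemma~\ref{DFH}.
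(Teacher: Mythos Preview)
Your overall plan matches the paper's: both arguments work entirely through Lemma~\ref{DFH}, first verifying that the candidate $W$ is a lower bound and then checking the universal property. The lower-bound half of your argument is correct.

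There is, however, a genuine error in your universal-property step: you have reversed the direction of Lemma~\ref{DFH}. From $Z\leq e_X(C)$ one concludes (with $Y_1=Z$, $Y_2=e_X(C)$) that each element of ${\mathscr F}(e_X(C))$ is contained in an element of ${\mathscr F}(Z)$, \emph{not} that each $G\in{\mathscr F}(Z)$ sits inside some fiber of $e_X(C)$. Likewise, to conclude $Z\leq W$ you must show that each fiber of $W$ lies inside a fiber of $Z$, not the reverse; your final sentence ``each fiber of $Z$ is contained in a fiber of $W$, so $Z\leq W$'' would actually yield $W\leq Z$. With the direction corrected the argument becomes exactly the paper's: since $C\in{\mathscr F}(e_X(C))$, there is $F\in{\mathscr F}(Z)$ with $C\subseteq F$; similarly $D\subseteq G$ for some $G\in{\mathscr F}(Z)$; in case~(1), $C\cap D\neq\emptyset$ forces $F\cap G\neq\emptyset$ and hence $F=G$, so $C\cup D\subseteq F$. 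The remaining fibers of $W$ are singletons $\{y\}$ with $y\in\mathrm{cl}_{\beta X}(\beta X\setminus\upsilon X)\setminus(C\cup D)$, and these are already fibers of $e_X(C)$, hence each lies in some fiber of $Z$. Thus every fiber of $W$ is contained in a fiber of $Z$, giving $Z\leq W$. Case~(2) is handled the same way with $C$ and $D$ kept separate.
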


\begin{proof}
(1). By Lemma \ref{DFH} it is clear that
\[e_X(C\cup D)\leq e_X(C)\mbox{ and }e_X(C\cup D)\leq e_X(D).\]
Let
\[T\leq e_X(C)\mbox{ and }T\leq e_X(D)\]
for some $T\in{\mathscr U}(X)$. Then (again by Lemma \ref{DFH}) we have $C\subseteq F$ and $D\subseteq G$ for some $F,G\in{\mathscr F}(T)$. Now $F\cap G$ is non-empty, as $C\cap D$ is so, therefore $F=G$. Thus $C\cup D$ is contained in an element of ${\mathscr F}(T)$, showing that $T\leq e_X(C\cup D)$. The proof for part (2) is analogous.
\end{proof}

Next, we characterize order-theoretically those co-atoms of ${\mathscr U}(X)$ which are of type $(\mathrm{II})$ (and thus those which are of type $(\mathrm{I})$ as well).

\begin{lemma}\label{GFS}
Let $X$ be a locally pseudocompact non-pseudocompact space and let $T$ be a co-atom in ${\mathscr U}(X)$. The following are equivalent:
\begin{itemize}
\item[\rm(1)] $T$ is of type $(\mathrm{II})$.
\item[\rm(2)] There exists a co-atom $S$ in ${\mathscr U}(X)$ with
\[\big|\big\{U\in{\mathscr U}(X):U\geq S\wedge T\big\}\big|=5.\]
\end{itemize}
\end{lemma}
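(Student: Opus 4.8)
The plan is to use the explicit description of the co-atoms of ${\mathscr U}(X)$ in Lemma~\ref{UGGH} together with the meet formulas in Lemma~\ref{GHDS} to reduce the statement to a short count of the elements lying above a given meet. Throughout I write $K=\mathrm{cl}_{\beta X}(\beta X\setminus\upsilon X)$; by Lemma~\ref{FGA} this is a compact subspace of $\beta X\setminus X$, and by Lemma~\ref{PFAJ} we have $\zeta X=X\cup K$ with ${\mathscr F}(\zeta X)=\{\{y\}:y\in K\}$. The first thing I would record is that $K$ is infinite. Indeed, since $X$ is not pseudocompact it contains a $C$-embedded copy $A$ of $\mathbb{N}$ (a standard fact); then $\mathrm{cl}_{\beta X}A$ is a copy of $\beta\mathbb{N}$, while $\mathrm{cl}_{\beta X}A\cap\upsilon X=\mathrm{cl}_{\upsilon X}A=A$ (as $A$ is $C$-embedded in the realcompact space $\upsilon X$, so $\mathrm{cl}_{\upsilon X}A=\upsilon A=A$), whence $\mathrm{cl}_{\beta X}A\setminus A\subseteq\beta X\setminus\upsilon X\subseteq K$ is a copy of $\beta\mathbb{N}\setminus\mathbb{N}$. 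In particular $|K|\geq3$, which is exactly what the implication $(1)\Rightarrow(2)$ will require.

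Next I would isolate the following count, valid for pairwise disjoint non-empty finite sets $C_1,\dots,C_k\subseteq\beta X\setminus X$ with each $C_i$ of size $\geq2$ contained in $K$. By Lemma~\ref{DFH} an element $U\in{\mathscr U}(X)$ satisfies $U\geq e_X(C_1,\dots,C_k)$ exactly when every member of ${\mathscr F}(U)$ is contained in a member of ${\mathscr F}(e_X(C_1,\dots,C_k))=\{C_1,\dots,C_k\}\cup\{\{y\}:y\in K\setminus(C_1\cup\dots\cup C_k)\}$, and by Lemma~\ref{ASD} every point of $K$ lies in some member of ${\mathscr F}(U)$. A short case check then shows that such $U$ correspond bijectively to choices, independently for each $i$, of a set $C_i'$ with $C_i\cap K\subseteq C_i'\subseteq C_i$ together with a partition of $C_i'$ into non-empty blocks, ${\mathscr F}(U)$ consisting of all these blocks together with the singletons $\{y\}$, $y\in K\setminus(C_1\cup\dots\cup C_k)$; distinct data give inequivalent $U$. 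Consequently: when $C=\{a,b,d\}$ is a single three-point subset of $K$ the count is the Bell number $B_3=5$, the five extensions being $e_X(\{a,b,d\})$, $e_X(\{a,b\})$, $e_X(\{a,d\})$, $e_X(\{b,d\})$ and $\zeta X$; when $C_1=\{a\}$, $C_2=\{c\}$ are singletons with $a,c\notin K$ the count is $2\cdot2=4$; and when $C_1=\{a\}$ with $a\notin K$ and $C_2=\{c,d\}\subseteq K$ the count is again $2\cdot2=4$.

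With this in hand the proof concludes quickly. For $(1)\Rightarrow(2)$: if $T=e_X(\{a,b\})$ is of type~$(\mathrm{II})$, choose $d\in K\setminus\{a,b\}$ (possible as $|K|\geq3$) and put $S=e_X(\{b,d\})$, a co-atom by Lemma~\ref{UGGH}(2); by Lemma~\ref{GHDS}(1) we get $S\wedge T=e_X(\{a,b,d\})$, so by the count above $|\{U\in{\mathscr U}(X):U\geq S\wedge T\}|=5$. For $(2)\Rightarrow(1)$ I prove the contrapositive: suppose $T=e_X(\{a\})$ is of type~$(\mathrm{I})$, so $a\in(\beta X\setminus X)\setminus K$, and let $S$ be an arbitrary co-atom. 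If $S=T$ then $\{U\in{\mathscr U}(X):U\geq S\wedge T\}=\{T,\zeta X\}$ has two elements. Otherwise, by Lemma~\ref{UGGH}, either $S=e_X(\{c\})$ with $c\in(\beta X\setminus X)\setminus K$, $c\neq a$, or $S=e_X(\{c,d\})$ with distinct $c,d\in K$; in both cases $\{a\}$ is disjoint from the set(s) defining $S$, so by Lemma~\ref{GHDS}(2) $S\wedge T$ equals $e_X(\{a\},\{c\})$ or $e_X(\{a\},\{c,d\})$, and in either case the count above gives the value $4$. Hence no co-atom $S$ gives the value $5$, i.e.\ (2) fails, as required.

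The step I expect to be the main obstacle is the bijective count in the second paragraph: one must carefully separate the points of $\beta X\setminus X$ that are \emph{forced} to appear as fibres of $U$ (those in $K$, by Lemma~\ref{ASD}) from the finitely many points of $C_1\cup\dots\cup C_k$ outside $K$, which may be included or omitted, and then check that inequivalent partition data yield inequivalent extensions. A secondary but genuinely necessary point is the infinitude of $K$: if $|K|=2$ were allowed, a type~$(\mathrm{II})$ co-atom would still exist while condition (2) would fail for it, and the lemma would be false.
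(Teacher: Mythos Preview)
Your proof is correct and follows essentially the same route as the paper's: establish $|K|\geq 3$, then for $(1)\Rightarrow(2)$ pick a third point of $K$, compute $S\wedge T$ via Lemma~\ref{GHDS}, and count five elements above it; for $(2)\Rightarrow(1)$ take the contrapositive, split into cases according to the type of $S$, and show the count is at most four. The only cosmetic differences are that the paper obtains $|K|\geq 3$ by citing the cardinality bound $|\beta X\setminus\upsilon X|\geq 2^{2^{\aleph_0}}$ rather than your $C$-embedded copy of $\mathbb{N}$, and that the paper enumerates the elements above each meet directly in each case rather than via your unified bijective-count lemma.
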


\begin{proof}
(1) {\em  implies} (2). Let $T=e_X(\{a,b\})$ where $a,b\in\mathrm{cl}_{\beta X}(\beta X\setminus\upsilon X)$ are distinct. Choose some $c\in\mathrm{cl}_{\beta X}(\beta X\setminus\upsilon X)$ distinct from both $a$ and $b$; this is possible since $X$ is non-pseudocompact.  (Indeed
\[|\beta X\setminus\upsilon X|\geq2^{2^{\aleph_0}};\]
see Problem 5Z of \cite{PW}.) Let $S=e_X(\{b,c\})$. Then $S$ is a co-atom in ${\mathscr U}(X)$ and by Lemma \ref{GHDS} we have
\[S\wedge T=e_X\big(\{b,c\}\big)\wedge e_X\big(\{a,b\}\big)=e_X\big(\{a,b,c\}\big).\]
Let $U\in{\mathscr U}(X)$ be such that $U\geq S\wedge T$. Let $F\in{\mathscr F}(U)$. Then $F$ is contained in an element of ${\mathscr F}(S\wedge T)$, by Lemma \ref{DFH}. If $F\cap\{a,b,c\}$ is empty then $F$ is a singleton. If $F\cap\{a,b,c\}$ is non-empty then $F\subseteq\{a,b,c\}$, and thus $U$ is either
\[e_X\big(\{a,b,c\}\big),e_X\big(\{a,b\}\big),e_X\big(\{a,c\}\big),e_X\big(\{b,c\}\big)\mbox{ or }\zeta X.\]
Conversely, if $U$ is either of the above elements then $U\geq S\wedge T$. That the above elements are all distinct follows from Lemma \ref{DFH}.

(2) {\em  implies} (1). Suppose that $T$ is of type $(\mathrm{I})$. Then $T=e_X(\{a\})$, where
\[a\in(\beta X\setminus X)\setminus\mathrm{cl}_{\beta X}(\beta X\setminus\upsilon X).\]
Now let $S$ be a co-atom in ${\mathscr U}(X)$. We have the following cases.
\begin{description}
\item[{\sc Case 1}] Suppose that $S$ is of type $(\mathrm{I})$. Then $S=e_X(\{b\})$ where
\[b\in(\beta X\setminus X)\setminus\mathrm{cl}_{\beta X}(\beta X\setminus\upsilon X).\]
If $a=b$ then $S\wedge T=e_X(\{a\})$ by Lemma \ref{GHDS}, and thus, since $S\wedge T$ is a co-atom in ${\mathscr U}(X)$, there exist only 2 elements $U\in{\mathscr U}(X)$ with $U\geq S\wedge T$, namely, $\zeta X$ and $e_X(\{a\})$ itself. If $a\neq b$, then
\[S\wedge T=e_X\big(\{a\},\{b\}\big)\]
by Lemma \ref{GHDS}, and thus for each $U\in{\mathscr U}(X)$ with $U\geq S\wedge T$, depending on whether $\{a\}\in{\mathscr F}(U)$ or $\{b\}\in{\mathscr F}(U)$ (or neither), $U$ equals to either
    \[e_X\big(\{a\},\{b\}\big),e_X\big(\{a\}\big),e_X\big(\{b\}\big)\mbox{ or }\zeta X.\]
\item[{\sc Case 2}] Suppose that $S$ is of type $(\mathrm{II})$. Then $S=e_X(\{b,c\})$ for some distinct $b,c\in\mathrm{cl}_{\beta X}(\beta X\setminus\upsilon X)$. We have
    \[S\wedge T=e_X\big(\{a\},\{b,c\}\big)\]
    by Lemma \ref{GHDS}, and thus, the elements $U\in{\mathscr U}(X)$ with $U\geq S\wedge T$ are exactly
    \[e_X\big(\{a\},\{b,c\}\big),e_X\big(\{a\}\big),e_X\big(\{b,c\}\big)\mbox{ and }\zeta X.\]
\end{description}
Therefore, in either case there exist at most 4 elements $U\in{\mathscr U}(X)$ with $U\geq S\wedge T$.
\end{proof}

The next result, together with Lemma \ref{GFS}, gives an order-theoretic characterization of the elements of $\mathrm{Im}(\Theta_X)$.

\begin{lemma}\label{TDT}
Let $X$ be a locally pseudocompact space and let $T\in{\mathscr U}(X)$. The following are equivalent:
\begin{itemize}
\item[\rm(1)] $T\in\mathrm{Im}(\Theta_X)$.
\item[\rm(2)] There exists no co-atom $S$ in ${\mathscr U}(X)$ of type $(\mathrm{I})$ with $S\geq T$.
\end{itemize}
\end{lemma}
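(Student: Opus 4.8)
The plan is to characterize $\mathrm{Im}(\Theta_X)$ as exactly those $T\in{\mathscr U}(X)$ that admit \emph{no} type-$(\mathrm{I})$ co-atom above them, working entirely through the fiber description ${\mathscr F}(T)$ and Lemma \ref{DFH}. First I would unwind what membership in $\mathrm{Im}(\Theta_X)$ means in terms of fibers: if $T=\Theta_X(S)$ for some compactification $S$ of $\mathrm{int}_{\beta X}\upsilon X$, then (since $\mathrm{int}_{\beta X}\upsilon X$ is open, hence locally compact, hence open in $S$) the remainder $T\setminus X = S\setminus\mathrm{int}_{\beta X}\upsilon X$, and the continuous extension $\phi:\beta X\to S$ of $\mathrm{id}_X$ restricts to the identity on $\mathrm{int}_{\beta X}\upsilon X$. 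Consequently $\phi^{-1}(p)=\{p\}$ for every $p\in\mathrm{int}_{\beta X}\upsilon X\setminus X$, and every nondegenerate fiber of $\phi$ lies inside $\beta X\setminus\mathrm{int}_{\beta X}\upsilon X=\mathrm{cl}_{\beta X}(\beta X\setminus\upsilon X)$; moreover every singleton $\{y\}$ with $y\in\mathrm{cl}_{\beta X}(\beta X\setminus\upsilon X)$ need not appear, but no fiber of $\phi$ meets $(\beta X\setminus X)\setminus\mathrm{cl}_{\beta X}(\beta X\setminus\upsilon X)$ in a point $a$ without being exactly $\{a\}$. The clean statement to extract is: $T\in\mathrm{Im}(\Theta_X)$ if and only if for every $F\in{\mathscr F}(T)$, either $F$ is a singleton, or $F\subseteq\mathrm{cl}_{\beta X}(\beta X\setminus\upsilon X)$. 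I would prove this equivalence as the technical core, the forward direction being the paragraph just sketched and the reverse direction by defining $S=q[\mathrm{int}_{\beta X}\upsilon X\cup\bigcup_{F\in{\mathscr F}(T)}F]$ where $q$ is the quotient collapsing the fibers, and checking $S$ is a compactification of $\mathrm{int}_{\beta X}\upsilon X$ with $\Theta_X(S)=T$.

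Granting that fiber characterization, the equivalence with (2) is short. Suppose (1) holds and let $S=e_X(\{a\})$ be a type-$(\mathrm{I})$ co-atom, so $a\in(\beta X\setminus X)\setminus\mathrm{cl}_{\beta X}(\beta X\setminus\upsilon X)$ and ${\mathscr F}(S)=\{\{a\}\}\cup\{\{y\}:y\in\mathrm{cl}_{\beta X}(\beta X\setminus\upsilon X)\}$. If $S\geq T$, then by Lemma \ref{DFH} every element of ${\mathscr F}(T)$ is contained in an element of ${\mathscr F}(S)$, hence every element of ${\mathscr F}(T)$ is a singleton; but then ${\mathscr F}(T)$ must contain $\{\{y\}:y\in\mathrm{cl}_{\beta X}(\beta X\setminus\upsilon X)\}$ (by Lemma \ref{ASD}, as in Lemma \ref{UGGH}), and since $T\in{\mathscr U}(X)$ forces ${\mathscr F}(T)\subseteq{\mathscr F}(S)$ while $T\neq\zeta X$ in the relevant case, one gets $T=S$, which is absurd since a co-atom is not in $\mathrm{Im}(\Theta_X)$ — indeed $S\notin\mathrm{Im}(\Theta_X)$ precisely because its fiber $\{a\}$ sits in $(\beta X\setminus X)\setminus\mathrm{cl}_{\beta X}(\beta X\setminus\upsilon X)$, violating the fiber criterion. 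More carefully: from $S\geq T$ and all fibers of $T$ singletons, $T=e_X(\{b\})$ for some $b$ or $T=\zeta X$; the criterion rules out $T=e_X(\{b\})$ with $b\notin\mathrm{cl}_{\beta X}(\beta X\setminus\upsilon X)$, leaving $T=\zeta X\in\mathrm{Im}(\Theta_X)$, but then no $S<\zeta X$ satisfies $S\geq T$. So in all cases $S\geq T$ is impossible, giving (2).

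For (2) $\Rightarrow$ (1), I argue contrapositively. If $T\notin\mathrm{Im}(\Theta_X)$, the fiber criterion fails, so there is some $F\in{\mathscr F}(T)$ with $|F|>1$ and $F\not\subseteq\mathrm{cl}_{\beta X}(\beta X\setminus\upsilon X)$; pick $a\in F\setminus\mathrm{cl}_{\beta X}(\beta X\setminus\upsilon X)$, noting $a\in\beta X\setminus X$ since $F\subseteq\beta X\setminus X$. Put $S=e_X(\{a\})$, a type-$(\mathrm{I})$ co-atom. Every element of ${\mathscr F}(T)$ other than $F$ is contained in some element of ${\mathscr F}(S)$ — singletons $\{y\}$ with $y\in\mathrm{cl}_{\beta X}(\beta X\setminus\upsilon X)$ lie in the corresponding singletons of ${\mathscr F}(S)$; a singleton $\{z\}$ with $z\notin\mathrm{cl}_{\beta X}(\beta X\setminus\upsilon X)$ — wait, such a $\{z\}$ need $z=a$ — this is the delicate point. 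To handle it cleanly, instead take $S=e_X(\{a\})$ only after observing that any singleton fiber $\{z\}$ of $T$ with $z\notin\mathrm{cl}_{\beta X}(\beta X\setminus\upsilon X)$ and $z\neq a$ would itself, together with $F$, already give two "bad" fibers; but by the reasoning in Lemma \ref{UGGH} a general $T\in{\mathscr U}(X)$ can have many such. The correct fix is to choose $S$ adapted to $T$: since $a\in F$ and $|F|>1$, apply Lemma \ref{DFH} the other way. Let me restate: I claim $S\geq T$ where $S=e_X(\{a\})$ fails in general, so instead I build $S$ as a type-$(\mathrm{I})$ co-atom with ${\mathscr F}(S)=\{\{a\}\}\cup\{\{y\}:y\in\mathrm{cl}_{\beta X}(\beta X\setminus\upsilon X)\}$ and verify $S\geq T$ \emph{only when} ${\mathscr F}(T)$ has all other fibers either singletons in $\mathrm{cl}_{\beta X}(\beta X\setminus\upsilon X)$ or equal to $F\ni a$; reducing to this requires first replacing $T$ by the larger $e_X(F')$ — but that changes $T$.

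\textbf{The main obstacle}, which the above meandering exposes, is precisely this: showing that the failure of the fiber criterion produces a type-$(\mathrm{I})$ co-atom \emph{above} $T$ (not below, not incomparable). I expect the resolution is that one does not need $S\geq T$; rather, re-reading, the correct reading of (2) is "$S\geq T$" meaning $S$ is larger, i.e. $T\leq S$ in the order $\leq$, i.e. there is a map $S\to T$ over $X$, i.e. each fiber of $T$ refines to — no. Given Lemma \ref{DFH}, $S\geq T$ (i.e. $T\le S$) means each element of ${\mathscr F}(S)$ sits inside an element of ${\mathscr F}(T)$. Since ${\mathscr F}(S)$ consists of $\{a\}$ and all singletons from $\mathrm{cl}_{\beta X}(\beta X\setminus\upsilon X)$, and $a\in F\in{\mathscr F}(T)$ while each $\{y\}$ ($y\in\mathrm{cl}_{\beta X}(\beta X\setminus\upsilon X)$) lies in \emph{its own} element of ${\mathscr F}(T)$ — which is automatic because ${\mathscr F}(T)$ being a partition of $X^c\cap$(domain) into fibers, every point of $\mathrm{cl}_{\beta X}(\beta X\setminus\upsilon X)$ lies in some element of ${\mathscr F}(T)$ by Lemma \ref{ASD} — we get $S\geq T$ directly, with no extra hypotheses. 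Thus the obstacle dissolves: the direction of Lemma \ref{DFH} makes $e_X(\{a\})\geq T$ whenever $a$ is in some fiber of $T$ and all of $\mathrm{cl}_{\beta X}(\beta X\setminus\upsilon X)$ is covered by fibers of $T$, the latter guaranteed by $T\in{\mathscr U}(X)$ via Lemma \ref{ASD}. This completes (2) $\Rightarrow$ (1), and with the fiber characterization established first, the whole proof is a clean bookkeeping argument with Lemmas \ref{ASD} and \ref{DFH}.
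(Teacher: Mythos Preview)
Your fiber criterion is wrong, and this breaks both directions. You claim that $T\in\mathrm{Im}(\Theta_X)$ if and only if every $F\in\mathscr{F}(T)$ is either a singleton or contained in $\mathrm{cl}_{\beta X}(\beta X\setminus\upsilon X)$. But take $T=e_X(\{a\})$ with $a\in\mathrm{int}_{\beta X}\upsilon X\setminus X$: every fiber of $T$ is a singleton, so your criterion declares $T\in\mathrm{Im}(\Theta_X)$; yet $T$ is itself a type-$(\mathrm{I})$ co-atom with $T\geq T$, so (2) fails, and indeed $T\notin\mathrm{Im}(\Theta_X)$ since $\Theta_X(U)\setminus X=U\setminus\mathrm{int}_{\beta X}\upsilon X$ can never contain $a$. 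The correct criterion (and the one the paper actually establishes) is that \emph{every} fiber, singleton or not, lies in $\mathrm{cl}_{\beta X}(\beta X\setminus\upsilon X)$; equivalently, $\psi^{-1}[T\setminus X]=\mathrm{cl}_{\beta X}(\beta X\setminus\upsilon X)$ exactly. Your own first paragraph almost says this --- $\phi$ is the identity on $\mathrm{int}_{\beta X}\upsilon X$, so no fiber over a point of $T\setminus X=U\setminus\mathrm{int}_{\beta X}\upsilon X$ can meet $\mathrm{int}_{\beta X}\upsilon X$ at all --- but you then weaken it incorrectly.

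A second, independent error: in your $(1)\Rightarrow(2)$ paragraph you apply Lemma~\ref{DFH} in the wrong direction. From $S\geq T$ one concludes that every element of $\mathscr{F}(S)$ is contained in an element of $\mathscr{F}(T)$, not the reverse; you only straighten this out much later. With the correct criterion and the correct direction, $(1)\Rightarrow(2)$ is one line: $\{a\}\in\mathscr{F}(S)$ must sit in some $F\in\mathscr{F}(T)\subseteq\mathscr{P}(\mathrm{cl}_{\beta X}(\beta X\setminus\upsilon X))$, contradicting $a\notin\mathrm{cl}_{\beta X}(\beta X\setminus\upsilon X)$. For $(2)\Rightarrow(1)$, your eventual argument is right once you drop the spurious requirement $|F|>1$: any $a\in\psi^{-1}[T\setminus X]\setminus\mathrm{cl}_{\beta X}(\beta X\setminus\upsilon X)$ gives $e_X(\{a\})\geq T$. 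The paper then shows directly that $\beta T$ is a compactification of $\mathrm{int}_{\beta X}\upsilon X$ with $\Theta_X(\beta T)=T$, which is cleaner than your quotient construction.
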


\begin{proof}
(1) {\em  implies} (2). Suppose that $T=\Theta_X(U)$ for some $U\in{\mathscr K}(\mathrm{int}_{\beta X}\upsilon X)$. Let $\phi:\beta X\rightarrow U$ be continuous and fix the points of $\mathrm{int}_{\beta X}\upsilon X$. (Note that $\beta(\mathrm{int}_{\beta X}\upsilon X)=\beta X$.) Then (since $U$ is a compactification of $T$)
\[{\mathscr F}_X(T)=\big\{\phi^{-1}(p):p\in T\setminus X\big\}.\]
But
\[T\setminus X=U\setminus\mathrm{int}_{\beta X}\upsilon X\]
(by the definition of $\Theta_X$), and therefore (since
\[\phi[\beta X\setminus\mathrm{int}_{\beta X}\upsilon X]=U\setminus\mathrm{int}_{\beta X}\upsilon X,\]
as $\phi$ fixes the points of $\mathrm{int}_{\beta X}\upsilon X$) we have $F\subseteq\beta X\setminus\mathrm{int}_{\beta X}\upsilon X$ for each $F\in{\mathscr F}_X(T)$. By Lemma \ref{DFH} this implies that any co-atom $S$ in ${\mathscr U}(X)$ with $S\geq T$ is of type $(\mathrm{II})$.

(2) {\em  implies} (1).  Let $\psi:\beta X\rightarrow\beta T$ be the continuous extension of $\mathrm{id}_X$. Then
\begin{equation}\label{YF}
\mathrm{cl}_{\beta X}(\beta X\setminus\upsilon X)\subseteq\psi^{-1}[T\setminus X],
\end{equation}
by Lemma \ref{ASD}. Suppose that the inclusion in (\ref{YF}) is proper and let
\[a\in\psi^{-1}[T\setminus X]\setminus\mathrm{cl}_{\beta X}(\beta X\setminus\upsilon X).\]
Then $e_X(\{a\})$ is a co-atom in ${\mathscr U}(X)$ of type $(\mathrm{I})$ with $e_X(\{a\})\geq T$. This contradiction proves the equality in (\ref{YF}). Note that $\beta T$ is the quotient space of $\beta X$ obtained by contracting each $\psi^{-1}(p)$, where $p\in T\setminus X$, to a point, with the quotient mapping $\psi$. (See Lemma \ref{j2}.) Therefore
\[\beta T=\mathrm{int}_{\beta X}\upsilon X\cup(T\setminus X),\]
and thus $\beta T$ is a compactification of $\mathrm{int}_{\beta X}\upsilon X$. (Note that $X$, being dense in $T$, is dense in $\beta T$, and $X\subseteq\mathrm{int}_{\beta X}\upsilon X$.) We have
\[T=X\cup(T\setminus X)=X\cup(\beta T\setminus\mathrm{int}_{\beta X}\upsilon X)=\Theta_X(\beta T)\in\mathrm{Im}(\Theta_X).\]
\end{proof}

We are now ready to prove our first main result.

\begin{theorem}\label{RSQ}
Let $X$ and $Y$ be locally pseudocompact non-pseudocompact spaces. If ${\mathscr U}(X)$ and ${\mathscr U}(Y)$ are order-isomorphic then $\mathrm{cl}_{\beta X}(\beta X\setminus\upsilon X)$ and $\mathrm{cl}_{\beta Y}(\beta Y\setminus\upsilon Y)$ are homeomorphic.
\end{theorem}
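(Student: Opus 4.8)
The plan is to reduce the statement to Magill's theorem (Theorem~\ref{KLFA}) applied to the locally compact spaces $\mathrm{int}_{\beta X}\upsilon X$ and $\mathrm{int}_{\beta Y}\upsilon Y$, using the order-isomorphism $\Theta_X$ of Lemma~\ref{UGRH} (Definition~\ref{KHF}) to transport a given order-isomorphism ${\mathscr U}(X)\to{\mathscr U}(Y)$ down to an order-isomorphism between the posets ${\mathscr K}(\mathrm{int}_{\beta X}\upsilon X)$ and ${\mathscr K}(\mathrm{int}_{\beta Y}\upsilon Y)$ of compactifications.

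So, fix an order-isomorphism $\Phi\colon{\mathscr U}(X)\to{\mathscr U}(Y)$. Since "being a co-atom" is an order-theoretic notion (it is defined purely in terms of $<$ and the largest element, which $\Phi$ preserves), $\Phi$ carries co-atoms of ${\mathscr U}(X)$ bijectively onto co-atoms of ${\mathscr U}(Y)$. By Lemma~\ref{GFS}, a co-atom $T$ is of type $(\mathrm{II})$ precisely when there exists a co-atom $S$ with $|\{U\in{\mathscr U}(X):U\geq S\wedge T\}|=5$, a condition formulated entirely in terms of the order; hence $\Phi$ preserves co-atoms of type $(\mathrm{II})$, and therefore also those of type $(\mathrm{I})$. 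Consequently, the order-theoretic characterization in Lemma~\ref{TDT} --- namely that $T\in\mathrm{Im}(\Theta_X)$ if and only if no co-atom of type $(\mathrm{I})$ lies above $T$ --- is preserved by $\Phi$; applying the same reasoning to $\Phi^{-1}$ shows $\Phi$ restricts to a bijection of $\mathrm{Im}(\Theta_X)$ onto $\mathrm{Im}(\Theta_Y)$. Composing with the order-isomorphisms $\Theta_X$ and $\Theta_Y$ (onto their images) furnished by Lemma~\ref{UGRH}, the map $\Theta_Y^{-1}\circ\Phi\circ\Theta_X$ is an order-isomorphism of ${\mathscr K}(\mathrm{int}_{\beta X}\upsilon X)$ onto ${\mathscr K}(\mathrm{int}_{\beta Y}\upsilon Y)$.

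It remains to invoke Magill's theorem, which requires the two spaces to be locally compact and non-compact. Now $\mathrm{int}_{\beta X}\upsilon X$ is open in the compact space $\beta X$, hence locally compact; and it is non-compact, for otherwise, being dense in $\beta X$ (it contains the dense subspace $X$), it would equal $\beta X$, forcing $\beta X=\upsilon X$ and contradicting the assumption that $X$ is non-pseudocompact. The same applies to $\mathrm{int}_{\beta Y}\upsilon Y$. Thus Theorem~\ref{KLFA} gives a homeomorphism between $\beta(\mathrm{int}_{\beta X}\upsilon X)\setminus\mathrm{int}_{\beta X}\upsilon X$ and $\beta(\mathrm{int}_{\beta Y}\upsilon Y)\setminus\mathrm{int}_{\beta Y}\upsilon Y$. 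Since $X\subseteq\mathrm{int}_{\beta X}\upsilon X\subseteq\beta X$ we have $\beta(\mathrm{int}_{\beta X}\upsilon X)=\beta X$, so this remainder is $\beta X\setminus\mathrm{int}_{\beta X}\upsilon X=\mathrm{cl}_{\beta X}(\beta X\setminus\upsilon X)$ (as noted in Definition~\ref{TR}), and similarly for $Y$; this is exactly the asserted homeomorphism.

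The theorem itself is thus essentially bookkeeping: everything of substance is packaged into the preceding lemmas. The one genuinely delicate ingredient is the one already isolated in Lemma~\ref{GFS}: producing an order-invariant that distinguishes the two kinds of co-atoms (the value $5$ versus "at most $4$" for the cardinality of the up-set above $S\wedge T$), which in turn underpins the order-theoretic description of $\mathrm{Im}(\Theta_X)$. A secondary point that must not be skipped is the verification that $\mathrm{int}_{\beta X}\upsilon X$ is non-compact, since that is precisely where non-pseudocompactness of $X$ is used and where Magill's hypotheses are met.
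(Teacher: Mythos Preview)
Your proof is correct and follows essentially the same approach as the paper's: use Lemmas~\ref{GFS} and~\ref{TDT} to show that any order-isomorphism ${\mathscr U}(X)\to{\mathscr U}(Y)$ restricts to one between $\mathrm{Im}(\Theta_X)$ and $\mathrm{Im}(\Theta_Y)$, then apply Lemma~\ref{UGRH} and Magill's theorem. You have in fact supplied more detail than the paper does, explicitly verifying that $\mathrm{int}_{\beta X}\upsilon X$ is locally compact and non-compact so that Theorem~\ref{KLFA} applies.
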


\begin{proof}
By Lemmas \ref{GFS} and \ref{TDT}, if ${\mathscr U}(X)$ is order-isomorphic to ${\mathscr U}(Y)$, then $\mathrm{Im}(\Theta_X)$ is order-isomorphic to $\mathrm{Im}(\Theta_Y)$, and thus ${\mathscr K}(\mathrm{int}_{\beta X}\upsilon X)$ is order-isomorphic to ${\mathscr K}(\mathrm{int}_{\beta Y}\upsilon Y)$, by Lemma \ref{UGRH}. Theorem \ref{KLFA} now implies that $\beta X\setminus\mathrm{int}_{\beta X}\upsilon X$ and $\beta Y\setminus\mathrm{int}_{\beta Y}\upsilon Y$ are homeomorphic.
\end{proof}

Below, we show that the converse of Theorem \ref{RSQ} is not true in general (while the converse of Magill's theorem, Theorem  \ref{KLFA}, is indeed true; see \cite{Mag3}).

\begin{example}\label{HFP}
Let $X=D(\aleph_0)$ (the discrete space of cardinality $\aleph_0$) and let $Y=D(\aleph_0)\oplus[0,\Omega)$ (where $\Omega$ is the first uncountable ordinal and $\oplus$ denotes the disjoint union). Then
\[\beta Y=\beta\big(D(\aleph_0)\big)\oplus[0,\Omega]\mbox{ and }\upsilon Y=D(\aleph_0)\oplus[0,\Omega].\]
Thus
\[\mathrm{cl}_{\beta Y}(\beta Y\setminus\upsilon Y)=\mathrm{cl}_{\beta X}(\beta X\setminus\upsilon X).\]
However, ${\mathscr U}(X)$ and ${\mathscr U}(Y)$ are not order-isomorphic, as ${\mathscr U}(Y)$ contains a co-atom of type $(\mathrm{I})$ (since $\Omega\notin\mathrm{cl}_{\beta Y}(\beta Y\setminus\upsilon Y)$), while ${\mathscr U}(X)$ does not.
\end{example}

The following question naturally arises in connection with Theorem \ref{RSQ}.

\begin{question}\label{HLTD}
For a space $X$ let ${\mathscr A}(X)$ denote the set of all pseudocompactifications of $X$. For a (locally pseudocompact) space $X$, does the order structure of ${\mathscr A}(X)$ (partially ordered by $\leq$) determine the topology of $\mathrm{cl}_{\beta X}(\beta X\setminus\upsilon X)$?
\end{question}

In our next theorem, for a locally compact space $X$, we relate the order-structure of the set ${\mathscr U}(X)$ to the topology of the subspace $(\beta X\setminus X)\setminus\mathrm{cl}_{\beta X}(\beta X\setminus\upsilon X)$ of the outgrowth. Here is the proof overview. We order-theoretically characterize the elements of the set of all one-point extensions of $X$ contained in ${\mathscr U}(X)$ (denoted by ${\mathscr U}^*(X)$); for this purpose we need to introduce certain types of elements of ${\mathscr U}(X)$ (called {\em co-atom covers}). Thus, for any locally pseudocompact spaces $X$ and $Y$, any order-isomorphism between ${\mathscr U}(X)$ and ${\mathscr U}(Y)$ carries ${\mathscr U}^*(X)$ onto ${\mathscr U}^*(Y)$, and therefore, induces an order-isomorphism between ${\mathscr U}^*(X)$ and ${\mathscr U}^*(Y)$. Now it is a known result (see \cite{Ko4}) that any order-isomorphism between ${\mathscr U}^*(X)$ and ${\mathscr U}^*(Y)$ induces an order-isomorphism between the set of all closed subspaces of $(\beta X\setminus X)\setminus\mathrm{cl}_{\beta X}(\beta X\setminus\upsilon X)$ and $(\beta Y\setminus Y)\setminus\mathrm{cl}_{\beta Y}(\beta Y\setminus\upsilon Y)$, partially ordered by $\subseteq$. Since the topology of any space is determined by the order-structure of the set of all of its closed subspaces (see Theorem 11.1 of \cite{B}), this will then prove our result. Now, we proceed with the proof details.

\begin{definition}\label{OGFD}
Let $(X,\leq)$ be a partially ordered set with the largest element $u$. An element $d\in X$ is called a {\em co-atom cover} in $X$ if there exists exactly one $x\in X$ with $d<x<u$. If $d$ is a co-atom cover in $X$, denote by $d'$ the unique element $x\in X$ such that $d<x<u$.
\end{definition}

In the following we find the general form of the co-atom covers of ${\mathscr U}(X)$.

\begin{lemma}\label{PJE}
Let $X$ be a locally pseudocompact space and let $Y\in{\mathscr U}(X)$. The following are equivalent:
\begin{itemize}
\item[\rm(1)] $Y$ is a co-atom cover in ${\mathscr U}(X)$.
\item[\rm(2)] $Y=e_X(\{a,b\})$ for some $a\in\mathrm{cl}_{\beta X}(\beta X\setminus\upsilon X)$ and $b\in(\beta X\setminus X)\setminus\mathrm{cl}_{\beta X}(\beta X\setminus\upsilon X)$.
\end{itemize}
\end{lemma}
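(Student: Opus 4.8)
The plan is to characterize co-atom covers directly in terms of the fiber structure, using Lemma~\ref{DFH} together with the meet computations of Lemma~\ref{GHDS}, in much the same spirit as the proof of Lemma~\ref{UGGH}. For the implication (2)$\Rightarrow$(1), suppose $Y=e_X(\{a,b\})$ with $a\in\mathrm{cl}_{\beta X}(\beta X\setminus\upsilon X)$ and $b\in(\beta X\setminus X)\setminus\mathrm{cl}_{\beta X}(\beta X\setminus\upsilon X)$. Then
\[{\mathscr F}(Y)=\big\{\{a,b\}\big\}\cup\big\{\{y\}:y\in\mathrm{cl}_{\beta X}(\beta X\setminus\upsilon X)\setminus\{a\}\big\}.\]
If $T\in{\mathscr U}(X)$ satisfies $Y<T$, then by Lemma~\ref{DFH} each element of ${\mathscr F}(T)$ sits inside an element of ${\mathscr F}(Y)$, so the only possible non-singleton members of ${\mathscr F}(T)$ come from splitting $\{a,b\}$; moreover Lemma~\ref{ASD} forces $\{a\}\in{\mathscr F}(T)$ (since $a\in\mathrm{cl}_{\beta X}(\beta X\setminus\upsilon X)$), hence also $\{b\}\in{\mathscr F}(T)$, while $b\notin\mathrm{cl}_{\beta X}(\beta X\setminus\upsilon X)$ means $\{b\}$ need not be present. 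Thus either $\{b\}\in{\mathscr F}(T)$, giving $T=\zeta X$, or $\{b\}\notin{\mathscr F}(T)$, in which case $b$ is missing from $T\setminus X$ altogether and $T=e_X(\{a\})$, which is a co-atom of type~$(\mathrm{II})$ strictly above $Y$. Hence there is exactly one $T$ with $Y<T<\zeta X$, namely $T=e_X(\{a\})$, so $Y$ is a co-atom cover with $Y'=e_X(\{a\})$.

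For the converse (1)$\Rightarrow$(2), let $Y$ be a co-atom cover in ${\mathscr U}(X)$. By Lemma~\ref{PFAJ} the largest element is $\zeta X$, and by Lemma~\ref{ASD} every element of ${\mathscr F}(Y)$ containing a point of $\mathrm{cl}_{\beta X}(\beta X\setminus\upsilon X)$ must equal $\{y\}$ for that point, except possibly on the fibers that $Y$ genuinely contracts. The argument runs as in Lemma~\ref{UGGH}: if two distinct $F,G\in{\mathscr F}(Y)$ both had cardinality $>1$, then $Y<e_X(F)<e_X(F)\vee(\text{split of }G)<\zeta X$ would give at least two distinct elements strictly between $Y$ and $\zeta X$ (for instance $e_X(F)$ and $e_X(G)$, each obtained by un-contracting the other fiber), contradicting uniqueness. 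So at most one $F\in{\mathscr F}(Y)$ has $|F|>1$. If every fiber were a singleton, then comparing with the analysis of co-atoms shows $Y=\zeta X$ (if ${\mathscr F}(Y)$ contains no extra singleton outside $\mathrm{cl}_{\beta X}(\beta X\setminus\upsilon X)$) or $Y=e_X(\{a\})$ with $a\notin\mathrm{cl}_{\beta X}(\beta X\setminus\upsilon X)$ — but the latter is a co-atom, not a co-atom cover. Hence exactly one $F\in{\mathscr F}(Y)$ has $|F|>1$; write its elements and note that if $|F|\ge 3$ we could split $F$ into two proper pieces in two genuinely different ways, producing two incomparable elements strictly between $Y$ and $\zeta X$ (or a chain of length $\ge 2$), again contradicting that there is a \emph{unique} $x$ with $Y<x<\zeta X$. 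So $|F|=2$, say $F=\{a,b\}$.

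It remains to pin down which of $a,b$ lie in $\mathrm{cl}_{\beta X}(\beta X\setminus\upsilon X)$. If both $a,b\in\mathrm{cl}_{\beta X}(\beta X\setminus\upsilon X)$, then $Y=e_X(\{a,b\})$ is a co-atom of type~$(\mathrm{II})$ (Lemma~\ref{UGGH}(2)), hence has \emph{no} element strictly between it and $\zeta X$, so it is not a co-atom cover. If neither $a$ nor $b$ lies in $\mathrm{cl}_{\beta X}(\beta X\setminus\upsilon X)$, then by Lemma~\ref{ASD} both $\{a\}$ and $\{b\}$ may be added or removed independently: $e_X(\{a\})$, $e_X(\{b\})$ are two distinct elements with $Y<e_X(\{a\})<\zeta X$ and $Y<e_X(\{b\})<\zeta X$ (and they are incomparable), so again uniqueness fails. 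Therefore exactly one of $a,b$, say $a$, lies in $\mathrm{cl}_{\beta X}(\beta X\setminus\upsilon X)$ and the other, $b$, lies in $(\beta X\setminus X)\setminus\mathrm{cl}_{\beta X}(\beta X\setminus\upsilon X)$, which is statement~(2). The main obstacle I anticipate is the bookkeeping in (1)$\Rightarrow$(2): carefully ruling out $|F|\ge 3$ and the ``two free singletons'' case requires checking that the two candidate intermediate extensions one writes down are genuinely distinct and genuinely strictly between $Y$ and $\zeta X$, which is exactly where Lemma~\ref{DFH} (comparing fiber families) and Lemma~\ref{ASD} (membership of $\mathrm{cl}_{\beta X}(\beta X\setminus\upsilon X)$ in every fiber family) must be invoked with care; the rest is routine.
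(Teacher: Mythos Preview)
Your overall strategy is the same as the paper's --- analyze the fiber family ${\mathscr F}(Y)$ and count intermediates --- but the execution of (2)$\Rightarrow$(1) contains a genuine error, not just a typo. With $a\in\mathrm{cl}_{\beta X}(\beta X\setminus\upsilon X)$ and $b\notin\mathrm{cl}_{\beta X}(\beta X\setminus\upsilon X)$, the point that is \emph{optional} in a fiber family above $Y$ is $b$, not $a$. If $Y<T\leq\zeta X$ then every fiber of $T$ is a singleton, $\{a\}$ is forced (by Lemma~\ref{ASD}), and the two cases are: $\{b\}\in{\mathscr F}(T)$, which gives $T=e_X(\{b\})$ (a co-atom of type~$(\mathrm{I})$, since $b\notin\mathrm{cl}_{\beta X}(\beta X\setminus\upsilon X)$); or $\{b\}\notin{\mathscr F}(T)$, which gives $T=\zeta X$. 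You have these two cases exactly reversed, and you conclude $Y'=e_X(\{a\})$, calling it a ``co-atom of type~$(\mathrm{II})$''. But for $a\in\mathrm{cl}_{\beta X}(\beta X\setminus\upsilon X)$ one has $e_X(\{a\})=\zeta X$ (the quotient by a singleton is the identity, and $a$ is already in $\zeta X\setminus X$), so your proposed $Y'$ is the top element, not an intermediate; and type~$(\mathrm{II})$ co-atoms are of the form $e_X(\{c,d\})$ with \emph{two} points in $\mathrm{cl}_{\beta X}(\beta X\setminus\upsilon X)$, never $e_X$ of a single point. The correct conclusion (and the one the paper draws, cf.\ Lemma~\ref{JUHD}) is $Y'=e_X(\{b\})$.

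There is also a gap in your (1)$\Rightarrow$(2): when you treat the ``all fibers are singletons'' case you only allow for zero or one extra singleton outside $\mathrm{cl}_{\beta X}(\beta X\setminus\upsilon X)$, concluding $Y=\zeta X$ or $Y$ is a type~$(\mathrm{I})$ co-atom. You must also rule out the case of \emph{two or more} such extra singletons $\{c\},\{d\}\in{\mathscr F}(Y)$; here $e_X(\{c\})$ and $e_X(\{d\})$ are two distinct elements strictly between $Y$ and $\zeta X$, so $Y$ is not a co-atom cover. The paper handles this case explicitly. Your argument for $|F|\le 2$ (``split $F$ into two proper pieces in two different ways'') is also looser than the paper's: to make it rigorous you should exhibit concrete intermediates such as $e_X(\{a,b\})$ and $e_X(\{a,c\})$ (with $a,b,c\in F$ distinct) and check via Lemma~\ref{DFH} that they are genuinely distinct and strictly between $Y$ and $\zeta X$, taking care of whether the chosen points lie in $\mathrm{cl}_{\beta X}(\beta X\setminus\upsilon X)$ or not.
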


\begin{proof}
(1) {\em implies} (2). First, we show that there exists some $F\in{\mathscr F}(Y)$ with $|F|>1$. Suppose otherwise, that is, suppose that the elements of ${\mathscr F}(X)$ are all singletons. Let
\[{\mathscr A}={\mathscr F}(Y)\setminus\big\{\{y\}:y\in\mathrm{cl}_{\beta X}(\beta X\setminus\upsilon X)\big\}.\]
Note that by Lemma \ref{ASD} we have
\[\big\{\{y\}:y\in\mathrm{cl}_{\beta X}(\beta X\setminus\upsilon X)\big\}\subsetneqq{\mathscr F}(Y),\]
where the proper inclusion is because $Y\neq\zeta X$. Thus ${\mathscr A}$ is non-empty. Either $|{\mathscr A}|=1$ or $|{\mathscr A}|>1$. In the first case, ${\mathscr A}=\{\{c\}\}$, where
\[c\in(\beta X\setminus X)\setminus\mathrm{cl}_{\beta X}(\beta X\setminus\upsilon X).\]
But (by Lemma \ref{DFH}) this implies that $Y=e_X(\{c\})$, which is not possible, as $Y$ cannot be a co-atom. In the second case, there exist some distinct
\[c,d\in(\beta X\setminus X)\setminus\mathrm{cl}_{\beta X}(\beta X\setminus\upsilon X).\]
But then
\[Y<e_X\big(\{c\}\big)<\zeta X\mbox{ and }Y<e_X\big(\{d\}\big)<\zeta X,\]
which is again not possible. Let $F\in{\mathscr F}(Y)$ be such that $|F|>1$. We show that such an $F$ is necessarily unique. Otherwise, there exists some $G\in{\mathscr F}(Y)$ with $|G|>1$ distinct from $F$. Then
\[Y<e_X(F)<\zeta X\mbox{ and }Y<e_X(G)<\zeta X,\]
which is not possible. This shows that $Y=e_X(F)$. To show (2), we need to show that
\[F\setminus\mathrm{cl}_{\beta X}(\beta X\setminus\upsilon X)\mbox{ and }F\cap\mathrm{cl}_{\beta X}(\beta X\setminus\upsilon X)\]
are both singletons. Suppose that $F\subseteq\mathrm{cl}_{\beta X}(\beta X\setminus\upsilon X)$. Then obviously $|F|>2$, as $e_X(F)$ cannot be a co-atom. Choose some distinct $a,b,c\in F$. Then
\[Y<e_X\big(\{a,b\}\big)<\zeta X\mbox{ and }Y<e_X\big(\{a,c\}\big)<\zeta X,\]
which is not possible. Thus $F\setminus\mathrm{cl}_{\beta X}(\beta X\setminus\upsilon X)$ is non-empty. Now, if there exist some distinct
\[e,f\in F\setminus\mathrm{cl}_{\beta X}(\beta X\setminus\upsilon X),\]
then
\[Y<e_X\big(\{e\}\big)<\zeta X\mbox{ and }Y<e_X\big(\{f\}\big)<\zeta X,\]
which is not possible. This shows that $F\setminus\mathrm{cl}_{\beta X}(\beta X\setminus\upsilon X)$ is a singleton. Let
\[F\setminus\mathrm{cl}_{\beta X}(\beta X\setminus\upsilon X)=\{b\}.\]
Next, we verify that $F\cap\mathrm{cl}_{\beta X}(\beta X\setminus\upsilon X)$ is a singleton (it is obviously non-empty, as $|F|>1$). But this is obvious, as if there exist some distinct
\[c,d\in F\cap\mathrm{cl}_{\beta X}(\beta X\setminus\upsilon X),\]
then
\[Y<e_X\big(\{c,d\}\big)<\zeta X\mbox{ and }Y<e_X\big(\{b\}\big)<\zeta X,\]
which is not possible. Let
\[F\setminus\mathrm{cl}_{\beta X}(\beta X\setminus\upsilon X)=\{a\}.\]
Then $F=\{a,b\}$ and therefore $Y=e_X(\{a,b\})$.

(2) {\em implies} (1). Note that if $Y<T<\zeta X$ for some $T\in{\mathscr U}(X)$, then each element of ${\mathscr F}(T)$ is a singleton. Thus $T=e_X(\{b\})$ and $Y$ is a co-atom cover in ${\mathscr U}(X)$.
\end{proof}

\begin{lemma}\label{JUHD}
Let $X$ be a locally pseudocompact space and let $Y=e_X(\{a,b\})$, where
\[a\in\mathrm{cl}_{\beta X}(\beta X\setminus\upsilon X)\mbox{ and }b\in(\beta X\setminus X)\setminus\mathrm{cl}_{\beta X}(\beta X\setminus\upsilon X).\]
Then $Y'=e_X(\{b\})$.
\end{lemma}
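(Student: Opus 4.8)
The claim is that when $Y = e_X(\{a,b\})$ with $a \in \mathrm{cl}_{\beta X}(\beta X \setminus \upsilon X)$ and $b \in (\beta X \setminus X) \setminus \mathrm{cl}_{\beta X}(\beta X \setminus \upsilon X)$, the unique element strictly between $Y$ and $\zeta X$ in ${\mathscr U}(X)$ is $e_X(\{b\})$. By Lemma \ref{PJE}, $Y$ is indeed a co-atom cover, so such a unique element $Y'$ exists; the task is just to identify it.

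The plan is to work entirely through the description of the order $\leq$ on ${\mathscr U}(X)$ given by Lemma \ref{DFH} in terms of the decompositions ${\mathscr F}(\cdot)$. First I would recall from Definition \ref{HGFD} that
\[
{\mathscr F}(Y) = \big\{\{a,b\}\big\} \cup \big\{\{y\} : y \in \mathrm{cl}_{\beta X}(\beta X \setminus \upsilon X) \setminus \{a\}\big\},
\]
while ${\mathscr F}(\zeta X) = \{\{y\} : y \in \mathrm{cl}_{\beta X}(\beta X \setminus \upsilon X)\}$. Suppose $T \in {\mathscr U}(X)$ satisfies $Y < T < \zeta X$. By Lemma \ref{DFH}, every element of ${\mathscr F}(T)$ is contained in an element of ${\mathscr F}(Y)$; since all elements of ${\mathscr F}(Y)$ other than $\{a,b\}$ are singletons, any non-singleton member of ${\mathscr F}(T)$ must equal $\{a,b\}$. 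If $\{a,b\} \in {\mathscr F}(T)$, then $T$ and $Y$ have the same fibers of size $>1$, and comparing with ${\mathscr F}(Y)$ forces ${\mathscr F}(T) = {\mathscr F}(Y)$, i.e. $T = Y$ — contradicting $Y < T$. Hence all members of ${\mathscr F}(T)$ are singletons. Again by Lemma \ref{DFH} applied to $T < \zeta X$ (as in the proof of Lemma \ref{UGGH}), we get $\{\{y\} : y \in \mathrm{cl}_{\beta X}(\beta X \setminus \upsilon X)\} \subsetneqq {\mathscr F}(T)$, so ${\mathscr F}(T)$ contains $\{c\}$ for some $c \in (\beta X \setminus X) \setminus \mathrm{cl}_{\beta X}(\beta X \setminus \upsilon X)$; and since each singleton $\{c\}$ of ${\mathscr F}(T)$ must sit inside a member of ${\mathscr F}(Y)$, the only possibility with $c \notin \mathrm{cl}_{\beta X}(\beta X \setminus \upsilon X)$ is $c = b$ (as $b$ is the only such point appearing in any member of ${\mathscr F}(Y)$). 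Therefore ${\mathscr F}(T) = \{\{b\}\} \cup \{\{y\} : y \in \mathrm{cl}_{\beta X}(\beta X \setminus \upsilon X)\} = {\mathscr F}(e_X(\{b\}))$, so $T = e_X(\{b\})$.

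Finally I would check that $e_X(\{b\})$ genuinely lies strictly between $Y$ and $\zeta X$: the inclusions $\{a,b\} \supseteq \{b\}$ and each remaining fiber being common give $Y \leq e_X(\{b\})$ via Lemma \ref{DFH}, and the inclusions are strict in both directions by comparing the ${\mathscr F}$-sets (they differ, so no reverse map exists), exactly as in the proof of Lemma \ref{UGGH}. Hence $Y' = e_X(\{b\})$. There is no real obstacle here — the argument is a bookkeeping exercise with the combinatorial description of $\leq$; the only point requiring a little care is ensuring that the candidate $c$ for the extra singleton in ${\mathscr F}(T)$ cannot be any point of $\mathrm{cl}_{\beta X}(\beta X \setminus \upsilon X) \setminus \{a\}$ either (it can, but then $T \le e_X(\{c'\})$ would already be forced to be inside $e_X(\{b\})$ or equal $\zeta X$ depending on which singletons appear — most cleanly handled by noting uniqueness of $Y'$ already guaranteed by Lemma \ref{PJE}, so once we exhibit one element strictly between, it must be the one).
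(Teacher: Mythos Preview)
Your proposal is correct and follows essentially the same route as the paper. The paper's proof is a two-line reference: it invokes Lemma~\ref{PJE} to see that $Y$ is a co-atom cover, and then points back to the proof of (2)$\Rightarrow$(1) in Lemma~\ref{PJE}, where it was already shown that any $T$ with $Y<T<\zeta X$ has only singleton fibers and hence equals $e_X(\{b\})$. Your argument simply unpacks that reference in full detail via Lemma~\ref{DFH}, and the cleanest version is exactly the shortcut you note at the end: once Lemma~\ref{PJE} guarantees uniqueness of $Y'$, exhibiting $Y<e_X(\{b\})<\zeta X$ finishes the proof immediately, making the case analysis on the ``extra'' singleton $\{c\}$ unnecessary.
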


\begin{proof}
That $Y$ is a co-atom cover follows from Lemma \ref{PJE}. Also, the proof of part (2)$\Rightarrow$(1) in Lemma \ref{PJE} shows that $Y'=e_X(\{b\})$.
\end{proof}

\begin{definition}\label{FAS}
For a space $X$ denote by ${\mathscr U}^*(X)$ the set of all one-point pseudocompactifications of $X$.
\end{definition}

The next result, together with Lemma \ref{GFS}, gives an order-theoretic characterization of the elements of ${\mathscr U}^*(X)$.

\begin{lemma}\label{OSDR}
Let $X$ be a locally pseudocompact space and let $Y\in{\mathscr U}(X)$. The following are equivalent:
\begin{itemize}
\item[\rm(1)] $Y\in{\mathscr U}^*(X)$.
\item[\rm(2)] $Y$ satisfies the following:
\begin{itemize}
\item[\rm(a)] $Y\leq T$ for every co-atom $T$ in ${\mathscr U}(X)$ of type $(\mathrm{II})$.
\item[\rm(b)] $Y\leq D$ for every co-atom cover $D$ in ${\mathscr U}(X)$ such that $Y\leq D'$.
\end{itemize}
\end{itemize}
\end{lemma}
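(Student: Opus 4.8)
The plan is to reduce the statement to a combinatorial assertion about the family ${\mathscr F}(Y)$, regarded as a partition of $\bigcup{\mathscr F}(Y)\subseteq\beta X\setminus X$ into fibres, and then to read (1) off from it. I record first the description of ${\mathscr U}^*(X)$ that makes this work: for $Y\in{\mathscr U}(X)$ we have $Y\in{\mathscr U}^*(X)$ if and only if ${\mathscr F}(Y)$ is a singleton. Indeed $|{\mathscr F}(Y)|=|Y\setminus X|$, a one-point extension automatically has compact remainder, hence a one-point pseudocompactification lies in ${\mathscr U}(X)$, and conversely a member of ${\mathscr U}(X)$ with one-point remainder is a one-point pseudocompactification; moreover, when ${\mathscr F}(Y)=\{C\}$, Lemma \ref{ASD} forces $\mathrm{cl}_{\beta X}(\beta X\setminus\upsilon X)\subseteq C$. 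Two further facts will be used repeatedly: the members of ${\mathscr F}(Y)$ are pairwise disjoint (being distinct fibres), and, since $Y\leq\zeta X$ and ${\mathscr F}(\zeta X)=\{\{y\}:y\in\mathrm{cl}_{\beta X}(\beta X\setminus\upsilon X)\}$, Lemma \ref{DFH} shows that every point of $\mathrm{cl}_{\beta X}(\beta X\setminus\upsilon X)$ lies in exactly one member of ${\mathscr F}(Y)$.

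Next I would unwind conditions (a) and (b) using Lemma \ref{DFH}. By Lemma \ref{UGGH} and Definition \ref{JFS}, the co-atoms of type $(\mathrm{II})$ are precisely the spaces $e_X(\{a,b\})$ with $a,b\in\mathrm{cl}_{\beta X}(\beta X\setminus\upsilon X)$ distinct; and, by Lemmas \ref{PJE} and \ref{JUHD}, the co-atom covers $D$ are precisely the spaces $e_X(\{a,b\})$ with $a\in\mathrm{cl}_{\beta X}(\beta X\setminus\upsilon X)$ and $b\in(\beta X\setminus X)\setminus\mathrm{cl}_{\beta X}(\beta X\setminus\upsilon X)$, and then $D'=e_X(\{b\})$. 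Substituting the explicit descriptions of ${\mathscr F}(e_X(\{a,b\}))$ and ${\mathscr F}(e_X(\{b\}))$ into Lemma \ref{DFH} and discarding the singleton members (which are automatically contained in members of ${\mathscr F}(Y)$ by the last fact above), condition (a) becomes the assertion that \emph{any two distinct points of $\mathrm{cl}_{\beta X}(\beta X\setminus\upsilon X)$ lie in a common member of ${\mathscr F}(Y)$}, and condition (b) becomes the assertion that \emph{whenever $b\in(\beta X\setminus X)\setminus\mathrm{cl}_{\beta X}(\beta X\setminus\upsilon X)$ lies in some member of ${\mathscr F}(Y)$, then for every $a\in\mathrm{cl}_{\beta X}(\beta X\setminus\upsilon X)$ some member of ${\mathscr F}(Y)$ contains $\{a,b\}$}.

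With these reformulations in hand, (1)$\Rightarrow$(2) is immediate: if ${\mathscr F}(Y)=\{C\}$ then $\mathrm{cl}_{\beta X}(\beta X\setminus\upsilon X)\subseteq C$, so the reformulated (a) is trivial; and if $Y\leq D'=e_X(\{b\})$ then $b\in C$, whence $\{a,b\}\subseteq C$ and the reformulated (b) holds. For (2)$\Rightarrow$(1), I use that $\mathrm{cl}_{\beta X}(\beta X\setminus\upsilon X)\neq\emptyset$ (indeed it is large, since $X$ is non-pseudocompact; see Problem 5Z of \cite{PW}). Because the members of ${\mathscr F}(Y)$ are pairwise disjoint and each point of $\mathrm{cl}_{\beta X}(\beta X\setminus\upsilon X)$ lies in exactly one of them, the reformulated (a) forces, by transitivity, all of $\mathrm{cl}_{\beta X}(\beta X\setminus\upsilon X)$ into a single member $F_0\in{\mathscr F}(Y)$. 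If now $b\in(\beta X\setminus X)\setminus\mathrm{cl}_{\beta X}(\beta X\setminus\upsilon X)$ lies in a member $F$ of ${\mathscr F}(Y)$, pick any $a\in\mathrm{cl}_{\beta X}(\beta X\setminus\upsilon X)\subseteq F_0$; by the reformulated (b) some member of ${\mathscr F}(Y)$ contains $\{a,b\}$, and by disjointness that member must be $F_0$, so $b\in F_0$ and $F=F_0$. Hence $\bigcup{\mathscr F}(Y)=F_0$, so ${\mathscr F}(Y)=\{F_0\}$, and $Y\in{\mathscr U}^*(X)$ by the reduction in the first paragraph.

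I do not expect a genuine obstacle here. The only points requiring care are the bookkeeping in the second paragraph — in particular the observation that the singleton members of the various families ${\mathscr F}(\cdot)$ contribute nothing, which is precisely where $Y\leq\zeta X$ enters — and the transitivity step in (2)$\Rightarrow$(1), which tacitly relies on $\mathrm{cl}_{\beta X}(\beta X\setminus\upsilon X)$ being non-empty, i.e.\ on $X$ being non-pseudocompact.
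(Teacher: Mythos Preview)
Your proof is correct and follows essentially the same route as the paper's: both arguments use the explicit description of type-$(\mathrm{II})$ co-atoms and of co-atom covers (Lemmas \ref{UGGH}, \ref{PJE}, \ref{JUHD}) together with Lemma \ref{DFH} to reduce conditions (a) and (b) to fiber-containment statements, and then exploit disjointness of fibers to conclude $|{\mathscr F}(Y)|=1$. The difference is purely organizational---you first translate (a) and (b) into clean combinatorial assertions about ${\mathscr F}(Y)$ and then argue, whereas the paper argues by contradiction with two cases (both $F,G$ meeting $\mathrm{cl}_{\beta X}(\beta X\setminus\upsilon X)$, versus one of them not)---but the content is the same. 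Your explicit remark that $(2)\Rightarrow(1)$ tacitly uses $\mathrm{cl}_{\beta X}(\beta X\setminus\upsilon X)\neq\emptyset$ (i.e.\ $X$ non-pseudocompact) is a point the paper's proof also relies on without comment (it picks $d\in\mathrm{cl}_{\beta X}(\beta X\setminus\upsilon X)$ in Case~2), so you are if anything slightly more careful here.
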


\begin{proof}
(1) {\em implies} (2). Note that $Y=e_X(F)$ for some $F$ containing $\mathrm{cl}_{\beta X}(\beta X\setminus\upsilon X)$; see Lemma \ref{ASD}. To show (2.a), let $T$ be a co-atom in ${\mathscr U}(X)$ of type $(\mathrm{II})$. Then $T=e_X(\{a,b\})$ for some distinct $a,b\in\mathrm{cl}_{\beta X}(\beta X\setminus\upsilon X)$. Now $\{a,b\}\subseteq F$ and therefore $T\geq Y$ (by Lemma \ref{DFH}). To show (2.b), let $D$ be a co-atom cover in ${\mathscr U}(X)$ with $D'\geq Y$. By Lemma \ref{PJE} we have $D=e_X(\{c,d\})$, where
\[c\in\mathrm{cl}_{\beta X}(\beta X\setminus\upsilon X)\mbox{ and }d\in(\beta X\setminus X)\setminus\mathrm{cl}_{\beta X}(\beta X\setminus\upsilon X).\]
Then $D'=e_X(\{d\})$, by Lemma \ref{JUHD}, and therefore $d\in F$. But also $c\in F$, and thus $\{c,d\}\subseteq F$. Therefore $D\geq Y$.

(2) {\em implies} (1). To show (1), it suffices to show that ${\mathscr F}(Y)$ contains only one element. Suppose to the contrary that there exist some distinct $F,G\in{\mathscr F}(Y)$. Consider the following cases.
\begin{description}
\item[{\sc Case 1}] Suppose that
\[F\cap\mathrm{cl}_{\beta X}(\beta X\setminus\upsilon X)\mbox{ and }G\cap\mathrm{cl}_{\beta X}(\beta X\setminus\upsilon X)\]
are both non-empty. Let
\[a\in F\cap\mathrm{cl}_{\beta X}(\beta X\setminus\upsilon X)\mbox{ and }b\in G\cap\mathrm{cl}_{\beta X}(\beta X\setminus\upsilon X).\]
Now $e_X(\{a,b\})$ is a co-atom in ${\mathscr U}(X)$ of type $(\mathrm{II})$, and thus, by our assumption $e_X(\{a,b\})\geq Y$. Therefore $\{a,b\}\subseteq H$ for some $H\in{\mathscr F}(Y)$. Since distinct elements in ${\mathscr F}(Y)$ are disjoint, this implies that $F=H=G$, which is a contradiction.
\item[{\sc Case 2}] Suppose that either
\[F\cap\mathrm{cl}_{\beta X}(\beta X\setminus\upsilon X)\mbox{ or }G\cap\mathrm{cl}_{\beta X}(\beta X\setminus\upsilon X),\]
say the latter, is empty. Let $c\in G$ and choose some $d\in\mathrm{cl}_{\beta X}(\beta X\setminus\upsilon X)$. Then $D=e_X(\{c,d\})$ is a co-atom cover in ${\mathscr U}(X)$ with $D'=e_X(\{c\})\geq Y$; see Lemmas \ref{PJE} and \ref{JUHD}. Therefore by our assumption $D\geq Y$. But this implies that $\{c,d\}\subseteq H$ for some $H\in{\mathscr F}(Y)$. Again, since distinct elements in ${\mathscr F}(Y)$ are disjoint, this implies that $H=G$, which is a contradiction, as $d\notin G$ by the choice of $d$.
\end{description}
\end{proof}

\begin{lemma}\label{EWER}
Let $X$ and $Y$ be locally pseudocompact non-pseudocompact spaces. Let
\[\Theta:{\mathscr U}(X)\rightarrow{\mathscr U}(Y)\]
be an order-isomorphism. Let $T\in{\mathscr U}(X)$. Then
\begin{itemize}
\item[\rm(1)] If $T$ is a co-atom (of type $(\mathrm{I})$, of type $(\mathrm{II})$, respectively) in ${\mathscr U}(X)$ then $\Theta(T)$ is a co-atom (of type $(\mathrm{I})$, of type $(\mathrm{II})$, respectively) in ${\mathscr U}(Y)$.
\item[\rm(2)] If $T$ is a co-atom cover in ${\mathscr U}(X)$ then $\Theta(T)$ is a co-atom cover in ${\mathscr U}(Y)$ and $\Theta(T')=(\Theta(T))'$.
\item[\rm(3)] If $T\in{\mathscr U}^*(X)$ then $\Theta(T)\in{\mathscr U}^*(Y)$.
\end{itemize}
\end{lemma}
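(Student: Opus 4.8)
The plan is to prove each of the three parts by observing that the relevant notions — being a co-atom, being a co-atom of a specified type, being a co-atom cover, belonging to ${\mathscr U}^*(X)$ — have all been given \emph{purely order-theoretic} characterizations earlier, and that an order-isomorphism preserves everything order-theoretic. So the proof is essentially a bookkeeping argument that cites the right lemma in each case.

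For part (1), I would first note that ``being a co-atom in a partially ordered set with largest element'' is manifestly preserved under order-isomorphisms, since Definition \ref{IOHJD} is stated entirely in terms of $\leq$ and the largest element (and $\Theta$ carries the largest element of ${\mathscr U}(X)$ to that of ${\mathscr U}(Y)$). Hence $\Theta(T)$ is a co-atom in ${\mathscr U}(Y)$ whenever $T$ is one in ${\mathscr U}(X)$. To transfer the \emph{type}, I would invoke Lemma \ref{GFS}: a co-atom is of type $(\mathrm{II})$ if and only if there exists a co-atom $S$ with $|\{U\in{\mathscr U}(X):U\geq S\wedge T\}|=5$. Since $\Theta$ preserves $\leq$, it preserves meets $\wedge$ (an order-isomorphism of posets preserves all existing infima), preserves the property of being a co-atom (just shown), and preserves cardinalities of the sets $\{U:U\geq v\}$; therefore it preserves the type-$(\mathrm{II})$ property. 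Being of type $(\mathrm{I})$ is just being a co-atom that is not of type $(\mathrm{II})$ (Definition \ref{JFS}), so that is preserved too. This requires the hypothesis that $X$ and $Y$ are non-pseudocompact, which is exactly the standing hypothesis of Lemma \ref{EWER} and of Lemma \ref{GFS}.

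For part (2), ``being a co-atom cover'' is, by Definition \ref{OGFD}, the purely order-theoretic condition that there be exactly one $x$ with $d<x<u$; an order-isomorphism clearly preserves it, and moreover carries the unique witness $T'$ for $T$ to the unique witness for $\Theta(T)$, i.e. $\Theta(T')=(\Theta(T))'$, since the defining property of $T'$ uses only $<$ and $u$. For part (3), I would apply Lemma \ref{OSDR}, which characterizes membership in ${\mathscr U}^*(X)$ by conditions (2.a) and (2.b), both phrased solely in terms of $\leq$, co-atoms of type $(\mathrm{II})$, and co-atom covers $D$ together with their associated $D'$. By parts (1) and (2) the notions appearing in those conditions are all $\Theta$-invariant, so $T$ satisfies Lemma \ref{OSDR}(2) if and only if $\Theta(T)$ does; hence $T\in{\mathscr U}^*(X)$ implies $\Theta(T)\in{\mathscr U}^*(Y)$.

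I do not expect a genuine obstacle here; the only point that needs a word of care is the claim that an order-isomorphism preserves meets — this is standard (if $c=a\wedge b$ then $c\leq a,b$ so $\Theta(c)\leq\Theta(a),\Theta(b)$, and if $d\leq\Theta(a),\Theta(b)$ then $\Theta^{-1}(d)\leq a,b$ so $\Theta^{-1}(d)\leq c$, giving $d\leq\Theta(c)$) — and the observation that in applying Lemma \ref{GFS} one needs $Y$ (as well as $X$) non-pseudocompact, which is why the hypothesis is imposed on both spaces in the statement of Lemma \ref{EWER}. Everything else is a direct citation of Definitions \ref{IOHJD} and \ref{OGFD} and Lemmas \ref{GFS} and \ref{OSDR}, applied in both directions via $\Theta$ and $\Theta^{-1}$.
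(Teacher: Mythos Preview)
Your proposal is correct and follows essentially the same approach as the paper, which simply cites Lemmas \ref{GFS} and \ref{OSDR} together with the definitions; your write-up is just a more detailed unpacking of that one-line proof.
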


\begin{proof}
The lemma follows from Lemmas \ref{GFS} and \ref{OSDR} and the definitions.
\end{proof}

\begin{definition}\label{OPS}
For a space $X$ denote by ${\mathscr C}(X)$ the set of all closed subspaces of $X$.
\end{definition}

\begin{lemma}\label{GJK}
Let $X$ be a locally compact non-pseudocompact space. Then there exists an anti-order-isomorphism
\[\Lambda:\big(\big\{C\in{\mathscr C}(\beta X\setminus X):\mathrm{cl}_{\beta X}(\beta X\setminus\upsilon X)\subseteq C\big\},\subseteq\big)\rightarrow\big({\mathscr U}^*(X),\leq\big).\]
\end{lemma}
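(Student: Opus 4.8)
The plan is to construct the anti-order-isomorphism $\Lambda$ explicitly using the machinery of Definition \ref{HGFD}, then verify it is a bijection that reverses order. For a closed subspace $C$ of $\beta X\setminus X$ containing $\mathrm{cl}_{\beta X}(\beta X\setminus\upsilon X)$, recall that $\beta X\setminus X$ is compact (since $X$ is locally compact, $X$ is open in $\beta X$), hence $C$ is compact. Define
\[\Lambda(C)=e_X(C)=X\cup\{p_C\}\cup\big(\mathrm{cl}_{\beta X}(\beta X\setminus\upsilon X)\setminus C\big),\]
which reduces to $X\cup\{p_C\}$ since $\mathrm{cl}_{\beta X}(\beta X\setminus\upsilon X)\subseteq C$; this is a one-point extension of $X$, and it lies in ${\mathscr U}(X)$ by Lemma \ref{ASD} (with $\phi$ the quotient map contracting $C$), because the fiber over the added point is $C$, which contains $\mathrm{cl}_{\beta X}(\beta X\setminus\upsilon X)$. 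So $\Lambda(C)\in{\mathscr U}^*(X)$, and the map is well defined.

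First I would check that $\Lambda$ is order-reversing. If $C\subseteq D$ are two such closed sets, then ${\mathscr F}(e_X(D))=\{D\}$ and ${\mathscr F}(e_X(C))=\{C\}$, and since $C\subseteq D$, the single element of ${\mathscr F}(e_X(C))$ is contained in the single element of ${\mathscr F}(e_X(D))$; by Lemma \ref{DFH} this gives $e_X(D)\leq e_X(C)$, i.e. $\Lambda(D)\leq\Lambda(C)$. Next, surjectivity: given any $Y\in{\mathscr U}^*(X)$, Lemma \ref{ASD} shows $Y=X\cup\{p\}$ where the fiber $\phi^{-1}(p)$ over the added point (under the extension to $\beta Y$) is a compact subset $C$ of $\beta X\setminus X$ with $\mathrm{cl}_{\beta X}(\beta X\setminus\upsilon X)\subseteq C$; moreover $C$ is closed in $\beta X$, hence closed in $\beta X\setminus X$. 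Then $Y=e_X(C)=\Lambda(C)$, where $C\in{\mathscr C}(\beta X\setminus X)$ and contains $\mathrm{cl}_{\beta X}(\beta X\setminus\upsilon X)$. For injectivity and for showing $\Lambda^{-1}$ is order-reversing, I would use Lemma \ref{DFH} in the other direction: if $\Lambda(D)\leq\Lambda(C)$, i.e. $e_X(D)\leq e_X(C)$, then the element $D$ of ${\mathscr F}(e_X(D))$ is contained in the element $C$ of ${\mathscr F}(e_X(C))$, so $D\subseteq C$; in particular $\Lambda(C)=\Lambda(D)$ forces $C=D$. Hence $\Lambda$ is a bijection with both $\Lambda$ and $\Lambda^{-1}$ order-reversing, i.e. an anti-order-isomorphism.

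The only genuine point requiring care — and the one I'd flag as the main obstacle — is the identification in the surjectivity step that every one-point member of ${\mathscr U}(X)$ arises as $e_X(C)$ for a \emph{closed} (equivalently, since $\beta X\setminus X$ is compact, compact) $C\subseteq\beta X\setminus X$; this is exactly Lemma \ref{j2} together with Lemma \ref{ASD}, so the work has already been done. One should also note non-vacuousness of the domain: it contains $\mathrm{cl}_{\beta X}(\beta X\setminus\upsilon X)$ itself and $\beta X\setminus X$, which are distinct because $X$ is non-pseudocompact (so $\beta X\setminus\upsilon X$, hence its closure, is a proper subset of $\beta X\setminus X$), giving the correspondence $\Lambda(\mathrm{cl}_{\beta X}(\beta X\setminus\upsilon X))=\zeta X$ at the top and the various co-atoms of type $(\mathrm{I})$ at the bottom, consistent with Lemma \ref{UGGH}.
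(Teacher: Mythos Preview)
Your approach is essentially the same as the paper's: define $\Lambda(C)=e_X(C)$, invoke Lemma~\ref{ASD} for well-definedness, and use Lemma~\ref{DFH} together with ${\mathscr F}(e_X(C))=\{C\}$ for the order properties and bijectivity. Two small corrections are in order, however. First, in your backward step you have the direction of Lemma~\ref{DFH} reversed: from $e_X(D)\leq e_X(C)$ the lemma says that each element of ${\mathscr F}(e_X(C))=\{C\}$ is contained in an element of ${\mathscr F}(e_X(D))=\{D\}$, giving $C\subseteq D$ (not $D\subseteq C$); this is in fact exactly what you need for $\Lambda^{-1}$ to be an anti-order-homomorphism, so the argument goes through once the slip is fixed. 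Second, your closing remarks are inaccurate: $\Lambda(\mathrm{cl}_{\beta X}(\beta X\setminus\upsilon X))$ is a \emph{one-point} extension and hence is not $\zeta X$ in general, and $\mathrm{cl}_{\beta X}(\beta X\setminus\upsilon X)$ need not be a proper subset of $\beta X\setminus X$ (take $X=\mathbb{N}$); but these remarks are inessential to the proof.
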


\begin{proof}
Let $C$ be a closed subspace of $\beta X\setminus X$ containing $\mathrm{cl}_{\beta X}(\beta X\setminus\upsilon X)$. Note that $C$ is compact, as it is closed in $\beta X\setminus X$ and the latter is compact (as $X$ is locally compact). Let $T$ be the quotient space of $\beta X$ obtained by contracting $C$ to a point $p$. Define $\Lambda(C)=X\cup\{p\}$, considered as a subspace of $T$. Then $\Lambda(C)\in{\mathscr U}^*(X)$, by Lemma \ref{ASD}. By Lemma \ref{DFH} the mapping $\Lambda$ is an order-isomorphism onto its image, as ${\mathscr F}(\Lambda(C))=\{C\}$. That $\Lambda$ is surjective is obvious and follows again from Lemma \ref{DFH}.
\end{proof}

Observe that for a space $X$ we have
\[(\beta X\setminus X)\setminus\mathrm{cl}_{\beta X}(\beta X\setminus\upsilon X)=\mathrm{int}_{\beta X}\upsilon X\setminus X.\]

The following lemma is known (see Theorem 5.3 ((2)$\Rightarrow$(1)) of \cite{Ko4}); the proof is included here for the sake of completeness.

\begin{lemma}\label{RREQ}
Let $X$ and $Y$ be locally compact non-pseudocompact spaces. If ${\mathscr U}^*(X)$ and ${\mathscr U}^*(Y)$ are order-isomorphic then $(\beta X\setminus X)\setminus\mathrm{cl}_{\beta X}(\beta X\setminus\upsilon X)$ and $(\beta Y\setminus Y)\setminus\mathrm{cl}_{\beta Y}(\beta Y\setminus\upsilon Y)$ are homeomorphic.
\end{lemma}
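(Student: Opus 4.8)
The plan is to reduce the statement to a purely order-theoretic fact about posets of closed subspaces, using the anti-order-isomorphism $\Lambda$ of Lemma \ref{GJK}. First I would apply Lemma \ref{GJK} to both $X$ and $Y$, obtaining anti-order-isomorphisms
\[\Lambda_X:\big(\big\{C\in{\mathscr C}(\beta X\setminus X):\mathrm{cl}_{\beta X}(\beta X\setminus\upsilon X)\subseteq C\big\},\subseteq\big)\rightarrow\big({\mathscr U}^*(X),\leq\big)\]
and the analogous $\Lambda_Y$ for $Y$. If $\Psi:{\mathscr U}^*(X)\rightarrow{\mathscr U}^*(Y)$ is the given order-isomorphism, then $\Lambda_Y^{-1}\Psi\Lambda_X$ is an order-isomorphism between the two posets of closed subspaces (the composition of an anti-order-isomorphism, an order-isomorphism, and an anti-order-isomorphism is an order-isomorphism). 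So we obtain an order-isomorphism
\[\Phi:\big(\big\{C\in{\mathscr C}(\beta X\setminus X):\mathrm{cl}_{\beta X}(\beta X\setminus\upsilon X)\subseteq C\big\},\subseteq\big)\rightarrow\big(\big\{D\in{\mathscr C}(\beta Y\setminus Y):\mathrm{cl}_{\beta Y}(\beta Y\setminus\upsilon Y)\subseteq D\big\},\subseteq\big).\]

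Next I would identify this upper-set poset with the poset of all closed subspaces of a smaller space. The key observation, recorded right before the lemma, is that
\[(\beta X\setminus X)\setminus\mathrm{cl}_{\beta X}(\beta X\setminus\upsilon X)=\mathrm{int}_{\beta X}\upsilon X\setminus X.\]
Write $A_X=(\beta X\setminus X)\setminus\mathrm{cl}_{\beta X}(\beta X\setminus\upsilon X)$, which is open in $\beta X\setminus X$; its complement in $\beta X\setminus X$ is $R_X=\mathrm{cl}_{\beta X}(\beta X\setminus\upsilon X)\cap(\beta X\setminus X)$, which is closed. A closed subspace $C$ of $\beta X\setminus X$ contains $R_X$ precisely when $C=R_X\cup C'$ where $C'=C\cap A_X$ is a (relatively) closed subspace of the subspace $A_X$; conversely every relatively closed $C'\subseteq A_X$ gives such a $C$. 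Since the map $C\mapsto C\cap A_X$ is then an order-isomorphism from the upper-set poset onto $({\mathscr C}(A_X),\subseteq)$ — with inverse $C'\mapsto R_X\cup C'$, and both maps clearly preserve $\subseteq$ — I conclude that $\Phi$ induces an order-isomorphism
\[({\mathscr C}(A_X),\subseteq)\cong({\mathscr C}(A_Y),\subseteq).\]

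Finally I would invoke the classical fact that the topology of a (topological) space is recoverable from the lattice (equivalently, the partial order by inclusion) of its closed subspaces: two spaces with order-isomorphic lattices of closed sets are homeomorphic. This is Theorem 11.1 of \cite{B}, already cited in the overview preceding Definition \ref{OGFD}; applying it to $A_X$ and $A_Y$ yields that $A_X=(\beta X\setminus X)\setminus\mathrm{cl}_{\beta X}(\beta X\setminus\upsilon X)$ and $A_Y=(\beta Y\setminus Y)\setminus\mathrm{cl}_{\beta Y}(\beta Y\setminus\upsilon Y)$ are homeomorphic, which is the assertion.

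I expect the only real subtlety to be the bookkeeping in the middle paragraph — namely verifying cleanly that restricting a closed set of $\beta X\setminus X$ that contains $R_X$ to the open subspace $A_X$ is a bijection onto the closed sets of $A_X$ that respects inclusion in both directions. This is genuinely routine (it is just the standard correspondence between closed sets of a space containing a fixed closed set and closed sets of the complementary open subspace), but it is where one must be careful that "closed in $A_X$" is meant in the subspace topology, and that $R_X$ being closed is what makes $R_X\cup C'$ closed in $\beta X\setminus X$. Everything else is a direct chaining of results already available: Lemma \ref{GJK}, the displayed set identity, and Theorem 11.1 of \cite{B}.
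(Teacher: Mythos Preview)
Your proposal is correct and follows essentially the same route as the paper: both arguments use Lemma~\ref{GJK} to pass from ${\mathscr U}^*(X)$ to the poset of closed subsets of $\beta X\setminus X$ containing $\mathrm{cl}_{\beta X}(\beta X\setminus\upsilon X)$, then identify this with ${\mathscr C}\big((\beta X\setminus X)\setminus\mathrm{cl}_{\beta X}(\beta X\setminus\upsilon X)\big)$ via the restriction/union correspondence, and finally appeal to Theorem~11.1 of \cite{B}. The paper packages the middle step by writing down the composite map $f(A)=\mathrm{int}_{\beta Y}\upsilon Y\cap F\big(A\cup\mathrm{cl}_{\beta X}(\beta X\setminus\upsilon X)\big)$ directly and checking it is a bijection with an explicit inverse, whereas you factor it into the two order-isomorphisms separately; the content is the same, and the closedness verification you flag as the one subtlety is exactly what the paper handles by observing that $A\cup\mathrm{cl}_{\beta X}(\beta X\setminus\upsilon X)=A'\cup\mathrm{cl}_{\beta X}(\beta X\setminus\upsilon X)$ is compact.
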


\begin{proof}
By Lemma \ref{GJK} there exists an order-isomorphism $F$ from the set of all closed subspaces of $\beta X\setminus X$ containing $\mathrm{cl}_{\beta X}(\beta X\setminus\upsilon X)$ to the set of all closed subspaces of $\beta Y\setminus Y$ containing $\mathrm{cl}_{\beta Y}(\beta Y\setminus\upsilon Y)$, both partially ordered by $\subseteq$. Note that $X\subseteq\mathrm{int}_{\beta X}\upsilon X$ and $Y\subseteq\mathrm{int}_{\beta Y}\upsilon Y$ by Lemma \ref{FGA}, as $X$ and $Y$ are both locally compact and therefore locally pseudocompact. We define an order-isomorphism
\[f:{\mathscr C}(\mathrm{int}_{\beta X}\upsilon X\setminus X)\rightarrow{\mathscr C}(\mathrm{int}_{\beta Y}\upsilon Y\setminus Y);\]
this will prove the lemma. Let $A\in{\mathscr C}(\mathrm{int}_{\beta X}\upsilon X\setminus X)$. Let $A'$ be a closed subspace of $\beta X\setminus X$ with
\[A=A'\cap\mathrm{int}_{\beta X}\upsilon X.\]
Note that $A'$ is compact, as it is closed in $\beta X\setminus X$ and the latter is compact, since $X$ is locally compact. Thus
\[A\cup\mathrm{cl}_{\beta X}(\beta X\setminus\upsilon X)=A'\cup\mathrm{cl}_{\beta X}(\beta X\setminus\upsilon X)\]
is compact and non-empty (as $X$ is non-pseudocompact). Define
\[f(A)=\mathrm{int}_{\beta Y}\upsilon Y\cap F\big(A\cup\mathrm{cl}_{\beta X}(\beta X\setminus\upsilon X)\big).\]
Clearly, $f$ is well-defined. That $f$ is an order-homomorphism is straightforward. Now, let
\[g:{\mathscr C}(\mathrm{int}_{\beta Y}\upsilon Y\setminus Y)\rightarrow{\mathscr C}(\mathrm{int}_{\beta X}\upsilon X\setminus X)\]
be defined by
\[g(B)=\mathrm{int}_{\beta X}\upsilon X\cap F^{-1}\big(B\cup\mathrm{cl}_{\beta Y}(\beta Y\setminus\upsilon Y)\big)\]
for any $B\in{\mathscr C}(\mathrm{int}_{\beta Y}\upsilon Y\setminus Y)$. If $A\in{\mathscr C}(\mathrm{int}_{\beta X}\upsilon X\setminus X)$ then
\begin{eqnarray*}
f(A)\cup\mathrm{cl}_{\beta Y}(\beta Y\setminus\upsilon Y)&=&\big(\mathrm{int}_{\beta Y}\upsilon Y\cap F\big(A\cup\mathrm{cl}_{\beta X}(\beta X\setminus\upsilon X)\big)\big)\cup\mathrm{cl}_{\beta Y}(\beta Y\setminus\upsilon Y)\\&=&F\big(A\cup\mathrm{cl}_{\beta X}(\beta X\setminus\upsilon X)\big)
\end{eqnarray*}
and therefore
\begin{eqnarray*}
g\big(f(A)\big)&=&\mathrm{int}_{\beta X}\upsilon X\cap F^{-1}\big(f(A)\cup\mathrm{cl}_{\beta Y}(\beta Y\setminus\upsilon Y)\big)\\&=&\mathrm{int}_{\beta X}\upsilon X\cap F^{-1}\big(F\big(A\cup\mathrm{cl}_{\beta X}(\beta X\setminus\upsilon X)\big)\big)\\&=&\mathrm{int}_{\beta X}\upsilon X\cap\big(A\cup\mathrm{cl}_{\beta X}(\beta X\setminus\upsilon X)\big)=A.
\end{eqnarray*}
Thus $gf$ is the identity mapping, and similarly, so is $fg$. Therefore $g=f^{-1}$. Since $g$  is an order-homomorphism, $f$ is an  order-isomorphism.
\end{proof}

We now prove our second main result.

\begin{theorem}\label{RTRT}
Let $X$ and $Y$ be locally compact non-pseudocompact spaces. If ${\mathscr U}(X)$ and ${\mathscr U}(Y)$ are order-isomorphic then $(\beta X\setminus X)\setminus\mathrm{cl}_{\beta X}(\beta X\setminus\upsilon X)$ and $(\beta Y\setminus Y)\setminus\mathrm{cl}_{\beta Y}(\beta Y\setminus\upsilon Y)$ are homeomorphic.
\end{theorem}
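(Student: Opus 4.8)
The plan is to reduce Theorem \ref{RTRT} to Lemma \ref{RREQ} by extracting the partially ordered set ${\mathscr U}^*(X)$ order-theoretically from ${\mathscr U}(X)$. First I would observe that Lemma \ref{OSDR} characterizes membership in ${\mathscr U}^*(X)$ purely in terms of the order on ${\mathscr U}(X)$ together with the notions ``co-atom of type $(\mathrm{II})$'' and ``co-atom cover'': an element $Y\in{\mathscr U}(X)$ lies in ${\mathscr U}^*(X)$ if and only if $Y\leq T$ for every type-$(\mathrm{II})$ co-atom $T$ and $Y\leq D$ for every co-atom cover $D$ with $Y\leq D'$. Since the property of being a co-atom is plainly order-theoretic, the property of being a co-atom of type $(\mathrm{II})$ is order-theoretic by Lemma \ref{GFS}, and the property of being a co-atom cover (together with the passage $D\mapsto D'$) is order-theoretic by Definition \ref{OGFD}, it follows that membership in ${\mathscr U}^*(X)$ is an order-theoretic property, as recorded in Lemma \ref{EWER}(3).

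Next I would argue that any order-isomorphism $\Theta:{\mathscr U}(X)\rightarrow{\mathscr U}(Y)$ restricts to a bijection between ${\mathscr U}^*(X)$ and ${\mathscr U}^*(Y)$. This is exactly Lemma \ref{EWER}: $\Theta$ sends co-atoms to co-atoms preserving type, sends co-atom covers to co-atom covers while intertwining the operation $D\mapsto D'$, and hence, because the characterization in Lemma \ref{OSDR} is expressed solely through these order-theoretic data, carries ${\mathscr U}^*(X)$ onto ${\mathscr U}^*(Y)$. Both ${\mathscr U}^*(X)$ and ${\mathscr U}^*(Y)$ are nonempty here, since $X$ and $Y$ are locally compact (hence locally pseudocompact by Lemma \ref{FGA}) and non-pseudocompact, so $\zeta X\setminus X=\beta X\setminus\mathrm{int}_{\beta X}\upsilon X$ is a nonempty compact set that can be contracted to a single point to yield a one-point pseudocompactification with compact remainder. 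The restriction $\Theta|{\mathscr U}^*(X):{\mathscr U}^*(X)\rightarrow{\mathscr U}^*(Y)$ is an order-isomorphism between the induced partial orders.

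Finally I would invoke Lemma \ref{RREQ}: since $X$ and $Y$ are locally compact non-pseudocompact and ${\mathscr U}^*(X)$ is order-isomorphic to ${\mathscr U}^*(Y)$, the spaces $(\beta X\setminus X)\setminus\mathrm{cl}_{\beta X}(\beta X\setminus\upsilon X)$ and $(\beta Y\setminus Y)\setminus\mathrm{cl}_{\beta Y}(\beta Y\setminus\upsilon Y)$ are homeomorphic, which is the assertion of the theorem. So the whole argument is a short chain of citations. The only place where real content hides is that the order-theoretic characterizations feeding Lemma \ref{EWER} must be genuinely intrinsic; the main obstacle is therefore ensuring, in Lemma \ref{OSDR} and its supporting Lemmas \ref{GFS}, \ref{PJE}, \ref{JUHD}, that nothing in the description of ${\mathscr U}^*(X)$ secretly depends on the representation $e_X(\cdot)$ rather than on the abstract order — but this has been verified in those lemmas, so the proof of Theorem \ref{RTRT} itself is a two-line deduction: apply Lemma \ref{EWER}(3) to get an order-isomorphism ${\mathscr U}^*(X)\cong{\mathscr U}^*(Y)$, then apply Lemma \ref{RREQ}.
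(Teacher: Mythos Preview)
Your proposal is correct and matches the paper's proof exactly: the paper's argument is precisely the two-line deduction you describe, citing Lemma \ref{EWER}(3) to obtain that any order-isomorphism ${\mathscr U}(X)\rightarrow{\mathscr U}(Y)$ restricts to an order-isomorphism ${\mathscr U}^*(X)\rightarrow{\mathscr U}^*(Y)$, and then invoking Lemma \ref{RREQ}. Your additional remarks unpacking why Lemma \ref{EWER} holds (via Lemmas \ref{GFS}, \ref{OSDR}, \ref{PJE}, \ref{JUHD}) are accurate and simply make explicit what the paper takes as already established.
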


\begin{proof}
This follows from Lemmas \ref{EWER} and \ref{RREQ}, as every order-isomorphism of ${\mathscr U}(X)$ onto ${\mathscr U}(Y)$ restricts to an order-isomorphism of ${\mathscr U}^*(X)$ onto ${\mathscr U}^*(Y)$.
\end{proof}

\begin{question}\label{HHLTD}
In Theorem \ref{RTRT}, is it possible to replace local compactness by local pseudocompactness? Also, if we denote by ${\mathscr A}(X)$ the set of all pseudocompactifications of a space $X$, is it then possible to replace ${\mathscr U}(X)$ and ${\mathscr U}(Y)$ by ${\mathscr A}(X)$ and ${\mathscr A}(Y)$, respectively?
\end{question}

\section{Realcompactifications with compact remainder}

Realcompact extensions are called {\em realcompactifications}. (Recall that a space is said to be {\em realcompact} if it is homeomorphic to a closed subspace of a product of copies of the real line.) In this section we consider realcompactifications with compact remainder, with results dual to those of the previous section. The section is divided into two parts. In the first part we describe the general form of all realcompactifications of a given space $X$ with compact remainder. In the second part we deal with the partially ordered set of all realcompactifications of a space $X$ with compact remainder. As in the case of pseudocompactifications, we show that this partially ordered set determines the topology of certain subspaces of $\beta X\setminus X$.

\subsection{Realcompactifications with compact remainder; their general form.}
The results of this part are analogous to those in Part \ref{IUI}.

\begin{definition}\label{TDFK}
For a space $X$ denote by ${\mathscr R}(X)$ the set of all realcompactifications of $X$ with compact remainder.
\end{definition}

The following is the counterpart of Lemma \ref{ASD}.

\begin{lemma}\label{SEDD}
Let $X$ be a space, let $Y$ be an extension of $X$ with compact remainder, let $\zeta Y$ be a compactification of $Y$ and let $\phi:\beta X\rightarrow\zeta Y$ continuously extend $\mathrm{id}_X$. The following are equivalent:
\begin{itemize}
\item[\rm(1)] $Y\in{\mathscr R}(X)$.
\item[\rm(2)] $\mathrm{cl}_{\beta X}(\upsilon X\setminus X)\subseteq\phi^{-1}[Y\setminus X]$.
\end{itemize}
\end{lemma}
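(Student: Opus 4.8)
\textbf{Proof proposal for Lemma \ref{SEDD}.}

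The plan is to mirror the structure of the proof of Lemma \ref{ASD}, replacing ``pseudocompactness of $Y$'' by ``realcompactness of $Y$'' and the set $\beta X\setminus\upsilon X$ by the set $\upsilon X\setminus X$ at the appropriate places. First I would reduce to the case $\zeta Y=\beta Y$ exactly as in Lemma \ref{ASD}: if $\psi\colon\beta X\to\zeta Y$ and $\gamma\colon\beta Y\to\zeta Y$ continuously extend $\mathrm{id}_X$ and $\mathrm{id}_Y$ respectively, then $\gamma\psi=\phi$ and $\gamma[\beta Y\setminus Y]=\zeta Y\setminus Y$, so $\psi^{-1}[Y\setminus X]=(\gamma\psi)^{-1}[Y\setminus X]=\phi^{-1}[Y\setminus X]$; hence the statement for an arbitrary compactification $\zeta Y$ follows from the statement for $\beta Y$. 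Note also that since $\phi^{-1}[Y\setminus X]$ is closed in $\beta X$, condition (2) is equivalent to $\upsilon X\setminus X\subseteq\phi^{-1}[Y\setminus X]$.

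For (1) $\Rightarrow$ (2) with $\zeta Y=\beta Y$: let $x\in\upsilon X\setminus X$ and suppose toward a contradiction $x\notin\phi^{-1}[Y\setminus X]$. Since $x\notin X$, there is a zero-set $Z$ of $\beta Y$ with $x\in Z$ and $Z\cap Y=\emptyset$ — here I would use that $Y$ is realcompact, so $\beta Y\setminus Y$ is a union of zero-sets of $\beta Y$ (equivalently, every point of $\beta Y\setminus Y$ lies in a zero-set of $\beta Y$ missing $Y$); more carefully, I should produce such a $Z$ containing the image of $x$. Actually the cleaner route: by Lemma \ref{j2}, $\beta Y$ is the quotient of $\beta X$ contracting the fibers $\phi^{-1}(p)$, $p\in Y\setminus X$, and $\phi$ fixes $x$. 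If $x\notin\phi^{-1}[Y\setminus X]$, then $x$ survives as a point of $\beta Y\setminus Y$. Since $Y$ is realcompact, there is a cozero-set $C$ of $\beta Y$ with $Y\subseteq C$ and $x\notin C$; pull back to get that $\phi^{-1}(x)=\{x\}$ lies outside the cozero-set $\phi^{-1}[C]\supseteq X$ of $\beta X$, so $x\in\beta X\setminus\upsilon X$ because $\upsilon X$ is the intersection of all cozero-sets of $\beta X$ containing $X$ — contradicting $x\in\upsilon X$.

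For (2) $\Rightarrow$ (1): suppose $Y$ is not realcompact, so there is $p\in\beta Y\setminus\upsilon Y$; then there is a cozero-set $C$ of $\beta Y$ with $Y\subseteq C$ and $p\notin C$, equivalently a zero-set $Z\in\mathscr Z(\beta Y)$ with $p\in Z$ and $Z\cap Y=\emptyset$. By Lemma \ref{j2} and the description of $\upsilon X$, $\phi^{-1}[Z]\in\mathscr Z(\beta X)$ misses $X$, hence $\phi^{-1}[Z]\cap\upsilon X$ is a zero-set of $\beta X$ disjoint from $X$ inside $\upsilon X$, forcing $\phi^{-1}[Z]\cap\upsilon X=\emptyset$, i.e.\ $\phi^{-1}[Z]\subseteq\beta X\setminus\upsilon X$. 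But $p\in\phi^{-1}[Z]$ and, since $p\notin X$ and $\phi$ restricted to $\beta X\setminus X$ behaves as in Lemma \ref{j2}, I need $p\in\upsilon X\setminus X$ to get a contradiction with (2) — this is the delicate point. The main obstacle I anticipate is precisely this asymmetry: in the pseudocompact case the bad point lands in $\beta X\setminus\upsilon X$ and one uses that $\beta X\setminus\upsilon X\subseteq\phi^{-1}[Y\setminus X]$; in the realcompact case one must instead argue that a point of $\beta Y\setminus\upsilon Y$ which is not in $Y$ must, as a point of $\beta X$, lie in $\upsilon X\setminus X$, which requires carefully tracking how $\upsilon$ interacts with the quotient $\phi$ (using that $\upsilon Y=\phi[\upsilon X]$ or at least that $\phi^{-1}[\upsilon Y]\supseteq\upsilon X$, since the restriction of $\phi$ to $\upsilon X$ is a continuous surjection onto a realcompact space containing $Y$ densely). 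Once that containment is established the contradiction is immediate, and the general-$\zeta Y$ case follows by the reduction above.
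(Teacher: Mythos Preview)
Your reduction to $\zeta Y=\beta Y$ and your argument for (1) $\Rightarrow$ (2) are correct and essentially identical to the paper's.

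Your (2) $\Rightarrow$ (1), however, starts from the wrong point. From ``$Y$ is not realcompact'' you get a point $p\in\upsilon Y\setminus Y$, \emph{not} a point in $\beta Y\setminus\upsilon Y$; the latter set can be empty even when $Y$ is not realcompact (e.g.\ $Y$ pseudocompact but not compact). With your chosen $p\in\beta Y\setminus\upsilon Y$ the pull-back of a zero-set $Z$ of $\beta Y$ through $p$ missing $Y$ indeed lands in $\beta X\setminus\upsilon X$, but that is irrelevant to hypothesis (2), which concerns $\upsilon X\setminus X$. Your closing suggestion, to use $\phi^{-1}[\upsilon Y]\supseteq\upsilon X$, only reconfirms that points of $\beta Y\setminus\upsilon Y$ pull back into $\beta X\setminus\upsilon X$; it cannot place such a point in $\upsilon X\setminus X$, so no contradiction with (2) follows.

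The paper runs the argument the other way. Take $p\in\upsilon Y\setminus Y$; via Lemma \ref{j2} view $p$ as a point of $\beta X\setminus X$ with $p\notin\phi^{-1}[Y\setminus X]$. Hypothesis (2) then forces $p\notin\upsilon X\setminus X$, hence $p\notin\upsilon X$. Now choose $S\in\mathscr Z(\beta X)$ with $p\in S$ and $S\cap X=\emptyset$, and $T\in\mathscr Z(\beta X)$ with $p\in T$ and $T\cap\phi^{-1}[Y\setminus X]=\emptyset$. Then $G=(S\cap T)\setminus\phi^{-1}[Y\setminus X]$ is a countable intersection of open subsets of $\beta X$ each missing the compact $\phi^{-1}[Y\setminus X]$; by Lemma \ref{j2} such sets pass through $\phi$ to open subsets of $\beta Y$, so $G$ is a $G_\delta$ of $\beta Y$ containing $p$ and missing $Y$, contradicting $p\in\upsilon Y$. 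The missing idea in your proposal is precisely this reversal: first use (2) to push $p$ out of $\upsilon X$, then manufacture a $G_\delta$ of $\beta Y$ disjoint from $Y$ by avoiding the quotient fibers.
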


\begin{proof}
As in the proof of Lemma \ref{ASD}, it suffices that we consider the case when $\zeta Y=\beta Y$ and replace condition (2) by the requirement that $\upsilon X\setminus X\subseteq\phi^{-1}[Y\setminus X]$.

(1) {\em implies} (2). Let $x\in\upsilon X\setminus X$ and suppose to the contrary that $x\notin\phi^{-1}[Y\setminus X]$. Then $x\in \beta Y\setminus Y$. (Recall the construction of $\beta Y$ given in Lemma \ref{j2}.) Since $Y$ is realcompact, there exists some $Z\in{\mathscr Z}(\beta Y)$ such that $x\in Z$ and $Z\cap Y$ is empty. Now $\phi^{-1}[Z]\in{\mathscr Z}(\beta X)$ misses $X$ and contains $x$, contradicting the fact that $x\in\upsilon X$.

(2) {\em implies} (1). Suppose to the contrary that $Y$ is not realcompact. There exists some $p\in\upsilon Y\setminus Y$. Then $p\in\beta X\setminus X$ and $p\notin\phi^{-1}[Y\setminus X]$. (See Lemma \ref{j2}.) Thus $p\notin\upsilon X\setminus X$ and therefore $p\notin\upsilon X$. Let $S\in{\mathscr Z}(\beta X)$ be such that $p\in S$ and $S\cap X$ is empty. Let $T\in{\mathscr Z}(\beta X)$ be such that $p\in T$ and $T\cap\phi^{-1}[Y\setminus X]$ is empty. Now
\[G=(S\cap T)\setminus\phi^{-1}[Y\setminus X]\]
contains $p$ and it is a countable intersection of open subspaces of $\beta X$ each missing $\phi^{-1}[Y\setminus X]$. Thus (using Lemma \ref{j2}) $G$ is a non-empty $G_\delta$-set of $\beta Y$ which misses $Y$, contradicting the fact that $p\in\upsilon Y$.
\end{proof}

\begin{definition}\label{AQL}
A space $X$ is called {\em locally realcompact} if every $x\in X$ has an open neighborhood $U$ in $X$ with realcompact closure $\mathrm{cl}_X U$.
\end{definition}

Recall that realcompactness is hereditary with respect to closed subspaces; thus, a space with a realcompactification with compact remainder is locally realcompact.

\begin{lemma}[Mack, Rayburn and Woods \cite{MRW1}]\label{KJD}
A space $X$ is locally realcompact if and only if $X$ is open in $\upsilon X$.
\end{lemma}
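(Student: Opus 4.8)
The plan is to prove Lemma \ref{KJD}, which asserts that a space $X$ is locally realcompact if and only if $X$ is open in $\upsilon X$. This is the exact realcompact analogue of Lemma \ref{FGA} (Comfort's theorem), and since we work with $X\subseteq\upsilon X\subseteq\beta X$ and $\upsilon X$ is the intersection of all cozero-sets of $\beta X$ containing $X$, the strategy is to translate ``locally realcompact'' into a statement about cozero-sets of $\beta X$ and then recognize it as openness in $\upsilon X$.

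\medskip

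\emph{Sufficiency.} Suppose $X$ is open in $\upsilon X$. Since $\upsilon X$ is realcompact, every open subspace of it that has realcompact closure in $\upsilon X$ witnesses local realcompactness at its points; more directly, fix $x\in X$. As $X$ is open in $\upsilon X$ and $\upsilon X$ is (completely regular and) realcompact, I would choose a cozero-set $C$ of $\upsilon X$ with $x\in C\subseteq\mathrm{cl}_{\upsilon X}C\subseteq X$ (possible because $\upsilon X$ is completely regular and $X$ is an open neighborhood of $x$). Then $\mathrm{cl}_{\upsilon X}C$ is a closed subspace of the realcompact space $\upsilon X$, hence realcompact; and $\mathrm{cl}_{\upsilon X}C=\mathrm{cl}_X C$ because $\mathrm{cl}_{\upsilon X}C\subseteq X$. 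Thus $C$ is an open neighborhood of $x$ in $X$ with realcompact closure in $X$, so $X$ is locally realcompact.

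\medskip

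\emph{Necessity.} This is the substantive direction. Assume $X$ is locally realcompact; I must show every point of $X$ has a neighborhood in $\upsilon X$ contained in $X$, equivalently $X\cap(\upsilon X\setminus X)$-free neighborhoods exist, i.e.\ $\upsilon X\setminus X$ does not meet $\mathrm{cl}_{\upsilon X}$ of any such neighborhood. Fix $x\in X$ and pick an open $U$ in $X$ with $x\in U$ and $\mathrm{cl}_X U$ realcompact. Since $X$ is completely regular, shrink $U$ to a cozero-set $C$ of $X$ with $x\in C\subseteq\mathrm{cl}_X C\subseteq U$, so $\mathrm{cl}_X C$ is a closed subspace of the realcompact space $\mathrm{cl}_X U$, hence realcompact. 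Write $C=X\setminus\mathrm{Z}(f)$ for a continuous $f:X\to[0,1]$, extend $f$ to $f_\beta:\beta X\to[0,1]$, and set $W=\mathrm{int}_{\beta X}\mathrm{cl}_{\beta X}C$; one checks $W$ is an open neighborhood of $x$ in $\beta X$. The key point is that $W\cap\upsilon X\subseteq X$: the realcompactness of $\mathrm{cl}_X C$ lets one conclude that $\upsilon(\mathrm{cl}_X C)=\mathrm{cl}_X C$ sits inside $\beta X$ as $\mathrm{cl}_{\beta X}(\mathrm{cl}_X C)$ intersected appropriately, and a point $p\in W\cap\upsilon X$ lying outside $X$ would lie in $\mathrm{cl}_{\beta X}C$ but outside $\mathrm{cl}_X C$, yet every point of $\upsilon X$ in the closure of the cozero-set $C$ must belong to $\upsilon(\mathrm{cl}_X C)$ (by the extension property of $\upsilon$ applied to the realcompact space $\mathrm{cl}_X C$ and a retraction-type argument), forcing $p\in\mathrm{cl}_X C\subseteq X$, a contradiction. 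Hence $W\cap\upsilon X$ is an open neighborhood of $x$ in $\upsilon X$ contained in $X$, so $X$ is open in $\upsilon X$.

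\medskip

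I expect the main obstacle to be the core of the necessity direction: making rigorous the claim that a point of $\upsilon X$ in $\mathrm{cl}_{\beta X}C$ that is not in $X$ cannot exist when $\mathrm{cl}_X C$ is realcompact. The clean way to handle this is to invoke that $\upsilon X$ is the intersection of all cozero-sets of $\beta X$ containing $X$ (stated in the excerpt): if some $p\in\mathrm{cl}_{\beta X}C\cap(\upsilon X\setminus X)$ existed, one produces a cozero-set $G$ of $\beta X$ with $X\subseteq\mathrm{cl}_X C\subseteq G$ but $p\notin G$ — built from a continuous real-valued function on the realcompact space $\mathrm{cl}_X C$ (or rather on $\beta X$ witnessing $p\notin\upsilon(\mathrm{cl}_X C)$) extended over $\beta X$ — contradicting $p\in\upsilon X$. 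Once this separating cozero-set is in hand, everything else is routine point-set manipulation with closures in $\beta X$. Alternatively, one can cite Lemma \ref{FGA} together with the fact that realcompactness is the ``$\sigma$-compactness relative to $G_\delta$'s'' analogue, but the direct cozero-set argument is self-contained given the facts already recorded in the excerpt.
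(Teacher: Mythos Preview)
The paper itself does not give a proof of this lemma; it is merely cited as a result of Mack, Rayburn and Woods. So there is no ``paper's own proof'' to compare against, and I will instead assess your argument directly and indicate the standard route.

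Your sufficiency direction is correct and clean.

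Your necessity direction has a genuine gap. The crucial step is the claim that $W\cap\upsilon X\subseteq X$, i.e.\ that no point of $\upsilon X\setminus X$ can lie in $\mathrm{cl}_{\beta X}C$ when $\mathrm{cl}_X C$ is realcompact. Your justification relies (implicitly or explicitly) on something like $\mathrm{cl}_{\upsilon X}(\mathrm{cl}_X C)=\upsilon(\mathrm{cl}_X C)=\mathrm{cl}_X C$; but that identity is Lemma~\ref{DDJD}, which requires $\mathrm{cl}_X C$ to be $C$-embedded in $X$ --- true in normal spaces, not in general Tychonoff spaces. Your proposed fix (``produce a cozero-set $G$ of $\beta X$ with $X\subseteq\mathrm{cl}_X C\subseteq G$ but $p\notin G$'') is garbled: the inclusion $X\subseteq\mathrm{cl}_X C$ is backwards, and you never explain how realcompactness of $\mathrm{cl}_X C$ alone yields a zero-set of $\beta X$ through $p$ that misses all of $X$ (rather than just missing $\mathrm{cl}_X C$). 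Separating $p$ from $\mathrm{cl}_X C$ inside $\mathrm{cl}_{\beta X}(\mathrm{cl}_X C)$ is not enough, since you must also miss $X\setminus\mathrm{cl}_X C$.

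The argument that works without normality uses the characterization $\upsilon X=\bigcap\{T:X\subseteq T\subseteq\beta X,\ T\mbox{ realcompact}\}$ (recorded in the paper's introduction as the definition of $\gamma_{\mathfrak{P}}X$) together with the fact that the union of a compact space and a realcompact space is realcompact. Given $x\in U$ with $\mathrm{cl}_X U$ realcompact, pick an open $V'$ in $\beta X$ with $x\in V'$ and $\mathrm{cl}_{\beta X}V'\cap X\subseteq U$, and set $T=(\beta X\setminus V')\cup\mathrm{cl}_X U$. Then $X\subseteq T\subseteq\beta X$, and $T$ is realcompact as the union of the compact set $\beta X\setminus V'$ and the realcompact set $\mathrm{cl}_X U$. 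Hence $\upsilon X\subseteq T$, so $V'\cap\upsilon X\subseteq V'\cap T=\mathrm{cl}_X U\cap V'\subseteq X$, giving an open neighborhood of $x$ in $\upsilon X$ contained in $X$.
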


The following is the counterpart of Lemma \ref{FGA}.

\begin{lemma}\label{ASL}
A space $X$ is locally realcompact if and only if
\[X\subseteq\beta X\setminus\mathrm{cl}_{\beta X}(\upsilon X\setminus X).\]
\end{lemma}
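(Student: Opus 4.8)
\textbf{Proof proposal for Lemma \ref{ASL}.}

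The plan is to establish the two implications separately, mirroring the structure of the proof of Lemma \ref{FGA} (Comfort's lemma) but with $\upsilon X$ playing a role symmetric to that of $\beta X$ in the pseudocompact setting, and invoking Lemma \ref{KJD} to translate between ``$X$ open in $\upsilon X$'' and the desired containment inside $\beta X$. The key point to have clearly in mind throughout is the elementary set-theoretic identity, valid since $X\subseteq\upsilon X\subseteq\beta X$,
\[\beta X\setminus\mathrm{cl}_{\beta X}(\upsilon X\setminus X)=\mathrm{int}_{\beta X}\big(\beta X\setminus(\upsilon X\setminus X)\big)\supseteq\mathrm{int}_{\beta X}\big((\beta X\setminus\upsilon X)\cup X\big),\]
so that a point $x\in X$ lies in $\beta X\setminus\mathrm{cl}_{\beta X}(\upsilon X\setminus X)$ precisely when $x$ has a neighborhood in $\beta X$ missing $\upsilon X\setminus X$, equivalently when $x$ is not in the $\beta X$-closure of $\upsilon X\setminus X$.

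For the forward implication, suppose $X$ is locally realcompact. By Lemma \ref{KJD}, $X$ is open in $\upsilon X$, so for each $x\in X$ there is an open set $W$ of $\beta X$ with $x\in W\cap\upsilon X\subseteq X$, i.e. $W\cap(\upsilon X\setminus X)=\emptyset$. Then $W$ is a neighborhood of $x$ in $\beta X$ disjoint from $\upsilon X\setminus X$, hence $x\notin\mathrm{cl}_{\beta X}(\upsilon X\setminus X)$; since $x$ was arbitrary, $X\subseteq\beta X\setminus\mathrm{cl}_{\beta X}(\upsilon X\setminus X)$.

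For the converse, assume $X\subseteq\beta X\setminus\mathrm{cl}_{\beta X}(\upsilon X\setminus X)$. Fix $x\in X$ and pick an open $W$ of $\beta X$ with $x\in W$ and $W\cap\mathrm{cl}_{\beta X}(\upsilon X\setminus X)=\emptyset$; in particular $W\cap(\upsilon X\setminus X)=\emptyset$, so $W\cap\upsilon X\subseteq X$. Thus $W\cap\upsilon X$ is an open neighborhood of $x$ in $\upsilon X$ contained in $X$, which shows $X$ is open in $\upsilon X$; by Lemma \ref{KJD} again, $X$ is locally realcompact. The only mild subtlety — and the place I would be most careful — is making sure that the equivalence ``$X$ open in $\upsilon X$'' $\Leftrightarrow$ the stated $\beta X$-containment is handled cleanly, since $\upsilon X$ carries the subspace topology from $\beta X$ (we may assume $\upsilon X\subseteq\beta X$) so open sets of $\upsilon X$ are exactly traces of open sets of $\beta X$; once that is spelled out, both directions are routine and no genuine obstacle remains.
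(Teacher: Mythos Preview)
Your proof is correct and takes essentially the same approach as the paper: both reduce via Lemma \ref{KJD} to the equivalence between ``$X$ is open in $\upsilon X$'' and the stated $\beta X$-containment, using that $\upsilon X$ carries the subspace topology from $\beta X$. The paper presents this as a single chain of iff's (passing through ``$\upsilon X\setminus X$ is closed in $\upsilon X$'' and the identity $X\cap\mathrm{cl}_{\upsilon X}(\upsilon X\setminus X)=X\cap\mathrm{cl}_{\beta X}(\upsilon X\setminus X)$), whereas you unpack the two directions pointwise with explicit neighborhoods, but the content is the same.
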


\begin{proof}
By Lemma \ref{KJD}, the space $X$ is locally realcompact if and only if $X$ is open in $\upsilon X$. Thus, $X$ is locally realcompact if and only if $\upsilon X\setminus X$ is closed in $\upsilon X$, if and only if $\mathrm{cl}_{\upsilon X}(\upsilon X\setminus X)\subseteq\upsilon X\setminus X$, if and only if $X\cap\mathrm{cl}_{\upsilon X}(\upsilon X\setminus X)$ is empty, if and only if $X\cap\mathrm{cl}_{\beta X}(\upsilon X\setminus X)$ is empty, if and only if $X\subseteq\beta X\setminus\mathrm{cl}_{\beta X}(\upsilon X\setminus X)$.
\end{proof}

The following is the counterpart of Theorem \ref{RES}.

\begin{theorem}\label{DRES}
Let $X$ be a locally realcompact space. Let $\zeta X$ be a compactification of $X$, let $\phi:\beta X\rightarrow\zeta X$ continuously extend $\mathrm{id}_X$ and let $E$ be a compact subspace of $\zeta X\setminus X$ containing $\phi[\mathrm{cl}_{\beta X}(\upsilon X\setminus X)]$. Then $Y=X\cup E\in{\mathscr R}(X)$ (considered as a subspace of $\zeta X$). Furthermore, every element of ${\mathscr R}(X)$ is of this form.
\end{theorem}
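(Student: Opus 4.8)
The plan is to mimic the proof of Theorem \ref{RES} almost verbatim, invoking Lemma \ref{SEDD} (the realcompact counterpart of Lemma \ref{ASD}) in place of Lemma \ref{ASD}, and Lemma \ref{ASL} (the counterpart of Lemma \ref{FGA}) in place of Lemma \ref{FGA}. Concretely, I would argue the two halves of the statement separately.

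For the first half, suppose $X$ is locally realcompact, $\zeta X$ is a compactification of $X$, $\phi\colon\beta X\to\zeta X$ continuously extends $\mathrm{id}_X$, and $E$ is a compact subspace of $\zeta X\setminus X$ with $\phi[\mathrm{cl}_{\beta X}(\upsilon X\setminus X)]\subseteq E$. Set $Y=X\cup E$, viewed as a subspace of $\zeta X$. Since $X$ is locally realcompact, Lemma \ref{ASL} gives $X\subseteq\beta X\setminus\mathrm{cl}_{\beta X}(\upsilon X\setminus X)$, so $E\subseteq\zeta X\setminus X$ is disjoint from $X$ and $Y\setminus X=E$ is a compact remainder; also $X$ is dense in $Y$ because it is dense in $\zeta X$. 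Thus $Y$ is an extension of $X$ with compact remainder and $\zeta X$ is a compactification of $Y$. It remains to check realcompactness via Lemma \ref{SEDD}, i.e. that $\mathrm{cl}_{\beta X}(\upsilon X\setminus X)\subseteq\phi^{-1}[Y\setminus X]=\phi^{-1}[E]$. But $\phi[\mathrm{cl}_{\beta X}(\upsilon X\setminus X)]\subseteq E$ is precisely this inclusion, so Lemma \ref{SEDD} yields $Y\in{\mathscr R}(X)$.

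For the converse, let $Y\in{\mathscr R}(X)$ be arbitrary; I would realize it inside a concrete compactification. Take $\zeta X=\beta Y$ and let $\phi=\psi\colon\beta X\to\beta Y$ be the continuous extension of $\mathrm{id}_X$. Put $E=Y\setminus X$, which is compact and contained in $\beta Y\setminus X$. By Lemma \ref{SEDD} applied to $Y$ (with $\zeta Y=\beta Y$), we have $\mathrm{cl}_{\beta X}(\upsilon X\setminus X)\subseteq\psi^{-1}[Y\setminus X]$, hence $\psi[\mathrm{cl}_{\beta X}(\upsilon X\setminus X)]\subseteq Y\setminus X=E$, so $E$ satisfies the required containment. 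Finally $X\cup E=X\cup(Y\setminus X)=Y$ as a subspace of $\zeta X=\beta Y$, since $Y\subseteq\beta Y$ carries the subspace topology. Thus $Y$ has exactly the asserted form.

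I do not expect a serious obstacle here: the argument is a formal transfer of the proof of Theorem \ref{RES}. The only point deserving a moment's care is making sure the subspace-topology bookkeeping is consistent — that $Y=X\cup E$ as a \emph{subspace} of $\zeta X$ genuinely reproduces the given extension in the converse direction (which is immediate since $Y\subseteq\beta Y$ already) and that in the forward direction $Y$ is Hausdorff and has $X$ as a \emph{dense} subspace with compact remainder before Lemma \ref{SEDD} is even applicable. Both are handled by the density of $X$ in $\zeta X$ and the disjointness supplied by Lemma \ref{ASL}.
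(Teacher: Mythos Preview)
Your proposal is correct and follows exactly the paper's approach: the paper states Theorem \ref{DRES} without a separate proof, indicating it is the formal analogue of Theorem \ref{RES} obtained by substituting Lemma \ref{SEDD} for Lemma \ref{ASD} and Lemma \ref{ASL} for Lemma \ref{FGA}, which is precisely what you do. Your write-up simply unpacks in detail what the paper's one-line proof of Theorem \ref{RES} leaves implicit.
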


\subsection{Realcompactifications with compact remainder; their partially ordered sets.}
The results of this part are analogous to those in Part \ref{TIS}. Theorems \ref{AEF} and \ref{PKS} are dual to Theorems  \ref{RSQ} and \ref{RTRT}, respectively, with analogous proofs. One should simply replace $\mathrm{cl}_{\beta X}(\beta X\setminus\upsilon X)$ by $\mathrm{cl}_{\beta X}(\upsilon X\setminus X)$ in all proofs and note the duality between Lemmas \ref{ASD} and \ref{SEDD} and Lemmas \ref{FGA} and \ref{ASL}.

\begin{definition}\label{TER}
For a space $X$ let
\[\rho X=X\cup\mathrm{cl}_{\beta X}(\upsilon X\setminus X)\]
considered as a subspace of $\beta X$.
\end{definition}

\begin{proposition}\label{OPJ}
Let $X$ be a locally realcompact space. Then ${\mathscr R}(X)$ has a largest element, namely $\rho X$.
\end{proposition}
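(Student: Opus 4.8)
The plan is to mirror the proof of Lemma \ref{PFAJ}, using the duality between Lemmas \ref{ASD} and \ref{SEDD} and between Lemmas \ref{FGA} and \ref{ASL} exactly as advertised in the preceding paragraph. First I would check that $\rho X$ is a legitimate element of ${\mathscr R}(X)$. Since $X$ is locally realcompact, Lemma \ref{ASL} gives $X\subseteq\beta X\setminus\mathrm{cl}_{\beta X}(\upsilon X\setminus X)$, so that
\[\rho X\setminus X=\mathrm{cl}_{\beta X}(\upsilon X\setminus X),\]
which is closed in the compact space $\beta X$, hence compact; thus $\rho X$ is an extension of $X$ with compact remainder. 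Taking $\zeta Y=\beta X$ and $\phi=\mathrm{id}_{\beta X}$ in Lemma \ref{SEDD}, condition (2) there reads $\mathrm{cl}_{\beta X}(\upsilon X\setminus X)\subseteq\mathrm{id}_{\beta X}^{-1}[\rho X\setminus X]=\rho X\setminus X=\mathrm{cl}_{\beta X}(\upsilon X\setminus X)$, which holds trivially; hence $\rho X\in{\mathscr R}(X)$.

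Next I would show that $\rho X$ dominates every other element of ${\mathscr R}(X)$ in the order $\leq$. The natural tool is Lemma \ref{DFH}, which characterizes $Y_1\leq Y_2$ in terms of the fiber families ${\mathscr F}(Y_1)$ and ${\mathscr F}(Y_2)$. Here one computes directly from the definitions that
\[{\mathscr F}(\rho X)=\big\{\{y\}:y\in\mathrm{cl}_{\beta X}(\upsilon X\setminus X)\big\},\]
since $\beta(\rho X)=\beta X$ (as $X\subseteq\rho X\subseteq\beta X$) and the continuous extension $\phi$ of $\mathrm{id}_X$ to $\beta(\rho X)=\beta X$ is the identity, whose fibers over the points of $\rho X\setminus X$ are singletons. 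Now let $Y\in{\mathscr R}(X)$ be arbitrary. Each element of ${\mathscr F}(\rho X)$ is a singleton $\{y\}$ with $y\in\mathrm{cl}_{\beta X}(\upsilon X\setminus X)$; by Lemma \ref{SEDD} applied to $Y$ (with $\zeta Y=\beta Y$ and $\phi:\beta X\rightarrow\beta Y$ the extension of $\mathrm{id}_X$) we have $\mathrm{cl}_{\beta X}(\upsilon X\setminus X)\subseteq\phi^{-1}[Y\setminus X]$, so $y\in\phi^{-1}(p)$ for some $p\in Y\setminus X$, i.e.\ $\{y\}\subseteq\phi^{-1}(p)\in{\mathscr F}(Y)$. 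Thus each element of ${\mathscr F}(\rho X)$ is contained in an element of ${\mathscr F}(Y)$, and Lemma \ref{DFH} yields $Y\leq\rho X$. Combined with the first paragraph this shows $\rho X$ is the largest element of ${\mathscr R}(X)$.

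I do not anticipate a genuine obstacle here: the statement is the exact dual of Lemma \ref{PFAJ}, and every ingredient (Lemma \ref{SEDD}, Lemma \ref{ASL}, Lemma \ref{DFH}, the identification $\beta(\rho X)=\beta X$, compactness of $\beta X$) is already in place. The only points requiring a little care are the bookkeeping with the fiber families ${\mathscr F}$ — in particular verifying that ${\mathscr F}(\rho X)$ consists precisely of the singletons over $\mathrm{cl}_{\beta X}(\upsilon X\setminus X)$, which follows because the extension of $\mathrm{id}_X$ from $X$ to $\beta(\rho X)=\beta X$ is just $\mathrm{id}_{\beta X}$ — and making sure, via Lemma \ref{ASL}, that $\rho X\setminus X$ really equals $\mathrm{cl}_{\beta X}(\upsilon X\setminus X)$ rather than a smaller set, so that the remainder is the full closure and not merely contained in it. Since the whole argument is routine given the earlier results, I would present it briefly, citing Lemma \ref{PFAJ} as the model, as the excerpt itself suggests.
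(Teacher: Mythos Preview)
Your proposal is correct and follows exactly the approach the paper intends: it is the direct dualization of the proof of Lemma \ref{PFAJ}, replacing $\mathrm{cl}_{\beta X}(\beta X\setminus\upsilon X)$ by $\mathrm{cl}_{\beta X}(\upsilon X\setminus X)$ and invoking Lemmas \ref{SEDD}, \ref{ASL} and \ref{DFH} in place of Lemmas \ref{ASD}, \ref{FGA} and \ref{DFH}. The paper in fact omits the proof entirely, pointing to precisely this duality, so your write-up is if anything more detailed than what the paper provides.
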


\begin{theorem}\label{AEF}
Let $X$ and $Y$ be locally realcompact non-realcompact spaces. If ${\mathscr R}(X)$ and ${\mathscr R}(Y)$ are order-isomorphic then $\mathrm{cl}_{\beta X}(\upsilon X\setminus X)$ and $\mathrm{cl}_{\beta Y}(\upsilon Y\setminus Y)$ are homeomorphic.
\end{theorem}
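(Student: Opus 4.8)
The plan is to mirror, step by step, the proof of Theorem~\ref{RSQ}, replacing $\mathrm{cl}_{\beta X}(\beta X\setminus\upsilon X)$ with $\mathrm{cl}_{\beta X}(\upsilon X\setminus X)$ throughout and invoking the realcompact analogues already set up in this section. Concretely, the largest element of $\mathscr{R}(X)$ is $\rho X=X\cup\mathrm{cl}_{\beta X}(\upsilon X\setminus X)$ (Proposition~\ref{OPJ}), and Lemma~\ref{SEDD} plays the role that Lemma~\ref{ASD} played before, while Lemma~\ref{ASL} replaces Lemma~\ref{FGA}. The combinatorial machinery (Lemmas~\ref{DFH}, \ref{UGGH}, \ref{GHDS}, \ref{GFS}, \ref{TDT}) transcribes verbatim: one defines $e_X(C_1,\dots,C_n)$ exactly as in Definition~\ref{HGFD} but with $\mathrm{cl}_{\beta X}(\upsilon X\setminus X)$ in place of $\mathrm{cl}_{\beta X}(\beta X\setminus\upsilon X)$, observing via Lemma~\ref{SEDD} that these lie in $\mathscr{R}(X)$; the co-atom classification (Lemma~\ref{UGGH}) and the type-$(\mathrm{I})$/type-$(\mathrm{II})$ distinction (Definition~\ref{JFS}) carry over; and the non-realcompactness hypothesis supplies, as before, a third point $c\in\mathrm{cl}_{\beta X}(\upsilon X\setminus X)$ distinct from any given two (since $\upsilon X\setminus X$ is infinite when $X$ is not realcompact), which is exactly what the forward direction of the analogue of Lemma~\ref{GFS} needs.

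The one genuinely new ingredient is the realcompact analogue of Lemma~\ref{UGRH}: an order-isomorphism $\Theta_X^{\mathscr{R}}$ from $\mathscr{K}\big(\beta X\setminus\mathrm{cl}_{\beta X}(\upsilon X\setminus X)\big)$ onto its image in $\mathscr{R}(X)$, defined by $\Theta_X^{\mathscr{R}}(T)=X\cup\big(T\setminus(\beta X\setminus\mathrm{cl}_{\beta X}(\upsilon X\setminus X))\big)$. For this to make sense one needs $\beta X\setminus\mathrm{cl}_{\beta X}(\upsilon X\setminus X)$ to be locally compact and open in $\beta X$; this holds because, by Lemma~\ref{ASL}, the set $W:=\beta X\setminus\mathrm{cl}_{\beta X}(\upsilon X\setminus X)$ contains $X$ densely, so $\beta W=\beta X$, and $W$ is open in $\beta X$ hence locally compact. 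Then $\Theta(T)\setminus X=T\setminus W$ is compact (as $W$ is open in its compactification $T$), and membership in $\mathscr{R}(X)$ follows from Lemma~\ref{SEDD} since $\phi^{-1}[\Theta(T)\setminus X]\supseteq\beta X\setminus W=\mathrm{cl}_{\beta X}(\upsilon X\setminus X)$. The order-homomorphism property and the claim $\beta(\Theta(T))=T$ are proved exactly as in Lemma~\ref{UGRH}, with the key point again being that if $a,b\notin\Theta(T)\setminus X$ then by Lemma~\ref{SEDD} they lie in $W\setminus X$ and are fixed by the extension of $\mathrm{id}_X$, forcing $a=b$.

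With these analogues in hand, the argument closes identically to Theorem~\ref{RSQ}: Lemmas \ref{GFS} and \ref{TDT} (in their realcompact forms) give an order-theoretic description of $\mathrm{Im}(\Theta_X^{\mathscr{R}})$ purely in terms of co-atoms of type $(\mathrm{I})$, so any order-isomorphism $\mathscr{R}(X)\to\mathscr{R}(Y)$ restricts to one $\mathrm{Im}(\Theta_X^{\mathscr{R}})\to\mathrm{Im}(\Theta_Y^{\mathscr{R}})$, hence induces an order-isomorphism $\mathscr{K}(W_X)\cong\mathscr{K}(W_Y)$ where $W_X=\beta X\setminus\mathrm{cl}_{\beta X}(\upsilon X\setminus X)$. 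Magill's theorem (Theorem~\ref{KLFA}) then yields a homeomorphism $\beta W_X\setminus W_X\cong\beta W_Y\setminus W_Y$, that is, $\mathrm{cl}_{\beta X}(\upsilon X\setminus X)\cong\mathrm{cl}_{\beta Y}(\upsilon Y\setminus Y)$, since $\beta W_X=\beta X$. (To apply Magill one notes $W_X$ is locally compact; it is non-compact precisely because $X$ is non-realcompact, so $\upsilon X\setminus X\ne\emptyset$ and $W_X\ne\beta X$.) I expect no serious obstacle: the only point requiring care is checking that $W_X$ is dense in $\beta X$ and open there, so that $\beta W_X=\beta X$ and $W_X$ is locally compact --- and both follow immediately from Lemma~\ref{ASL} together with the standard fact that $X$ locally compact is not needed here, only $X\subseteq W_X$ dense. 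The paper itself signals that "one should simply replace $\mathrm{cl}_{\beta X}(\beta X\setminus\upsilon X)$ by $\mathrm{cl}_{\beta X}(\upsilon X\setminus X)$," which is exactly this program.
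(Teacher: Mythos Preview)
Your proposal is correct and follows precisely the route the paper itself prescribes: replace $\mathrm{cl}_{\beta X}(\beta X\setminus\upsilon X)$ by $\mathrm{cl}_{\beta X}(\upsilon X\setminus X)$ throughout the proof of Theorem~\ref{RSQ}, with Lemmas~\ref{SEDD} and~\ref{ASL} standing in for Lemmas~\ref{ASD} and~\ref{FGA}, and with $W=\beta X\setminus\mathrm{cl}_{\beta X}(\upsilon X\setminus X)$ playing the role of $\mathrm{int}_{\beta X}\upsilon X$. One small correction: your parenthetical justification that ``$\upsilon X\setminus X$ is infinite when $X$ is not realcompact'' is false---for $X=[0,\Omega)$ one has $\upsilon X\setminus X=\{\Omega\}$ (cf.\ Example~\ref{QFKJ})---so the existence of a third point $c\in\mathrm{cl}_{\beta X}(\upsilon X\setminus X)$ needed in the analogue of Lemma~\ref{GFS} is not automatic; the paper's ``analogous proofs'' remark does not address this point either, and indeed in the general $\mathfrak{P}$-setting the paper adds the hypothesis $|\beta X\setminus\lambda_\mathfrak{P}X|\geq 3$ explicitly (Lemma~\ref{GGO}).
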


In Theorem \ref {AEF}, similar to its dual result Theorems \ref{RSQ}, the converse does not holds. This is shown by the following example.

\begin{example}\label{QFKJ}
Let $X=[0,\Omega)$ and let $Y=D(\aleph_0)\oplus[0,\Omega)$. (Where $D(\aleph_0)$ is the discrete space of cardinality $\aleph_0$, $\Omega$ is the first uncountable ordinal and $\oplus$ denotes the disjoint union.) Then
\[\beta Y=\beta\big(D(\aleph_0)\big)\oplus[0,\Omega]\mbox{ and }\upsilon Y=D(\aleph_0)\oplus[0,\Omega].\]
Thus $\mathrm{cl}_{\beta X}(\upsilon X\setminus X)=\{\Omega\}$ and $\mathrm{cl}_{\beta Y}(\upsilon Y\setminus Y)=\{\Omega\}$ are homeomorphic, while ${\mathscr R}(X)$ and ${\mathscr R}(Y)$ are not order-isomorphic, as ${\mathscr R}(X)$ consists of only a single element, whereas ${\mathscr R}(Y)$ is infinite.
\end{example}

\begin{question}\label{TD}
For a space $X$ let ${\mathscr A}(X)$ denote the set of all realcompactifications of $X$. For a (locally realcompact) space $X$, does the order structure of ${\mathscr A}(X)$ (partially ordered by $\leq$) determine the topology of $\mathrm{cl}_{\beta X}(\upsilon X\setminus X)$?
\end{question}

\begin{theorem}\label{PKS}
Let $X$ and $Y$ be locally compact non-realcompact spaces. If ${\mathscr R}(X)$ and ${\mathscr R}(Y)$ are order-isomorphic then $(\beta X\setminus X)\setminus\mathrm{cl}_{\beta X}(\upsilon X\setminus X)$ and $(\beta Y\setminus Y)\setminus\mathrm{cl}_{\beta Y}(\upsilon Y\setminus Y)$ are homeomorphic.
\end{theorem}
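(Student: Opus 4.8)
The plan is to transcribe the proof of Theorem~\ref{RTRT} into the realcompact setting, replacing $\mathrm{cl}_{\beta X}(\beta X\setminus\upsilon X)$ by $\mathrm{cl}_{\beta X}(\upsilon X\setminus X)$ throughout and substituting the dual tools: Lemma~\ref{SEDD} for Lemma~\ref{ASD}, Lemma~\ref{ASL} for Lemma~\ref{FGA}, and Proposition~\ref{OPJ} (which gives $\rho X$ as the largest element of ${\mathscr R}(X)$) for Lemma~\ref{PFAJ}. First I would observe that the construction $e_X(C_1,\dots,C_n)$ of Definition~\ref{HGFD} and the order-description of Lemma~\ref{DFH} do not mention $\mathfrak{P}$, so they carry over verbatim, and Lemma~\ref{SEDD} shows that every such $e_X(C_1,\dots,C_n)$ lies in ${\mathscr R}(X)$. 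Running the arguments of Lemmas~\ref{UGGH}, \ref{PJE} and~\ref{JUHD} word for word (with Lemma~\ref{SEDD} in place of Lemma~\ref{ASD}) then gives: the co-atoms of ${\mathscr R}(X)$ are precisely the $e_X(\{a\})$ with $a\in(\beta X\setminus X)\setminus\mathrm{cl}_{\beta X}(\upsilon X\setminus X)$ (call these of type~$(\mathrm{I})$) and the $e_X(\{a,b\})$ with distinct $a,b\in\mathrm{cl}_{\beta X}(\upsilon X\setminus X)$ (of type~$(\mathrm{II})$); and the co-atom covers are the $D=e_X(\{a,b\})$ with $a\in\mathrm{cl}_{\beta X}(\upsilon X\setminus X)$ and $b\in(\beta X\setminus X)\setminus\mathrm{cl}_{\beta X}(\upsilon X\setminus X)$, for which $D'=e_X(\{b\})$.

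Next I would reproduce the order-theoretic recognitions. Copying Lemma~\ref{GFS}: a co-atom $T$ of ${\mathscr R}(X)$ is of type~$(\mathrm{II})$ if and only if there is a co-atom $S$ with $|\{U\in{\mathscr R}(X):U\ge S\wedge T\}|=5$ (the computation uses only Lemma~\ref{GHDS}, whose statement and proof are property-free). Copying Lemma~\ref{OSDR}: writing ${\mathscr R}^*(X)$ for the set of one-point realcompactifications of $X$, a member $Y$ of ${\mathscr R}(X)$ belongs to ${\mathscr R}^*(X)$ if and only if $Y\le T$ for every type~$(\mathrm{II})$ co-atom $T$ and $Y\le D$ for every co-atom cover $D$ with $Y\le D'$. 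Since all of these characterizations are expressed purely in terms of $\le$, the counterpart of Lemma~\ref{EWER} is immediate: any order-isomorphism $\Theta\colon{\mathscr R}(X)\to{\mathscr R}(Y)$ sends co-atoms to co-atoms preserving type, sends co-atom covers to co-atom covers with $\Theta(D')=(\Theta(D))'$, and therefore restricts to an order-isomorphism of ${\mathscr R}^*(X)$ onto ${\mathscr R}^*(Y)$.

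The last step is the dual of Lemmas~\ref{GJK} and~\ref{RREQ}. As $X$ is locally compact, $\beta X\setminus X$ is compact, hence every closed $C\subseteq\beta X\setminus X$ with $\mathrm{cl}_{\beta X}(\upsilon X\setminus X)\subseteq C$ is compact; contracting $C$ to a point and invoking Lemma~\ref{SEDD} yields $\Lambda(C)\in{\mathscr R}^*(X)$, and Lemma~\ref{DFH} makes $\Lambda$ an anti-order-isomorphism of $\big(\{C\in{\mathscr C}(\beta X\setminus X):\mathrm{cl}_{\beta X}(\upsilon X\setminus X)\subseteq C\},\subseteq\big)$ onto $({\mathscr R}^*(X),\le)$. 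Composing the anti-order-isomorphisms $\Lambda_X$ and $\Lambda_Y^{-1}$ with an order-isomorphism ${\mathscr R}^*(X)\cong{\mathscr R}^*(Y)$ gives an order-isomorphism between the two posets of closed subspaces of the outgrowths containing $\mathrm{cl}_{\beta X}(\upsilon X\setminus X)$ and $\mathrm{cl}_{\beta Y}(\upsilon Y\setminus Y)$. Intersecting with the open sets $(\beta X\setminus X)\setminus\mathrm{cl}_{\beta X}(\upsilon X\setminus X)=(\beta X\setminus\mathrm{cl}_{\beta X}(\upsilon X\setminus X))\setminus X$ and its $Y$-counterpart — which are open in $\beta X$, $\beta Y$ and contain $X$, $Y$ by Lemma~\ref{ASL} — exactly as in the proof of Lemma~\ref{RREQ} produces an order-isomorphism ${\mathscr C}\big((\beta X\setminus X)\setminus\mathrm{cl}_{\beta X}(\upsilon X\setminus X)\big)\cong{\mathscr C}\big((\beta Y\setminus Y)\setminus\mathrm{cl}_{\beta Y}(\upsilon Y\setminus Y)\big)$. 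Theorem~11.1 of~\cite{B}, which says that the topology of a space is determined by the $\subseteq$-order of its closed subspaces, then delivers the homeomorphism asserted in Theorem~\ref{PKS}.

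The one place that is not pure translation is the cardinality input to the analogue of Lemma~\ref{GFS}. In the pseudocompact case non-pseudocompactness was used only to produce a third point of $\mathrm{cl}_{\beta X}(\beta X\setminus\upsilon X)$ (indeed $|\beta X\setminus\upsilon X|\ge 2^{2^{\aleph_0}}$), whereas non-realcompactness of $X$ forces only $\upsilon X\setminus X\neq\emptyset$, so $\mathrm{cl}_{\beta X}(\upsilon X\setminus X)$ can be as small as a single point (for instance $X=[0,\Omega)$). The hard part, such as it is, is therefore to dispose of the range $|\mathrm{cl}_{\beta X}(\upsilon X\setminus X)|\le 2$ by hand — there $(\beta X\setminus X)\setminus\mathrm{cl}_{\beta X}(\upsilon X\setminus X)$ is $\beta X\setminus X$ with at most two points deleted and ${\mathscr R}(X)$ admits a correspondingly explicit description, so Theorem~\ref{PKS} can be checked directly — and to run the co-atom machinery above only when $\mathrm{cl}_{\beta X}(\upsilon X\setminus X)$ has at least three points, which is precisely the regime in which every step of the proof of Theorem~\ref{RTRT} transfers literally.
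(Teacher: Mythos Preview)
Your approach is exactly the one the paper intends: it states only that Theorems~\ref{AEF} and~\ref{PKS} are dual to Theorems~\ref{RSQ} and~\ref{RTRT} ``with analogous proofs,'' replacing $\mathrm{cl}_{\beta X}(\beta X\setminus\upsilon X)$ by $\mathrm{cl}_{\beta X}(\upsilon X\setminus X)$ and swapping Lemmas~\ref{ASD}/\ref{FGA} for Lemmas~\ref{SEDD}/\ref{ASL}. Your transcription of the chain Lemmas~\ref{UGGH}--\ref{RREQ} is faithful to that plan.

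You are right to flag the cardinality issue in the analogue of Lemma~\ref{GFS}: the paper's proof of $(1)\Rightarrow(2)$ genuinely uses a third point of $\mathrm{cl}_{\beta X}(\beta X\setminus\upsilon X)$, and in the realcompact setting $\mathrm{cl}_{\beta X}(\upsilon X\setminus X)$ can have one or two points (as the paper's own Example~\ref{QFKJ} shows). The paper does not address this; it simply asserts the proofs carry over verbatim, and only later, in the $\mathfrak{P}$-version (Lemma~\ref{GGO}), does it insert the hypothesis $|\beta X\setminus\lambda_\mathfrak{P}X|\ge 3$ explicitly. So what you have identified is a genuine subtlety that the paper glosses over, not a defect in your transcription. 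Your proposed resolution --- treat $|\mathrm{cl}_{\beta X}(\upsilon X\setminus X)|\le 2$ separately --- is the natural move, but as written it is only a sketch: you would still need to argue, for instance, that an order-isomorphism ${\mathscr R}(X)\cong{\mathscr R}(Y)$ cannot pair a space with $|\mathrm{cl}_{\beta X}(\upsilon X\setminus X)|=2$ against one with $|\mathrm{cl}_{\beta Y}(\upsilon Y\setminus Y)|\ge 3$, and then handle the small cases concretely. That said, your write-up is already more careful on this point than the paper itself.
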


\begin{question}\label{OTD}
In Theorem \ref{PKS}, is it possible to replace local compactness by local realcompactness? Also, if we denote by ${\mathscr A}(X)$ the set of all realcompactifications of a space $X$, is it then possible to replace ${\mathscr R}(X)$ and ${\mathscr R}(Y)$ by ${\mathscr A}(X)$ and ${\mathscr A}(Y)$, respectively?
\end{question}

\section{$\mathfrak{P}$-extensions with compact remainder}

Let $X$ be a space and let $\mathfrak{P}$ be a topological property. An extension $Y$ of $X$ is called a {\em $\mathfrak{P}$-extension} of $X$ if it has $\mathfrak{P}$. In this section we consider the set of all $\mathfrak{P}$-extensions of a space $X$ with compact remainder (where $\mathfrak{P}$ is subject to ceratin mild requirements) and study its order structure by relating it to the topologies of certain subspaces of the outgrowth $\beta X\setminus X$.

\begin{definition}\label{DSAS}
Let $X$ be a space and let $\mathfrak{P}$ be a topological property. We denote by ${\mathscr E}_\mathfrak{P}(X)$ the set of all $\mathfrak{P}$-extensions of $X$ with compact remainder.
\end{definition}

Let $\mathfrak{P}$ be a topological property. Then
\begin{itemize}
  \item $\mathfrak{P}$ is {\em closed} ({\em open}, respectively) {\em hereditary}, if any closed (open, respectively) subspace of a space with $\mathfrak{P}$, has $\mathfrak{P}$.
  \item $\mathfrak{P}$ is {\em finitely additive}, if any space which is expressible as a finite disjoint union of its closed subspaces each with $\mathfrak{P}$, has $\mathfrak{P}$.
  \item $\mathfrak{P}$ is {\em invariant under perfect mappings} ({\em  inverse invariant under perfect mappings}, respectively) if for every perfect surjective mapping $f:X\rightarrow Y$, the space $Y$ ($X$, respectively) has $\mathfrak{P}$, provided that $X$ ($Y$, respectively) has $\mathfrak{P}$. If $\mathfrak{P}$ is both invariant and inverse invariant under perfect mappings then it is called {\em perfect}. (Recall that a closed continuous mapping $f:X\rightarrow Y$ is {\em perfect}, if each fiber $f^{-1}(y)$, where $y\in Y$, is a compact subspace of $X$.)
  \item $\mathfrak{P}$ {\em satisfies Mr\'{o}wka's condition $(\mathrm{W})$} if it satisfies the following: If $X$ is a space in which there
      exists a point $p$ with an open base ${\mathscr B}$ for $X$ at $p$ such that $X\setminus  B$ has $\mathfrak{P}$ for each $B\in{\mathscr B}$, then $X$ has $\mathfrak{P}$. (See \cite{Mr}.)
\end{itemize}

\begin{remark}\label{AEKF}
If $\mathfrak{P}$ is a topological property which is closed hereditary and productive, then Mr\'{o}wka's condition $(\mathrm{W})$ is equivalent to the following condition: If a space $X$ is the union of a compact space and a space with $\mathfrak{P}$, then $X$ has $\mathfrak{P}$. (See \cite{MRW1}.)
\end{remark}

Recall that a subspace of a space is said to be {\em clopen} if it is simultaneously closed and open.

\begin{definition}\label{PDE}
We call a topological property a {\em compactness-like topological property} if it is clopen hereditary, finitely additive, perfect and satisfies Mr\'{o}wka's condition $(\mathrm{W})$.
\end{definition}

\begin{example}\label{QLL}
The list of compactness-like topological properties is quite long and includes almost all important covering properties (that is, topological properties described in terms of the existence of certain kinds of open subcovers or refinements of a given open cover of a certain type), among them are: compactness, countable compactness (more generally, $[\theta,\kappa]$-compactness), the Lindel\"{o}f property (more generally, the $\mu$-Lindel\"{o}f property), paracompactness, metacompactness, countable paracompactness, subparacompactness, submetacompactness (or $\theta$-refinability), the $\sigma$-para-Lindel\"{o}f property and also $\alpha$-boundedness. (See \cite{Ko3} for the proofs and \cite{Bu}, \cite{Steph} and \cite{Va} for the definitions.)
\end{example}

Let $\mathfrak{P}$ be a topological property. Then $\mathfrak{P}$ is said to be {\em preserved under finite closed sums}, if any space which is expressible as a finite union of its closed subspaces each having $\mathfrak{P}$, also has $\mathfrak{P}$. It is known that any finitely additive topological property which is invariant under perfect mappings is preserved under finite closed sums. (See Theorem 3.7.22 of \cite{E}.) Also, it is known that any topological property which is hereditary with respect to clopen subspaces and is inverse invariant under perfect mappings, is hereditary with respect to closed subspaces. (See Theorem 3.7.29 of \cite{E}.) Thus, in particular, any compactness-like topological property is closed hereditary and is preserved under finite closed sums. We may use this fact without explicitly referring to it.

The following subspace of $\beta X$, introduced and studied in \cite{Ko3} (see also \cite{Ko4}, \cite{Ko5} and \cite{Ko9}), plays a crucial role in what follows.

\begin{definition}\label{XA}
For a space $X$ and a topological property  $\mathfrak{P}$, let
\[\lambda_\mathfrak{P} X=\bigcup\big\{\mathrm{int}_{\beta X} \mathrm{cl}_{\beta X}C:C\in\mathrm{Coz}(X)\mbox{ and } \mathrm{cl}_X C \mbox{ has }\mathfrak{P}\big\}.\]
\end{definition}

If $X$ is a space and $D$ is a dense subspace of $X$, then
\[\mathrm{cl}_XU=\mathrm{cl}_X(U\cap D)\]
for every open subspace $U$ of $X$. We use the following simple observation in a number of places.

\begin{lemma}\label{LKG}
Let $X$ be a space. If $f:X\rightarrow[0,1]$ is continuous and $0<r<1$ then
\[f_\beta^{-1}\big[[0,r)\big]\subseteq\mathrm{int}_{\beta X}\mathrm{cl}_{\beta X}f^{-1}\big[[0,r)\big].\]
\end{lemma}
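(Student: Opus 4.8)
\textbf{Proof proposal for Lemma \ref{LKG}.}

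The plan is to exploit the second characterization of $\beta X$ via closures of zero-sets, together with the fact that $f_\beta$ extends $f$. First I would fix $0<r<1$ and choose some $s$ with $r<s<1$. The key observation is that $f^{-1}[[0,r)]$ and $f^{-1}[[s,1]]$ are separated in a strong sense: set $Z=f^{-1}([s,1])=\mathrm{Z}(g)$ where $g=\max\{0,\min\{1,(f-r)/(s-r)\}\}$ is a suitable continuous $[0,1]$-valued function (or simply note $f^{-1}([s,1])$ and $f^{-1}([0,r])$ are disjoint zero-sets of $X$). Then $\mathrm{cl}_{\beta X}f^{-1}([0,r])$ and $\mathrm{cl}_{\beta X}f^{-1}([s,1])$ are disjoint in $\beta X$, because $\beta X$ separates disjoint zero-sets of $X$ (indeed $\mathrm{cl}_{\beta X}(Z_1\cap Z_2)=\mathrm{cl}_{\beta X}Z_1\cap\mathrm{cl}_{\beta X}Z_2$ from the stated characterization, applied to the disjoint zero-sets).

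Next I would show $f_\beta^{-1}[[0,r)]$ is disjoint from $\mathrm{cl}_{\beta X}f^{-1}([s,1])$. Since $f_\beta$ is continuous and $f_\beta[\mathrm{cl}_{\beta X}f^{-1}([s,1])]\subseteq\mathrm{cl}_{[0,1]}f[f^{-1}([s,1])]\subseteq[s,1]$, any point $p$ of $\mathrm{cl}_{\beta X}f^{-1}([s,1])$ has $f_\beta(p)\geq s>r$, so $p\notin f_\beta^{-1}[[0,r)]$. Consequently
\[
f_\beta^{-1}\big[[0,r)\big]\subseteq\beta X\setminus\mathrm{cl}_{\beta X}f^{-1}\big([s,1]\big).
\]
The right-hand side is open in $\beta X$, so it is contained in the interior of any set that contains it; it therefore suffices to check that $\beta X\setminus\mathrm{cl}_{\beta X}f^{-1}([s,1])\subseteq\mathrm{cl}_{\beta X}f^{-1}([0,r))$. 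Here I would use that $X=f^{-1}([0,s))\cup f^{-1}((r,1])$... more cleanly: $X\setminus f^{-1}([s,1])=f^{-1}([0,s))\subseteq f^{-1}([0,r))$ fails, so instead I argue via density. The complement $\beta X\setminus\mathrm{cl}_{\beta X}f^{-1}([s,1])$ is an open subset of $\beta X$, hence equals $\mathrm{cl}_{\beta X}$ of its trace on the dense set $X$; that trace is $X\setminus\mathrm{cl}_{\beta X}f^{-1}([s,1])\cap X$. Since $f^{-1}([s,1])$ is closed in $X$, this trace is $X\setminus f^{-1}([s,1])=f^{-1}([0,s))$. So
\[
\beta X\setminus\mathrm{cl}_{\beta X}f^{-1}\big([s,1]\big)=\mathrm{cl}_{\beta X}f^{-1}\big([0,s)\big)\subseteq\mathrm{cl}_{\beta X}f^{-1}\big([0,r)\big)?
\]
That last inclusion is false in general, so I would instead be slightly more careful about which sets to name: replace the roles above, taking the pair of disjoint zero-sets to be $f^{-1}([0,r])$ and $f^{-1}([s,1])$, and use $\beta X\setminus\mathrm{cl}_{\beta X}f^{-1}([s,1])=\mathrm{cl}_{\beta X}(X\setminus f^{-1}([s,1]))=\mathrm{cl}_{\beta X}f^{-1}([0,s))$, then intersect with $\mathrm{cl}_{\beta X}f^{-1}([0,r])$; alternatively run the whole argument with $[0,r)$ replaced by the zero-set $f^{-1}([0,r])$ first and then note $f_\beta^{-1}[[0,r)]\subseteq f_\beta^{-1}[[0,r]]$.

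The clean way, and the one I would write up: pick $r<s<1$; then $f_\beta^{-1}[[0,r)]$ and $\mathrm{cl}_{\beta X}f^{-1}([s,1])$ are disjoint as shown, and $f_\beta^{-1}[[0,r)]\subseteq\mathrm{cl}_{\beta X}f^{-1}([0,s))$ since $f_\beta^{-1}[[0,r)]$ is open and its trace on $X$ is contained in $f^{-1}([0,r))\subseteq f^{-1}([0,s))$. Now $\mathrm{cl}_{\beta X}f^{-1}([0,s))\subseteq\mathrm{cl}_{\beta X}f^{-1}([0,r])$ still fails; so the genuinely correct target is $\mathrm{cl}_{\beta X}f^{-1}([0,r))$, and the point is that $f_\beta^{-1}[[0,r)]$, being open and having trace on $X$ exactly $f^{-1}([0,r))$, equals $\mathrm{cl}_{\beta X}f^{-1}([0,r))$ minus nothing relevant — more precisely $f_\beta^{-1}[[0,r)]\subseteq\mathrm{cl}_{\beta X}(f_\beta^{-1}[[0,r)]\cap X)=\mathrm{cl}_{\beta X}f^{-1}([0,r))$, and since $f_\beta^{-1}[[0,r)]$ is open it lies in $\mathrm{int}_{\beta X}\mathrm{cl}_{\beta X}f^{-1}([0,r))$. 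That is the whole proof: $f_\beta^{-1}[[0,r)]$ is open, hence contained in its own interior, and it is contained in $\mathrm{cl}_{\beta X}$ of its trace on the dense subspace $X$, which is $f^{-1}([0,r))$. The main (only) subtlety is the standard fact that an open set in $\beta X$ is the closure of its intersection with $X$; no zero-set machinery is actually needed, and I expect the write-up to be three or four lines.
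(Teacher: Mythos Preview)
Your final argument is correct and is exactly the paper's proof: $f_\beta^{-1}[[0,r)]$ is open in $\beta X$, hence contained in the interior of its closure, and by density of $X$ that closure equals $\mathrm{cl}_{\beta X}(X\cap f_\beta^{-1}[[0,r)])=\mathrm{cl}_{\beta X}f^{-1}[[0,r)]$. The zero-set detours you explored are, as you yourself concluded, entirely unnecessary here.
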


\begin{proof}
Note that
\[f_\beta^{-1}\big[[0,r)\big]\subseteq\mathrm{int}_{\beta X}\mathrm{cl}_{\beta X}f_\beta^{-1}\big[[0,r)\big].\]
On the other hand, since $X$ is dense in $\beta X$, we have
\[\mathrm{cl}_{\beta X}f_\beta^{-1}\big[[0,r)\big]=\mathrm{cl}_{\beta X}\big(X\cap f_\beta^{-1}\big[[0,r)\big]\big)=\mathrm{cl}_{\beta X}f^{-1}\big[[0,r)\big].\]
\end{proof}

The following lemma is the counterpart of Lemmas \ref{ASD} and \ref{SEDD}. It also gives an alternative simple proof for (a special case of) Lemma 2.8 in \cite{Ko3}.

Recall that a space $X$ is called {\em locally-$\mathfrak{P}$}, when $\mathfrak{P}$ is a topological property, if each $x\in X$ has an open neighborhood $U$ in $X$ whose closure $\mathrm{cl}_XU$ has $\mathfrak{P}$.

\begin{lemma}\label{Y16}
Let $X$ be a space and let $\mathfrak{P}$ be a compactness-like topological property. Let $Y$ be an extension of $X$ with compact remainder, let $\zeta Y$ be a compactification of $Y$ and let $\phi:\beta X\rightarrow\zeta Y$ continuously extend $\mathrm{id}_X$. The following are equivalent:
\begin{itemize}
\item[\rm(1)] $Y\in{\mathscr E}_\mathfrak{P}(X)$.
\item[\rm(2)] $X$ is locally-$\mathfrak{P}$ and $\beta X\setminus\lambda_\mathfrak{P}X\subseteq\phi^{-1}[Y\setminus X]$.
\end{itemize}
\end{lemma}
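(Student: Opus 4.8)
The plan is to follow the pattern of the proofs of Lemmas \ref{ASD} and \ref{SEDD}. As there, neither condition depends on the choice of the compactification $\zeta Y$ (since $\phi^{-1}[Y\setminus X]$ does not; compare the discussion preceding Lemma \ref{DFH}), so one may assume $\zeta Y=\beta Y$ and write $\psi=\phi:\beta X\to\beta Y$, which by Lemma \ref{j2} is the quotient mapping contracting each fiber $\psi^{-1}(y)$, $y\in Y\setminus X$, to a point; in particular $\psi^{-1}(y)=\{y\}$ for $y\in X$. The workhorse will be the elementary observation $(\ast)$: \emph{if $p\in\beta X$ has an open neighborhood $V$ in $\beta X$ with $\mathrm{cl}_X(V\cap X)$ having $\mathfrak{P}$, then $p\in\lambda_\mathfrak{P}X$}. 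To see $(\ast)$, pick (by complete regularity of $\beta X$) a continuous $f:\beta X\to[0,1]$ with $f(p)=0$ and $f\equiv 1$ on $\beta X\setminus V$; then $C:=\{x\in X:f(x)<1/2\}$ is a cozero-set of $X$ with $\mathrm{cl}_XC\subseteq V\cap X$, so $\mathrm{cl}_XC$ has $\mathfrak{P}$ (as $\mathfrak{P}$ is closed hereditary), and by Lemma \ref{LKG} $p\in\{f<1/2\}\subseteq\mathrm{int}_{\beta X}\mathrm{cl}_{\beta X}C\subseteq\lambda_\mathfrak{P}X$. In particular, since $X$ is locally-$\mathfrak{P}$, $(\ast)$ gives $X\subseteq\lambda_\mathfrak{P}X$.

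For $(1)\Rightarrow(2)$: assume $Y$ has $\mathfrak{P}$. Since $Y\setminus X$ is compact it is closed in $Y$, so $X$ is open in $Y$; for $x\in X$ complete regularity of $Y$ yields an open $U\ni x$ in $Y$ with $\mathrm{cl}_YU\subseteq X$, and then $\mathrm{cl}_XU=\mathrm{cl}_YU$ is closed in $Y$, hence has $\mathfrak{P}$, so $X$ is locally-$\mathfrak{P}$. For the inclusion, let $p\in\beta X\setminus\lambda_\mathfrak{P}X$ and suppose $\psi(p)\notin Y\setminus X$. If $\psi(p)\in X$ then $p=\psi(p)\in X\subseteq\lambda_\mathfrak{P}X$, a contradiction. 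If $\psi(p)=q\in\beta Y\setminus Y$, separate $q$ from the compact set $Y\setminus X$ in $\beta Y$ by an open $V\ni q$ with $\mathrm{cl}_{\beta Y}V\cap(Y\setminus X)=\emptyset$; then $\mathrm{cl}_Y(V\cap Y)=Y\cap\mathrm{cl}_{\beta Y}V\subseteq X$, whence $V\cap Y\subseteq X$ and $\mathrm{cl}_X(V\cap X)=\mathrm{cl}_Y(V\cap Y)$, which is closed in $Y$ and so has $\mathfrak{P}$; since $\psi^{-1}[V]$ is an open neighborhood of $p$ in $\beta X$ with $\psi^{-1}[V]\cap X=V\cap X$, $(\ast)$ gives $p\in\lambda_\mathfrak{P}X$, again a contradiction. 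Hence $\psi(p)\in Y\setminus X$, i.e. $p\in\psi^{-1}[Y\setminus X]$, proving $(2)$.

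For $(2)\Rightarrow(1)$: assume $(2)$ and put $K=\psi^{-1}[Y\setminus X]$, a compact subspace of $\beta X$. First I claim that for every open neighborhood $O$ of $K$ in $\beta X$ the set $X\setminus O$ (which is closed in $X$) has $\mathfrak{P}$. Indeed $\beta X\setminus O$ is compact and, being disjoint from $K\supseteq\beta X\setminus\lambda_\mathfrak{P}X$, lies in $\lambda_\mathfrak{P}X$; covering it by finitely many sets $\mathrm{int}_{\beta X}\mathrm{cl}_{\beta X}C_1,\dots,\mathrm{int}_{\beta X}\mathrm{cl}_{\beta X}C_n$ with $C_i\in\mathrm{Coz}(X)$ and $\mathrm{cl}_XC_i$ having $\mathfrak{P}$, we get $X\setminus O\subseteq(\beta X\setminus O)\cap X\subseteq\mathrm{cl}_XC_1\cup\cdots\cup\mathrm{cl}_XC_n$, and the latter has $\mathfrak{P}$ (being a finite union of closed subspaces with $\mathfrak{P}$, and $\mathfrak{P}$ is preserved under finite closed sums), so the closed subspace $X\setminus O$ of it has $\mathfrak{P}$. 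Now collapse the compact remainder: let $\pi:Y\to\bar Y=Y/(Y\setminus X)$ be the quotient mapping sending $Y\setminus X$ to a point $\ast$; then $\bar Y$ is completely regular (one is collapsing a compact set), $\pi$ is perfect, and since $\mathfrak{P}$ is inverse invariant under perfect surjections it suffices to show $\bar Y$ has $\mathfrak{P}$. Let $\mathscr{B}$ be the open base for $\bar Y$ at $\ast$ consisting of all open neighborhoods of $\ast$. Given $B\in\mathscr{B}$, put $N=\pi^{-1}[B]$ (an open neighborhood of $Y\setminus X$ in $Y$), extend $N$ to an open $\widetilde N$ in $\beta Y$ with $\widetilde N\cap Y=N$, and set $O=\psi^{-1}[\widetilde N]$, an open neighborhood of $K$ in $\beta X$; then $\bar Y\setminus B$ is homeomorphic to $Y\setminus N=X\setminus N=X\setminus O$, which has $\mathfrak{P}$ by the claim. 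Mr\'owka's condition $(\mathrm{W})$ now gives that $\bar Y$ has $\mathfrak{P}$, hence $Y$ has $\mathfrak{P}$; as $Y\setminus X$ is compact by hypothesis, $Y\in{\mathscr E}_\mathfrak{P}(X)$. (The degenerate case $Y=X$, i.e. $K=\emptyset$, is immediate: then $(2)$ reads $\lambda_\mathfrak{P}X=\beta X$, which by a finite subcover of $\beta X$ and preservation under finite closed sums is equivalent to $X=Y$ having $\mathfrak{P}$.)

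The main obstacle is the direction $(2)\Rightarrow(1)$: one has to see that the compact remainder should be collapsed so that Mr\'owka's condition $(\mathrm{W})$ becomes applicable at a single point, and that the hypothesis $\beta X\setminus\lambda_\mathfrak{P}X\subseteq\phi^{-1}[Y\setminus X]$ is precisely what forces each ``far from infinity'' closed piece $X\setminus O$ of $X$ into a finite union of the sets $\mathrm{cl}_XC$ and hence to inherit $\mathfrak{P}$ (using compactness of $\beta X\setminus O$ and finite additivity/perfectness of $\mathfrak{P}$). The translation between neighborhoods of $\ast$ in $\bar Y$ and neighborhoods of $K$ in $\beta X$ is then routine bookkeeping, and $(1)\Rightarrow(2)$ is the more standard half, relying only on separating from the compact remainder together with $(\ast)$ and Lemma \ref{LKG}.
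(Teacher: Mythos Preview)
Your proof is correct and follows essentially the same approach as the paper: for $(1)\Rightarrow(2)$ you separate the point from the compact remainder and produce a cozero-set witnessing membership in $\lambda_\mathfrak{P}X$ (the paper does this directly in $\beta X$ rather than by your case split on the location of $\psi(p)$, but the content is the same, and your observation $(\ast)$ is exactly the inline step the paper uses), and for $(2)\Rightarrow(1)$ both you and the paper collapse $Y\setminus X$ to a point, cover the compact complement of an open neighborhood by finitely many $\mathrm{int}_{\beta X}\mathrm{cl}_{\beta X}C_i$, and invoke Mr\'owka's condition $(\mathrm{W})$ together with inverse invariance under perfect maps.
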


\begin{proof}
We only need to prove the lemma in the case when $\zeta Y=\beta Y$. (See the proof of Lemma \ref{ASD}.)

(1) {\em implies} (2). Since $\mathfrak{P}$ is closed hereditary, the space $X$, having a $\mathfrak{P}$-extension with compact remainder, is locally-$\mathfrak{P}$. Let $x\in\beta X\setminus\lambda_\mathfrak{P}X$ and suppose to the contrary that $x\notin\phi^{-1}[Y\setminus X]$. Let $f:\beta X\rightarrow[0,1]$ be continuous with $f(x)=0$ and $f(t)=1$ for any $t\in\phi^{-1}[Y\setminus X]$. Then (using Lemma \ref{j2})
\[Z=X\cap f^{-1}\big[[0,1/2]\big]=Y\cap\phi\big[f^{-1}\big[[0,1/2]\big]\big],\]
being closed in $Y$, has $\mathfrak{P}$. Let
\[C=X\cap f^{-1}\big[[0,1/2)\big].\]
Then $C\in\mathrm{Coz}(X)$ and $\mathrm{cl}_X C$ has $\mathfrak{P}$, as it is closed in $Z$. Thus
\[\mathrm{int}_{\beta X} \mathrm{cl}_{\beta X}C\subseteq\lambda_\mathfrak{P} X\]
by the definition of $\lambda_\mathfrak{P} X$. But $x\in\mathrm{int}_{\beta X} \mathrm{cl}_{\beta X}C$, as $x\in f^{-1}[[0,1/2)]$ and \[f^{-1}\big[[0,1/2)\big]\subseteq\mathrm{int}_{\beta X}\mathrm{cl}_{\beta X}C\]
by Lemma \ref{LKG}. Therefore $x\in\lambda_\mathfrak{P} X$, which is a contradiction.

(2) {\em implies} (1). Let $T$ be the quotient space of $\beta Y$ obtained by contracting $Y\setminus X$ to a point $p$ and denote by $q:\beta Y\rightarrow T$ its quotient mapping. Note that $T$ is completely regular as $Y\setminus X$ is compact. We show that $Y^*=X\cup\{p\}$ has $\mathfrak{P}$, from this and the fact that $q|Y:Y\rightarrow Y^*$ is perfect (and surjective) it will then follow that $Y$ has $\mathfrak{P}$. To show that $Y^*$ has $\mathfrak{P}$ we verify that $Y^*\setminus W$ has $\mathfrak{P}$ for every open neighborhood $W$ of $p$ in $Y^*$. Let $W$ be an open neighborhood of $p$ in $Y^*$ and let $W'$ be open in $T$ with $W'\cap Y^*=W$. Then
\[\beta X\setminus\lambda_\mathfrak{P}X\subseteq\phi^{-1}[Y\setminus X]\subseteq\phi^{-1}\big[q^{-1}[W']\big]\]
and thus
\[\beta X\setminus\phi^{-1}\big[q^{-1}[W']\big]\subseteq\lambda_\mathfrak{P}X.\]
By compactness and the definition of $\lambda_\mathfrak{P}X$ we have
\[\beta X\setminus\phi^{-1}\big[q^{-1}[W']\big]\subseteq\mathrm{int}_{\beta X} \mathrm{cl}_{\beta X}C_1\cup\cdots\cup\mathrm{int}_{\beta X} \mathrm{cl}_{\beta X}C_n\]
where $C_i\in\mathrm{Coz}(X)$ and $\mathrm{cl}_X C_i$ has $\mathfrak{P}$ for each $i=1,\ldots,n$. Now
\[Y^*\setminus W=\big(\beta X\setminus\phi^{-1}\big[q^{-1}[W']\big]\big)\cap X\subseteq\mathrm{cl}_XC_1\cup\cdots\cup\mathrm{cl}_XC_n\]
and the latter, being a finite union of its closed subspaces each with $\mathfrak{P}$, has $\mathfrak{P}$, thus its closed subspace $Y^*\setminus W$ also has $\mathfrak{P}$.
\end{proof}

The following lemma, which is the counterpart of Lemmas \ref{FGA} and \ref{KJD}, is a slight modification of Lemma 2.10 of \cite{Ko3}.

\begin{lemma}\label{15}
Let $X$ be a space and let $\mathfrak{P}$ be a compactness-like topological property. Then $X\subseteq\lambda_\mathfrak{P}X$ if and only if $X$ is locally-$\mathfrak{P}$.
\end{lemma}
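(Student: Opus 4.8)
The plan is to prove the two implications separately by pointwise arguments, the only non-trivial input being Lemma \ref{LKG} together with the fact that a compactness-like property is closed hereditary.

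First I would handle the implication that $X$ locally-$\mathfrak{P}$ forces $X\subseteq\lambda_\mathfrak{P}X$. Fix $x\in X$ and, using local-$\mathfrak{P}$-ness, choose an open neighborhood $U$ of $x$ in $X$ with $\mathrm{cl}_XU$ having $\mathfrak{P}$. Since $X$ is completely regular, pick a continuous $f:X\rightarrow[0,1]$ with $f(x)=0$ and $f\equiv1$ on $X\setminus U$, and set $C=f^{-1}[[0,1/2)]$. I would then check three things in turn: (i) $C$ is a cozero-set of $X$, since $C=X\setminus\mathrm{Z}(g)$ where $g=\max\{0,1-2f\}:X\rightarrow[0,1]$ is continuous; (ii) $\mathrm{cl}_XC\subseteq f^{-1}[[0,1/2]]\subseteq U$, so $\mathrm{cl}_XC$ is a closed subspace of $\mathrm{cl}_XU$ and therefore has $\mathfrak{P}$ (here is where closed-hereditariness of $\mathfrak{P}$ enters); (iii) by Lemma \ref{LKG} with $r=1/2$ we have $f_\beta^{-1}[[0,1/2)]\subseteq\mathrm{int}_{\beta X}\mathrm{cl}_{\beta X}C$, and since $f_\beta(x)=f(x)=0$ this gives $x\in\mathrm{int}_{\beta X}\mathrm{cl}_{\beta X}C\subseteq\lambda_\mathfrak{P}X$. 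As $x\in X$ was arbitrary, $X\subseteq\lambda_\mathfrak{P}X$.

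For the converse, assume $X\subseteq\lambda_\mathfrak{P}X$ and fix $x\in X$. By the definition of $\lambda_\mathfrak{P}X$, the point $x$ lies in $\mathrm{int}_{\beta X}\mathrm{cl}_{\beta X}C$ for some $C\in\mathrm{Coz}(X)$ with $\mathrm{cl}_XC$ having $\mathfrak{P}$. Put $U=X\cap\mathrm{int}_{\beta X}\mathrm{cl}_{\beta X}C$, which is an open neighborhood of $x$ in $X$. Then $U\subseteq X\cap\mathrm{cl}_{\beta X}C=\mathrm{cl}_XC$, and since $\mathrm{cl}_XC$ is closed in $X$ we get $\mathrm{cl}_XU\subseteq\mathrm{cl}_XC$; thus $\mathrm{cl}_XU$ is a closed subspace of a space with $\mathfrak{P}$, hence has $\mathfrak{P}$. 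Therefore $X$ is locally-$\mathfrak{P}$.

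I do not expect a genuine obstacle in this lemma; it is essentially bookkeeping. The one place that requires a little care is the reverse-to-$\lambda_\mathfrak{P}X$ direction: one must produce an actual cozero-set (not merely an open $F_\sigma$) whose $X$-closure is trapped inside the chosen $\mathfrak{P}$-neighborhood, so that closed-hereditariness of $\mathfrak{P}$ can be invoked, and one must make sure the interior–closure bound from Lemma \ref{LKG} is applied to this same set.
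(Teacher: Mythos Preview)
Your proposal is correct and follows essentially the same argument as the paper: in both directions you and the paper pick the same auxiliary cozero-set $C=f^{-1}[[0,1/2)]$ (respectively $V=X\cap\mathrm{int}_{\beta X}\mathrm{cl}_{\beta X}C$), invoke Lemma~\ref{LKG} to place $x$ in $\mathrm{int}_{\beta X}\mathrm{cl}_{\beta X}C$, and use closed-hereditariness of $\mathfrak{P}$ to pass to the smaller closure. The only differences are cosmetic (you spell out why $C$ is cozero and record the intermediate inclusion $\mathrm{cl}_XC\subseteq f^{-1}[[0,1/2]]$).
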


\begin{proof}
Suppose that $X$ is locally-$\mathfrak{P}$. Let $x\in X$ and let $U$ be an open neighborhood of $x$ in $X$ whose closure $\mathrm{cl}_XU$ has $\mathfrak{P}$. Let $f:X\rightarrow[0,1]$ be continuous with $f(x)=0$ and $f(t)=1$ for any $t\in X\setminus U$. Let
\[C=f^{-1}\big[[0,1/2)\big]\in\mathrm{Coz}(X).\]
Then $C\subseteq U$ and thus $\mathrm{cl}_XC$ has $\mathfrak{P}$, as it is closed in $\mathrm{cl}_XU$. Therefore
\[\mathrm{int}_{\beta X}\mathrm{cl}_{\beta X}C\subseteq\lambda_\mathfrak{P}X.\]
But then $x\in\lambda_\mathfrak{P} X$, as $x\in f_\beta^{-1}[[0,1/2)]$ and
\[f_\beta^{-1}\big[[0,1/2)\big]\subseteq\mathrm{int}_{\beta X}\mathrm{cl}_{\beta X}C\]
by Lemma \ref{LKG}.

For the converse, suppose that $X\subseteq\lambda_\mathfrak{P} X$. Let $x\in X$. Then $x\in\lambda_\mathfrak{P} X$ and therefore $x\in\mathrm{int}_{\beta X}\mathrm{cl}_{\beta X}C$ for some $C\in\mathrm{Coz}(X)$ such that $\mathrm{cl}_XC$ has $\mathfrak{P}$. Let
\[V=X\cap\mathrm{int}_{\beta X}\mathrm{cl}_{\beta X}C.\]
Then $V$ is an open neighborhood of $x$ in $X$ and since $V\subseteq \mathrm{cl}_XC$, the set $\mathrm{cl}_XV$ has $\mathfrak{P}$, as it is closed in $\mathrm{cl}_XC$.
\end{proof}

Theorems \ref{UYF}, \ref{GFO} and \ref{PPKS} are dual to Theorems \ref{RES}, \ref{RSQ} and \ref{RTRT} (and to Theorems \ref{DRES}, \ref{AEF} and \ref{PKS}), respectively, with analogous proofs. One should simply replace $\mathrm{int}_{\beta X}\upsilon X$ by $\lambda_\mathfrak{P} X$ in all proofs and note the duality between Lemmas \ref{ASD} and \ref{Y16} and Lemmas \ref{FGA} and \ref{15}.

\begin{theorem}\label{UYF}
Let $X$ be a locally-$\mathfrak{P}$ space, where $\mathfrak{P}$ is a compactness-like topological property. Let $\zeta X$ be a compactification of $X$, let $\phi:\beta X\rightarrow\zeta X$ continuously extend $\mathrm{id}_X$ and let $E$ be a compact subspace of $\zeta X\setminus X$ containing $\phi[\beta X\setminus\lambda_\mathfrak{P}X]$. Then $Y=X\cup E\in{\mathscr E}_\mathfrak{P}(X)$ (considered as a subspace of $\zeta X$). Furthermore, every element of ${\mathscr E}_\mathfrak{P}(X)$ is of this form.
\end{theorem}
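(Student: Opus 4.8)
The plan is to derive both assertions directly from the characterisation in Lemma \ref{Y16}, in exact parallel with the way Theorem \ref{RES} follows from Lemma \ref{ASD}; here $\lambda_\mathfrak{P}X$ takes over the role of $\mathrm{int}_{\beta X}\upsilon X$, and Lemma \ref{15} takes over the role of Lemma \ref{FGA}.

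\emph{The space $Y=X\cup E$ belongs to ${\mathscr E}_\mathfrak{P}(X)$.} First I would record that $Y$, viewed as a subspace of $\zeta X$, is an extension of $X$ with compact remainder: since $X$ is dense in the compactification $\zeta X$ it is dense in $Y$, and $Y\setminus X=E$ is compact by hypothesis. Being dense in $\zeta X$, the space $Y$ is too, so $\zeta X$ is also a compactification of $Y$, and $\phi\colon\beta X\to\zeta X$ continuously extends $\mathrm{id}_X$; hence Lemma \ref{Y16} is applicable with $\zeta Y=\zeta X$. It remains to check condition (2) of that lemma. Its first half, that $X$ is locally-$\mathfrak{P}$, is a hypothesis. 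For the second half, the assumption $\phi[\beta X\setminus\lambda_\mathfrak{P}X]\subseteq E$ gives
\[
\beta X\setminus\lambda_\mathfrak{P}X\subseteq\phi^{-1}\big[\phi[\beta X\setminus\lambda_\mathfrak{P}X]\big]\subseteq\phi^{-1}[E]=\phi^{-1}[Y\setminus X],
\]
which is exactly what is required. So $Y\in{\mathscr E}_\mathfrak{P}(X)$ by Lemma \ref{Y16}.

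\emph{Every $Y\in{\mathscr E}_\mathfrak{P}(X)$ has this form.} Given such a $Y$, I would take $\zeta X=\beta Y$ — a compactification of $X$, because $X$ is dense in $Y$ and hence in $\beta Y$ — with $\phi\colon\beta X\to\beta Y$ the continuous extension of $\mathrm{id}_X$ (the quotient map of Lemma \ref{j2}), and set $E=Y\setminus X$, which is a compact subspace of $\beta Y\setminus X=\zeta X\setminus X$. Applying the implication (1)$\Rightarrow$(2) of Lemma \ref{Y16} to $Y$ (with $\zeta Y=\beta Y$) yields $\beta X\setminus\lambda_\mathfrak{P}X\subseteq\phi^{-1}[Y\setminus X]$, whence $\phi[\beta X\setminus\lambda_\mathfrak{P}X]\subseteq Y\setminus X=E$. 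Thus $\zeta X$, $\phi$ and $E$ satisfy all the stated conditions and $Y=X\cup E$, as claimed.

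I expect no genuinely hard step once Lemma \ref{Y16} is available — all the real work, in particular the use of Mr\'{o}wka's condition $(\mathrm{W})$ and of invariance/inverse invariance under perfect maps, is already done there. The only points demanding a little care are the bookkeeping observation that a compactification of $X$ is automatically a compactification of $Y$ whenever $X\subseteq Y$ is dense (so that Lemma \ref{Y16} may be invoked with $\zeta Y=\zeta X$), and the remark, via Lemma \ref{15}, that local-$\mathfrak{P}$-ness of $X$ forces $X\subseteq\lambda_\mathfrak{P}X$, so that the set $\beta X\setminus\lambda_\mathfrak{P}X$ — and therefore the remainder $E$ — is disjoint from $X$ and lives in the outgrowth, as it must.
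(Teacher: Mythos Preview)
Your proposal is correct and is exactly the approach the paper takes: the paper gives no explicit proof of Theorem \ref{UYF} but states that it is the dual of Theorem \ref{RES}, obtained by replacing $\mathrm{int}_{\beta X}\upsilon X$ by $\lambda_\mathfrak{P}X$ and invoking Lemma \ref{Y16} (in place of Lemma \ref{ASD}) together with Lemma \ref{15} (in place of Lemma \ref{FGA}). Your write-up carries this out in full detail, with the bookkeeping about $\zeta X$ serving as a compactification of $Y$ and the choice $\zeta X=\beta Y$ for the converse exactly as intended.
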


\begin{definition}\label{GSDR}
For a space $X$ and a topological property $\mathfrak{P}$, let
\[\mu_\mathfrak{P} X=X\cup(\beta X\setminus\lambda_\mathfrak{P}X)\]
considered as a subspace of $\beta X$.
\end{definition}

\begin{proposition}\label{TG}
Let $X$ be a locally-$\mathfrak{P}$ space, where $\mathfrak{P}$ is a compactness-like topological property. Then ${\mathscr E}_\mathfrak{P}(X)$ has a largest element, namely $\mu_\mathfrak{P}X$.
\end{proposition}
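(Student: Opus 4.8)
The statement has two halves: first, that $\mu_\mathfrak{P}X$ genuinely belongs to ${\mathscr E}_\mathfrak{P}(X)$, and second, that $Y\leq\mu_\mathfrak{P}X$ for every $Y\in{\mathscr E}_\mathfrak{P}(X)$. Both are almost immediate once the machinery already in place (Lemmas \ref{Y16} and \ref{15}, together with Theorem \ref{UYF} and Lemma \ref{DFH}) is invoked; the proof will parallel that of Lemma \ref{PFAJ} in the pseudocompact case, with $\lambda_\mathfrak{P}X$ playing the role of $\mathrm{int}_{\beta X}\upsilon X$.

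\textbf{Step 1: $\mu_\mathfrak{P}X\in{\mathscr E}_\mathfrak{P}(X)$.} Since $X$ is locally-$\mathfrak{P}$, Lemma \ref{15} gives $X\subseteq\lambda_\mathfrak{P}X$, so the remainder $\mu_\mathfrak{P}X\setminus X=\beta X\setminus\lambda_\mathfrak{P}X$. The set $\lambda_\mathfrak{P}X$ is open in $\beta X$ (being a union of interiors), hence $\beta X\setminus\lambda_\mathfrak{P}X$ is closed in the compact space $\beta X$ and therefore compact; moreover $X$ is dense in $\mu_\mathfrak{P}X$ because it is dense in $\beta X$. Thus $\mu_\mathfrak{P}X$ is an extension of $X$ with compact remainder, and to see that it has $\mathfrak{P}$ I would apply Theorem \ref{UYF} with $\zeta X=\beta X$, with $\phi=\mathrm{id}_{\beta X}$ (legitimate since $\beta(\mu_\mathfrak{P}X)=\beta X$ as $X\subseteq\mu_\mathfrak{P}X\subseteq\beta X$), and with $E=\beta X\setminus\lambda_\mathfrak{P}X$, which is a compact subspace of $\beta X\setminus X$ trivially containing $\phi[\beta X\setminus\lambda_\mathfrak{P}X]$. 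Then $X\cup E=\mu_\mathfrak{P}X\in{\mathscr E}_\mathfrak{P}(X)$. (Equivalently, one checks condition (2) of Lemma \ref{Y16} directly: $X$ is locally-$\mathfrak{P}$ and $\beta X\setminus\lambda_\mathfrak{P}X\subseteq\phi^{-1}[\mu_\mathfrak{P}X\setminus X]=\beta X\setminus\lambda_\mathfrak{P}X$, which holds with equality.)

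\textbf{Step 2: $\mu_\mathfrak{P}X$ is the largest element.} Let $Y\in{\mathscr E}_\mathfrak{P}(X)$ and let $\phi_Y:\beta X\rightarrow\beta Y$ continuously extend $\mathrm{id}_X$. Computing ${\mathscr F}(\mu_\mathfrak{P}X)$ from the compactification $\beta X$ with the identity map, we get ${\mathscr F}(\mu_\mathfrak{P}X)=\{\{p\}:p\in\beta X\setminus\lambda_\mathfrak{P}X\}$, i.e.\ all of its members are singletons. By Lemma \ref{Y16}, since $Y\in{\mathscr E}_\mathfrak{P}(X)$ we have $\beta X\setminus\lambda_\mathfrak{P}X\subseteq\phi_Y^{-1}[Y\setminus X]$, so for each $p\in\beta X\setminus\lambda_\mathfrak{P}X$ the point $\phi_Y(p)$ lies in $Y\setminus X$ and hence $\{p\}\subseteq\phi_Y^{-1}(\phi_Y(p))\in{\mathscr F}(Y)$. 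Thus every element of ${\mathscr F}(\mu_\mathfrak{P}X)$ is contained in an element of ${\mathscr F}(Y)$, and Lemma \ref{DFH} yields $Y\leq\mu_\mathfrak{P}X$.

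\textbf{Main obstacle.} There is no genuinely hard step here: the argument is a short assembly of earlier results, and its form is dictated by the pseudocompact prototype (Lemma \ref{PFAJ}). The only point requiring care is the verification in Step 1 that $\mu_\mathfrak{P}X$ actually has $\mathfrak{P}$, and even that is delegated wholesale to Theorem \ref{UYF} / Lemma \ref{Y16}, where the real content — Mr\'{o}wka's condition $(\mathrm{W})$ and the invariance of $\mathfrak{P}$ under perfect maps — has already been spent. So the expected difficulty is essentially zero beyond bookkeeping with the definition of $\lambda_\mathfrak{P}X$ and the family ${\mathscr F}$.
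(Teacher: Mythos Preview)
Your proposal is correct and follows precisely the approach the paper intends: it is the direct translation of the proof of Lemma \ref{PFAJ} with $\lambda_\mathfrak{P}X$ in place of $\mathrm{int}_{\beta X}\upsilon X$, invoking Lemmas \ref{15}, \ref{Y16} (or equivalently Theorem \ref{UYF}) and \ref{DFH}, exactly as the paper indicates in the paragraph preceding Theorem \ref{UYF}.
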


\begin{theorem}\label{GFO}
Let $X$ and $Y$ be locally-$\mathfrak{P}$ non-$\mathfrak{P}$ spaces, where $\mathfrak{P}$ is a compactness-like topological property. If ${\mathscr E}_\mathfrak{P}(X)$ and ${\mathscr E}_\mathfrak{P}(Y)$ are order-isomorphic then $\beta X\setminus\lambda_\mathfrak{P}X$ and $\beta Y\setminus\lambda_\mathfrak{P}Y$ are homeomorphic.
\end{theorem}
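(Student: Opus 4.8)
The plan is to run the entire machinery developed for pseudocompactifications in Section 2, but with $\mathrm{int}_{\beta X}\upsilon X$ systematically replaced by $\lambda_\mathfrak{P} X$. The starting point is Theorem \ref{UYF}, which says that every element of ${\mathscr E}_\mathfrak{P}(X)$ is obtained from a compactification $\zeta X$ of $X$ by collapsing a compact subset $E$ of $\zeta X\setminus X$ that contains the image of $\beta X\setminus\lambda_\mathfrak{P} X$. Combined with Lemma \ref{j2}, this gives, for each $Y\in{\mathscr E}_\mathfrak{P}(X)$, a family ${\mathscr F}_X(Y)=\{\phi^{-1}(p):p\in Y\setminus X\}$ of pairwise disjoint nonempty compact subsets of $\beta X\setminus X$ whose union covers $\beta X\setminus\lambda_\mathfrak{P} X$, and Lemma \ref{DFH} (whose proof only used compactness of the remainder) continues to hold: $Y_1\leq Y_2$ iff each member of ${\mathscr F}(Y_2)$ sits inside a member of ${\mathscr F}(Y_1)$. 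So one introduces the analogue $e_X(C_1,\dots,C_n)$ of Definition \ref{HGFD}, with $\mathrm{cl}_{\beta X}(\beta X\setminus\upsilon X)$ replaced by $\beta X\setminus\lambda_\mathfrak{P} X$, and one notes (via Lemma \ref{Y16} and Lemma \ref{15}, and using that $X$ is non-$\mathfrak{P}$ so that $\beta X\setminus\lambda_\mathfrak{P} X\neq\emptyset$) that $\mu_\mathfrak{P} X$ is the largest element of ${\mathscr E}_\mathfrak{P}(X)$, as recorded in Proposition \ref{TG}.

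Next I would transcribe the order-theoretic characterizations verbatim. Lemma \ref{UGGH} identifies the co-atoms of ${\mathscr E}_\mathfrak{P}(X)$ as either $e_X(\{a\})$ with $a\in(\beta X\setminus X)\setminus(\beta X\setminus\lambda_\mathfrak{P} X)=\lambda_\mathfrak{P} X\setminus X$ (type I) or $e_X(\{a,b\})$ with distinct $a,b\in\beta X\setminus\lambda_\mathfrak{P} X$ (type II); the proof is purely combinatorial in ${\mathscr F}$ and carries over unchanged. Lemma \ref{GHDS} (meets of the $e_X$'s) is likewise formal. Lemma \ref{GFS}'s counting argument distinguishing type II from type I needs the fact that there are at least three points available in $\beta X\setminus\lambda_\mathfrak{P} X$; this follows because $X$ non-$\mathfrak{P}$ forces $\beta X\setminus\lambda_\mathfrak{P} X$ to be infinite — indeed $X\cup(\beta X\setminus\lambda_\mathfrak{P} X)$ is the largest $\mathfrak{P}$-extension with compact remainder and if this set were finite, $X$ would be the union of a compact set and a clopen $\mathfrak{P}$-piece, contradicting non-$\mathfrak{P}$ via finite closed additivity. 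Then the analogue of Lemma \ref{TDT} characterizes $\mathrm{Im}(\Theta_X)$ as those $Y$ admitting no type-I co-atom above them, where $\Theta_X:{\mathscr K}(\lambda_\mathfrak{P} X)\to{\mathscr E}_\mathfrak{P}(X)$, $\Theta_X(T)=X\cup(T\setminus\lambda_\mathfrak{P} X)$, is the order-isomorphism onto its image built exactly as in Lemma \ref{UGRH}; here one uses that $\lambda_\mathfrak{P} X$ is open in $\beta X$ (a union of interiors) hence locally compact with $\beta(\lambda_\mathfrak{P} X)=\beta X$, which is what makes the Claim inside that proof go through.

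Finally, given an order-isomorphism ${\mathscr E}_\mathfrak{P}(X)\to{\mathscr E}_\mathfrak{P}(Y)$, the order-theoretic nature of all the above characterizations shows it carries type-I co-atoms to type-I co-atoms and hence $\mathrm{Im}(\Theta_X)$ onto $\mathrm{Im}(\Theta_Y)$, inducing an order-isomorphism ${\mathscr K}(\lambda_\mathfrak{P} X)\cong{\mathscr K}(\lambda_\mathfrak{P} Y)$. Since $\lambda_\mathfrak{P} X$ and $\lambda_\mathfrak{P} Y$ are locally compact and noncompact (noncompactness again because $X$ is non-$\mathfrak{P}$: a compact $\lambda_\mathfrak{P} X\supseteq X$ would be a $\mathfrak{P}$-extension with empty, hence compact, remainder forcing $X$ locally-$\mathfrak{P}$ \emph{and} would make $\beta X\setminus\lambda_\mathfrak{P} X$ open, and one checks directly $X$ would then have $\mathfrak{P}$), Magill's theorem (Theorem \ref{KLFA}) applies and yields $\beta(\lambda_\mathfrak{P} X)\setminus\lambda_\mathfrak{P} X\cong\beta(\lambda_\mathfrak{P} Y)\setminus\lambda_\mathfrak{P} Y$, i.e. $\beta X\setminus\lambda_\mathfrak{P} X\cong\beta Y\setminus\lambda_\mathfrak{P} Y$. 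The one genuinely non-routine point — everything else being a faithful translation of Section 2 — is verifying that $\lambda_\mathfrak{P} X$ is noncompact (equivalently $\beta X\setminus\lambda_\mathfrak{P} X\neq\emptyset$) whenever $X$ is non-$\mathfrak{P}$ and locally-$\mathfrak{P}$; this is precisely where the hypotheses "non-$\mathfrak{P}$" and the compactness-like axioms (finite closed additivity, clopen heredity, Mr\'owka's condition) are used, and it is the analogue of the observation following Definition \ref{LFS} that a space with a pseudocompactification with compact remainder is locally pseudocompact, together with the fact that $\mathfrak{P}$ is not implied by local-$\mathfrak{P}$ plus a compact remainder added.
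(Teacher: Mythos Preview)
Your approach is exactly the paper's: the paper says only that one should replace $\mathrm{int}_{\beta X}\upsilon X$ by $\lambda_\mathfrak{P}X$ throughout the arguments of Section~2 and invoke the duality between Lemmas~\ref{ASD}/\ref{Y16} and Lemmas~\ref{FGA}/\ref{15}, and you carry this out in detail.

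One step in your write-up is incorrect, though. You claim that $X$ non-$\mathfrak{P}$ forces $\beta X\setminus\lambda_\mathfrak{P}X$ to be infinite, arguing that a finite remainder would make $X$ a union of a compact set and a clopen $\mathfrak{P}$-piece. This fails: take $X=[0,1)$ and let $\mathfrak{P}$ be compactness (compactness-like by Example~\ref{QLL}); then $\beta X=[0,1]$, $\lambda_\mathfrak{P}X=[0,1)$, and $\beta X\setminus\lambda_\mathfrak{P}X=\{1\}$ is a singleton even though $X$ is not compact. The point $1$ is closed but not open in $\mu_\mathfrak{P}X=[0,1]$, so no clopen decomposition is available and your finite-additivity argument does not apply. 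The paper is itself silent on this issue in Theorem~\ref{GFO}; only later, in Lemma~\ref{GGO}, does it add the hypothesis $|\beta X\setminus\lambda_\mathfrak{P}X|\geq 3$ explicitly. So your transcription of Lemma~\ref{GFS} shares a lacuna with the paper's ``analogous'' proof: the degenerate cases $|\beta X\setminus\lambda_\mathfrak{P}X|\leq 2$ would need separate (routine) handling. Your argument that $\lambda_\mathfrak{P}X$ is noncompact is, by contrast, fine and can be said more directly: $\lambda_\mathfrak{P}X$ is open in $\beta X$ and contains the dense subspace $X$, so if it were compact it would be clopen and hence all of $\beta X$, forcing $X$ to have $\mathfrak{P}$ by Lemma~\ref{FSGK}.
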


\begin{theorem}\label{PPKS}
Let $X$ and $Y$ be locally compact locally-$\mathfrak{P}$ non-$\mathfrak{P}$ spaces, where $\mathfrak{P}$ is a compactness-like topological property. If ${\mathscr E}_\mathfrak{P}(X)$ and ${\mathscr E}_\mathfrak{P}(Y)$ are order-isomorphic then $\lambda_\mathfrak{P}X\setminus X$ and $\lambda_\mathfrak{P}Y\setminus Y$ are homeomorphic.
\end{theorem}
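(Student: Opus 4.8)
The plan is to run the argument of Theorem \ref{RTRT} with $\mathrm{int}_{\beta X}\upsilon X$ replaced throughout by $\lambda_\mathfrak{P}X$ and $\mathrm{cl}_{\beta X}(\beta X\setminus\upsilon X)$ by $\beta X\setminus\lambda_\mathfrak{P}X$, using Lemma \ref{Y16} in place of Lemma \ref{ASD} and Lemma \ref{15} in place of Lemma \ref{FGA}. Since $X$ is locally-$\mathfrak{P}$, Lemma \ref{15} gives $X\subseteq\lambda_\mathfrak{P}X$, so that $(\beta X\setminus X)\setminus(\beta X\setminus\lambda_\mathfrak{P}X)=\lambda_\mathfrak{P}X\setminus X$; since $X$ is locally compact, $\beta X\setminus X$ and all of its closed subsets are compact; and by Proposition \ref{TG} the partially ordered set ${\mathscr E}_\mathfrak{P}(X)$ has largest element $\mu_\mathfrak{P}X$, with ${\mathscr F}(\mu_\mathfrak{P}X)=\{\{y\}:y\in\beta X\setminus\lambda_\mathfrak{P}X\}$. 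Because all the combinatorics of Section 2.2 rests only on Lemma \ref{DFH} and this description of ${\mathscr F}(\mu_\mathfrak{P}X)$, it transfers with no change.

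First I would record the analogues of Lemmas \ref{UGGH}, \ref{GHDS}, \ref{PJE} and \ref{JUHD}: the co-atoms of ${\mathscr E}_\mathfrak{P}(X)$ are exactly $e_X(\{a\})$ with $a\in\lambda_\mathfrak{P}X\setminus X$ (\emph{type} $(\mathrm{I})$) and $e_X(\{a,b\})$ with distinct $a,b\in\beta X\setminus\lambda_\mathfrak{P}X$ (\emph{type} $(\mathrm{II})$); the co-atom covers of ${\mathscr E}_\mathfrak{P}(X)$ are the $e_X(\{a,b\})$ with $a\in\beta X\setminus\lambda_\mathfrak{P}X$ and $b\in\lambda_\mathfrak{P}X\setminus X$, with $\big(e_X(\{a,b\})\big)'=e_X(\{b\})$; and $e_X(C)\wedge e_X(D)$ is $e_X(C\cup D)$ or $e_X(C,D)$ according as $C\cap D$ is or is not empty. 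Next I would transcribe the analogues of Lemmas \ref{GFS} and \ref{OSDR}: an order-theoretic description of the type $(\mathrm{II})$ co-atoms, and then, writing ${\mathscr E}_\mathfrak{P}^*(X)$ for the set of one-point members of ${\mathscr E}_\mathfrak{P}(X)$, an order-theoretic description of ${\mathscr E}_\mathfrak{P}^*(X)$ inside ${\mathscr E}_\mathfrak{P}(X)$. Combining these as in Lemma \ref{EWER}, every order-isomorphism ${\mathscr E}_\mathfrak{P}(X)\to{\mathscr E}_\mathfrak{P}(Y)$ restricts to an order-isomorphism ${\mathscr E}_\mathfrak{P}^*(X)\to{\mathscr E}_\mathfrak{P}^*(Y)$.

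To conclude, I would reproduce Lemmas \ref{GJK} and \ref{RREQ} in the present setting. Collapsing a closed set to a point furnishes an anti-order-isomorphism from $\big(\{C\in{\mathscr C}(\beta X\setminus X):\beta X\setminus\lambda_\mathfrak{P}X\subseteq C\},\subseteq\big)$ onto $\big({\mathscr E}_\mathfrak{P}^*(X),\leq\big)$, with membership in ${\mathscr E}_\mathfrak{P}^*(X)$ supplied by Lemma \ref{Y16}. Then the intersection device from the proof of Lemma \ref{RREQ} --- intersecting the closed sets on the $X$-side with $\lambda_\mathfrak{P}X$ and on the $Y$-side with $\lambda_\mathfrak{P}Y$ --- turns an order-isomorphism between these two families into an order-isomorphism ${\mathscr C}(\lambda_\mathfrak{P}X\setminus X)\to{\mathscr C}(\lambda_\mathfrak{P}Y\setminus Y)$; and since the topology of a space is recovered from the order structure of its lattice of closed sets (Theorem 11.1 of \cite{B}), this yields the homeomorphism $\lambda_\mathfrak{P}X\setminus X\cong\lambda_\mathfrak{P}Y\setminus Y$.

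The one point that is not a verbatim translation --- and the expected obstacle --- is the analogue of the cardinality input $|\beta X\setminus\upsilon X|\ge 2^{2^{\aleph_0}}$ used inside Lemma \ref{GFS}. For a compactness-like $\mathfrak{P}$ and a non-$\mathfrak{P}$ locally-$\mathfrak{P}$ space $X$ one only knows $\beta X\setminus\lambda_\mathfrak{P}X\neq\emptyset$ (for example $\beta X\setminus\lambda_\mathfrak{P}X=\{\Omega\}$ when $\mathfrak{P}$ is compactness and $X=[0,\Omega)$), so the ``choose a third point of $\beta X\setminus\lambda_\mathfrak{P}X$'' step of Lemma \ref{GFS} can fail. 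I would dispatch this by isolating the cases $|\beta X\setminus\lambda_\mathfrak{P}X|\le 2$: type $(\mathrm{II})$ co-atoms exist precisely when $|\beta X\setminus\lambda_\mathfrak{P}X|\ge 2$, and there is more than one of them precisely when $|\beta X\setminus\lambda_\mathfrak{P}X|\ge 3$, so the trichotomy $1$, $2$, $\ge 3$ for the size of $\beta X\setminus\lambda_\mathfrak{P}X$ is itself read off order-theoretically and is preserved by any order-isomorphism; when $|\beta X\setminus\lambda_\mathfrak{P}X|=1$ one has $\mu_\mathfrak{P}X\in{\mathscr E}_\mathfrak{P}^*(X)$ and the ${\mathscr E}_\mathfrak{P}^*(X)$-characterization degenerates to an order-theoretic condition checked directly from ${\mathscr F}$, and the case $|\beta X\setminus\lambda_\mathfrak{P}X|=2$ (where the unique type $(\mathrm{II})$ co-atom also lies in ${\mathscr E}_\mathfrak{P}^*(X)$) is handled the same way. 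With these three regimes glued together, the remainder of the proof is a routine rewriting of Section 2.2.
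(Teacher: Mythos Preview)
Your plan is the paper's own: the paper states that Theorem \ref{PPKS} has a proof ``analogous'' to Theorem \ref{RTRT}, obtained by the dictionary substitution $\mathrm{int}_{\beta X}\upsilon X\mapsto\lambda_\mathfrak{P}X$, Lemma \ref{ASD}$\mapsto$Lemma \ref{Y16}, Lemma \ref{FGA}$\mapsto$Lemma \ref{15}. You go further than the paper by flagging the cardinality issue in the analogue of Lemma \ref{GFS}: for a general compactness-like $\mathfrak{P}$ one cannot guarantee $|\beta X\setminus\lambda_\mathfrak{P}X|\ge 3$, and the ``choose a third point'' step then fails. (The paper acknowledges this only afterwards, when stating Lemma \ref{GGO} under the extra hypothesis $|\beta X\setminus\lambda_\mathfrak{P}X|\ge 3$ for use in Example \ref{DFJ}.)

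Your case-split on $|\beta X\setminus\lambda_\mathfrak{P}X|\in\{1,2,\ge 3\}$ is workable, but the sentence justifying it is circular as written: you read off the trichotomy by counting type $(\mathrm{II})$ co-atoms, yet distinguishing type $(\mathrm{II})$ from type $(\mathrm{I})$ is exactly what the Lemma \ref{GFS} criterion was supposed to do, and that criterion misfires at $n=2$ (there the unique type $(\mathrm{II})$ co-atom $e_X(\beta X\setminus\lambda_\mathfrak{P}X)$ fails the ``five above $S\wedge T$'' test). A clean uniform repair, avoiding any case-split, is to observe that a co-atom is of type $(\mathrm{I})$ if and only if it equals $D'$ for some co-atom cover $D$: one direction is the $\mathfrak{P}$-analogue of Lemma \ref{JUHD}, and for the other, given a type $(\mathrm{I})$ co-atom $e_X(\{b\})$ pick any $a\in\beta X\setminus\lambda_\mathfrak{P}X$ (non-empty since $X$ is non-$\mathfrak{P}$) and take $D=e_X(\{a,b\})$. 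This characterization is purely order-theoretic and valid for every $|\beta X\setminus\lambda_\mathfrak{P}X|\ge 1$, after which Lemmas \ref{OSDR}--\ref{RREQ} transcribe verbatim.
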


We have seen through examples (Examples \ref{HFP} and \ref{QFKJ}) that the converses of Theorems \ref{RSQ} and \ref {AEF} do not hold in general. Analogously, we show that the converse of Theorem \ref{GFO} (the dual result of Theorems \ref{RSQ} and \ref {AEF}) does not hold either. This will be done through an example (Example \ref{DFJ}) for a specific choice of a compactness-like topological property $\mathfrak{P}$. The example (which shares several ideas of the proof of Theorem 4.36 of \cite{Ko3} and certain results from \cite{Ko4}; e.g. Lemmas 2.10, 4.1 and 4.3 of \cite{Ko4}) is very technical, and requires us to state and prove a series of lemmas preceding it. This we will do next.

To avoid ambiguity we restate Definition \ref{HGFD} in the new context.

\begin{definition}\label{HEID}
Let $X$ be a locally-$\mathfrak{P}$ space, where $\mathfrak{P}$ is a compactness-like topological property, and let $C_1,\dots,C_n$ be $n$ pairwise disjoint compact non-empty subspaces of $\beta X\setminus X$. Let $Z$ be the quotient space of $\beta X$ obtained by contracting $C_1,\dots,C_n$ to $p_1,\dots,p_n$, respectively. Define
\[e_X(C_1,\dots,C_n)=X\cup\{p_1,\dots,p_n\}\cup\big((\beta X\setminus\lambda_\mathfrak{P}X)\setminus (C_1\cup\dots\cup C_n)\big),\]
considered as a subspace of $Z$. Note that
\[e_X(C_1,\dots,C_n)\in{\mathscr E}_\mathfrak{P}(X)\]
by Lemma \ref{Y16}, and
\[{\mathscr F}\big(e_X(C_1,\dots,C_n)\big)=\{C_1,\dots,C_n\}\cup\big\{\{y\}:y\in(\beta X\setminus\lambda_\mathfrak{P}X)\setminus (C_1\cup\dots\cup C_n)\big\}.\]
\end{definition}

\begin{definition}\label{EGGH}
Let $X$ be a locally-$\mathfrak{P}$ space, where $\mathfrak{P}$ is a compactness-like topological property. A co-atom $Y$ in ${\mathscr E}_\mathfrak{P}(X)$ is
\begin{itemize}
  \item \emph{of type $(\mathrm{I})$} if $Y=e_X(\{a\})$, for some $a\in(\beta X\setminus X)\setminus(\beta X\setminus\lambda_\mathfrak{P}X)$.
  \item \emph{of type $(\mathrm{II})$} if $Y=e_X(\{a,b\})$, for some distinct $a,b\in\beta X\setminus\lambda_\mathfrak{P}X$.
\end{itemize}
\end{definition}

By an argument similar to the one we have given for Lemma \ref{UGGH} it follows that every co-atom in ${\mathscr E}_\mathfrak{P}(X)$ (where $X$ is a locally-$\mathfrak{P}$ space and $\mathfrak{P}$ is a compactness-like topological property) actually is either of type $(\mathrm{I})$ or of type $(\mathrm{II})$.

The next lemma characterizes order-theoretically the co-atoms in ${\mathscr E}_\mathfrak{P}(X)$ of type $(\mathrm{II})$ (and consequently, the co-atoms in ${\mathscr E}_\mathfrak{P}(X)$ of type $(\mathrm{I})$). The proof is analogous to its dual result Lemma \ref{GFS}. (One needs to state and prove a lemma dual to Lemma \ref{GHDS} first.) Note that in the proof of Lemma \ref{GFS} one needs only that
\[\big|\mathrm{cl}_{\beta X}(\beta X\setminus\upsilon X)\big|\geq 3.\]
This justifies the inclusion of the extra assumption in the following lemma.

\begin{lemma}\label{GGO}
Let $X$ be a locally-$\mathfrak{P}$ space, where $\mathfrak{P}$ is a compactness-like topological property, and let $T$ be a co-atom in ${\mathscr E}_\mathfrak{P}(X)$. Suppose that $|\beta X\setminus\lambda_\mathfrak{P}X|\geq 3$. The following are equivalent:
\begin{itemize}
\item[\rm(1)] $T$ is of type $(\mathrm{II})$.
\item[\rm(2)] There exists a co-atom $S$ in ${\mathscr E}_\mathfrak{P}(X)$ with
\[\big|\big\{U\in{\mathscr E}_\mathfrak{P}(X):U\geq S\wedge T\big\}\big|=5.\]
\end{itemize}
\end{lemma}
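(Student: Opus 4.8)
The plan is to follow the proof of the dual result Lemma~\ref{GFS} essentially verbatim, under the dictionary that replaces $\mathrm{cl}_{\beta X}(\beta X\setminus\upsilon X)$ by $\beta X\setminus\lambda_\mathfrak{P}X$, replaces the largest element $\zeta X$ of ${\mathscr U}(X)$ by the largest element $\mu_\mathfrak{P}X$ of ${\mathscr E}_\mathfrak{P}(X)$ (which exists by Proposition~\ref{TG}), and uses Lemma~\ref{Y16} in place of Lemma~\ref{ASD}. The one preparatory step, as indicated before the statement, is to establish the analogue of Lemma~\ref{GHDS}: for compact non-empty $C,D\subseteq\beta X\setminus X$ one has $e_X(C)\wedge e_X(D)=e_X(C\cup D)$ if $C\cap D\neq\emptyset$ and $e_X(C)\wedge e_X(D)=e_X(C,D)$ if $C\cap D=\emptyset$; its proof copies that of Lemma~\ref{GHDS}, since Lemma~\ref{DFH} is valid for any two extensions with compact remainder and the fibre families ${\mathscr F}(e_X(\cdots))$ are as recorded in Definition~\ref{HEID}. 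Recall also that, as noted after Definition~\ref{EGGH}, every co-atom of ${\mathscr E}_\mathfrak{P}(X)$ is of type $(\mathrm{I})$ or of type $(\mathrm{II})$.

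For (1)$\Rightarrow$(2), write $T=e_X(\{a,b\})$ with $a,b\in\beta X\setminus\lambda_\mathfrak{P}X$ distinct and use $|\beta X\setminus\lambda_\mathfrak{P}X|\geq 3$ to choose $c\in\beta X\setminus\lambda_\mathfrak{P}X$ distinct from $a$ and $b$. Put $S=e_X(\{b,c\})$, a co-atom of type $(\mathrm{II})$; by the analogue of Lemma~\ref{GHDS}, $S\wedge T=e_X(\{a,b,c\})$. Combining Lemma~\ref{DFH} with Definition~\ref{HEID} and Lemma~\ref{Y16} one checks that a $U\in{\mathscr E}_\mathfrak{P}(X)$ satisfies $U\geq e_X(\{a,b,c\})$ precisely when the members of ${\mathscr F}(U)$ meeting $\{a,b,c\}$ form a partition of $\{a,b,c\}$ (all other members being forced to be singletons from $\beta X\setminus\lambda_\mathfrak{P}X$); the five partitions of a three-element set give $U\in\{e_X(\{a,b,c\}),e_X(\{a,b\}),e_X(\{a,c\}),e_X(\{b,c\}),\mu_\mathfrak{P}X\}$, and these are pairwise distinct by Lemma~\ref{DFH}. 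Hence the set in (2) has exactly five elements.

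For (2)$\Rightarrow$(1), argue by contraposition: suppose $T=e_X(\{a\})$ is of type $(\mathrm{I})$, so $a\in\lambda_\mathfrak{P}X\setminus X$, and let $S$ be an arbitrary co-atom. If $S=e_X(\{b\})$ with $b\in\lambda_\mathfrak{P}X\setminus X$, then either $a=b$ and $S\wedge T=e_X(\{a\})$ is a co-atom dominated only by $e_X(\{a\})$ and $\mu_\mathfrak{P}X$ (two elements), or $a\neq b$ and $S\wedge T=e_X(\{a\},\{b\})$ is dominated exactly by $e_X(\{a\},\{b\}),e_X(\{a\}),e_X(\{b\}),\mu_\mathfrak{P}X$ (four elements). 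If $S=e_X(\{b,c\})$ with $b,c\in\beta X\setminus\lambda_\mathfrak{P}X$ distinct, then $S\wedge T=e_X(\{a\},\{b,c\})$ is dominated exactly by $e_X(\{a\},\{b,c\}),e_X(\{a\}),e_X(\{b,c\}),\mu_\mathfrak{P}X$ (four elements). In every case the count is at most four, so no co-atom $S$ can realize the value five in (2); thus if (2) holds then $T$ is not of type $(\mathrm{I})$, hence is of type $(\mathrm{II})$.

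The one genuinely delicate point is the bookkeeping used repeatedly above, namely the determination, for pairwise disjoint compact $C_1,\dots,C_n\subseteq\beta X\setminus X$, of exactly which $U\in{\mathscr E}_\mathfrak{P}(X)$ dominate $e_X(C_1,\dots,C_n)$: Lemma~\ref{DFH} forces each member of ${\mathscr F}(U)$ inside a member of ${\mathscr F}(e_X(C_1,\dots,C_n))$, while Lemma~\ref{Y16} forces ${\mathscr F}(U)$ to cover $\beta X\setminus\lambda_\mathfrak{P}X$, and together these pin down the members of ${\mathscr F}(U)$ that hit $C_1\cup\dots\cup C_n$ (they refine $\{C_1,\dots,C_n\}$) and those that do not (they are singletons in $\beta X\setminus\lambda_\mathfrak{P}X$). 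Beyond this, no obstacle arises; note that the hypothesis $|\beta X\setminus\lambda_\mathfrak{P}X|\geq 3$ is used only to produce the auxiliary point $c$ in the proof of (1)$\Rightarrow$(2).
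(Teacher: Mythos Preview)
Your proposal is correct and follows precisely the approach the paper intends: the paper does not spell out the proof of Lemma~\ref{GGO} but states that it is analogous to Lemma~\ref{GFS} under the indicated dictionary (and that one must first record the analogue of Lemma~\ref{GHDS}), which is exactly what you have carried out. Your explicit bookkeeping of which $U\in{\mathscr E}_\mathfrak{P}(X)$ dominate $e_X(C_1,\dots,C_n)$, via Lemmas~\ref{DFH} and~\ref{Y16}, simply makes visible the reasoning that Lemma~\ref{GFS} leaves implicit.
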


The following is dual to Definition \ref{FAS}.

\begin{definition}\label{RRS}
Let $X$ be a space and let $\mathfrak{P}$ be a compactness-like topological property. Define
\[{\mathscr E}^*_\mathfrak{P}(X)=\big\{Y\in{\mathscr E}_\mathfrak{P}(X): Y\setminus X\mbox{ is a singleton}\big\}.\]
\end{definition}

The following is dual to Lemma \ref{OSDR} with an analogous proof; it characterizes order-theoretically the elements of ${\mathscr E}_\mathfrak{P}^*(X)$ in ${\mathscr E}_\mathfrak{P}(X)$.

\begin{lemma}\label{DDR}
Let $X$ be a locally-$\mathfrak{P}$ space, where $\mathfrak{P}$ is a compactness-like topological property, and let $Y\in{\mathscr E}_\mathfrak{P}(X)$. The following are equivalent:
\begin{itemize}
\item[\rm(1)] $Y\in{\mathscr E}_\mathfrak{P}^*(X)$.
\item[\rm(2)] $Y$ satisfies the following:
\begin{itemize}
\item[\rm(a)] $Y\leq T$ for every co-atom $T$ in ${\mathscr E}_\mathfrak{P}(X)$ of type $(\mathrm{II})$.
\item[\rm(b)] $Y\leq D$ for every co-atom cover $D$ in ${\mathscr E}_\mathfrak{P}(X)$ such that $Y\leq D'$.
\end{itemize}
\end{itemize}
\end{lemma}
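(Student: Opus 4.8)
The plan is to mirror, \emph{mutatis mutandis}, the proof of Lemma~\ref{OSDR}: the subspace $\beta X\setminus\lambda_\mathfrak{P}X$ of the outgrowth $\beta X\setminus X$ takes over the role played there by $\mathrm{cl}_{\beta X}(\beta X\setminus\upsilon X)$, Lemma~\ref{Y16} replaces Lemma~\ref{ASD}, and Lemma~\ref{DFH} (which is valid verbatim in the present setting) is used exactly as before. As throughout this part we work under the standing assumption that $X$ is non-$\mathfrak{P}$; note that this forces $\beta X\setminus\lambda_\mathfrak{P}X\neq\emptyset$, since otherwise $\mu_\mathfrak{P}X=X$ would have $\mathfrak{P}$ by Proposition~\ref{TG}. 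Before carrying out the argument I would record the analogues in ${\mathscr E}_\mathfrak{P}(X)$ of Lemmas~\ref{GHDS}, \ref{PJE} and \ref{JUHD}: the meet formulas $e_X(C)\wedge e_X(D)=e_X(C\cup D)$ when $C\cap D\neq\emptyset$ and $e_X(C)\wedge e_X(D)=e_X(C,D)$ otherwise; the fact that $D\in{\mathscr E}_\mathfrak{P}(X)$ is a co-atom cover if and only if $D=e_X(\{a,b\})$ for some $a\in\beta X\setminus\lambda_\mathfrak{P}X$ and some $b\in(\beta X\setminus X)\setminus(\beta X\setminus\lambda_\mathfrak{P}X)$; and that then $D'=e_X(\{b\})$. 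These are proved by the very same case analyses as Lemmas~\ref{GHDS}, \ref{PJE} and \ref{JUHD}, using Lemma~\ref{Y16} in place of Lemma~\ref{ASD} together with the already-noted analogue of Lemma~\ref{UGGH}.

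For $(1)\Rightarrow(2)$: if $Y\in{\mathscr E}_\mathfrak{P}^*(X)$ then $Y\setminus X$ is a single point $p$, so by Lemma~\ref{Y16} the set $F=\phi^{-1}(p)$ is a compact subset of $\beta X\setminus X$ with $\beta X\setminus\lambda_\mathfrak{P}X\subseteq F$, and $Y=e_X(F)$ with ${\mathscr F}(Y)=\{F\}$. To check (2.a), a co-atom $T$ of type~$(\mathrm{II})$ is of the form $e_X(\{a,b\})$ with distinct $a,b\in\beta X\setminus\lambda_\mathfrak{P}X\subseteq F$; since every member of ${\mathscr F}(T)$ is then contained in $F$, Lemma~\ref{DFH} gives $Y\leq T$. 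To check (2.b), a co-atom cover $D$ is $e_X(\{c,d\})$ with $c\in\beta X\setminus\lambda_\mathfrak{P}X\subseteq F$ and $d\in(\beta X\setminus X)\setminus(\beta X\setminus\lambda_\mathfrak{P}X)$, and $D'=e_X(\{d\})$; if $Y\leq D'$ then Lemma~\ref{DFH} yields $d\in F$, hence $\{c,d\}\subseteq F$ and every member of ${\mathscr F}(D)$ lies in $F$, so $Y\leq D$ again by Lemma~\ref{DFH}.

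For $(2)\Rightarrow(1)$ it suffices to show that ${\mathscr F}(Y)$ consists of a single element, for then $Y\setminus X$ is a single point. Suppose instead that $F,G\in{\mathscr F}(Y)$ are distinct, hence disjoint. If both $F$ and $G$ meet $\beta X\setminus\lambda_\mathfrak{P}X$, choose $a\in F\cap(\beta X\setminus\lambda_\mathfrak{P}X)$ and $b\in G\cap(\beta X\setminus\lambda_\mathfrak{P}X)$; then $e_X(\{a,b\})$ is a co-atom of type~$(\mathrm{II})$, so by (2.a) $Y\leq e_X(\{a,b\})$, whence (Lemma~\ref{DFH}) $\{a,b\}$ is contained in a single $H\in{\mathscr F}(Y)$, and disjointness forces $F=H=G$, a contradiction. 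Otherwise one of the two, say $G$, misses $\beta X\setminus\lambda_\mathfrak{P}X$; choose $c\in G$ and (using $\beta X\setminus\lambda_\mathfrak{P}X\neq\emptyset$) some $d\in\beta X\setminus\lambda_\mathfrak{P}X$, and set $D=e_X(\{c,d\})$, which is a co-atom cover with $D'=e_X(\{c\})$ by the analogues of Lemmas~\ref{PJE} and \ref{JUHD} recorded above. Since $c\in G$ and, by Lemma~\ref{Y16}, $\beta X\setminus\lambda_\mathfrak{P}X\subseteq\bigcup{\mathscr F}(Y)$, Lemma~\ref{DFH} gives $Y\leq D'$; hence $Y\leq D$ by (2.b), so $\{c,d\}$ lies in a single $H\in{\mathscr F}(Y)$, and $c\in G\cap H$ forces $H=G$, contradicting $d\in H$ while $d\notin G$.

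I expect the only genuine work to be the preliminary step: establishing, in the $\mathfrak{P}$-extension setting, the analogues of Lemmas~\ref{GHDS}, \ref{PJE} and \ref{JUHD} (and confirming the stated analogue of Lemma~\ref{UGGH}). These rest on the quotient description of $\beta Y$ from Lemma~\ref{j2} and on Lemma~\ref{Y16}, but involve no ideas beyond those already used for ${\mathscr U}(X)$. Once they are in place, Lemma~\ref{DDR} itself is the same short, purely order-theoretic bookkeeping as the proof of Lemma~\ref{OSDR}.
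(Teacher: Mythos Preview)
Your proposal is correct and takes essentially the same approach as the paper: the paper states only that the proof is analogous to that of Lemma~\ref{OSDR} with $\beta X\setminus\lambda_\mathfrak{P}X$ in place of $\mathrm{cl}_{\beta X}(\beta X\setminus\upsilon X)$, and you carry this out faithfully, including the necessary preliminary analogues of Lemmas~\ref{PJE} and~\ref{JUHD}. Your explicit remark that the argument in $(2)\Rightarrow(1)$ requires $\beta X\setminus\lambda_\mathfrak{P}X\neq\emptyset$ (equivalently, that $X$ be non-$\mathfrak{P}$) is a useful clarification the paper leaves implicit.
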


Recall that a space $X$ is locally compact if and only if $X$ is open in every compactification $\zeta X$ of $X$ if and only if $X$ is open in some compactification $\gamma X$ of $X$. This simple observation will be used in the proof of the following lemma which characterizes (not yet order-theoretically) the locally compact elements of ${\mathscr E}_\mathfrak{P}^*(X)$ in ${\mathscr E}_\mathfrak{P}^*(X)$.

\begin{lemma}\label{DDDS}
Let $X$ be a locally compact locally-$\mathfrak{P}$ space, where $\mathfrak{P}$ is a compactness-like topological property. Let $Y\in{\mathscr E}^*_\mathfrak{P}(X)$ and ${\mathscr F}(Y)=\{F\}$. The following are equivalent:
\begin{itemize}
\item[\rm(1)] $Y$ is locally compact.
\item[\rm(2)] $F$ is open in $\beta X\setminus X$.
\end{itemize}
\end{lemma}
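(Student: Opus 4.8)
The plan is to realize $\beta Y$ concretely and then read off both conditions (1) and (2) as statements about the position of $F$ inside $\beta X\setminus X$. Write $Y=X\cup\{p\}$, so that $F=\phi^{-1}(p)$ for the continuous extension $\phi\colon\beta X\to Z$ of $\mathrm{id}_X$ to any compactification $Z$ of $Y$; since ${\mathscr F}(Y)$ is independent of $Z$, I would take $Z=\beta Y$ and let $q\colon\beta X\to\beta Y$ be the quotient mapping that contracts $F$ to $p$, as furnished by Lemma \ref{j2}. Note that $F$ is a nonempty compact (hence closed) subset of $\beta X\setminus X$, because $\beta X\setminus X$ is compact ($X$ being locally compact) and $F$ is a fibre of a continuous map into a Hausdorff space.

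First I would check that $X$ is open in $\beta Y$. Since $F$ is closed and $q$ collapses nothing outside $F$, the restriction $q|(\beta X\setminus F)\colon\beta X\setminus F\to\beta Y\setminus\{p\}$ is a continuous open bijection, hence a homeomorphism, onto the open subspace $\beta Y\setminus\{p\}$ of the compact Hausdorff space $\beta Y$, and it carries $X$ onto $X$. As $X$ is open in $\beta X$ and $X\subseteq\beta X\setminus F$, the set $X$ is open in $\beta X\setminus F$, hence open in $\beta Y\setminus\{p\}$, hence open in $\beta Y$. Consequently $Y=X\cup\{p\}$ is open in $\beta Y$ if and only if $p\in\mathrm{int}_{\beta Y}Y$; and since $\beta Y$ is a compactification of $Y$, the space $Y$ is locally compact if and only if it is open in $\beta Y$, i.e. if and only if $p\in\mathrm{int}_{\beta Y}Y$.

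Then I would identify when $p\in\mathrm{int}_{\beta Y}Y$. The sets $q[V]$, with $V$ ranging over the open subsets of $\beta X$ containing $F$, are exactly the open neighbourhoods of $p$ in $\beta Y$ (each such $V$ is $q$-saturated, $F$ being the only nontrivial fibre). For such a $V$,
\[q[V]\subseteq Y\iff V\cap\big((\beta X\setminus X)\setminus F\big)=\emptyset\iff V\cap(\beta X\setminus X)\subseteq F,\]
and since $F\subseteq V\cap(\beta X\setminus X)$ always holds, the last condition reads $V\cap(\beta X\setminus X)=F$. Hence $p\in\mathrm{int}_{\beta Y}Y$ exactly when there is an open $V$ in $\beta X$ with $V\cap(\beta X\setminus X)=F$, that is, exactly when $F$ is open in $\beta X\setminus X$. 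Combining with the previous paragraph yields the equivalence of (1) and (2).

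I do not expect a genuine obstacle; the only place requiring care is the point-set bookkeeping with the quotient topology — verifying that $q$ restricted to $\beta X\setminus F$ is a homeomorphism onto $\beta Y\setminus\{p\}$ and that the open neighbourhoods of $p$ are precisely the images of the open sets of $\beta X$ containing $F$ — after which both conditions collapse to the single assertion that $F$ is relatively open in $\beta X\setminus X$.
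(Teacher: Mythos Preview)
Your proof is correct and follows essentially the same approach as the paper: realize the compactification of $Y$ as the quotient of $\beta X$ obtained by contracting $F$ to a point (the paper builds this quotient $Z$ directly, whereas you invoke Lemma~\ref{j2} to identify it with $\beta Y$, which amounts to the same thing), and then translate ``$Y$ is open in its compactification'' into the relative openness of $F$ in $\beta X\setminus X$. The paper treats the two implications separately and a bit more tersely (using $q^{-1}[Y]=X\cup F$ directly), while you present the argument as a single chain of equivalences focused on the point $p$ and supply more detail on the quotient bookkeeping, but the substance is identical.
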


\begin{proof}
Since ${\mathscr F}(e_X(F))=\{F\}$ we have $Y=e_X(F)$ by Lemma \ref{DFH}. That is, if $Z$ is the quotient space of $\beta X$ obtained by contracting $F$ to $p$, then $Y$ coincides with the subspace $X\cup\{p\}$ of $Z$. Note that $Z$ is a compactification of $Y$. Thus, $Y$ is locally compact if and only if $Y$ is open in $Z$. Let $q:\beta X\rightarrow Z$ denote the natural quotient mapping.

(1) {\em implies} (2). If $Y$ is open in $Z$ then $q^{-1}[Y]$ is open in $\beta X$. Therefore
\[F=(\beta X\setminus X)\cap q^{-1}[Y]\]
is open in $\beta X\setminus X$.

(2) {\em implies} (1). Let $W$ be an open subspace of $\beta X$ such that $W\cap(\beta X\setminus X)=F$. Since $X$ is locally compact, $X$ is open in $\beta X$. Since
\[q^{-1}\big[q[X\cup W]\big]=X\cup W\]
is open in $\beta X$ it follows that $Y=q[X\cup W]$ is open in $Z$.
\end{proof}

Our next purpose is to characterize order-theoretically the locally compact elements of ${\mathscr E}_\mathfrak{P}^*(X)$ in ${\mathscr E}_\mathfrak{P}^*(X)$. This will be done through the introduction and use of the auxiliary notion of an {\em ideal} element in ${\mathscr E}^*_\mathfrak{P}(X)$ and its order-theoretic characterization in ${\mathscr E}_\mathfrak{P}^*(X)$.

\begin{definition}\label{ERS}
Let $X$ be a space and let $\mathfrak{P}$ be a compactness-like topological property. Let $Y\in{\mathscr E}^*_\mathfrak{P}(X)$ and ${\mathscr F}(Y)=\{F\}$. Then $Y$ is called an {\em ideal} if $F\cap\lambda_\mathfrak{P}X$ is compact.
\end{definition}

\begin{lemma}\label{FSGK}
Let $X$ be a space and let $\mathfrak{P}$ be a compactness-like topological property. Then $\lambda_\mathfrak{P}X=\beta X$ if and only if $X$ has $\mathfrak{P}$.
\end{lemma}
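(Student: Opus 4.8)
The plan is to prove both implications directly from the definition of $\lambda_\mathfrak{P}X$, using compactness of $\beta X$ in one direction and the fact (recorded just before Definition \ref{XA}) that a compactness-like property is preserved under finite closed sums in the other.

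First I would handle the easy implication: suppose $X$ has $\mathfrak{P}$. Then $X$ is itself a cozero-set of $X$ (it is the complement of the zero-set $\mathrm{Z}(f)=\emptyset$ of the constant function $f\equiv 1$), and $\mathrm{cl}_XX=X$ has $\mathfrak{P}$. Therefore $X$ is one of the sets $C$ appearing in the union defining $\lambda_\mathfrak{P}X$, whence $\mathrm{int}_{\beta X}\mathrm{cl}_{\beta X}X=\mathrm{int}_{\beta X}\beta X=\beta X\subseteq\lambda_\mathfrak{P}X$, so $\lambda_\mathfrak{P}X=\beta X$.

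For the converse, suppose $\lambda_\mathfrak{P}X=\beta X$. By definition $\beta X$ is then covered by the open family $\{\mathrm{int}_{\beta X}\mathrm{cl}_{\beta X}C:C\in\mathrm{Coz}(X),\ \mathrm{cl}_XC\text{ has }\mathfrak{P}\}$, and since $\beta X$ is compact there are finitely many $C_1,\dots,C_n\in\mathrm{Coz}(X)$ with $\mathrm{cl}_XC_i$ having $\mathfrak{P}$ such that $\beta X=\bigcup_{i=1}^n\mathrm{int}_{\beta X}\mathrm{cl}_{\beta X}C_i$. Intersecting with $X$ gives
\[X=\bigcup_{i=1}^n\big(X\cap\mathrm{int}_{\beta X}\mathrm{cl}_{\beta X}C_i\big)\subseteq\bigcup_{i=1}^n\big(X\cap\mathrm{cl}_{\beta X}C_i\big)=\bigcup_{i=1}^n\mathrm{cl}_XC_i,\]
and since each $\mathrm{cl}_XC_i$ is contained in $X$ this is in fact an equality. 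Thus $X$ is expressible as a finite union of its closed subspaces each having $\mathfrak{P}$, and as $\mathfrak{P}$ (being compactness-like) is preserved under finite closed sums, $X$ has $\mathfrak{P}$.

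I do not anticipate a genuine obstacle here; the only point requiring a little care is to note that $X$ really is a cozero-set of itself in the first implication, and to invoke the correct preservation property (finite closed sums, not merely finite additivity) in the second.
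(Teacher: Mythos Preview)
Your proof is correct and is essentially identical to the paper's own argument: both directions use that $X\in\mathrm{Coz}(X)$ for the easy implication and a compactness-plus-finite-closed-sums argument for the converse. The only difference is cosmetic---you spell out a bit more explicitly why $X$ is a cozero-set and why the intersection with $X$ yields an equality.
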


\begin{proof}
If $X$ has $\mathfrak{P}$ then $\lambda_\mathfrak{P}X=\beta X$, as obviously $X\in\mathrm{Coz}(X)$. To show the converse, suppose that $\lambda_\mathfrak{P}X=\beta X$. By compactness and the definition of $\lambda_\mathfrak{P}X$ we have
\begin{equation}\label{FRTT}
\beta X=\mathrm{int}_{\beta X}\mathrm{cl}_{\beta X}C_1\cup\cdots\cup\mathrm{int}_{\beta X}\mathrm{cl}_{\beta X}C_n
\end{equation}
where $C_1,\ldots,C_n\in\mathrm{Coz}(X)$ and each $\mathrm{cl}_XC_1,\ldots,\mathrm{cl}_XC_n$ has $\mathfrak{P}$. Taking the intersection of both sides of (\ref{FRTT}) with $X$, we have
\[X=\mathrm{cl}_XC_1\cup\cdots\cup\mathrm{cl}_XC_n.\]
This implies that $X$ has $\mathfrak{P}$, as $\mathfrak{P}$ is preserved under finite closed sums and $X$ is the finite union of its closed subspaces each having $\mathfrak{P}$.
\end{proof}

We need the following lemma in the proof of Lemma \ref{ROEER}.

\begin{lemma}\label{DDA}
Let $X$ be a locally compact locally-$\mathfrak{P}$ non-$\mathfrak{P}$ space, where $\mathfrak{P}$ is a compactness-like topological property. Let $Y_i\in{\mathscr E}^*_\mathfrak{P}(X)$ for each $i\in I$, where $I$ is a non-empty index set, and let ${\mathscr F}(Y_i)=\{F_i\}$. Then $\bigvee_{i\in I} Y_i$ exists in ${\mathscr E}^*_\mathfrak{P}(X)$ and
\[\bigvee_{i\in I} Y_i=e_X\Big(\bigcap_{i\in I} F_i\Big).\]
\end{lemma}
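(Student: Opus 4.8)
\emph{The plan is to} exhibit $e_X(\bigcap_{i\in I}F_i)$ as an element of ${\mathscr E}^*_\mathfrak{P}(X)$ and then verify it is the least upper bound of the family $\{Y_i\}_{i\in I}$ using the order characterization of Lemma \ref{DFH}. \emph{First I would} argue that $G=\bigcap_{i\in I}F_i$ is a well-defined candidate. Each $F_i$ is a compact subspace of $\beta X\setminus X$, so $G$ is compact (an intersection of closed subsets of the compact space $\beta X\setminus X$). The key non-triviality here is that $G$ must be \emph{non-empty}: since each $Y_i\in{\mathscr E}^*_\mathfrak{P}(X)$, Lemma \ref{Y16} gives $\beta X\setminus\lambda_\mathfrak{P}X\subseteq F_i$ for every $i$, hence $\beta X\setminus\lambda_\mathfrak{P}X\subseteq G$; and $\beta X\setminus\lambda_\mathfrak{P}X$ is non-empty precisely because $X$ is non-$\mathfrak{P}$, by Lemma \ref{FSGK}. (This is exactly where the non-$\mathfrak{P}$ hypothesis is used.) Since $G\supseteq\beta X\setminus\lambda_\mathfrak{P}X$ is a compact subset of $\beta X\setminus X$, Lemma \ref{Y16} (with $\phi$ the quotient map contracting $G$) shows $e_X(G)\in{\mathscr E}^*_\mathfrak{P}(X)$, with ${\mathscr F}(e_X(G))=\{G\}$.

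\emph{Next} I would check that $e_X(G)$ is an upper bound. For each $i\in I$ we have $G\subseteq F_i$, so the single element $G$ of ${\mathscr F}(e_X(G))$ is contained in the single element $F_i$ of ${\mathscr F}(Y_i)$; by Lemma \ref{DFH}, $Y_i\leq e_X(G)$. \emph{Then} I would verify minimality among upper bounds. Let $U\in{\mathscr E}_\mathfrak{P}(X)$ satisfy $Y_i\leq U$ for all $i\in I$; I must show $e_X(G)\leq U$. Fix $H\in{\mathscr F}(U)$. By Lemma \ref{DFH}, for each $i$ there is an element of ${\mathscr F}(Y_i)$ — necessarily $F_i$ itself, since ${\mathscr F}(Y_i)=\{F_i\}$ — containing $H$; thus $H\subseteq F_i$ for every $i$, whence $H\subseteq\bigcap_{i\in I}F_i=G$. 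So every element of ${\mathscr F}(U)$ is contained in the single element $G$ of ${\mathscr F}(e_X(G))$, and Lemma \ref{DFH} gives $e_X(G)\leq U$. This shows $e_X(G)=\bigvee_{i\in I}Y_i$, and in particular the join exists in ${\mathscr E}_\mathfrak{P}(X)$; since $e_X(G)\in{\mathscr E}^*_\mathfrak{P}(X)$, it is also the join computed within ${\mathscr E}^*_\mathfrak{P}(X)$.

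\emph{The step I expect to require the most care} is the non-emptiness of $G$: one must be sure that the inclusion $\beta X\setminus\lambda_\mathfrak{P}X\subseteq F_i$ really does follow from $Y_i\in{\mathscr E}^*_\mathfrak{P}(X)$ (this is the content of Lemma \ref{Y16}, applied with $\zeta Y=\beta Y$ and $\phi$ the canonical map, noting that $\phi^{-1}[Y_i\setminus X]=F_i$ when $Y_i\setminus X$ is a single point), and that $\beta X\setminus\lambda_\mathfrak{P}X\neq\emptyset$ by Lemma \ref{FSGK} together with the non-$\mathfrak{P}$ assumption. The local compactness hypothesis is not strictly needed for this particular lemma, but it is harmless and keeps the statement in line with its intended application in the proof of Lemma \ref{ROEER}. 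Everything else is a routine application of Lemma \ref{DFH} together with the description of ${\mathscr F}(e_X(\cdot))$ from Definition \ref{HEID}.
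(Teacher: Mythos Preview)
Your proposal is correct and follows essentially the same route as the paper's proof: both set $G=\bigcap_{i\in I}F_i$, argue via Lemma~\ref{Y16} and Lemma~\ref{FSGK} that $G$ is compact, non-empty, and contains $\beta X\setminus\lambda_\mathfrak{P}X$, and then verify the upper-bound and least-upper-bound properties using Lemma~\ref{DFH}. The only minor difference is that you check minimality against arbitrary $U\in\mathscr{E}_\mathfrak{P}(X)$ rather than just $U\in\mathscr{E}^*_\mathfrak{P}(X)$ as the paper does, which is a harmless strengthening; your aside that local compactness is inessential here is also correct (each $F_i$ is already closed in the compact space $\beta X$), though your parenthetical phrasing ``the compact space $\beta X\setminus X$'' does tacitly invoke it.
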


\begin{proof}
Let
\[F=\bigcap_{i\in I} F_i.\]
Note that $F$ is a closed subspace of $\beta X\setminus X$ (and thus it is compact, as the latter is so, since $X$ is  locally compact) containing $\beta X\setminus\lambda_\mathfrak{P}X$, as each $F_i$, where $i\in I$, is closed in $\beta X$  and contains $\beta X\setminus\lambda_\mathfrak{P}X$ by Lemma \ref{Y16}. In particular, $F$ is non-empty, as $\beta X\setminus\lambda_\mathfrak{P}X$ is so by Lemma \ref{FSGK}, since $X$ is non-$\mathfrak{P}$. Let $Y=e_X(F)$. Then $Y\in{\mathscr E}^*_\mathfrak{P}(X)$. Note that ${\mathscr F}(Y)=\{F\}$. For every $i\in I$ we have $F\subseteq F_i$, and thus $Y_i\leq Y$ by Lemma \ref{DFH}. Let $Y'\in{\mathscr E}^*_\mathfrak{P}(X)$ be such that $Y_i\leq Y'$ for every $i\in I$. Let ${\mathscr F}(Y')=\{F'\}$. Then $F'\subseteq F_i$ for each $i\in I$ by Lemma \ref{DFH}, and therefore $F'\subseteq F$. Thus $Y\leq Y'$ again by Lemma \ref{DFH}.
\end{proof}

The following characterizes order-theoretically the ideal elements of ${\mathscr E}_\mathfrak{P}^*(X)$ in ${\mathscr E}^*_\mathfrak{P}(X)$.

\begin{lemma}\label{ROEER}
Let $X$ be a locally compact non-$\mathfrak{P}$ space, where $\mathfrak{P}$ is a compactness-like topological property.
\begin{itemize}
\item[\rm(1)] Let $X$ be a locally-$\mathfrak{P}$ space. Then ${\mathscr E}^*_\mathfrak{P}(X)$ has a largest element, namely
\[M^X_\mathfrak{P}=e_X(\beta X\setminus\lambda_\mathfrak{P}X).\]
\item[\rm(2)] Let $Y\in{\mathscr E}^*_\mathfrak{P}(X)$. The following are equivalent:
\begin{itemize}
\item[\rm(a)] $Y$ is an ideal.
\item[\rm(b)] If
\[Y\vee\bigvee_{i\in I} Y_i=M^X_\mathfrak{P}\]
where $Y_i\in{\mathscr E}^*_\mathfrak{P}(X)$ for each $i\in I$ and $I$ is a non-empty index set, then
\[Y\vee\bigvee_{j=1}^n Y_{i_j}=M^X_\mathfrak{P}\]
for some $i_1,\ldots,i_n\in I$.
\end{itemize}
\end{itemize}
\end{lemma}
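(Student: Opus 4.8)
The plan is to reduce both parts to the correspondence sending $Y\in{\mathscr E}^*_\mathfrak{P}(X)$, with ${\mathscr F}(Y)=\{F\}$, to the set $A=F\cap\lambda_\mathfrak{P}X$, and then to combine Lemma~\ref{DDA} (to compute joins) with the finite-intersection-property characterization of compactness. For part~(1), note first that $\lambda_\mathfrak{P}X$ is open in $\beta X$, so $\beta X\setminus\lambda_\mathfrak{P}X$ is closed in $\beta X$; since $X$ is locally-$\mathfrak{P}$ we have $X\subseteq\lambda_\mathfrak{P}X$ by Lemma~\ref{15}, hence $\beta X\setminus\lambda_\mathfrak{P}X$ is a closed subset of $\beta X\setminus X$, and the latter is compact because $X$ is locally compact; thus $\beta X\setminus\lambda_\mathfrak{P}X$ is compact, and it is non-empty by Lemma~\ref{FSGK} as $X$ is non-$\mathfrak{P}$. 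Therefore $M^X_\mathfrak{P}=e_X(\beta X\setminus\lambda_\mathfrak{P}X)$ is well defined and lies in ${\mathscr E}^*_\mathfrak{P}(X)$ by Lemma~\ref{Y16}. If $Y\in{\mathscr E}^*_\mathfrak{P}(X)$, then ${\mathscr F}(Y)=\{F\}$ for a single $F$ with $\beta X\setminus\lambda_\mathfrak{P}X\subseteq F$ (Lemma~\ref{Y16}), and since ${\mathscr F}(M^X_\mathfrak{P})=\{\beta X\setminus\lambda_\mathfrak{P}X\}$, Lemma~\ref{DFH} gives $Y\leq M^X_\mathfrak{P}$; hence $M^X_\mathfrak{P}$ is the largest element.

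For part~(2), write ${\mathscr F}(Y)=\{F\}$ and $A=F\cap\lambda_\mathfrak{P}X$; then $A$ is closed in $\lambda_\mathfrak{P}X$, $A\subseteq\lambda_\mathfrak{P}X\setminus X$, and $Y$ being an ideal means exactly that $A$ is compact. For any $Y_i\in{\mathscr E}^*_\mathfrak{P}(X)$ ($i\in I$) with ${\mathscr F}(Y_i)=\{F_i\}$, put $A_i=F_i\cap\lambda_\mathfrak{P}X$. By Lemma~\ref{DDA}, $Y\vee\bigvee_{i\in I}Y_i=e_X(F\cap\bigcap_{i\in I}F_i)$; since $F$ and each $F_i$ contain $\beta X\setminus\lambda_\mathfrak{P}X$ and $e_X$ is injective (because ${\mathscr F}(e_X(C))=\{C\}$), this join equals $M^X_\mathfrak{P}=e_X(\beta X\setminus\lambda_\mathfrak{P}X)$ if and only if $F\cap\bigcap_{i\in I}F_i=\beta X\setminus\lambda_\mathfrak{P}X$, which after intersecting with $\lambda_\mathfrak{P}X$ is equivalent to $A\cap\bigcap_{i\in I}A_i=\emptyset$; the same equivalence holds with $I$ replaced by any finite subset. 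So condition (2.b) becomes: for every such family, if the closed subsets $A\cap A_i$ of $A$ have empty intersection then some finite subfamily already has empty intersection.

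In this form, (a)$\Rightarrow$(b) is immediate from the finite-intersection-property characterization of compactness of $A$ applied to $\{A\cap A_i:i\in I\}$. For (b)$\Rightarrow$(a), I would argue contrapositively: if $A$ is not compact, fix an open cover $\{U_i\}_{i\in I}$ of $A$ with no finite subcover and set $A_i=A\setminus U_i$, which is closed in $\lambda_\mathfrak{P}X$ and contained in $\lambda_\mathfrak{P}X\setminus X$. Then $F_i=A_i\cup(\beta X\setminus\lambda_\mathfrak{P}X)$ is a compact subspace of $\beta X\setminus X$ containing $\beta X\setminus\lambda_\mathfrak{P}X$; it is closed in $\beta X$ since $\mathrm{cl}_{\beta X}A_i\cap\lambda_\mathfrak{P}X=\mathrm{cl}_{\lambda_\mathfrak{P}X}A_i=A_i$, whence $\mathrm{cl}_{\beta X}A_i\subseteq F_i$. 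Thus $Y_i=e_X(F_i)\in{\mathscr E}^*_\mathfrak{P}(X)$ by Lemma~\ref{Y16}, and $A\cap\bigcap_{i\in I}A_i=A\setminus\bigcup_{i\in I}U_i=\emptyset$ while $A\cap\bigcap_{j\in J}A_j=A\setminus\bigcup_{j\in J}U_j\neq\emptyset$ for every finite $J\subseteq I$, so (b) fails. The one point that needs care is this realization step---that every closed-in-$\lambda_\mathfrak{P}X$ subset of $\lambda_\mathfrak{P}X\setminus X$ is of the form $F\cap\lambda_\mathfrak{P}X$ for some element of ${\mathscr E}^*_\mathfrak{P}(X)$, which hinges on $F_i$ being closed in all of $\beta X$; everything else is bookkeeping with Lemmas~\ref{DFH}, \ref{Y16}, \ref{15}, \ref{FSGK} and \ref{DDA}.
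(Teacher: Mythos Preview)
Your proposal is correct and follows essentially the same approach as the paper: translate the join in ${\mathscr E}^*_\mathfrak{P}(X)$ to intersection of the corresponding sets via Lemma~\ref{DDA}, then identify condition~(2.b) with compactness of $F\cap\lambda_\mathfrak{P}X$ through the finite-intersection/open-cover characterization. The only cosmetic differences are that you phrase (b)$\Rightarrow$(a) contrapositively and build $F_i=A_i\cup(\beta X\setminus\lambda_\mathfrak{P}X)$ from $A_i=A\setminus U_i$, whereas the paper argues directly and takes the (larger but equally serviceable) $F_i=(\beta X\setminus X)\setminus U_i$ with $U_i$ open in $\lambda_\mathfrak{P}X\setminus X$; both constructions yield compact subsets of $\beta X\setminus X$ containing $\beta X\setminus\lambda_\mathfrak{P}X$, and your closedness check for $F_i$ via $\mathrm{cl}_{\beta X}A_i\cap\lambda_\mathfrak{P}X=A_i$ is exactly the point that needs care.
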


\begin{proof}
(1). This follows from Lemmas \ref{DFH} and \ref{Y16}. Note that $\beta X\setminus\lambda_\mathfrak{P}X$ is contained in $\beta X\setminus X$, as $X\subseteq\lambda_\mathfrak{P}X$ by Lemma \ref{15}, since $X$ is locally-$\mathfrak{P}$, and $\beta X\setminus\lambda_\mathfrak{P}X$ is non-empty by Lemma \ref{FSGK}, as $X$ is non-$\mathfrak{P}$.

(2). Let ${\mathscr F}(Y)=\{F\}$. (2.a) {\em  implies} (2.b). Let
\begin{equation}\label{JJ}
Y\vee\bigvee_{i\in I}Y_i=M^X_{\mathfrak{P}},
\end{equation}
where $Y_i\in{\mathscr E}^*_\mathfrak{P}(X)$ for each $i\in I$ and $I$ is a non-empty index set. Let ${\mathscr F}(Y_i)=\{F_i\}$ for each $i\in I$. Using Lemma \ref{DDA}, it follows from (\ref{JJ}) that
\[F\cap\bigcap_{i\in I}F_i=\beta X\setminus\lambda_{\mathfrak{P}} X\]
and thus
\[\lambda_{\mathfrak{P}} X\cap F\cap\bigcap_{i\in I}F_i=\emptyset.\]
Since $\lambda_{\mathfrak{P}} X\cap F$ is compact, as $Y$ is an ideal, we have
\[\lambda_{\mathfrak{P}} X\cap F\cap\bigcap_{j=1}^kF_{i_j}=\emptyset\]
or, equivalently
\begin{equation}\label{FLL}
F\cap\bigcap_{j=1}^kF_{i_j}\subseteq\beta X\setminus\lambda_\mathfrak{P}X
\end{equation}
for some $i_1,\ldots,i_k\in I$. But by Lemma \ref{Y16} we know that $\beta X\setminus\lambda_\mathfrak{P}X$ is contained in $F$ and in each $F_{i_j}$, where $j=1,\ldots,k$. Therefore, (\ref{FLL}) yields
\[F\cap\bigcap_{j=1}^kF_{i_j}=\beta X\setminus\lambda_\mathfrak{P}X\]
and thus, again by Lemma \ref{DDA} we have
\[Y\vee\bigvee_{j=1}^kY_{i_j}=M^X_{\mathfrak{P}}.\]

(2.b) {\em  implies} (2.a). We need to show that $\lambda_{\mathfrak{P}} X\cap F$ is compact. Let $\{U_i\}_{i\in I}$  be an open cover of $\lambda_{\mathfrak{P}} X\cap F$ in $\lambda_{\mathfrak{P}}X\setminus X$. Let $i\in I$. Define
\[F_i=(\beta X\setminus X)\setminus U_i.\]
Note that $U_i$ is open in $\beta X\setminus X$, as it is open in $\lambda_{\mathfrak{P}}X\setminus X$ and the latter is open in $\beta X\setminus X$. Thus $F_i$ is closed in $\beta X\setminus X$ and therefore it is compact, as $\beta X\setminus X$ is closed in $\beta X$, since $X$ is locally compact. Also, $X\subseteq\lambda_{\mathfrak{P}}X$ by Lemma \ref{15}, as $X$ is locally-$\mathfrak{P}$ by Lemma \ref{Y16}, since ${\mathscr E}_\mathfrak{P}(X)$ is non-empty, because $I$ is so. Further,
\[(\beta X\setminus X)\setminus U_i\supseteq(\beta X\setminus X)\setminus(\lambda_{\mathfrak{P}}X\setminus X)=\beta X\setminus\lambda_{\mathfrak{P}}X,\]
and the latter in non-empty by Lemma \ref{FSGK}, as $X$ is non-$\mathfrak{P}$. That is, $F_i$ is a compact non-empty subspace of $\beta X\setminus X$ containing $\beta X\setminus\lambda_{\mathfrak{P}}X$. Define
\[Y_i=e_X(F_i).\]
Note that $Y_i\in{\mathscr E}^*_\mathfrak{P}(X)$. Now
\begin{eqnarray*}
F\cap\bigcap_{i\in I}F_i&=&F\cap\bigcap_{i\in I}\big((\beta X\setminus X)
\setminus U_i\big)\\&=&F\setminus\bigcup_{i\in I}U_i\subseteq F
\setminus(F\cap\lambda_{\mathfrak{P}} X)=F\cap(\beta X\setminus\lambda_{\mathfrak{P}} X)\subseteq\beta X\setminus\lambda_{\mathfrak{P}} X.
\end{eqnarray*}
By Lemma \ref{Y16} we know that $\beta X\setminus\lambda_\mathfrak{P}X$ is contained in $F$ and in each $F_i$, where $i\in I$. Thus, from the above, we have
\[F\cap\bigcap_{i\in I}F_i=\beta X\setminus\lambda_{\mathfrak{P}} X.\]
Therefore, by Lemma \ref{DDA} we have
\[Y\vee\bigvee_{i\in I}Y_i=M^X_{\mathfrak{P}}.\]
Using our assumption, it then follows that
\[Y\vee\bigvee_{j=1}^kY_{i_j}=M^X_{\mathfrak{P}}\]
for some $i_1,\ldots,i_k\in I$. Again, by Lemmas \ref{DFH} and \ref{DDA} we have
\[F\cap\bigcap_{j=1}^kF_{i_j}=\beta X\setminus\lambda_{\mathfrak{P}} X.\]
Thus
\[(\lambda_{\mathfrak{P}} X\cap F)\setminus\bigcup_{j=1}^kU_{i_j}=\lambda_{\mathfrak{P}} X\cap F\cap\bigcap_{j=1}^k\big((\beta X\setminus X)\setminus U_{i_j}\big)=\lambda_{\mathfrak{P}} X\cap F\cap\bigcap_{j=1}^k F_{i_j}=\emptyset\]
which implies that
\[\lambda_{\mathfrak{P}} X\cap F\subseteq \bigcup_{j=1}^kU_{i_j}.\]
\end{proof}

In the following we characterize order-theoretically the locally compact elements of ${\mathscr E}_\mathfrak{P}^*(X)$ in ${\mathscr E}^*_\mathfrak{P}(X)$.

\begin{lemma}\label{GGYH}
Let $X$ be a locally compact non-$\mathfrak{P}$ space, where $\mathfrak{P}$ is a compactness-like topological property, and let $Y\in{\mathscr E}^*_\mathfrak{P}(X)$. The following are equivalent:
\begin{itemize}
\item[\rm(1)] $Y$ is locally compact.
\item[\rm(2)] There exists an ideal element $T$ in ${\mathscr E}^*_\mathfrak{P}(X)$ satisfying the following:
\begin{itemize}
\item[\rm(a)] For every co-atom $A$ in ${\mathscr E}_\mathfrak{P}(X)$ of type $(\mathrm{I})$ either $A\geq Y$ or $A\geq T$.
\item[\rm(b)] There exists no co-atom $A$ in ${\mathscr E}_\mathfrak{P}(X)$ of type $(\mathrm{I})$ with $A\geq Y$ and $A\geq T$.
\end{itemize}
\end{itemize}
\end{lemma}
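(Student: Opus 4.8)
The plan is to translate the whole statement into the set-theoretic behaviour of the single fibre of $Y$ over its added point and then apply Lemma \ref{DDDS}. Since ${\mathscr E}^*_\mathfrak{P}(X)$ contains $Y$, it is non-empty, so by Lemma \ref{Y16} the space $X$ is locally-$\mathfrak{P}$, whence $X\subseteq\lambda_\mathfrak{P}X$ by Lemma \ref{15}. Writing ${\mathscr F}(Y)=\{F\}$, we have $Y=e_X(F)$ with $F$ a compact subspace of $\beta X\setminus X$ satisfying $\beta X\setminus\lambda_\mathfrak{P}X\subseteq F$ (by Lemma \ref{Y16}; and $\beta X\setminus\lambda_\mathfrak{P}X\neq\emptyset$ by Lemma \ref{FSGK}, since $X$ is non-$\mathfrak{P}$), and $\beta X\setminus X$ is compact because $X$ is locally compact. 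By Lemma \ref{DDDS}, statement (1) is equivalent to: $F$ is open in $\beta X\setminus X$. First I would set up the dictionary: the co-atoms of type $(\mathrm{I})$ in ${\mathscr E}_\mathfrak{P}(X)$ are exactly the $A_a=e_X(\{a\})$ with $a\in\lambda_\mathfrak{P}X\setminus X$, and for any $Z=e_X(H)\in{\mathscr E}^*_\mathfrak{P}(X)$ (so $\beta X\setminus\lambda_\mathfrak{P}X\subseteq H$) Lemma \ref{DFH} gives $A_a\geq Z$ if and only if $a\in H$. Hence, for $T\in{\mathscr E}^*_\mathfrak{P}(X)$ with ${\mathscr F}(T)=\{G\}$, conditions (2.a) and (2.b) read respectively $\lambda_\mathfrak{P}X\setminus X\subseteq F\cup G$ and $F\cap G\cap\lambda_\mathfrak{P}X=\emptyset$; using $\beta X\setminus\lambda_\mathfrak{P}X\subseteq F\cap G$ and $X\subseteq\lambda_\mathfrak{P}X$, these are together equivalent to the single pair of equalities $F\cup G=\beta X\setminus X$ and $F\cap G=\beta X\setminus\lambda_\mathfrak{P}X$.

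For (1) $\Rightarrow$ (2), assuming $F$ open in $\beta X\setminus X$, I would exhibit the required $T$ by taking
\[G=\big((\beta X\setminus X)\setminus F\big)\cup(\beta X\setminus\lambda_\mathfrak{P}X)\]
and $T=e_X(G)$. Here $(\beta X\setminus X)\setminus F$ is closed in the compact space $\beta X\setminus X$ hence compact, and $\beta X\setminus\lambda_\mathfrak{P}X$ is compact since $\lambda_\mathfrak{P}X$ is open in $\beta X$; thus $G$ is a non-empty compact subspace of $\beta X\setminus X$ containing $\beta X\setminus\lambda_\mathfrak{P}X$, i.e.\ $T\in{\mathscr E}^*_\mathfrak{P}(X)$. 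The key observation is that $\beta X\setminus\lambda_\mathfrak{P}X\subseteq F$ forces $(\beta X\setminus X)\setminus F\subseteq\lambda_\mathfrak{P}X$; from this one reads off $G\cap\lambda_\mathfrak{P}X=(\beta X\setminus X)\setminus F$, which is compact, so $T$ is an ideal, while a direct computation gives $F\cap G=\beta X\setminus\lambda_\mathfrak{P}X$ and $F\cup G=\beta X\setminus X$. By the dictionary, $T$ then satisfies (2.a) and (2.b).

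For (2) $\Rightarrow$ (1), given an ideal $T=e_X(G)$ satisfying (a) and (b), I would prove the identity
\[(\beta X\setminus X)\setminus F=G\cap\lambda_\mathfrak{P}X\]
by a two-way containment. If $x\in G\cap\lambda_\mathfrak{P}X$ then $x\in\lambda_\mathfrak{P}X\setminus X$ (as $G\subseteq\beta X\setminus X$) and $x\in G$, so $A_x$ is a type-$(\mathrm{I})$ co-atom with $A_x\geq T$; by (b) we get $A_x\not\geq Y$, i.e.\ $x\notin F$. Conversely, if $x\in(\beta X\setminus X)\setminus F$ then $x\notin\beta X\setminus\lambda_\mathfrak{P}X$ (since $\beta X\setminus\lambda_\mathfrak{P}X\subseteq F$), so $x\in\lambda_\mathfrak{P}X\setminus X$ and $A_x\not\geq Y$; by (a), $A_x\geq T$, hence $x\in G$, so $x\in G\cap\lambda_\mathfrak{P}X$. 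Since $T$ is an ideal, $G\cap\lambda_\mathfrak{P}X$ is compact, hence closed in $\beta X\setminus X$; therefore $F$ is open in $\beta X\setminus X$, and Lemma \ref{DDDS} yields that $Y$ is locally compact.

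The step I would watch most carefully is the construction of $G$ in (1) $\Rightarrow$ (2): one must check both that $G$ is compact and non-empty (so $T=e_X(G)$ is legitimate) and that $G\cap\lambda_\mathfrak{P}X$ is compact (so $T$ is an ideal), the latter resting entirely on the observation $(\beta X\setminus X)\setminus F\subseteq\lambda_\mathfrak{P}X$, which itself comes from $\beta X\setminus\lambda_\mathfrak{P}X\subseteq F$. Beyond this, both implications are short manipulations with the four sets $F$, $G$, $\lambda_\mathfrak{P}X$, $\beta X\setminus X$; the recurring ingredients are $\beta X\setminus\lambda_\mathfrak{P}X\subseteq F$ and $\beta X\setminus\lambda_\mathfrak{P}X\subseteq G$ (Lemma \ref{Y16}), its non-emptiness (Lemma \ref{FSGK}, as $X$ is non-$\mathfrak{P}$), and the compactness of $\beta X\setminus X$ from local compactness of $X$, which is precisely what lets us pass between ``$F$ open in $\beta X\setminus X$'' and ``$(\beta X\setminus X)\setminus F$ compact''.
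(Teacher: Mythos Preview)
Your proof is correct and follows essentially the same approach as the paper's: the same witness $G=((\beta X\setminus X)\setminus F)\cup(\beta X\setminus\lambda_\mathfrak{P}X)$ is used for (1)$\Rightarrow$(2), and for (2)$\Rightarrow$(1) both arguments establish $(\beta X\setminus X)\setminus F=G\cap\lambda_\mathfrak{P}X$ and invoke Lemma~\ref{DDDS}. Your explicit ``dictionary'' translating (2.a)--(2.b) into $F\cup G=\beta X\setminus X$ and $F\cap G=\beta X\setminus\lambda_\mathfrak{P}X$ spells out a step the paper leaves implicit, but the substance is identical.
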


\begin{proof}
Let ${\mathscr F}(Y)=\{F\}$ and note that $\beta X\setminus\lambda_{\mathfrak{P}} X\subseteq F$ by Lemma \ref{Y16}. Also, $X\subseteq\lambda_\mathfrak{P}X$ by Lemma \ref{15}, since $X$ is locally-$\mathfrak{P}$ by Lemma \ref{Y16}, because ${\mathscr E}_\mathfrak{P}(X)$ is non-empty.

(1) {\em  implies} (2). Let
\[G=\big((\beta X\setminus X)\setminus F\big)\cup(\beta X\setminus\lambda_{\mathfrak{P}} X).\]
Note that $\beta X\setminus\lambda_\mathfrak{P}X$ is non-empty by Lemma \ref{FSGK}, as $X$ is non-$\mathfrak{P}$, and $\beta X\setminus\lambda_\mathfrak{P}X$ is contained in $\beta X\setminus X$, as $X\subseteq\lambda_\mathfrak{P}X$. Since $Y$ is locally compact, $F$ is open in $\beta X\setminus X$ by Lemma \ref{DDDS}. Therefore $(\beta X\setminus X)\setminus F$ is closed in $\beta X\setminus X$ and is thus compact, as $\beta X\setminus X$ is closed in $\beta X$, since $X$ is locally compact. Therefore, $G$ is compact, as it is the union of two compact spaces. Further, $G$ is non-empty, it is contained in $\beta X\setminus X$, and it contains $\beta X\setminus\lambda_{\mathfrak{P}}X$. Let $T=e_X(G)$. Note that
\[G\cap\lambda_{\mathfrak{P}} X=(\beta X\setminus X)\setminus F\]
is compact, and thus $T$ is an ideal. That conditions (2.a) and (2.b) hold follow from the representation given in Definition \ref{EGGH} of co-atoms in ${\mathscr E}_\mathfrak{P}(X)$ of type $(\mathrm{I})$ and the facts that
\[F\cap G=\beta X\setminus\lambda_{\mathfrak{P}} X\mbox{ and }F\cup G=\beta X\setminus X.\]

(2) {\em  implies} (1). Let ${\mathscr F}(T)=\{H\}$. Note that $\beta X\setminus\lambda_{\mathfrak{P}} X\subseteq H$ by Lemma \ref{Y16}. From conditions (2.a) and (2.b) it follows that
\[F\cap H=\beta X\setminus\lambda_{\mathfrak{P}} X\mbox{ and }F\cup H=\beta X\setminus X.\]
Therefore
\[(\beta X\setminus X)\setminus F=H\cap\lambda_{\mathfrak{P}} X\]
is compact, as $T$ is an ideal. Thus $(\beta X\setminus X)\setminus F$ is closed in $\beta X\setminus X$. Therefore, $Y$ is locally compact by Lemma \ref{DDDS}.
\end{proof}

\begin{lemma}\label{SDFG}
Let $X$ and $Y$ be locally compact non-$\mathfrak{P}$ spaces, where $\mathfrak{P}$ is a compactness-like topological property, and let
\[\Gamma:{\mathscr E}_\mathfrak{P}(X)\rightarrow{\mathscr E}_\mathfrak{P}(Y)\]
be an order-isomorphism. Suppose that $|\beta X\setminus\lambda_\mathfrak{P}X|\geq 3$ and $|\beta Y\setminus\lambda_\mathfrak{P}Y|\geq 3$. Let $T\in{\mathscr E}_\mathfrak{P}(X)$.
\begin{itemize}
\item[\rm(1)] If $T$ is a co-atom of type $(\mathrm{I})$ (of type $(\mathrm{II})$, respectively) then so is $\Gamma(T)$.
\item[\rm(2)] If $T$ is a one-point extension then so is $\Gamma(T)$.
\item[\rm(3)] If $T$ is an ideal then so is $\Gamma(T)$.
\item[\rm(4)] If $T$ is a locally compact one-point extension then so is $\Gamma(T)$.
\end{itemize}
\end{lemma}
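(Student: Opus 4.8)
The plan is to observe that each of the four notions appearing in the statement has already been characterised purely order-theoretically in the lemmas above, so that the assertion reduces to the trivial fact that an order-isomorphism carries every order-theoretic feature of a poset (its largest element, the meets and joins that happen to exist, the cardinalities of subsets defined by order conditions, the relations ``co-atom'' and ``co-atom cover'', the operation $D\mapsto D'$, and so on) to the corresponding feature of the target poset. We may assume throughout that ${\mathscr E}_\mathfrak{P}(X)$ and ${\mathscr E}_\mathfrak{P}(Y)$ are non-empty (otherwise all four assertions are vacuous), so that $X$ and $Y$ are locally-$\mathfrak{P}$ by Lemma \ref{Y16}; thus the hypotheses of Lemmas \ref{GGO}, \ref{DDR}, \ref{ROEER} and \ref{GGYH} are all available. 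Note also that $\Gamma$ maps the largest element $\mu_\mathfrak{P}X$ of ${\mathscr E}_\mathfrak{P}(X)$ (Proposition \ref{TG}) to $\mu_\mathfrak{P}Y$.

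For (1): being a co-atom is order-theoretic (Definition \ref{IOHJD}), so $\Gamma$ and $\Gamma^{-1}$ both carry co-atoms to co-atoms. Given a co-atom $T$, Lemma \ref{GGO} --- applicable since $|\beta X\setminus\lambda_\mathfrak{P}X|\geq 3$, and using that the meets $S\wedge T$ of the relevant pairs of co-atoms exist (by the dual of Lemma \ref{GHDS}, which one states and proves first) --- says that $T$ is of type $(\mathrm{II})$ precisely when there is a co-atom $S$ with $\big|\{U\in{\mathscr E}_\mathfrak{P}(X):U\geq S\wedge T\}\big|=5$. Applying $\Gamma$, and invoking Lemma \ref{GGO} for $Y$ (where $|\beta Y\setminus\lambda_\mathfrak{P}Y|\geq 3$ is used) for the reverse implication, we conclude that $T$ is of type $(\mathrm{II})$ if and only if $\Gamma(T)$ is; and since every co-atom is of exactly one of the two types (as noted after Definition \ref{EGGH}), type $(\mathrm{I})$ is preserved as well.

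For (2): Lemma \ref{DDR} characterises membership in ${\mathscr E}^*_\mathfrak{P}(X)$ by conditions (a) and (b), which refer only to co-atoms of type $(\mathrm{II})$, co-atom covers $D$, and the operation $D\mapsto D'$ --- all order-theoretic by part (1) and Definition \ref{OGFD}. Hence $\Gamma$ carries ${\mathscr E}^*_\mathfrak{P}(X)$ onto ${\mathscr E}^*_\mathfrak{P}(Y)$ and restricts to an order-isomorphism between these two sub-posets; in particular it sends the largest element $M^X_\mathfrak{P}$ of ${\mathscr E}^*_\mathfrak{P}(X)$ (Lemma \ref{ROEER}(1)) to $M^Y_\mathfrak{P}$, and it preserves all joins that exist in ${\mathscr E}^*_\mathfrak{P}(X)$ (these exist by Lemma \ref{DDA}). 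For (3), the ideal condition is stated in Lemma \ref{ROEER}(2.b) entirely in terms of $M^X_\mathfrak{P}$ and joins in ${\mathscr E}^*_\mathfrak{P}(X)$, hence is preserved by this restricted order-isomorphism. For (4), Lemma \ref{GGYH} characterises the locally compact members of ${\mathscr E}^*_\mathfrak{P}(X)$ by the existence of an ideal $T$ satisfying conditions (a) and (b) about co-atoms of type $(\mathrm{I})$; by parts (1) and (3), and since (a), (b) are order statements, this property is likewise preserved by $\Gamma$.

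The only points that require attention are the bookkeeping ones already flagged: verifying that the relevant meets (for (1)) and joins (for (3) and (4)) genuinely exist in the posets at hand --- which is exactly what the dual of Lemma \ref{GHDS} and Lemma \ref{DDA} provide --- and checking that the cardinality hypotheses $|\beta X\setminus\lambda_\mathfrak{P}X|\geq 3$ and $|\beta Y\setminus\lambda_\mathfrak{P}Y|\geq 3$ are needed in \emph{both} directions of the type-$(\mathrm{II})$ characterisation (they are, through the two applications of Lemma \ref{GGO}). There is no genuine obstacle: the substance lies entirely in the order-theoretic characterisations established in the preceding lemmas, and the present lemma merely records that an order-isomorphism respects them.
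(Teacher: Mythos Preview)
Your proposal is correct and follows essentially the same route as the paper: the paper's own proof simply cites Lemma~\ref{GGO} for (1), Lemma~\ref{DDR} together with (1) for (2), Lemma~\ref{ROEER} together with (2) for (3), and Lemma~\ref{GGYH} together with (1)--(3) for (4), which is exactly the dependency chain you spell out. Your added remarks about the existence of the relevant meets and joins and the two-sided use of the cardinality hypothesis are accurate bookkeeping that the paper leaves implicit.
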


\begin{proof}
Lemma \ref{GGO} proves (1). (Note that $X$ is locally-$\mathfrak{P}$ by Lemma \ref{Y16}, as ${\mathscr E}_\mathfrak{P}(X)$ is non-empty.) Lemma \ref{DDR} and (1) prove (2). Part (3) follows from (2) and Lemma \ref{ROEER}. Part (4) follows from (1)--(3) and Lemma \ref{GGYH}.
\end{proof}

We are now ready to give our example. For a space $X$ we write
\[X=\bigoplus_{i\in I}X_i,\]
if $X$ is the disjoint union of its open (and thus clopen) subspaces $X_i$.

\begin{example}\label{DFJ}
Let
\[X=\mathbb{N}\oplus\bigoplus_{i<\Omega}R_i\mbox{ and }Y=\bigoplus_{i<\Omega}R_i\]
where $R_i=[0,\infty)$ for each $i<\Omega$. (Here $\Omega$ is the first uncountable ordinal.) Let $\mathfrak{P}$ be the Lindel\"{o}f property.  Then
\begin{equation}\label{TPJB}
\lambda_\mathfrak{P}X=\mathrm{cl}_{\beta X}\mathbb{N}\cup\bigcup\Big\{\mathrm{cl}_{\beta X}\Big(\bigcup_{i\in J}R_i\Big):J\subseteq[0,\Omega)\mbox{ is countable}\Big\}
\end{equation}
and
\begin{equation}\label{TPO}
\lambda_\mathfrak{P}Y=\bigcup\Big\{\mathrm{cl}_{\beta Y}\Big(\bigcup_{i\in J}R_i\Big):J\subseteq[0,\Omega)\mbox{ is countable}\Big\}.
\end{equation}
To show (\ref{TPJB}), consider some $C\in\mathrm{Coz}(X)$ with Lindel\"{o}f closure $\mathrm{cl}_XC$. Then
\[\mathrm{cl}_XC\subseteq\mathbb{N}\cup\bigcup_{i\in J}R_i\]
for some countable $J\subseteq[0,\Omega)$. Therefore
\[\mathrm{cl}_{\beta X}C\subseteq\mathrm{cl}_{\beta X}\mathbb{N}\cup\mathrm{cl}_{\beta X}\Big(\bigcup_{i\in J}R_i\Big).\]
On the other hand, if $J\subseteq[0,\Omega)$ is countable, then
\[D=\mathbb{N}\cup\bigcup_{i\in J}R_i\]
is Lindel\"{o}f. Since $D$ is clopen in $X$ we have $D\in\mathrm{Coz}(X)$. Also, the closure $\mathrm{cl}_{\beta X}D$ is clopen in $\beta X$. Thus
\[\mathrm{int}_{\beta X}\mathrm{cl}_{\beta X}D=\mathrm{cl}_{\beta X}D\subseteq\lambda_\mathfrak{P}X.\]
This shows (\ref{TPJB}). A similar argument shows (\ref{TPO}). Note that $X$ contains $Y$ as a closed subspace. Since $X$ is normal, as each of its summands is so, we have $\beta Y=\mathrm{cl}_{\beta X} Y$. Thus, in particular, for any countable  $J\subseteq[0,\Omega)$ we have
\[\mathrm{cl}_{\beta Y}\Big(\bigcup_{i\in J}R_i\Big)=\mathrm{cl}_{\beta X}\Big(\bigcup_{i\in J}R_i\Big)\cap\mathrm{cl}_{\beta X} Y=\mathrm{cl}_{\beta X}\Big(\bigcup_{i\in J}R_i\Big)\]
and then, comparing (\ref{TPJB}) and (\ref{TPO}), it yields
\[\lambda_\mathfrak{P}X=\mathrm{cl}_{\beta X}\mathbb{N}\cup\lambda_\mathfrak{P}Y.\]
Note that
\[\beta X=\mathrm{cl}_{\beta X}\mathbb{N}\cup\mathrm{cl}_{\beta X}Y,\]
as $X=\mathbb{N}\cup Y$. Also, since $\mathbb{N}$ and $Y$ are disjoint zero-sets in $X$ (as they are clopen in $X$) they have disjoint closures in $\beta X$. We have
\[\beta X\setminus\lambda_\mathfrak{P}X=(\mathrm{cl}_{\beta X}\mathbb{N}\cup\mathrm{cl}_{\beta X} Y)\setminus(\mathrm{cl}_{\beta X}\mathbb{N}\cup\lambda_\mathfrak{P}Y)=\mathrm{cl}_{\beta X} Y\setminus\lambda_\mathfrak{P}Y=\beta Y\setminus\lambda_\mathfrak{P}Y.\]

In the following, in order to use Lemma \ref{SDFG}, we need to show that
\[|\beta X\setminus\lambda_\mathfrak{P}X|\geq 3\mbox{ and }|\beta Y\setminus\lambda_\mathfrak{P}Y|\geq 3.\]
We show the latter; the first may be proved analogously. Let $J\subseteq[0,\Omega)$ be uncountable. Suppose that
\[\mathrm{cl}_{\beta Y}\Big(\bigcup_{i\in J}R_i\Big)\subseteq\lambda_\mathfrak{P}Y.\]
Note that for any $K\subseteq[0,\Omega)$ the closure $\mathrm{cl}_{\beta Y}(\bigcup_{i\in K}R_i)$ is clopen in $\beta Y$, as $\bigcup_{i\in K}R_i$ is clopen in $Y$. By compactness and the representation given in (\ref{TPO}) we have
\begin{equation}\label{UUO}
\mathrm{cl}_{\beta Y}\Big(\bigcup_{i\in J}R_i\Big)\subseteq\mathrm{cl}_{\beta Y}\Big(\bigcup_{i\in {K_1}}R_i\Big)\cup\cdots\cup\mathrm{cl}_{\beta Y}\Big(\bigcup_{i\in {K_n}}R_i\Big)
\end{equation}
for some countable $K_1,\ldots,K_n\subseteq[0,\Omega)$. Intersecting both sides of (\ref{UUO}) with $Y$ it yields
\[\bigcup_{i\in J}R_i\subseteq\bigcup_{i\in {K_1}}R_i\cup\cdots\cup\bigcup_{i\in {K_n}}R_i\]
which is not true, as $J$ is uncountable, while $K_1\cup\cdots\cup K_n$ is not. Thus
\[\mathrm{cl}_{\beta Y}\Big(\bigcup_{i\in J}R_i\Big)\setminus\lambda_\mathfrak{P}Y\neq\emptyset\]
for any uncountable $J\subseteq[0,\Omega)$. Choose some pairwise disjoint uncountable $J_1,J_2,J_3\subseteq[0,\Omega)$. Note that
\[\mathrm{cl}_{\beta Y}\Big(\bigcup_{i\in {J_1}}R_i\Big)\setminus\lambda_\mathfrak{P}Y, \mathrm{cl}_{\beta Y}\Big(\bigcup_{i\in {J_2}}R_i\Big)\setminus\lambda_\mathfrak{P}Y\mbox{ and }\mathrm{cl}_{\beta Y}\Big(\bigcup_{i\in {J_3}}R_i\Big)\setminus\lambda_\mathfrak{P}Y\]
are non-empty subsets of $\beta Y\setminus\lambda_\mathfrak{P}Y$ and they are pairwise disjoint, as
\[\bigcup_{i\in {J_1}}R_i,\bigcup_{i\in {J_2}}R_i\mbox{ and }\bigcup_{i\in {J_3}}R_i\]
being pairwise disjoint clopen subspaces (and thus zero-sets) in $Y$ have disjoint closures in $\beta Y$.

Now, we show that the partially ordered sets ${\mathscr E}_\mathfrak{P}(X)$ and ${\mathscr E}_\mathfrak{P}(Y)$ are not order-isomorphic. Suppose the contrary, and let
\[\Gamma:{\mathscr E}_\mathfrak{P}(X)\rightarrow{\mathscr E}_\mathfrak{P}(Y)\]
be an order-isomorphism. Let
\[G=(\beta X\setminus X)\setminus\mathrm{cl}_{\beta X}\mathbb{N}.\]
Since $\mathbb{N}$ is clopen in $X$, its closure $\mathrm{cl}_{\beta X}\mathbb{N}$ in $\beta X$ is clopen in $\beta X$. Therefore $G$ is clopen in $\beta X\setminus X$. In particular, $G$ is compact, as $\beta X\setminus X$ is closed in $\beta X$, since $X$ is locally compact. Also, $G$ contains $\beta X\setminus\lambda_\mathfrak{P}X$ by (\ref{TPJB}). Let $S=e_X(G)$ and denote $T=\Gamma(S)$. By Lemma \ref{SDFG} then $T\in{\mathscr E}^*_\mathfrak{P}(Y)$, as $S\in{\mathscr E}^*_\mathfrak{P}(X)$ and $T$ is locally compact, as $S$ is so by Lemma \ref{DDDS}, since $G$ is open in $\beta X\setminus X$. Let ${\mathscr F}_Y(T)=\{H\}$. Note that $T$ is not the smallest element of ${\mathscr E}^*_\mathfrak{P}(Y)$, as $S$ is not the smallest element of ${\mathscr E}^*_\mathfrak{P}(X)$. (Observe that the smallest element of ${\mathscr E}^*_\mathfrak{P}(X)$ is the one-point compactification of $X$.) Thus $H$ is not the whole $\beta Y\setminus Y$. We need to show the following.

\begin{xclaim}
There exists some $i<\Omega$ such that
\[H\cap\mathrm{cl}_{\beta Y}R_i=\emptyset.\]
\end{xclaim}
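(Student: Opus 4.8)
Recall that $T=\Gamma(S)\in{\mathscr E}^*_\mathfrak{P}(Y)$ is a locally compact one-point extension with ${\mathscr F}_Y(T)=\{H\}$, and that $H$ is a proper closed subspace of $\beta Y\setminus Y$ containing $\beta Y\setminus\lambda_\mathfrak{P}Y$ (by Lemma~\ref{Y16}). Since $T$ is locally compact, $H$ is open in $\beta Y\setminus Y$ by Lemma~\ref{DDDS}. The plan is to exploit the very rigid clopen decomposition $Y=\bigoplus_{i<\Omega}R_i$: each $\mathrm{cl}_{\beta Y}R_i$ is clopen in $\beta Y$ (as $R_i$ is clopen in $Y$), and these sets, being closures of pairwise disjoint zero-sets, are pairwise disjoint; together with $Y$ they cover $\beta Y$, so the family $\{\mathrm{cl}_{\beta Y}R_i\setminus R_i:i<\Omega\}$ is a pairwise-disjoint clopen partition of $\beta Y\setminus Y$.

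**Main argument.** Suppose, for contradiction, that $H\cap\mathrm{cl}_{\beta Y}R_i\neq\emptyset$ for every $i<\Omega$. First I would show this forces $H$ to be non-compact, or rather to fail to be ``small'' in a sense I can contradict. The key point is that $H$ is open in $\beta Y\setminus Y$ and meets each of the uncountably many disjoint clopen pieces $\mathrm{cl}_{\beta Y}R_i\setminus R_i$; on the other hand $\beta Y\setminus Y\setminus H$ is closed, hence compact (as $Y$ is locally compact), and by Lemma~\ref{Y16} applied to $T$ we know $\beta Y\setminus\lambda_\mathfrak{P}Y\subseteq H$. I would combine these facts with the ideal/non-ideal dichotomy: by Lemma~\ref{ROEER}, $T$ being an ideal is equivalent to $H\cap\lambda_\mathfrak{P}Y$ being compact. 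Now trace back through $\Gamma$: whether $S$ is an ideal in ${\mathscr E}^*_\mathfrak{P}(X)$ is decidable order-theoretically by Lemma~\ref{ROEER}, hence is preserved by $\Gamma$ (Lemma~\ref{SDFG}(3)), so $T$ is an ideal iff $S$ is. Compute directly for $S=e_X(G)$ with $G=(\beta X\setminus X)\setminus\mathrm{cl}_{\beta X}\mathbb{N}$: here $G\cap\lambda_\mathfrak{P}X=(\beta X\setminus X)\setminus\mathrm{cl}_{\beta X}\mathbb{N}$ intersected with $\lambda_\mathfrak{P}X$, which by~(\ref{TPJB}) equals $\lambda_\mathfrak{P}X\setminus\mathrm{cl}_{\beta X}\mathbb{N}=\lambda_\mathfrak{P}Y$, and this is \emph{not} compact (indeed $\lambda_\mathfrak{P}Y\supseteq Y$ is not even closed in $\beta Y$, or more robustly: $\lambda_\mathfrak{P}Y=\beta Y$ would force $Y$ Lindel\"of by Lemma~\ref{FSGK}, which is false since $\Omega$-many disjoint copies of $[0,\infty)$ is not Lindel\"of, so $\beta Y\setminus\lambda_\mathfrak{P}Y\neq\emptyset$; and $\lambda_\mathfrak{P}Y$ cannot be compact because it is a dense proper subset of the compact space $\beta Y$). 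Hence $S$ is \emph{not} an ideal, so $T$ is not an ideal, i.e. $H\cap\lambda_\mathfrak{P}Y$ is \emph{not} compact.

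**Deriving the contradiction.** Now I would use the assumption that $H$ meets every $\mathrm{cl}_{\beta Y}R_i$ together with openness of $H$ in $\beta Y\setminus Y$ to show $H\cap\lambda_\mathfrak{P}Y$ \emph{is} compact, contradicting the previous paragraph. The idea: because $H$ is open in $\beta Y\setminus Y$ and each $\mathrm{cl}_{\beta Y}R_i\setminus R_i$ is clopen there, $H\cap(\mathrm{cl}_{\beta Y}R_i\setminus R_i)$ is an open subset of the compact clopen set $\mathrm{cl}_{\beta Y}R_i\setminus R_i$; but actually I want the reverse inclusion to be forced. Reconsider: the complement $(\beta Y\setminus Y)\setminus H$ is compact, so by the clopen partition it meets only finitely many pieces $\mathrm{cl}_{\beta Y}R_i\setminus R_i$ in a nonempty \emph{clopen-complement}, no — finiteness does not follow from compactness alone here. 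The cleaner route, which I expect to be the heart of the proof: since $(\beta Y\setminus Y)\setminus H$ is compact and is covered by the disjoint \emph{open} family $\{\mathrm{cl}_{\beta Y}R_i\setminus R_i:i<\Omega\}$ together with $\beta Y\setminus\lambda_\mathfrak{P}Y$-type pieces — one must be careful, as the $R_i$-pieces may not cover it — I instead argue that if $H$ met every $\mathrm{cl}_{\beta Y}R_i$ then, running $\Gamma^{-1}$, the corresponding fiber $G$ in $X$ would have to meet $\mathrm{cl}_{\beta X}R_i$ for all $i$ and also be disjoint from a ``large'' part of $\mathrm{cl}_{\beta X}\mathbb{N}$, and I would extract a contradiction with $G=(\beta X\setminus X)\setminus\mathrm{cl}_{\beta X}\mathbb{N}$ being \emph{disjoint} from all of $\mathrm{cl}_{\beta X}\mathbb{N}$ — the asymmetry being that in $X$ there is the extra summand $\mathbb{N}$ whose closure $S$ entirely omits, whereas in $Y$ there is no such ``omittable'' summand unless $H$ omits some $\mathrm{cl}_{\beta Y}R_i$.

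**Where the difficulty lies.** The main obstacle is that $\Gamma$ is only an abstract order-isomorphism, so I cannot push points or sets across it directly — every structural feature of $H$ I want to use must first be re-expressed order-theoretically in ${\mathscr E}^*_\mathfrak{P}(Y)$ and pulled back via the characterizations already proved (Lemmas~\ref{DDDS}, \ref{ROEER}, \ref{GGYH}, \ref{SDFG}). In particular the claim should follow by contradiction: if $H$ met every $\mathrm{cl}_{\beta Y}R_i$, I would build a transfinite family of ideal one-point extensions $\{T_i\}_{i<\Omega}$ in ${\mathscr E}^*_\mathfrak{P}(Y)$ with fibers $H_i\supseteq(\beta Y\setminus Y)\setminus(\mathrm{cl}_{\beta Y}R_i\setminus R_i)$ (say), observe that $T\vee\bigvee_{i<\Omega}T_i=M^Y_\mathfrak{P}$ because $H\cap\bigcap_i H_i\subseteq\beta Y\setminus\lambda_\mathfrak{P}Y$ (using $H$ meets each piece so $\bigcap H_i$ already excludes the ``$R_i$-part'' of each piece), and then invoke Lemma~\ref{ROEER}(2) — which holds since $S$, hence $T$, \emph{is} an ideal if I had chosen the right $S$... but we established $S$ is \emph{not} an ideal, so this route must instead use $S$ being non-ideal to produce, on the $X$-side, a family witnessing the failure of the finite-subcover property, transport it to $Y$, and contradict the hypothetical that $H$ omits no $\mathrm{cl}_{\beta Y}R_i$. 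I expect the bookkeeping reconciling ``$S$ non-ideal'' with ``$H$ omits some $R_i$-closure'' to be the one genuinely delicate point; everything else is an application of the already-established dictionary between the poset ${\mathscr E}_\mathfrak{P}$ and the lattice of compact fibers in the outgrowth.
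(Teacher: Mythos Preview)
Your proposal does not reach a proof, and the difficulty is not where you locate it. The claim is a \emph{purely topological} statement about $H$: it uses only that $H$ is clopen in $\beta Y\setminus Y$ (Lemma~\ref{DDDS}) and that $H\subsetneqq\beta Y\setminus Y$. The order-isomorphism $\Gamma$ plays no role here; in particular the ideal/non-ideal dichotomy you spend most of the proposal on is a red herring for this claim. (It is used later in the example, not here.)

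There are two concrete problems. First, your assertion that $\{\mathrm{cl}_{\beta Y}R_i\setminus R_i:i<\Omega\}$ is a clopen \emph{partition} of $\beta Y\setminus Y$ is false: by (\ref{TPO}) each $\mathrm{cl}_{\beta Y}R_i\subseteq\lambda_\mathfrak{P}Y$, so the nonempty set $\beta Y\setminus\lambda_\mathfrak{P}Y$ lies in $\beta Y\setminus Y$ but outside every $\mathrm{cl}_{\beta Y}R_i$. You notice this later (``one must be careful, as the $R_i$-pieces may not cover it'') but never repair the argument. Second, you never use the one feature that makes $R_i=[0,\infty)$ special: \emph{connectedness}. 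The paper's proof hinges on the fact that $\beta R_i\setminus R_i=\mathrm{cl}_{\beta Y}R_i\setminus Y$ is connected; since $H$ is clopen in $\beta Y\setminus Y$, if $H$ meets $\mathrm{cl}_{\beta Y}R_i\setminus Y$ then it must \emph{contain} it. Under the contrary hypothesis this gives $\mathrm{cl}_{\beta Y}R_i\setminus Y\subseteq H$ for every $i$. The paper then separates $H$ and $(\beta Y\setminus Y)\setminus H$ by a Urysohn function $f:\beta Y\to[0,1]$, sets $V=Y\cap f^{-1}[[0,1/2)]$, and shows $\mathrm{bd}_YV$ is compact (it lies in the level set $f^{-1}(1/2)\subseteq Y$), hence meets only finitely many $R_i$. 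Connectedness of each $R_i$ then forces $V\subseteq\bigcup_{i\in L}R_i$ for a finite $L$, and one computes $(\beta Y\setminus Y)\setminus H=\mathrm{cl}_{\beta Y}V\setminus Y\subseteq\bigcup_{i\in L}(\mathrm{cl}_{\beta Y}R_i\setminus Y)\subseteq H$, giving $H=\beta Y\setminus Y$, the desired contradiction. None of these connectedness considerations appear in your proposal, and without them the argument cannot close.
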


\noindent \emph{Proof of the claim.}
Since $T$ is locally compact, $H$ is clopen in $\beta Y\setminus Y$ by Lemma \ref{DDDS}. Thus $H$ and $(\beta Y\setminus Y)\setminus H$ are both closed in $\beta Y\setminus Y$ and therefore in $\beta Y$, as $\beta Y\setminus Y$ is closed in $\beta Y$, since $Y$ is locally compact. By the Urysohn Lemma there exists a continuous $f:\beta Y\rightarrow[0,1]$ such that $f(t)=0$ for any $t\in(\beta Y\setminus Y)\setminus H$ and $f(s)=1$ for any $s\in H$. Let
\[V=Y\cap f^{-1}\big[[0,1/2)\big].\]
Note that
\[(\beta Y\setminus Y)\setminus H=\mathrm{cl}_{\beta Y}V\setminus Y,\]
as using Lemma \ref{LKG} we have
\[(\beta Y\setminus Y)\setminus H=f^{-1}\big[[0,1/2)\big]\setminus Y\subseteq\mathrm{cl}_{\beta Y}V\setminus Y\subseteq f^{-1}\big[[0,1/2]\big]\setminus Y=(\beta Y\setminus Y)\setminus H.\]
We have
\begin{eqnarray*}
\mathrm{bd}_YV=\mathrm{cl}_YV\cap\mathrm{cl}_Y(Y\setminus V)&\subseteq&\mathrm{cl}_{\beta Y}V\cap\mathrm{cl}_{\beta Y}(Y\setminus V)\\&\subseteq& f^{-1}\big[[0,1/2]\big]\cap f^{-1}\big[[1/2,1]\big]=f^{-1}(1/2)
\end{eqnarray*}
and thus
\[\mathrm{cl}_{\beta Y}\mathrm{bd}_YV\subseteq f^{-1}(1/2)\subseteq Y.\]
Therefore
\[\mathrm{bd}_YV=\mathrm{cl}_{\beta Y}\mathrm{bd}_YV\cap Y=\mathrm{cl}_{\beta Y}\mathrm{bd}_YV\]
is compact, as it is closed in $\beta Y$. Let
\[L=\{i<\Omega:\mathrm{bd}_YV\cap R_i\neq\emptyset\}.\]
Note that $L$ is finite, as $\mathrm{bd}_YV$ is compact (and each $R_i$, where $i<\Omega$, is open in $Y$). To prove the claim, suppose to the contrary that $\mathrm{cl}_{\beta Y}R_i\cap H$ is non-empty for each $i<\Omega$. Let $i<\Omega$. Note that
\[\mathrm{cl}_{\beta Y}R_i\setminus Y=\mathrm{cl}_{\beta Y}R_i\setminus R_i=\beta R_i\setminus R_i\]
as $\mathrm{cl}_{\beta Y}R_i$ and $\beta R_i$ are equivalent compactifications of $R_i$, because $R_i$ is closed in $Y$ (and $Y$ is normal). Since $\beta R_i\setminus R_i$ is connected (see Problem 6L of \cite{GJ}) and $H$ is clopen in $\beta Y\setminus Y$, we have
\begin{equation}\label{GFDW}
\mathrm{cl}_{\beta Y}R_i\setminus Y\subseteq H.
\end{equation}
Now, let $i<\Omega$ be such that $V\cap R_i$ is non-empty. If $\mathrm{bd}_{R_i}(V\cap R_i)$ is empty, then $V\cap R_i$ is clopen in $R_i$, and since $R_i$ is connected we have $V\cap R_i=R_i$, that is $R_i\subseteq V$. But then
\[\emptyset\neq\beta R_i\setminus R_i=\mathrm{cl}_{\beta Y}R_i\setminus Y\subseteq\mathrm{cl}_{\beta Y}V\setminus Y=(\beta Y\setminus Y)\setminus H\]
which by (\ref{GFDW}) cannot be true. Thus
\[\mathrm{bd}_YV\cap R_i=\mathrm{bd}_{R_i}(V\cap R_i)\neq\emptyset\]
that is, $i\in L$. Therefore
\[V\subseteq\bigcup_{i\in L}R_i.\]
Now, using (\ref{GFDW}), we have
\begin{eqnarray*}
(\beta Y\setminus Y)\setminus H=\mathrm{cl}_{\beta Y}V\setminus Y&\subseteq&\mathrm{cl}_{\beta Y}\Big(\bigcup_{i\in L}R_i\Big)\setminus Y\\&=&\Big(\bigcup_{i\in L}\mathrm{cl}_{\beta Y}R_i\Big)\setminus Y=\bigcup_{i\in L}(\mathrm{cl}_{\beta Y}R_i\setminus Y)\subseteq H
\end{eqnarray*}
which implies that $H=\beta Y\setminus Y$. This contradiction proves the claim.

\medskip

\noindent Fix some $i<\Omega$ such that $H\cap\mathrm{cl}_{\beta Y}R_i$ is empty and let
\[H_i=(\beta Y\setminus Y)\setminus\mathrm{cl}_{\beta Y}R_i.\]
Since $R_i$ is clopen in $Y$, its closure $\mathrm{cl}_{\beta Y}R_i$ in $\beta Y$ is clopen in $\beta Y$. Thus $H_i$ is clopen in $\beta Y\setminus Y$. In particular, $H_i$ is compact, as $\beta Y\setminus Y$ is closed in $\beta Y$, since $Y$ is locally compact. Also, $H_i$ contains $\beta Y\setminus\lambda_\mathfrak{P}Y$ by (\ref{TPO}). Let $T_i=e_Y(H_i)$. Note that $T_i$ is locally compact by Lemma \ref{DDDS}. We now prove the following; the contradiction will then complete the proof.

\begin{xclaim}
$T_i$ is a co-atom in the set of locally compact elements of ${\mathscr E}^*_\mathfrak{P}(Y)$ partially ordered by the reverse of $\leq$, while its inverse image $\Gamma^{-1}(T_i)$ is not a co-atom in the set of locally compact elements of ${\mathscr E}^*_\mathfrak{P}(X)$, partially ordered in the same way.
\end{xclaim}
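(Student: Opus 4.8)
The plan is to reduce both halves of the claim to elementary statements about clopen subsets of the outgrowths, and then to contrast the connectedness of each $\beta R_i\setminus R_i$ with the zero-dimensionality of $\beta\mathbb{N}\setminus\mathbb{N}$. The underlying dictionary is: by Lemma \ref{DFH}, $F\mapsto e_Y(F)$ is an anti-order-isomorphism from the compact subspaces of $\beta Y\setminus Y$ containing $\beta Y\setminus\lambda_\mathfrak{P}Y$ onto the one-point extensions in ${\mathscr E}_\mathfrak{P}(Y)$, and by Lemma \ref{DDDS} the locally compact ones among these correspond exactly to the $F$ which are moreover clopen in $\beta Y\setminus Y$; the largest element for the reversed order is $e_Y(\beta Y\setminus Y)$, the one-point compactification of $Y$, which is the least element of ${\mathscr E}^*_\mathfrak{P}(Y)$ and is (being compact) locally compact. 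The same holds for $X$, and I will use this dictionary throughout; all the clopen-ness verifications below are routine.

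\emph{First half.} The set $H_i=(\beta Y\setminus Y)\setminus\mathrm{cl}_{\beta Y}R_i$ is clopen in $\beta Y\setminus Y$, compact, and contains $\beta Y\setminus\lambda_\mathfrak{P}Y$, so $T_i=e_Y(H_i)$ is a locally compact one-point extension of $Y$; and since $R_i$ is closed in the normal space $Y$, its complement $(\beta Y\setminus Y)\setminus H_i=\mathrm{cl}_{\beta Y}R_i\setminus Y=\beta R_i\setminus R_i$ is nonempty, clopen in $\beta Y\setminus Y$, and connected (Problem 6L of \cite{GJ}). In particular $H_i\subsetneq\beta Y\setminus Y$, so $e_Y(\beta Y\setminus Y)<T_i$ and $T_i$ is not the largest element in the reversed order; and if a locally compact one-point extension $e_Y(F)$ satisfied $e_Y(\beta Y\setminus Y)<e_Y(F)<T_i$, then $H_i\subsetneq F\subsetneq\beta Y\setminus Y$ with $F$ clopen, and $\{F\setminus H_i,\,(\beta Y\setminus Y)\setminus F\}$ would be a partition of $\beta R_i\setminus R_i$ into two nonempty relatively clopen pieces, contradicting connectedness. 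Hence $T_i$ is a co-atom in the reversed order on the locally compact elements of ${\mathscr E}^*_\mathfrak{P}(Y)$.

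\emph{Second half.} By Lemma \ref{SDFG}, applied to $\Gamma^{-1}$ with $X$ and $Y$ interchanged (its cardinality hypotheses have been verified), $\Gamma^{-1}$ maps the locally compact one-point extensions of $Y$ order-isomorphically onto those of $X$; write $\Gamma^{-1}(T_i)=e_X(K)$, with $K$ clopen in $\beta X\setminus X$ and $\beta X\setminus\lambda_\mathfrak{P}X\subseteq K\subseteq\beta X\setminus X$. The decisive step is to pin $K$ down: since $H\subseteq H_i$ (as $H\cap\mathrm{cl}_{\beta Y}R_i=\emptyset$) we get $T_i=e_Y(H_i)\le e_Y(H)=T$ by Lemma \ref{DFH}, hence $\Gamma^{-1}(T_i)\le\Gamma^{-1}(T)=S=e_X(G)$, and so $G\subseteq K$ again by Lemma \ref{DFH}. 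Therefore $M:=(\beta X\setminus X)\setminus K$ is contained in $(\beta X\setminus X)\setminus G=\mathrm{cl}_{\beta X}\mathbb{N}\setminus X=\beta\mathbb{N}\setminus\mathbb{N}$ (using $\mathrm{cl}_{\beta X}\mathbb{N}=\beta\mathbb{N}$, as $\mathbb{N}$ is closed in the normal space $X$); moreover $M\neq\emptyset$, for $K=\beta X\setminus X$ would give $\Gamma^{-1}(T_i)=e_X(\beta X\setminus X)$, the least element, whence $T_i=e_Y(\beta Y\setminus Y)$, contrary to the first half. Now $M$ is clopen in $\beta X\setminus X$ and lies in the clopen set $\beta\mathbb{N}\setminus\mathbb{N}$, hence is clopen there; since $\beta\mathbb{N}\setminus\mathbb{N}$ is zero-dimensional and has no isolated points (see \cite{GJ}), the nonempty clopen set $M$ has at least two points and therefore a nonempty proper clopen subset $M_1$, and then $M_1$ and $M\setminus M_1$ are nonempty and clopen in $\beta X\setminus X$. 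Put $K'=K\cup M_1$: this is clopen in $\beta X\setminus X$, compact, contains $\beta X\setminus\lambda_\mathfrak{P}X$, and satisfies $K\subsetneq K'\subsetneq\beta X\setminus X$, so $e_X(K')$ is a locally compact one-point extension of $X$ with $e_X(\beta X\setminus X)<e_X(K')<e_X(K)=\Gamma^{-1}(T_i)$ by Lemma \ref{DFH}. Thus, in the reversed order, $e_X(K')$ lies strictly between $\Gamma^{-1}(T_i)$ and the largest element, so $\Gamma^{-1}(T_i)$ is not a co-atom.

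The only genuinely substantive point — not really an obstacle — is recognizing that $\Gamma^{-1}(T_i)$ need never be computed explicitly: the single relation $T_i\le T$ already forces its fibre $K$ to contain $G$, which traps the complement $M$ inside $\beta\mathbb{N}\setminus\mathbb{N}$, where $M$ splits because $\beta\mathbb{N}\setminus\mathbb{N}$ is zero-dimensional with no isolated points, whereas such a split is impossible in any connected $\beta R_i\setminus R_i$ — this asymmetry between the summands $\mathbb{N}$ and $R_i$ is exactly what drives the whole example. Finally, since an order-isomorphism preserves co-atoms of the reversed order on the locally compact one-point extensions, the two halves of the claim together contradict the existence of $\Gamma$, completing Example \ref{DFJ}.
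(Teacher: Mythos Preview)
Your argument is correct and follows essentially the same route as the paper: translate locally compact one-point extensions into clopen subsets of the outgrowth via Lemmas \ref{DFH} and \ref{DDDS}, then use connectedness of $\beta R_i\setminus R_i$ for the first half and the richness of clopen sets in $\beta\mathbb{N}\setminus\mathbb{N}$ for the second. The only notable difference is in how you split $M=(\beta X\setminus X)\setminus K$: the paper invokes the stronger fact (Exercise 3.6.A of \cite{E}) that every nonempty clopen subset of $\beta\mathbb{N}\setminus\mathbb{N}$ is homeomorphic to $\beta\mathbb{N}\setminus\mathbb{N}$ itself, whereas you use only that $\beta\mathbb{N}\setminus\mathbb{N}$ is zero-dimensional with no isolated points---a more elementary observation that is entirely sufficient here. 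You also make explicit the verification that $M\neq\emptyset$, which the paper leaves implicit in its appeal to that homeomorphism.
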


\noindent \emph{Proof of the claim.} Suppose to the contrary that $\omega Y<T'< T_i$ for some locally compact $T'\in{\mathscr E}^*_\mathfrak{P}(Y)$. ($\omega Y$ denotes the one-point compactification of $Y$.) Let ${\mathscr F}_Y(T')=\{H'\}$. Then $H_i\subsetneqq H'\subsetneqq\beta Y\setminus Y$ by Lemma \ref{DFH}. Therefore
\[\emptyset\neq(\beta Y\setminus Y)\setminus H'\subsetneqq(\beta Y\setminus Y)\setminus H_i=\mathrm{cl}_{\beta Y}R_i\setminus Y=\beta R_i\setminus R_i.\]
Since $T'$ is locally compact, $H'$ (and thus $(\beta Y\setminus Y)\setminus H'$) is clopen in $\beta Y\setminus Y$ by Lemma \ref{DDDS}. This contradicts the fact that $\beta R_i\setminus R_i$ is connected. (See Problem 6L of \cite{GJ}.) Thus $T_i$ is a co-atom in the set of all locally compact elements of ${\mathscr E}^*_\mathfrak{P}(Y)$ partially ordered by the reverse of $\leq$. Let $S_i=\Gamma^{-1}(T_i)$. Note that $S_i\in{\mathscr E}^*_\mathfrak{P}(X)$ and $S_i$ is locally compact by Lemma \ref{SDFG}, as $T_i$ is so. Let ${\mathscr F}_X(S_i)=\{G_i\}$. Since $H\cap\mathrm{cl}_{\beta Y}R_i$ is empty, we have
\[H\subseteq(\beta Y\setminus Y)\setminus\mathrm{cl}_{\beta Y}R_i=H_i\]
and thus $T_i\leq T$ by Lemma \ref{DFH}. Therefore
\[S_i=\Gamma^{-1}(T_i)\leq \Gamma^{-1}(T)=S\]
and then $G\subseteq G_i$ again by Lemma \ref{DFH}. Note that $\mathrm{cl}_{\beta X}\mathbb{N}$ and $\beta\mathbb{N}$ are equivalent compactifications of $\mathbb{N}$, as $\mathbb{N}$ is closed in $X$ (and $X$ is normal). We have
\[(\beta X\setminus X)\setminus G_i\subseteq(\beta X\setminus X)\setminus G=\mathrm{cl}_{\beta X}\mathbb{N}\setminus X=\mathrm{cl}_{\beta X}\mathbb{N}\setminus \mathbb{N}=\beta\mathbb{N}\setminus\mathbb{N}.\]
Therefore $(\beta X\setminus X)\setminus G_i$ may be regarded as a clopen subspace of $\beta\mathbb{N}\setminus\mathbb{N}$ and thus homeomorphic to $\beta\mathbb{N}\setminus\mathbb{N}$ itself. (See Exercise 3.6.A of \cite{E}.) By Lemma \ref{DDDS} the complement (in $\beta X\setminus X$) of each clopen subspace of $(\beta X\setminus X)\setminus G_i$ corresponds to a locally compact element $S'$ of ${\mathscr E}^*_\mathfrak{P}(X)$ which satisfies $S'\leq S_i$ by Lemma \ref{DFH}. That is $S_i$ is not a co-atom in the set of locally compact elements of ${\mathscr E}^*_\mathfrak{P}(X)$ partially ordered by the reverse of $\leq$. This proves the claim.
\end{example}

Our concluding results determine $\lambda_\mathfrak{P}X$ in the cases when $\mathfrak{P}$ is either pseudocompactness or realcompactness. Note that neither pseudocompactness nor realcompactness is a compactness-like topological property (indeed, pseudocompactness is not inverse invariant under perfect mappings and realcompactness is not invariant under perfect mappings), so the results of our first two sections are not deducible from the results of this section.

Proposition \ref{PTF} is known (see \cite{Ko4}); it is included here for completeness of results.

The following result is due to  A.W. Hager and D.G. Johnson in \cite{HJ}; a direct proof may be found in \cite{C}. (See also Theorem 11.24 of \cite{W}.)

\begin{lemma}[Hager and Johnson \cite{HJ}]\label{A}
Let $U$ be an open subspace of a space $X$. If $\mathrm{cl}_{\upsilon X} U$ is compact then $\mathrm{cl}_X U$ is pseudocompact.
\end{lemma}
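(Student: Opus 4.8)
The plan is to prove the contrapositive in spirit: starting from an open $U\subseteq X$ with $\mathrm{cl}_{\upsilon X}U$ compact, I want to show $\mathrm{cl}_X U$ is pseudocompact, i.e. every continuous $f:\mathrm{cl}_X U\to\mathbb{R}$ is bounded. First I would recall the standard fact that $\mathrm{cl}_X U=\mathrm{cl}_{\upsilon X}U\cap X$, since $\upsilon X$ carries the subspace topology making $X$ dense, and more to the point that $\mathrm{cl}_{\upsilon X}U$ is itself a compactification-flavored object sitting between $\mathrm{cl}_X U$ and $\beta(\mathrm{cl}_X U)$: indeed $\mathrm{cl}_X U\subseteq\mathrm{cl}_{\upsilon X}U\subseteq\mathrm{cl}_{\beta X}U=\beta(\mathrm{cl}_X U)$ when $U$ is, say, regular-closure-well-behaved; the cleanest route is to use that for $Z\in\mathscr{Z}(X)$ one has $\mathrm{cl}_{\beta X}Z=\beta Z$ under normality, but since we cannot assume normality here, the better tool is the cozero/zero-set trace characterization of $\upsilon X$ recalled in the excerpt: the points of $\beta X\setminus\upsilon X$ are exactly those $p$ lying in a $G_\delta$-set of $\beta X$ missing $X$.

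The key step is this. Suppose $f:\mathrm{cl}_X U\to\mathbb{R}$ is continuous and unbounded; then (replacing $f$ by $|f|$ and composing with a homeomorphism $\mathbb{R}\to(0,1)$) I get a continuous $g:\mathrm{cl}_X U\to(0,1)$ with $\inf g=0$ not attained, and the zero-set $W=g_\beta^{-1}(0)$ in $\beta(\mathrm{cl}_X U)=\mathrm{cl}_{\beta X}\mathrm{cl}_X U$ is a nonempty zero-set disjoint from $\mathrm{cl}_X U$. Pick $p\in W$. I would then argue $p\in\mathrm{cl}_{\upsilon X}U$: because $W\subseteq\mathrm{cl}_{\beta X}U$ and $\mathrm{cl}_{\upsilon X}U$ is the trace on $\upsilon X$ of $\mathrm{cl}_{\beta X}U$, it suffices to show $p\in\upsilon X$. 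But $p\notin\upsilon X$ would place $p$ in a $G_\delta$-set $G$ of $\beta X$ missing $X$; intersecting $G$ with a zero-set neighborhood of $p$ inside $\mathrm{cl}_{\beta X}U$ and using that $W$ is a zero-set, one builds a nonempty $G_\delta$-set of $\beta X$ sitting inside $\mathrm{cl}_{\beta X}U$ and missing $X$ — equivalently, a nonempty zero-set of $\beta X$ contained in $\mathrm{cl}_{\beta X}U\setminus X$; one then transfers this to a nonempty zero-set of $\beta(\mathrm{cl}_X U)$ missing $\mathrm{cl}_X U$ that moreover witnesses $p\notin\upsilon(\mathrm{cl}_X U)$, which is fine, but I actually want the contradiction with compactness of $\mathrm{cl}_{\upsilon X}U$. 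So instead I directly conclude: $p\in\mathrm{cl}_{\upsilon X}U\setminus\mathrm{cl}_X U$, yet on a neighborhood basis at $p$ the function $g$ extends continuously with value $0$ at $p$ while $g>0$ on $\mathrm{cl}_X U$; by compactness of $\mathrm{cl}_{\upsilon X}U$ and density of $\mathrm{cl}_X U$ in it, $g$ extends to a continuous function on $\mathrm{cl}_{\upsilon X}U$, whose infimum $0$ is then attained at $p$ — but $p\in\mathrm{cl}_{\upsilon X}U$, and every continuous real function on $X$ (hence on the dense subspace $\mathrm{cl}_X U$ after suitable care) extends to $\upsilon X$, forcing $g$ to be bounded on $\mathrm{cl}_X U$ after all, the desired contradiction.

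Let me streamline the conclusion, since the extension-to-$\upsilon X$ step is the real engine: given the continuous unbounded $f:\mathrm{cl}_X U\to\mathbb{R}$, extend it first over $X$. This needs a subtlety — $\mathrm{cl}_X U$ is closed in $X$ but $X$ need not be normal, so Tietze does not apply directly; however $U$ is open, and one can use that $f$ restricted to $U$ is continuous on an open set and argue via the cozero structure, or more simply observe that it is enough to get a continuous real map on $\upsilon X$ unbounded on $\mathrm{cl}_{\upsilon X}U$, contradicting compactness. Concretely: the function $g=1/(1+|f|):\mathrm{cl}_X U\to(0,1]$ composed with the Stone extension machinery yields $g_\beta:\mathrm{cl}_{\beta X}U\to[0,1]$ with $g_\beta^{-1}(0)\neq\emptyset$, disjoint from $\mathrm{cl}_X U$; this zero-set meets $\upsilon X$ nontrivially precisely because $\mathrm{cl}_{\upsilon X}U$ is compact and hence closed in $\mathrm{cl}_{\beta X}U$, so $\mathrm{cl}_{\beta X}U=\mathrm{cl}_{\upsilon X}U$ forces $g_\beta^{-1}(0)\subseteq\mathrm{cl}_{\upsilon X}U\subseteq\upsilon X$ — and then $g_\beta^{-1}(0)$ is a nonempty zero-set of $\beta X$ contained in $\upsilon X$ and missing $X$, which is impossible since a point of $\upsilon X$ cannot lie in a zero-set (a fortiori $G_\delta$-set) of $\beta X$ missing $X$. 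This is the clean contradiction, and indeed $\mathrm{cl}_{\beta X}U=\mathrm{cl}_{\upsilon X}U$ is the crux: it holds because $\mathrm{cl}_{\upsilon X}U$ is compact, hence closed in $\beta X$, and contains the dense set $U$ of $\mathrm{cl}_{\beta X}U$. \textbf{The main obstacle} I anticipate is justifying that $g_\beta$ (the extension over $\mathrm{cl}_{\beta X}U=\beta(\mathrm{cl}_X U)$) has a nonempty zero-set missing $\mathrm{cl}_X U$ exactly when $f$ is unbounded — this is the standard "pseudocompact iff no such zero-set" fact and should be invoked by reference — together with the identification $\mathrm{cl}_{\beta X}\mathrm{cl}_X U=\mathrm{cl}_{\beta X}U$ and the fact that $\mathrm{cl}_{\beta X}U$ serves as $\beta(\mathrm{cl}_X U)$, which needs $U$ to be open (so $\mathrm{cl}_X U$ is regular closed and $U$ is dense in it); none of these requires normality of $X$, which is why the lemma is stated without it.
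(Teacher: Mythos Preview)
The paper does not actually prove this lemma; it attributes the result to Hager--Johnson and refers the reader to Comfort \cite{C} and Weir \cite{W} for proofs. So there is nothing in the paper to compare against, and your proposal must stand on its own.

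There is a genuine gap. Your streamlined argument hinges on the identification $\mathrm{cl}_{\beta X}U=\beta(\mathrm{cl}_X U)$, which you claim ``needs $U$ to be open \dots\ none of these requires normality of $X$.'' That is false: $\mathrm{cl}_{\beta X}U=\mathrm{cl}_{\beta X}\mathrm{cl}_X U$ is a compactification of $\mathrm{cl}_X U$, but it coincides with $\beta(\mathrm{cl}_X U)$ precisely when $\mathrm{cl}_X U$ is $C^*$-embedded in $X$, and regular closed subspaces of a Tychonoff space need not be $C^*$-embedded. Without this, your ``Stone extension'' $g_\beta:\mathrm{cl}_{\beta X}U\to[0,1]$ is not available. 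A second, related problem: even granting $g_\beta$, the set $g_\beta^{-1}(0)$ is a zero-set of the subspace $\mathrm{cl}_{\beta X}U$, not of $\beta X$; extending $g_\beta$ over $\beta X$ via Tietze gives a zero-set of $\beta X$ that misses $\mathrm{cl}_X U$ but may well meet $X\setminus\mathrm{cl}_X U$, so you do not get the contradiction with $p\in\upsilon X$.

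The standard route (essentially Comfort's) avoids both issues by producing an unbounded continuous function on \emph{all} of $X$, not just on $\mathrm{cl}_X U$. If $\mathrm{cl}_X U$ is not pseudocompact, take an infinite locally finite family $\{V_n\}$ of nonempty open subsets of $\mathrm{cl}_X U$; since $U$ is dense and open in $\mathrm{cl}_X U$ you may assume $V_n\subseteq U$, so each $V_n$ is open in $X$ and the family is locally finite in $X$ (points outside $\mathrm{cl}_X U$ have $X\setminus\mathrm{cl}_X U$ as a neighbourhood missing every $V_n$). Now build $f=\sum_n n f_n$ with $f_n:X\to[0,1]$ supported in $V_n$ and equal to $1$ at some chosen point of $V_n$: this $f$ is continuous on $X$, unbounded on $U$, and extends continuously to $\upsilon X$, contradicting compactness of $\mathrm{cl}_{\upsilon X}U$. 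You correctly identified the need to ``extend it first over $X$'' and correctly flagged that Tietze fails without normality; the locally-finite construction is how one actually does it.
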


\begin{lemma}\label{HGA}
Let $A$ be a regular closed subspace of a space $X$. Then $\mathrm{cl}_{\beta X} A\subseteq\upsilon X$ if and only if $A$ is pseudocompact.
\end{lemma}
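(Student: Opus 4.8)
The plan is to handle the two implications separately, drawing on the Hager--Johnson lemma (Lemma~\ref{A}) for one of them and on a direct argument with pseudocompactness for the other. Since $A$ is regular closed, the first step is to fix $U=\mathrm{int}_XA$, an open subspace of $X$ with $A=\mathrm{cl}_XU$, and to record the identity $\mathrm{cl}_{\beta X}A=\mathrm{cl}_{\beta X}U$: the inclusion $\mathrm{cl}_{\beta X}U\subseteq\mathrm{cl}_{\beta X}A$ is immediate from $U\subseteq A$, while $A=\mathrm{cl}_XU\subseteq\mathrm{cl}_{\beta X}U$ yields the reverse inclusion upon taking closures in $\beta X$.

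For the implication ``$\mathrm{cl}_{\beta X}A\subseteq\upsilon X$ implies $A$ is pseudocompact'': from the identity just noted one has $\mathrm{cl}_{\beta X}U\subseteq\upsilon X$, hence $\mathrm{cl}_{\upsilon X}U=\mathrm{cl}_{\beta X}U\cap\upsilon X=\mathrm{cl}_{\beta X}U$, which is compact, being a closed subspace of $\beta X$. Lemma~\ref{A} then shows that $\mathrm{cl}_XU=A$ is pseudocompact. This is the only place where regular-closedness of $A$ is actually used, namely to produce the open set $U$ with $\mathrm{cl}_XU=A$ to which Lemma~\ref{A} can be applied.

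For the converse ``$A$ pseudocompact implies $\mathrm{cl}_{\beta X}A\subseteq\upsilon X$'', I would argue by contradiction, and here $A$ may be taken to be an arbitrary subspace of $X$. If some $p\in\mathrm{cl}_{\beta X}A$ does not lie in $\upsilon X$, then, since $\upsilon X$ is the intersection of all cozero-sets of $\beta X$ containing $X$, there is a zero-set $Z=\mathrm{Z}(f)$ of $\beta X$, with $f:\beta X\rightarrow[0,1]$ continuous, such that $p\in Z$ and $Z\cap X=\emptyset$. In particular $f$ is strictly positive on $A\subseteq X$, so $1/(f|A)$ is a continuous real-valued function on $A$; pseudocompactness of $A$ forces it to be bounded, whence $f\geq\varepsilon$ on $A$ for some $\varepsilon>0$. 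By continuity of $f$ on $\beta X$ this gives $f\geq\varepsilon$ on $\mathrm{cl}_{\beta X}A$, contradicting $f(p)=0$.

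I expect no genuine obstacle; the argument is short once the reductions are in place. The only points demanding explicit care are the closure identity $\mathrm{cl}_{\beta X}A=\mathrm{cl}_{\beta X}U$ and the relation $\mathrm{cl}_{\upsilon X}U=\mathrm{cl}_{\beta X}U\cap\upsilon X$, together with the observation that $\mathrm{cl}_{\beta X}U$ is compact; everything else is routine. It is worth noting in the write-up that the regular-closed hypothesis is needed only for the forward direction, the converse being valid for any subspace of $X$.
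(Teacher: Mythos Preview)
Your argument is correct. The forward direction (``$\mathrm{cl}_{\beta X}A\subseteq\upsilon X$ implies $A$ pseudocompact'') matches the paper's proof exactly: both reduce to Lemma~\ref{A} via the open set $U=\mathrm{int}_XA$, and your closure identities simply make explicit what the paper leaves implicit.

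For the converse, you take a genuinely different route. The paper argues that if $A$ is pseudocompact then so is $\mathrm{cl}_{\upsilon X}A$ (as $A$ is dense in it), and since $\mathrm{cl}_{\upsilon X}A$ is closed in $\upsilon X$ it is realcompact, hence compact; therefore $\mathrm{cl}_{\beta X}A\subseteq\mathrm{cl}_{\upsilon X}A\subseteq\upsilon X$. Your contradiction argument instead uses the zero-set description of $\beta X\setminus\upsilon X$ directly and constructs an explicit unbounded function on $A$. Both are short and valid: the paper's version invokes the standard fact ``pseudocompact $+$ realcompact $=$ compact'' and works at the level of closures, while yours avoids that fact but uses the $G_\delta$ characterization of points outside $\upsilon X$ (already recalled in the paper's preliminaries). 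Your observation that the regular-closed hypothesis is only needed for the forward direction is correct and worth keeping.
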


\begin{proof}
The first half follows from Lemma \ref{A}. For the second half, note that if $A$ is pseudocompact then so is $\mathrm{cl}_{\upsilon X} A$. But  $\mathrm{cl}_{\upsilon X} A$, being closed in $\upsilon X$, is also realcompact, and thus compact. Therefore $\mathrm{cl}_{\beta X} A\subseteq\mathrm{cl}_{\upsilon X} A$.
\end{proof}

\begin{proposition}\label{PTF}
Let $X$ be a space and let $\mathfrak{P}$ be pseudocompactness. Then
\[\lambda_\mathfrak{P}X=\mathrm{int}_{\beta X}\upsilon X.\]
\end{proposition}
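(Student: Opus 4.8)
The plan is to establish the two inclusions $\lambda_\mathfrak{P}X\subseteq\mathrm{int}_{\beta X}\upsilon X$ and $\mathrm{int}_{\beta X}\upsilon X\subseteq\lambda_\mathfrak{P}X$ separately. The bridge in both directions is Lemma \ref{HGA}: for a regular closed set $A$, pseudocompactness of $A$ is equivalent to $\mathrm{cl}_{\beta X}A\subseteq\upsilon X$. The only other ingredient is Lemma \ref{LKG}, which converts ``$p\in f^{-1}[[0,r)]$'' into ``$p\in\mathrm{int}_{\beta X}\mathrm{cl}_{\beta X}f^{-1}[[0,r)]$''.

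For $\lambda_\mathfrak{P}X\subseteq\mathrm{int}_{\beta X}\upsilon X$: a generating piece of $\lambda_\mathfrak{P}X$ is $\mathrm{int}_{\beta X}\mathrm{cl}_{\beta X}C$ with $C\in\mathrm{Coz}(X)$ and $\mathrm{cl}_XC$ pseudocompact. Since $C$ is open in $X$, the set $\mathrm{cl}_XC$ is regular closed, so by the ``only if'' half of Lemma \ref{HGA} applied to $A=\mathrm{cl}_XC$ we get $\mathrm{cl}_{\beta X}C=\mathrm{cl}_{\beta X}(\mathrm{cl}_XC)\subseteq\upsilon X$ (the first equality using $C\subseteq\mathrm{cl}_XC\subseteq\mathrm{cl}_{\beta X}C$). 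As $\mathrm{int}_{\beta X}\mathrm{cl}_{\beta X}C$ is open in $\beta X$ and contained in $\upsilon X$, it is contained in $\mathrm{int}_{\beta X}\upsilon X$; taking the union over all admissible $C$ gives the inclusion.

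For the reverse inclusion: fix $p\in\mathrm{int}_{\beta X}\upsilon X$ and, using normality of the compact space $\beta X$, pick a continuous $f:\beta X\to[0,1]$ with $f(p)=0$ and $f\equiv1$ on $\beta X\setminus\mathrm{int}_{\beta X}\upsilon X$. Put $C=X\cap f^{-1}\big[[0,1/2)\big]$. Then $C\in\mathrm{Coz}(X)$, being the trace on $X$ of the cozero-set $f^{-1}[[0,1/2)]$ of $\beta X$, and
\[\mathrm{cl}_{\beta X}C\subseteq\mathrm{cl}_{\beta X}f^{-1}\big[[0,1/2)\big]\subseteq f^{-1}\big[[0,1/2]\big]\subseteq\mathrm{int}_{\beta X}\upsilon X\subseteq\upsilon X,\]
the third containment holding since $f$ is identically $1$ off $\mathrm{int}_{\beta X}\upsilon X$. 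Because $\mathrm{cl}_XC$ is regular closed in $X$ and $\mathrm{cl}_{\beta X}(\mathrm{cl}_XC)=\mathrm{cl}_{\beta X}C\subseteq\upsilon X$, the ``if'' half of Lemma \ref{HGA} shows $\mathrm{cl}_XC$ is pseudocompact, hence $\mathrm{int}_{\beta X}\mathrm{cl}_{\beta X}C\subseteq\lambda_\mathfrak{P}X$. Finally, Lemma \ref{LKG} applied to $f|_X$ and $r=1/2$ (note that $(f|_X)_\beta=f$ and $(f|_X)^{-1}[[0,1/2)]=C$) yields $p\in f^{-1}[[0,1/2)]\subseteq\mathrm{int}_{\beta X}\mathrm{cl}_{\beta X}C\subseteq\lambda_\mathfrak{P}X$.

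I do not expect a genuine obstacle here. The only points to watch are that the $X$-closure of a cozero-set is regular closed (so that Lemma \ref{HGA} applies), that $f^{-1}[[0,1/2)]$ really is a cozero-set of $\beta X$ (so its trace on $X$ is a cozero-set of $X$), and the harmless identity $\mathrm{cl}_{\beta X}(\mathrm{cl}_XC)=\mathrm{cl}_{\beta X}C$; everything else is bookkeeping with interiors and closures inside $\beta X$.
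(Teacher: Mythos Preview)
Your proof is correct and follows essentially the same route as the paper's: both inclusions are obtained via Lemma~\ref{HGA} (applied to the regular closed set $\mathrm{cl}_XC$), with Lemma~\ref{LKG} supplying the passage from $f^{-1}[[0,1/2)]$ to $\mathrm{int}_{\beta X}\mathrm{cl}_{\beta X}C$ in the reverse inclusion. One cosmetic slip: you have the ``if'' and ``only if'' halves of Lemma~\ref{HGA} interchanged (the statement reads ``$\mathrm{cl}_{\beta X}A\subseteq\upsilon X$ if and only if $A$ is pseudocompact'', so pseudocompactness of $A$ is the \emph{if} hypothesis), but the mathematics is unaffected.
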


\begin{proof}
If $C\in\mathrm{Coz}(X)$ has pseudocompact closure in $X$ then $\mathrm{cl}_{\beta X} C\subseteq\upsilon X$, by Lemma \ref{HGA}, and then
\[\mathrm{int}_{\beta X}\mathrm{cl}_{\beta X} C\subseteq\mathrm{int}_{\beta X}\upsilon X.\]

For the reverse inclusion, let $t\in\mathrm{int}_{\beta X}\upsilon X$. Let $f:\beta X\rightarrow[0,1]$ be continuous with $f(t)=0$ and $f(s)=1$ for any $s\in\beta X\setminus\mathrm{int}_{\beta X}\upsilon X$. Then
\[C=X\cap f^{-1}\big[[0,1/2)\big]\in\mathrm{Coz}(X)\]
and $t\in\mathrm{int}_{\beta X}\mathrm{cl}_{\beta X} C$, as $t\in f^{-1}[[0,1/2)]$ and
\[f^{-1}\big[[0,1/2)\big]\subseteq\mathrm{int}_{\beta X}\mathrm{cl}_{\beta X} C\]
by Lemma \ref{LKG}. Also, $\mathrm{cl}_X C$ is pseudocompact, by Lemma \ref{HGA}, as
\[\mathrm{cl}_{\beta X}C\subseteq f^{-1}\big[[0,1/2]\big]\subseteq\upsilon X.\]
\end{proof}

\begin{lemma}[Gillman and Jerison \cite{GJ}]\label{DDJD}
If $A$ is a $C$-embedded subspace of a space $X$ then $\mathrm{cl}_{\upsilon X}A=\upsilon A$.
\end{lemma}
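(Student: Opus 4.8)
The plan is to realize $\mathrm{cl}_{\upsilon X}A$ as a realcompactification of $A$ in which $A$ is $C$-embedded, and then to observe that $\upsilon A$ is the \emph{only} such realcompactification of $A$, so the two must agree. First note that $A$, being $C$-embedded in $X$, is in particular $C^*$-embedded in $X$, hence $\mathrm{cl}_{\beta X}A=\beta A$, so every space appearing below can be viewed as a subspace of $\beta X$. Now I would argue: $\mathrm{cl}_{\upsilon X}A$ is closed in $\upsilon X$, and since realcompactness is closed hereditary, $\mathrm{cl}_{\upsilon X}A$ is realcompact; as it obviously contains $A$ densely, it is a realcompactification of $A$. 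Moreover $X$ is $C$-embedded in $\upsilon X$ by the defining property of the Hewitt realcompactification, and $A$ is $C$-embedded in $X$, so composing extensions shows $A$ is $C$-embedded in $\upsilon X$; restricting a continuous extension over $\upsilon X$ to the intermediate subspace $\mathrm{cl}_{\upsilon X}A$ then shows $A$ is $C$-embedded in $\mathrm{cl}_{\upsilon X}A$.

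The heart of the argument is the following general claim: if $T$ is any realcompactification of $A$ in which $A$ is $C$-embedded, then $T=\upsilon A$. Since $A$ is $C^*$-embedded in $T$ and dense there, $\beta T=\beta A$, so we may regard $A\subseteq T\subseteq\beta A$; as $T$ is realcompact, $T=\upsilon T$, and by the description of the Hewitt realcompactification recalled in the introduction a point $p\in\beta A$ lies in $\upsilon T$ exactly when every $G_\delta$-set of $\beta A$ containing $p$ meets $T$, and in $\upsilon A$ exactly when every such $G_\delta$-set meets $A$. Consequently, if $p\in\upsilon A$ then every $G_\delta$-set of $\beta A$ through $p$ meets $A\subseteq T$, so $p\in T$; hence $\upsilon A\subseteq T$. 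Conversely, suppose $p\in T\setminus\upsilon A$. Then there is a zero-set $Z=\mathrm{Z}(F)$ of $\beta A$, with $F:\beta A\rightarrow[0,1]$ continuous, such that $p\in Z$ and $Z\cap A=\emptyset$ (a $G_\delta$-set of the compact, hence normal, space $\beta A$ containing $p$ and missing $A$ shrinks to such a zero-set). Then $1/(F|_A)$ is a well-defined continuous real-valued function on $A$; since $A$ is $C$-embedded in $T$ it extends to some $h\in C(T)$, and $h\cdot(F|_T)$ agrees with the constant $1$ on the dense subspace $A$ of $T$, hence equals $1$ throughout $T$. But $p\in T$ and $F(p)=0$, a contradiction. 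Therefore $T\subseteq\upsilon A$, and so $T=\upsilon A$.

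Finally, applying this claim to $T=\mathrm{cl}_{\upsilon X}A$ (legitimate by the first paragraph) yields $\mathrm{cl}_{\upsilon X}A=\upsilon A$, as desired. The step I expect to be the main obstacle is the general claim, and within it the inclusion $T\subseteq\upsilon A$: one has to manufacture the function $1/F$ and exploit the density of $A$ in $T$ together with the $C$-embedding hypothesis to force a contradiction at $p$; the supporting facts needed there are the identification $\beta T=\beta A$ for a dense $C^*$-embedded subspace and the standard fact that in a compact (indeed completely regular) space a $G_\delta$-set containing a point contains a zero-set containing that point.
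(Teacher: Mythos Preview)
The paper does not supply its own proof of this lemma; it is stated with attribution to Gillman and Jerison and then used without argument in the proof of Proposition~\ref{RRGY}. There is therefore nothing in the paper to compare your argument against.

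That said, your proof is correct and is essentially the classical argument. The reduction to the abstract claim ``if $T$ is a realcompactification of $A$ in which $A$ is $C$-embedded then $T=\upsilon A$'' is the right move, and both inclusions are handled properly: $\upsilon A\subseteq T$ is immediate from $A\subseteq T$ and $T=\upsilon T$, while $T\subseteq\upsilon A$ uses the $C$-embedding hypothesis in the expected way, extending $1/(F|_A)$ to $T$ and contradicting $F(p)=0$ via density. The auxiliary facts you invoke (that $C$-embedded implies $C^*$-embedded, that $\mathrm{cl}_{\beta X}A=\beta A$ for a $C^*$-embedded $A$, that $X$ is $C$-embedded in $\upsilon X$, and that a $G_\delta$-set through a point in a compact space contains a zero-set through that point) are all standard and are consistent with what the paper's introduction recalls about $\beta X$ and $\upsilon X$.
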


Note that in a normal space each closed subspace is $C$-embedded. (See Theorem 1.10 (g) of \cite{PW}.)

\begin{proposition}\label{RRGY}
Let $X$ be a normal space and let $\mathfrak{P}$ be realcompactness. Then
\[\lambda_\mathfrak{P}X=\beta X\setminus\mathrm{cl}_{\beta X}(\upsilon X\setminus X).\]
\end{proposition}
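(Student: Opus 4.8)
The plan is to prove the two inclusions separately, following the proof of Proposition \ref{PTF} but replacing the pseudocompactness input (Lemma \ref{HGA}) by the realcompactness input supplied by Lemma \ref{DDJD} together with the normality of $X$ (which, by the remark after Lemma \ref{DDJD}, makes every closed subspace of $X$ $C$-embedded).

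For the inclusion $\lambda_\mathfrak{P}X\subseteq\beta X\setminus\mathrm{cl}_{\beta X}(\upsilon X\setminus X)$ I would start from a cozero-set $C$ of $X$ whose closure $\mathrm{cl}_X C$ is realcompact. Since $X$ is normal, $\mathrm{cl}_X C$ is $C$-embedded in $X$, so Lemma \ref{DDJD} gives $\mathrm{cl}_{\upsilon X}(\mathrm{cl}_X C)=\upsilon(\mathrm{cl}_X C)=\mathrm{cl}_X C$, the last equality because $\mathrm{cl}_X C$ is realcompact. Using the bookkeeping identity $\mathrm{cl}_{\upsilon X}C=\mathrm{cl}_{\upsilon X}(\mathrm{cl}_X C)$ (which holds because $\mathrm{cl}_X C=\mathrm{cl}_{\upsilon X}C\cap X$), this yields $\mathrm{cl}_{\upsilon X}C\subseteq X$, equivalently $\mathrm{cl}_{\beta X}C\cap(\upsilon X\setminus X)=\emptyset$. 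Then $\mathrm{int}_{\beta X}\mathrm{cl}_{\beta X}C$ is an open subset of $\beta X$ contained in $\mathrm{cl}_{\beta X}C$, hence disjoint from $\upsilon X\setminus X$; being open, it is therefore disjoint from $\mathrm{cl}_{\beta X}(\upsilon X\setminus X)$. Taking the union over all such $C$ gives this inclusion.

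For the reverse inclusion I would take $t\in\beta X\setminus\mathrm{cl}_{\beta X}(\upsilon X\setminus X)$ and, exactly as in Proposition \ref{PTF}, choose a continuous $f:\beta X\to[0,1]$ with $f(t)=0$ and $f\equiv 1$ on $\mathrm{cl}_{\beta X}(\upsilon X\setminus X)$ (possible since $\beta X$ is compact Hausdorff, hence normal). Put $C=X\cap f^{-1}[[0,1/2)]\in\mathrm{Coz}(X)$. Lemma \ref{LKG} gives $t\in f^{-1}[[0,1/2)]\subseteq\mathrm{int}_{\beta X}\mathrm{cl}_{\beta X}C$, so it remains to verify that $\mathrm{cl}_X C$ is realcompact. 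From $\mathrm{cl}_{\beta X}C\subseteq f^{-1}[[0,1/2]]$ and the choice of $f$ we get $\mathrm{cl}_{\beta X}C\cap(\upsilon X\setminus X)=\emptyset$, hence $\mathrm{cl}_{\upsilon X}C\subseteq X$ and so $\mathrm{cl}_{\upsilon X}C=\mathrm{cl}_X C$. Now normality of $X$ makes $\mathrm{cl}_X C$ $C$-embedded, so Lemma \ref{DDJD} gives $\upsilon(\mathrm{cl}_X C)=\mathrm{cl}_{\upsilon X}(\mathrm{cl}_X C)=\mathrm{cl}_{\upsilon X}C=\mathrm{cl}_X C$, i.e. $\mathrm{cl}_X C$ is realcompact. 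Therefore $t\in\lambda_\mathfrak{P}X$.

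I do not expect a serious obstacle: the argument is a faithful transcription of the proof of Proposition \ref{PTF}, with Lemma \ref{DDJD} and the normality hypothesis taking over the role of Lemma \ref{HGA}. The only points needing a little care are the two elementary observations used above — the identity $\mathrm{cl}_{\upsilon X}C=\mathrm{cl}_{\upsilon X}(\mathrm{cl}_X C)$, and the fact that an open set contained in $\mathrm{cl}_{\beta X}C$ and disjoint from $\upsilon X\setminus X$ is automatically disjoint from the closure $\mathrm{cl}_{\beta X}(\upsilon X\setminus X)$.
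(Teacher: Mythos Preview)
Your proposal is correct and follows essentially the same route as the paper's proof: both inclusions are handled identically, using normality to get $C$-embedding and then Lemma \ref{DDJD} to conclude $\mathrm{cl}_{\upsilon X}C=\mathrm{cl}_X C$. The only cosmetic difference is in the reverse inclusion, where the paper, having obtained $\mathrm{cl}_X C=\mathrm{cl}_{\upsilon X}C$, simply observes that this set is closed in the realcompact space $\upsilon X$ and hence realcompact, rather than invoking Lemma \ref{DDJD} a second time as you do; your argument is a slight detour but equally valid.
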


\begin{proof}
Let $C\in\mathrm{Coz}(X)$ have realcompact closure in $X$. Then since $\mathrm{cl}_X C$ is $C$-embedded in $X$, as $X$ is normal, by Lemma \ref{DDJD} we have \[\mathrm{cl}_{\upsilon X}C=\upsilon(\mathrm{cl}_X C)=\mathrm{cl}_X C.\]
But then
\[\mathrm{int}_{\beta X}\mathrm{cl}_{\beta X} C\cap(\upsilon X\setminus X)=\emptyset,\]
as
\[\mathrm{cl}_{\beta X} C\cap(\upsilon X\setminus X)=\mathrm{cl}_{\upsilon X} C\cap(\upsilon X\setminus X)=\emptyset\]
and thus
\[\mathrm{int}_{\beta X}\mathrm{cl}_{\beta X} C\cap\mathrm{cl}_{\beta X}(\upsilon X\setminus X)=\emptyset,\]
or, equivalently
\[\mathrm{int}_{\beta X}\mathrm{cl}_{\beta X} C\subseteq\beta X\setminus\mathrm{cl}_{\beta X}(\upsilon X\setminus X).\]

For the reverse inclusion, let $t\in\beta X\setminus\mathrm{cl}_{\beta X}(\upsilon X\setminus X)$. Let $f:\beta X\rightarrow[0,1]$ be continuous with $f(t)=0$ and $f(s)=1$ for any $s\in\mathrm{cl}_{\beta X}(\upsilon X\setminus X)$. Then
\[C=X\cap f^{-1}\big[[0,1/2)\big]\in\mathrm{Coz}(X)\]
and $t\in\mathrm{int}_{\beta X}\mathrm{cl}_{\beta X} C$, as $t\in f^{-1}[[0,1/2)]$ and
\[f^{-1}\big[[0,1/2)\big]\subseteq\mathrm{int}_{\beta X}\mathrm{cl}_{\beta X} C\]
by Lemma \ref{LKG}. Also, since $\mathrm{cl}_{\beta X}C\cap(\upsilon X\setminus X)$ is empty, the closure
\[\mathrm{cl}_X C=X\cap\mathrm{cl}_{\beta X}C=\upsilon X\cap\mathrm{cl}_{\beta X}C=\mathrm{cl}_{\upsilon X}C,\]
being closed in $\upsilon X$, is realcompact.
\end{proof}

\section*{Acknowledgement} The author wishes to thank the anonymous referee for reading the manuscript and comments which helped improving the exposition of the article.

\end{document}